\numberwithin{equation}{section}
\theoremstyle{plain}
\newtheorem{theorem}{Theorem}[section]
\newtheorem{lemma}[theorem]{Lemma}
\newtheorem{corollary}[theorem]{Corollary}
\newtheorem{proposition}[theorem]{Proposition}
\newtheorem{definition}[theorem]{Definition}
\newtheorem{remark}[theorem]{Remark}
\newtheorem{conjecture}[theorem]{Conjecture}
\newtheorem{fact}[theorem]{Fact}
\def\GL{\operatorname{GL}}
\def\OO{\operatorname{O}}
\def\SL{\operatorname{SL}}
\def\SO{\operatorname{SO}}
\def\SU{\operatorname{SU}}
\def\U{\operatorname{U}}
\def\ad{\operatorname{ad}}
\def\Ad{\operatorname{Ad}}
\def\d{\operatorname{d\!}}
\def\det{\operatorname{det}}
\def\diag{\operatorname{diag}}
\def\End{\operatorname{End}}
\def\exp{\operatorname{exp}}
\def\Hom{\operatorname{Hom}}
\def\Im{\operatorname{Im}}
\def\Ind{\operatorname{Ind}}
\def\Ker{\operatorname{Ker}}
\def\Lie{\operatorname{Lie}}
\def\max{\operatorname{max}}
\def\Pf{\operatorname{Pf}}
\def\pr{\operatorname{pr}}
\def\Re{\operatorname{Re}}
\def\sgn{\operatorname{sgn}}
\def\sm{\operatorname{sm}}
\def\Spin{\operatorname{Spin}}
\def\Stab{\operatorname{Stab}}
\def\tr{\operatorname{tr}}
\newcommand{\bbC}{\mathbb{C}}
\newcommand{\bbR}{\mathbb{R}}
\newcommand{\bbZ}{\mathbb{Z}}
\begin{document}

\title[Restriction of representations to parabolic subgroups]{Restriction of irreducible unitary representations of Spin(N,1) to parabolic subgroups}
%\title{Proof of Duflo's conjecture for real rank one spin groups}
\author{Gang Liu}
\address{Gang Liu, Institut Elie Cartan de Lorraine, CNRS-UMR 7502, Universit\'e de Lorraine,  3 rue Augustin Fresnel, 57045 Metz, France}
\email{gang.liu@univ-lorraine.fr}
\author{Yoshiki Oshima}
\address{Yoshiki Oshima, Department of Pure and Applied Mathematics, Graduate School of Information Science and
Technology, Osaka University, 1-5 Yamadaoka, Suita, Osaka 565-0871, Japan.}
\email{oshima@ist.osaka-u.ac.jp}
\author{Jun Yu}
\address{Jun Yu, Beijing International Center for Mathematical Research, Peking University, No. 5 Yiheyuan Road,
Beijing 100871, China}
\email{junyu@bicmr.pku.edu.cn}
\keywords{unitary representations, branching laws, discrete series, Fourier transform, moment map,
 method of coadjoint orbits.}
\subjclass[2010]{22E46}
\begin{abstract}In this paper, we obtain explicit branching laws for all irreducible unitary representations of
$\Spin(N,1)$ restricted to a parabolic subgroup $P$. The restriction turns out to be a finite direct sum of
irreducible unitary representations of $P$. We also verify Duflo's conjecture for the branching law of tempered
representations of $\Spin(N,1)$ with respect to a parabolic subgroup $P$. That is to show: in the framework of
the orbit method, the branching law of a tempered representation is determined by the behavior of the moment map
from the corresponding coadjoint orbit. A few key tools used in this work include: Fourier transform, Knapp-Stein
intertwining operator, Casselman-Wallach globalization, Zuckerman translation principle, du Cloux's results for
smooth representations of semi-algebraic groups.
\end{abstract}

\maketitle

\setcounter{tocdepth}{1}

\tableofcontents

\section{Introduction}\label{S:introduction}

The unitary dual problem concerning the classification of irreducible unitary representations of a Lie group and
the branching law problem concerning the decomposition of the restriction of irreducible unitary representations
to a closed Lie subgroup are two of the most important problems in the representation theory of real Lie groups.
The orbit method of Kirillov (\cite{Kirillov2}, \cite{Kirillov}) and Kostant (\cite{Auslander-Kostant}, \cite{Kostant})
relates both problems to the geometry of coadjoint orbits.

In a series of seminal papers \cite{Kobayashi}, \cite{Kobayashi2}, \cite{Kobayashi3} Kobayashi initiated the study of
discrete decomposability and admissibility for representations when restricted to non-compact subgroups. Let $G$ be a
Lie group and let $H$ be a closed Lie subgroup. For an irreducible unitary representation $\pi$ of $G$, the restriction of
$\pi$ to $H$, denoted by $\pi|_H$, is said to be \emph{discretely decomposable} if it is a direct sum of irreducible
unitary representations of $H$. If moreover, all irreducible unitary representations of $H$ have only finite
multiplicities in $\pi$, then $\pi|_H$ is said to be \emph{admissible}. Kobayashi established criteria for the
admissibility for a large class of unitary representations with respect to reductive subgroups. Based on his work,
branching laws for admissible restriction have been studied in many papers including \cite{Duflo-Vargas2},
\cite{Gross-Wallach}, \cite{Kobayashi}, \cite{Kobayashi2}, \cite{Kobayashi3}, \cite{Kobayashi4},
\cite{Moellers-Oshima2}, \cite{Oshima}, \cite{Sekiguchi}. In this paper we set $G=\Spin(N,1)$ for $N>2$ and set $H$ to
be a minimal parabolic subgroup of $G$, which we denote by $P(\subset G)$. In the first half of the paper we obtain
explicit branching laws for all irreducible unitary representations of $G$. The formulas are given in
\S\ref{SS:branchinglaw} and \S\ref{SS:branchinglaw2}. We find that the restriction is always a finite direct sum of
irreducible unitary representations of $P$. Hence in particular, the restriction is admissible.

In the second half of the paper, we study moment maps of coadjoint orbits which is related to branching laws via the
so-called orbit method. Let $\pi$ be an irreducible unitary representation of $G$ associated to a $G$-coadjoint orbit
$\mathcal{O}$ in $\mathfrak{g}^{\ast}$. It is well known that equipped with the Kirillov-Kostant-Souriau symplectic
form, $\mathcal{O}$ becomes a $G$-Hamiltonian space and hence an $H$-Hamiltonian space. The corresponding moment map
is the natural projection $q\colon \mathcal{O} \rightarrow\mathfrak{h}^{\ast}$. The orbit method predicts that the
branching law of $\pi|_H$ is given in terms of the geometry of the moment map $q$ (see \cite{Kirillov}). In fact,
when $G$ and $H$ are unipotent groups, Corwin-Greenleaf~\cite{Corwin-Greenleaf} proved that the multiplicity of an
$H$-representation associated with an $H$-coadjoint orbit $\mathcal{O}'$ equals almost everywhere with the cardinality
of $q^{-1}(\mathcal{O}')/H$. Concerning more general Lie groups, recently Duflo formulated a precise conjecture which
describes a connection between the branching law of the restriction to a closed subgroup of discrete series on the
representation theory side and the moment map from strongly regular coadjoint orbits on the geometry side. The
conjecture is inspired by Heckman's thesis \cite{Heckman} and the ``quantization commuting with reduction" program
\cite{Guillemin-Sternberg}.

\begin{conjecture}\label{C:Duflo}
Let $\pi$ be a discrete series of a real almost algebraic group $G$, which is attached to a coadjoint orbit
$\mathcal{O}_{\pi}$. Let $H$ be a closed almost algebraic subgroup, and
 let $q\colon \mathcal{O}_{\pi}\rightarrow \mathfrak{h}^{\ast}$
 be the moment map from $\mathcal{O}_{\pi}$. Then,
\begin{itemize}
\item[(i)] $\pi\vert_{H}$ is $H$-admissible (in the sense of Kobayashi) if and only if the moment map $q\colon
\mathcal{O}_{\pi}\rightarrow\mathfrak{h}^{\ast}$ is \textit{weakly proper}.
\item[(ii)] If $\pi\vert_{H}$ is $H$-admissible, then each irreducible $H$-representation $\sigma$ which appears
in $\pi\vert_{H}$ is attached to a \textit{strongly regular} $H$-coadjoint orbit $\mathcal{O}'_{\sigma}$ (in the sense
of Duflo) contained in $q(\mathcal{O}_{\pi})$.
\item[(iii)] If $\pi\vert_{H}$ is $H$-admissible, then the multiplicity of each such $\sigma$ can be expressed
geometrically in terms of the \textit{reduced space} $q^{-1}(\mathcal{O}'_{\sigma})/H$.
 \end{itemize}
\end{conjecture}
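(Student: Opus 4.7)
The plan is to verify Conjecture~\ref{C:Duflo} (in the broader form covering all tempered $\pi$) for $G=\Spin(N,1)$, $H=P=MAN$ a minimal parabolic, by combining the explicit branching laws obtained in \S\ref{SS:branchinglaw}--\S\ref{SS:branchinglaw2} with a direct analysis of the moment map $q\colon\mathcal{O}_{\pi}\to\frp^{\ast}$. Since $G$ has real rank one, the tempered coadjoint orbits $\mathcal{O}_{\pi}$ split cleanly into elliptic ones (for the discrete series, existing only when $N$ is even) and hyperbolic ones (for the unitary principal series), and explicit representatives $\xi\in\frg^{\ast}\cong\frg$ can be written down using the Killing form. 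The moment map itself is just the restriction $\eta\mapsto\eta|_{\frp}$, whose fibers and image I would describe using the Iwasawa and Bruhat decompositions of $G$.

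For part (i), the first half of the paper already shows that $\pi|_{P}$ is a finite direct sum of irreducibles and hence admissible, so what remains is the geometric statement that $q$ is always weakly proper. I would establish this by an explicit matrix computation in the standard realization of $\Spin(N,1)$, checking that the preimage under $q$ of any compact subset of $\frp^{\ast}$ meets $\mathcal{O}_{\pi}$ in a set which is compact modulo the stabilizers.

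For parts (ii) and (iii), I would first enumerate the strongly regular $P$-coadjoint orbits. Since $P$ has unipotent radical $N$, a Mackey-type reduction expresses each such orbit as induced from a strongly regular orbit of the stabilizer $L\subset MA$ of a generic covector $\nu\in\frn^{\ast}$; orbit-method quantization then attaches an irreducible unitary representation of $P$ to each admissible datum. I would match these data, orbit by orbit, with the summands $\sigma$ appearing in the explicit branching decomposition, and compute the reduced spaces $q^{-1}(\mathcal{O}'_{\sigma})/P$ directly. The finiteness of the decomposition established in the first half predicts that each reduced space consists of only finitely many points, whose count should equal the multiplicity of $\sigma$ in $\pi|_{P}$.

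The hardest part will be the bookkeeping needed to align the orbit-method labels with the actual parameters of the summands: strong regularity in Duflo's sense depends not only on the orbit but also on an admissible datum (a unitary character of a metaplectic cover of the stabilizer), so the matching in (ii) must be performed at the level of these data rather than at the level of bare orbits. A secondary difficulty will be the boundary behavior, where $\mathcal{O}_{\pi}$ or $\mathcal{O}'_{\sigma}$ degenerates and the fibers of $q$ become singular; there, controlling the reduced space will most likely require the Zuckerman translation principle and the Knapp--Stein intertwiners highlighted in the abstract.
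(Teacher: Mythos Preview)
Your plan matches the paper's approach (Theorem~\ref{T:Duflo1}): verify the conjecture by comparing the explicit branching laws with the explicit moment-map image, checking that the interlacing conditions on both sides coincide after the $\rho$-shift. Two corrections to your expectations: the reduced spaces are always singletons (the branching is multiplicity-free), and the Zuckerman translation principle and Knapp--Stein intertwiners enter only in establishing the branching laws, not in the moment-map analysis---the latter is purely linear-algebraic, carried out by parametrizing $P$-orbits in $\mathcal{O}_f$ via $G^f\backslash G/P$ (i.e., torus orbits on the sphere $G/P$) and then identifying the image $P$-orbits in $\mathfrak{p}^\ast$ through the Pfaffian and singular values of an associated skew-symmetric matrix (Sections~\ref{S:elliptic}--\ref{S:non-elliptic}).
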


Let us give some explanations for the conjecture. The notion of ``almost algebraic group" is defined in \cite{Duflo2}.
Recall that an element $f\in\mathfrak{g}^{\ast}$ is called strongly regular
if $f$ is regular (i.e., the coadjoint orbit containing $f$ is of maximal dimension) and its ``reductive factor"
$\mathfrak{s}(f):= \{X\in \mathfrak{g}(f):\ad(X)\text{ is semisimple}  \}$ is of maximal dimension among reductive factors
of all regular elements in $\mathfrak{g}^*$ ($f$ is regular implies that $\mathfrak{g}(f)$ is commutative). Let
$\Upsilon_{sr}$ denote the set of strongly regular elements in  $\mathfrak{h}^*$. A coadjoint orbit $\mathcal{O}$ is called
strongly regular if there exists an element $f\in \mathcal{O}$ (then every element in $\mathcal{O}$) which is strongly regular.
``Weakly proper" in (i) means that the preimage (for $q$) of each compact subset which is contained in $q(\mathcal{O}_{\pi})
\cap\Upsilon_{sr}$ is compact in $\mathcal{O}_{\pi}$. Note that it is known that the classic properness condition is not
sufficient to characterize the $H$-admissibility when $H$ is not reductive (see \cite{Liu2}, \cite{Liu3}).
If $G$ is compact, then Duflo's conjecture is a special case of the $\text{Spin}^c$ version of  \emph{quantization commutes
with reduction} principle (see \cite{Paradan2}). More generally, if $G$ and $H$ are both reductive, then the assertions (ii)
and (iii) of the conjecture are consequences of a recent work of Paradan \cite{Paradan3}. Note that in this case, Dulfo-Vargas
and Paradan \cite{Duflo-Vargas1}, \cite{Duflo-Vargas2}, \cite{Paradan3} proved that $\pi\vert_{H}$ is $H$-admissible if and
only if the moment map $q\colon \mathcal{O}_{\pi}\rightarrow\mathfrak{h}^{\ast}$ is \textit{proper}. In order to prove the
assertion (i) of the conjecture in this case, one needs to prove the equivalence between properness and weak properness of
the moment map.

There is a fact that if $\pi$ is a tempered $G$-representation, then each irreducible $H$-representation appearing in the
spectrum of $\pi\vert_{H}$ is tempered. Thus when $\pi$ is a tempered representation (with regular infinitesimal character)
of a reductive group $G$,  Conjecture~\ref{C:Duflo} still makes sense. In this paper, based on our explicit branching laws
and an explicit description of the moment map, we verify Conjecture~\ref{C:Duflo} for the restriction to a minimal parabolic
subgroup of all tempered representations of $\Spin(N,1)$. In our setting the restriction is admissible for any irreducible
unitary representation $\pi$ while the moment map $q\colon \mathcal{O}\to \mathfrak{h}^{\ast}$ is weakly proper for any
$\mathcal{O}$. The restriction $\pi\vert_{H}$ is always multiplicity-free and the reduced space $q^{-1}(\mathcal{O}'_{\sigma})/H$
is always a singleton. In addition, we extend the conjecture to Vogan-Zuckerman's derived functor modules
$A_{\mathfrak{q}}(\lambda)$ and verify it. The representations $A_{\mathfrak{q}}(\lambda)$ are possibly non-tempered and are considered
to be associated with possibly singular elliptic orbits. For the conjecture in this case we need a certain modification
in the correspondence of orbits and representations for the parabolic subgroup (see \S\ref{SS:nontempered} for details).

For the proof of our branching laws, a key idea is
 to consider the non-compact picture ($N$-picture)
 of principal series representations of $G$
 and to take the classical Fourier transform.
Such an idea appeared in Kobayashi-Mano~\cite{Kobayashi-Mano}
 for the construction of
 an $L^2$-model (called the Schr\"{o}dinger model) of a minimal representation of $\OO(p,q)$.
This was extended to other groups and representations in
 \cite{HKMO}, \cite{Moellers}, \cite{Moellers-Oshima}.
They embed an irreducible representation into a degenerate principal series
 and then take the Fourier transform of the non-compact picture.
We will apply this method in our case and obtain $L^2$-models for all unitary principal series (see Appendix~\ref{S:principalSeries}) and for all irreducible unitary representations of $G$
with infinitesimal character $\rho$ and some complementary series (see Appendix \ref{S:trivial}).
The treatment for the latter representations is more involved. More precisely, we realize any of such a
representation as the image of a normalized Knapp-Stein intertwining operator between two non-unitary principal
series. Then, applying the Fourier transform to the non-compact picture of the non-unitary principal series, we
obtain the $L^2$-model. In this process, we find an explicit formula for the Fourier transformed counter-part of
the normalized Knapp-Stein intertwining operator and analyze the growth property at infinity and the singularity
at zero of the Fourier transformed functions (or distributions) carefully. Since the $P$-action on the $L^2$-model
is of a simple form, we have explicit branching laws for these representations. In order to obtain branching laws
for all irreducible unitary representations of $G$, we employ du Cloux's results \cite{duCloux} on moderate growth
smooth Fr\'echet representations of semi-algebraic groups and Zuckerman translation principle (\cite{Zuckerman},
\cite{Knapp}).

On the geometry side, let $\mathcal{O}_{f}=G/G^{f}$ be a coadjoint $G$-orbit. By parametrizing the double coset
space $P\backslash G/G^{f}$ we find explicit representatives of $P$-orbits in $\mathcal{O}_{f}$. By calculating
the Pfaffian and the characteristic polynomial of related skew-symmetric matrix, we are able to identify the
$P$-class of the moment map image of each representative. With this, we calculate the image and show geometric
properties of the moment map. The moment map is always weakly proper, but it is not proper unless
$\mathcal{O}_{f}$ is a zero orbit. This supports Duflo's belief that weak properness is the correct counterpart
of $H$-admissibility. We prove moreover that the reduced space for each regular $P$-coadjoint orbit in the image
of the moment map is a singleton. By comparing the branching law of discrete series (or unitary principal series)
and the behavior of the moment map from the corresponding coadjoint orbit, we verify Conjecture~\ref{C:Duflo}
when $G=\Spin(N,1)$ and $H$ is a parabolic subgroup.

One might compare our branching laws with
 Kirillov's conjecture which says that the restriction to
 a mirabolic subgroup of any irreducible unitary representation of
 $\GL_{n}(k)$ (for $k$ an archimedean or non-archimedean local field) is irreducible.
Kirillov's conjecture was proved by Bernstein~\cite{Bernstein} for $p$-adic groups.
It awaited nearly ten years for a breakthrough by Sahi~\cite{Sahi} who proved it for tempered representations of
$\GL_{n}(k)$ for $k$ an archimedean local field. It was finally proved by Baruch~\cite{Baruch} over archimedean local
fields in general through a qualitative approach by studying invariant distributions. The restriction to a mirabolic
subgroup of a general irreducible unitary representation of $\GL_{n}(\mathbb{R})$ or $\GL_{n}(\mathbb{C})$ is determined
by combining ~\cite{Sahi} (which sets up a strategy to attack this problem and treats tempered representations),
~\cite{Sahi2} (which treats Stein complementary series), ~\cite{Sahi-Stein} (which treats Speh representations) and
~\cite{AGS} (which treats Speh complementary series). In the literature, there is another related/similar work by
Rossi-Vergne~\cite{Rossi-Vergne} concerning the restriction to a minimal parabolic subgroup of holomorphic
(or anti-holomorphic) discrete series of a Hermitian simple Lie group. As for the restriction of irreducible unitary
representations of $\Spin(N,1)$ ($N\geq 2$) to a minimal parabolic subgroup, we note that the branching law is known
in the literature only when $N=2$ or $3$ by Martin~\cite{Martin}, and when $N=4$ by Fabec~\cite{Fabec}. Note that
Fourier transform is used in all of these works. On the geometry side, we describe explicitly the moment map image
for any coadjoint orbit of $G=\Spin(N,1)$. For the mirabolic subgroup of $\GL_{n}(\mathbb{R})$ (or $\GL_{n}(\mathbb{C})$),
similar calculation was done in \cite{Liu-Yu}. Kobayashi-Nasrin~\cite{Kobayashi-Nasrin} studied the moment map image
which corresponds to the restriction of holomorphic discrete series of scalar type with respect to holomorphic symmetric
pairs studied in \cite{Kobayashi4}.

Representations of four groups $G=\Spin(m+1,1)$, $G_{1}=\SO_{e}(m+1,1)$, $G_{2}=\OO(m+1,1)$, $G_{3}=\SO(m+1,1)$,
(resp.\ their parabolic subgroups, maximal compact subgroups, etc.) are studied (resp.\ arise) in this paper. We
remark on the advantages of studying representations of each of these groups $G$, $G_{1}$, $G_{2}$: the advantage
of studying representations of $G_{2}$ is using the matrix $s=\diag\{I_{m},-1,1\}$ to define and study intertwining
operators; the advantage of studying representations of $G$ is applying the Zuckerman translation principle; then,
representations of $G_{1}$ serve as a bridge connecting representations of $G_{2}$ and representations of $G$.
The group $G_{3}$ and its representations are used only in Appendix~\ref{S:GGP}.

Readers might be curious if methods and results of this paper can be generalized in more general setting. On the
representation theory side, we remark that the decomposition of unitary principal series as done in Appendix~\ref{S:principalSeries} can
be generalized well. The way of treating representations with trivial infinitesimal character and some complementary
series can be generalized partially. However, the comparing of $L^2$ spectrum and smooth quotient as done in
$\S$\ref{SS:CW-Cloux} and $\S$\ref{SS:ResPS} can be hardly generalized. On the geometry side, the method of moment
map calculation in this paper for a coadjoint orbit $\mathcal{O}_{g}=G\cdot f$ ($f\in\mathfrak{g}^{\ast}$) is
applicable whenever the double coset space $G^{f}\backslash G/P$ has a simple description, which is the case if
$\dim P$ is as large as possible and $f$ is as singular as possible.

The paper is organized as follows. In Section~\ref{S:repP} we introduce notation used throughout the paper. We give a
classification of irreducible unitary representations of $P$ and coadjoint $P$-orbits. In Section~\ref{S:resP} we obtain
branching laws of irreducible unitary representations of $G$ when restricted to $P$. We require an explicit calculation
of the Fourier transform of a vector in the lowest $K$-type of discrete series, which will be done in Appendix~\ref{S:trivial}.
Sections~\ref{S:elliptic} and \ref{S:non-elliptic} are devoted to the description of the moment map $q\colon\mathcal{O}
\to\mathfrak{p}^{\ast}$. In Section~\ref{S:Duflo} we verify Conjecture~\ref{C:Duflo} in our setting. In Appendices~\ref{S:principalSeries} and
\ref{S:trivial} we see the Fourier transformed picture more explicitly for particular representations: unitary principal series, those
with infinitesimal character $\rho$ and some complementary series. This yields the $L^2$-models and the decomposition
into irreducible $P$-representations of these representations. In Appendix~\ref{S:MN.K}, we show that $\bar{\pi}|_{MN}$ is
determined by the $K$ type of $\bar{\pi}$ for any irreducible unitarizable representation $\pi$ of $G$. In Appendix~\ref{S:GGP},
we explain that branching laws shown in this paper are related to a case of Bessel model of the local
Gan-Gross-Prasad conjecture (\cite{Gan-Gross-Prasad},\cite{Gan-Gross-Prasad2}).

\smallskip

\noindent\textbf{Acknowledgements.} We would like to thank Professors Michel Duflo and Pierre Torasso for suggesting this
problem. The authors thank Professor David Vogan for helpful suggestions and for providing the reference \cite{duCloux}.
Yoshiki Oshima would like to thank Professor Bent {\O}rsted for helpful comments about Appendix~\ref{S:trivial}. Jun Yu
would like to thank Professor David Vogan for encouragement and to thank Chengbo Zhu, Wen-wei Li, Lei Zhang for helpful
communications. Yoshiki Oshima was partially supported by JSPS Kakenhi Grant Number JP16K17562. Jun Yu was partially
supported by the NSFC Grant 11971036.

\section{Preliminaries}\label{S:repP}

\subsection{Notation and conventions}\label{SS:notation}

%In the following we use $A$ to denote a split torus in $G_{1}$, and also denote a square matrix of degree $2n$, $2n\!-\!1$
%or $2n\!-\!2$. The readers could recognize its meaning in context.

{\it Indefinite orthogonal and spin groups of real rank one.} Fix a positive integer $m$. Let $I_{m+1,1}$ be
the $(m+2)\times (m+2)$-matrix given as \[I_{m+1,1}=\begin{pmatrix}I_{m+1}&\\&-1\end{pmatrix}.\]
Put \begin{align*}
%\label{Eq:G2}
&G_{2}=\OO(m+1,1)=\{X\in M_{m+2}(\mathbb{R}):XI_{m+1,1}X^{t}=I_{m+1,1}\},\\
&G_{3}=\SO(m+1,1)=\{X\in\OO(m+1,1):\det X=1\},\\
&G_{1}=\SO_{e}(m+1,1),\\& G=\Spin(m+1,1),
\end{align*}
where $\SO_{e}(m+1,1)$ is the identity component of $\OO(m+1,1)$ (and of $\SO(m+1,1)$), and $\Spin(m+1,1)$ is
a universal covering group of $\SO_{e}(m+1,1)$. The Lie algebras of $G,G_{1},G_{2},G_{3}$ are all equal to
\begin{equation*}%\label{Eq:g}
\mathfrak{g}
=\mathfrak{so}(m+1,1)=\{X\in\mathfrak{gl}(m+2,\mathbb{R}): XI_{m+1,1}+I_{m+1,1}X^{t}=0\}.
\end{equation*}

%A general element $X\in\mathfrak{so}(m+1,1)$ is of the form $X=\begin{pmatrix}Y&\alpha^{t}\\\alpha&0\end{pmatrix}$,
%where $Y$ is a $m+1\times m+1$ matrix with $Y+Y^{t}=0$, and $\alpha$ is a $m+1$-row vector.

\smallskip

{\it Cartan decomposition.} Write
\begin{align*}%\label{Eq:K}
&K=\Spin(m+1),\\
&K_{1}=\{\diag\{Y,1\}:Y\in\SO(m+1)\},\\
&K_{2}=\{\diag\{Y,t\}:Y\in\OO(m+1),t\in\{\pm{1}\}\},\\
&K_{3}=\{\diag\{Y,t\}:Y\in\OO(m+1),t=\det Y\}.
\end{align*}
Then, $K,K_{1},K_{2},K_{3}$ are maximal compact subgroups of $G,G_{1},G_2,G_{3}$ respectively.
Their Lie algebras are equal to \[\mathfrak{k}=\{\diag\{Y,0\}: Y\in\mathfrak{so}(m+1)\}.\]
Write \[\mathfrak{s}=
\Bigl\{\begin{pmatrix}0_{(m+1)\times (m+1)}&\alpha^{t}\\ \alpha&0\end{pmatrix}
:\alpha\in M_{1\times (m+1)}(\mathbb{R})\Bigr\}.\]
Then, $\mathfrak{g}=\mathfrak{k}\oplus\mathfrak{s}$,
 which is a Cartan decomposition for $\mathfrak{g}$.
The corresponding Cartan involution $\theta$ of $G_{1}$ (or $G_{2}$, $G_{3}$) is given by
$\theta=\Ad(I_{m+1,1})$.

\medskip

{\it Restricted roots and Iwasawa decomposition.}
Put \begin{equation*}%\label{Eq:H0}
H_0=\begin{pmatrix}0_{m\times m}&0_{m\times 1}&0_{m\times 1}\\0_{1\times m}&0&1\\0_{1\times m}&1&0\\
\end{pmatrix}\textrm{ and }\mathfrak{a}=\mathbb{R}\cdot H_{0},\end{equation*}
which is a maximal abelian subspace in $\mathfrak{s}$.
Define $\lambda_{0}\in\mathfrak{a}^*$ by
%\begin{equation*}%\label{Eq:lambda0}
$\lambda_{0}(H_{0})=1$.
%\end{equation*}
Then, the restricted root system $\Delta(\mathfrak{g}_{\mathbb{C}},\mathfrak{a})$ consists of two roots
$\{\pm{\lambda_0}\}$. Let $\lambda_0$ be a positive restricted root.
Then the associated positive nilpotent part is
\begin{equation*}%\label{Eq:n}
\mathfrak{n}
=\Biggl\{ \begin{pmatrix}0_{m\times m}&-\alpha^{t}&\alpha^{t}\\
\alpha&0&0\\\alpha&0&0\\ \end{pmatrix}:
\alpha\in M_{1\times m}(\mathbb{R})\Biggr\}.
\end{equation*}
Let $\rho'$ be half the sum of positive roots in $\Delta(\mathfrak{n},\mathfrak{a})$.
% \[\rho'(X)= \frac{1}{2}\tr(\ad(X)|_{\mathfrak{n}})\ (\forall X\in \mathfrak{a}).\]
Then
%\begin{equation*}%\label{Eq:rho'}
\[\rho'=\frac{m}{2}\lambda_{0}\textrm{ and } \rho'(H_0)=\frac{m}{2}.\]
%\end{equation*}
One has the {\it Iwasawa decomposition}
$\mathfrak{g}=\mathfrak{k}\oplus\mathfrak{a}\oplus\mathfrak{n}$.
% and the {\it opposite Iwasawa decomposition}
%$\mathfrak{g}=\mathfrak{k}\oplus\mathfrak{a}\oplus\bar{\mathfrak{n}}$.

\medskip

{\it Standard parabolic and opposite parabolic subalgebras.}
Let \begin{equation*}%\label{Eq:m}
\mathfrak{m}=
Z_{\mathfrak{k}}(\mathfrak{a})=\{\diag\{Y,0_{2\times 2}\}:Y\in\mathfrak{so}(m,\mathbb{R})\}.
\end{equation*}
Write
%\begin{equation*}%\label{Eq:p}
$\mathfrak{p}=\mathfrak{m}+\mathfrak{a}+\mathfrak{n}$,
%\end{equation*}
 which is a parabolic subalgebra of $\mathfrak{g}$.
We have the opposite nilradical
\begin{equation*}\label{Eq:nbar}
\bar{\mathfrak{n}}=
\Biggl\{\begin{pmatrix}0
_{m\times m}&\alpha^{t}&\alpha^{t}\\-\alpha&0&0\\
\alpha&0&0\\
\end{pmatrix}:\alpha\in M_{1\times m}(\mathbb{R})\Biggr\}
\end{equation*}
 and the opposite parabolic subalgebra
%\begin{equation*}%\label{Eq:pbar}
$\bar{\mathfrak{p}}=\mathfrak{m}+\mathfrak{a}+\bar{\mathfrak{n}}$.
%\end{equation*}

\medskip

{\it Subgroups.} Let $A$ (or $A_{1}$, $A_{2}$, $A_{3}$), $N$ (or $N_{1}$, $N_{2}$, $N_{3}$), $\bar{N}$
(or $\bar{N}_{1}$, $\bar{N}_{2}$, $\bar{N}_{3}$) be analytic subgroups of $G$ (or $G_{1}$, $G_{2}$, $G_{3}$)
with Lie algebras $\mathfrak{a},\mathfrak{n},\bar{\mathfrak{n}}$ respectively.
In particular,
\begin{equation*}%\label{Eq:A}
A_{1}=\Biggl\{\begin{pmatrix}I_{m}&&\\
&r&s\\ &s&r\\
\end{pmatrix}:r,s\in\mathbb{R},\ r^{2}-s^{2}=1,\ r>0\Biggr\}.
\end{equation*}
By the two-fold covering
$G\twoheadrightarrow G_{1}$ and the inclusions $G_{1}\subset G_{2}$ and $G_{1}\subset G_{3}$ we identify
$A$, $A_{2}$, $A_{3}$ with $A_{1}$, identify $N$, $N_{2}$, $N_{3}$ with $N_{1}$, and identify $\bar{N}$,
$\bar{N}_{2}$, $\bar{N}_{3}$ with $\bar{N}_{1}$.

Put \[M=Z_{K}(\mathfrak{a}),\quad M_{1}=Z_{K_1}(\mathfrak{a}),\quad M_{2}=Z_{K_{2}}(\mathfrak{a}),
\quad M_{3}=Z_{K_{3}}(\mathfrak{a}).\] Set \[P=MAN,\quad P_{1}=M_{1}AN,\quad P_{2}=M_{2}AN,
\quad P_{3}=M_{3}AN\] and \[\bar{P}=MA\bar{N}, \quad \bar{P}_{1}=M_{1}A\bar{N},
\quad \bar{P}_{2}=M_{2}A\bar{N},\quad \bar{P}_{3}=M_{3}A\bar{N}.\]
Then, the Lie algebras of $M$ (or $M_{1}$, $M_2$, $M_{3}$), $P$ (or $P_{1}$, $P_2$, $P_{3}$), $\bar{P}$
(or $\bar{P}_{1}$, $\bar{P}_2$, $\bar{P}_{3}$) are equal to $\mathfrak{m}$, $\mathfrak{p}$, $\bar{\mathfrak{p}}$
respectively. Note that
\begin{equation*}%\label{Eq:M}
M=\Spin(m),\  M_{1}=\SO(m),\  M_{2}=\OO(m)\times\Delta_{2}(\OO(1)),\  M_{3}=\SO(m)\times
\Delta_{2}(\OO(1)),\end{equation*} where $\Delta_{2}(\OO(1))=\{\diag\{t,t\}:t=\pm{1}\}\subset\OO(2)$.

For $\Spin(2,1)\cong\SL_{2}(\mathbb{R})$,
 the Langlands classification of irreducible
 $(\mathfrak{g},K)$-modules requires different parametrization due to $M$ is disconnected.
For convenience in this paper we treat only $\Spin(m+1,1)$ ($m>1$),
though the case of $m=1$ is even easier (which was treated in \cite{Martin}).

\medskip

{\it Nilpotent elements.} For a row vector
 $\alpha\in \bbR^{m}$ write
\begin{equation*}%\label{Eq:Xalpha}
X_{\alpha}=\begin{pmatrix}
0_{m\times m}&-\alpha^{t}&\alpha^{t}\\ \alpha&0&0\\ \alpha&0&0\\
\end{pmatrix},\quad
\bar{X}_{\alpha}=
\begin{pmatrix}
0_{m\times m}&\alpha^{t}&\alpha^{t}\\ -\alpha&0&0\\ \alpha&0&0\\
\end{pmatrix}.
\end{equation*}
The following Lie brackets will be used later
\begin{align}\label{Eq:brackets}
&[X_{\alpha},\bar{X}_{\beta}]=
\diag\{2(\alpha^t\beta-\beta^t\alpha),0_{2\times 2}\}+2\alpha\beta^t H_0, \\ \nonumber
&[\diag\{Y,0_{2\times 2}\},\,X_{\alpha}]=X_{\alpha Y^t},\\ \nonumber
&[\diag\{Y,0_{2\times 2}\},\,\bar{X}_{\beta}]=\bar{X}_{\beta Y^t},\\ \nonumber
&[H_0,X_{\alpha}]=X_{\alpha},\\ \nonumber
&[H_0,\bar{X}_{\beta}]=-\bar{X}_{\beta},
\end{align}
where $\alpha,\beta\in \bbR^{m}$
 and $Y\in\mathfrak{so}(m,\mathbb{R})$.

Put
$n_{\alpha}=\exp(X_{\alpha})$ and $\bar{n}_{\alpha}=\exp(\bar{X}_{\alpha})$.
Then by $n_{\alpha}=I+X_{\alpha}+\frac{1}{2}X_{\alpha}^2$ and
$\bar{n}_{\alpha}=I+\bar{X}_{\alpha}+\frac{1}{2}\bar{X}_{\alpha}^2$ we have
\begin{equation*}%\label{Eq:expX}
n_{\alpha}=
\begin{pmatrix}I_{m}&-\alpha^{t}&\alpha^{t}\\
 \alpha&1-\frac{1}{2}|\alpha|^2&\frac{1}{2}|\alpha|^2\\
 \alpha&-\frac{1}{2}|\alpha|^{2}&1+\frac{1}{2}|\alpha|^2\\
\end{pmatrix}
\end{equation*} and
\begin{equation*}%\label{Eq:expX2}
\bar{n}_{\alpha}=
\begin{pmatrix}I_{m}&\alpha^{t}&\alpha^{t}\\
 -\alpha&1-\frac{1}{2}|\alpha|^2&-\frac{1}{2}|\alpha|^2\\
 \alpha&\frac{1}{2}|\alpha|^2&1+\frac{1}{2}|\alpha|^2\\
\end{pmatrix}.
\end{equation*}
Via the maps $\alpha\mapsto X_{\alpha}$ and $\alpha\mapsto n_{\alpha}$,
 one identifies $\mathfrak{n}$ and $N$ with the Euclidean space $\mathbb{R}^{m}$.
We have $\theta(n_{\alpha})=\bar{n}_{-\alpha}$.

\medskip

{\it Invariant bilinear form.}
For $X,Y\in\mathfrak{g}$, define
\begin{equation}\label{Eq:form}
(X,Y)=\frac{1}{2}\tr(XY).
\end{equation}
Then $(\cdot,\cdot)$ is a nondegenerate symmetric bilinear form on $\mathfrak{g}$,
 which is invariant under the adjoint action of $G$, $G_{1}$ and $G_{2}$.
Define $\iota\colon \mathfrak{g}\to \mathfrak{g}^{\ast}$ by
\begin{equation}\label{Eq:identification1}
\iota(X)(Y)=(X,Y)\ (\forall Y\in\mathfrak{g}).
\end{equation}
Then, $\iota$ is an isomorphism of $G_{2}$ (or $G$, $G_{1}$) modules.
%With $\iota$, we identify $\mathfrak{g}$ with its dual space $\mathfrak{g}^{\ast}$.
Define $\pr\colon \mathfrak{g}\to \mathfrak{p}^{\ast}$ by
\begin{equation}\label{Eq:identificaiton2}
\pr(X)(Y)=(X,Y)\
(\forall Y\in\mathfrak{p}).
\end{equation}
Then, the kernel of $\pr$ is $\mathfrak{n}$, and $\pr$ gives a $P$ (or
$P_{1}$, $P_{2}$) module isomorphism \[\mathfrak{g}/\mathfrak{n}\cong\mathfrak{p}^{\ast}.\]
Since $\bar{\mathfrak{p}}$ is a complement of $\mathfrak{n}$ in $\mathfrak{g}$,
 we can identify $\bar{\mathfrak{p}}$ with
$\mathfrak{p}^{\ast}$ by
%\begin{equation*}%\label{Eq:identificaiton3}
$X\mapsto\pr(X)\ (X\in\bar{\mathfrak{p}})$.
%\end{equation*}

\medskip

{\it Roots and weights.}
Let $n':=\lfloor \frac{m+1}{2}\rfloor$.
For $\vec{a}=(a_1,\dots,a_{n'})\in \bbR^{n'}$,
 let
\begin{align}\label{Eq:ta}
t_{\vec{a}}
:=\begin{pmatrix}
0&a_1&& &&\\
-a_{1}&0&&&&\\
&&\ddots&&&\\
&&&0&a_{n'}&\\
&&&-a_{n'}&0&\\
&&&&&0_{(m+2-2n')\times (m+2-2n')}\\
\end{pmatrix}.
\end{align}
Then
\[\mathfrak{t}=\{t_{\vec{a}} : a_1,\dots,a_{n'}\in \bbR\}\]
 is a maximal abelian subalgebra of $\mathfrak{k}=\Lie K$.
Write $T$ for the corresponding maximal torus in $K$.
Define $\epsilon'_i \in \mathfrak{t}_{\bbC}^*$ by
\[\epsilon'_i\colon t_{\vec{a}} \mapsto \mathbf{i}a_i.\]
The root system $\Delta(\mathfrak{k}_{\bbC},\mathfrak{t}_{\bbC})$
 is given by
\begin{align*}
&\{\pm{\epsilon'_{i}}\pm{\epsilon'_{j}}, \pm{\epsilon'_{k}}:1\leq i<j\leq n',\ 1\leq k\leq n'\}
\ \text{ if $m$ is even and},\\
&\{\pm{\epsilon'_{i}}\pm{\epsilon'_{j}}:1\leq i<j\leq n'\}
\ \text{ if $m$ is odd}.
\end{align*}
We denote the weight
 $c_1\epsilon'_1+\cdots +c_{n'}\epsilon'_{n'}\in \mathfrak{t}_{\bbC}^*$
 by $(c_1,\dots,c_{n'})$.
The similar notation will be used for
 elements in $(\mathfrak{t}\cap \mathfrak{m})_{\bbC}^*$
 and $(\mathfrak{t}\cap \mathfrak{m}')_{\bbC}^*$,
 where $\mathfrak{m}'$ denotes the Lie algebra of
 the group $M'$ defined in \eqref{Eq:M'}.

\begin{remark}\label{R:weight-orbit}
The bilinear form \eqref{Eq:form} on $\mathfrak{t}$ is given as
 $(t_{\vec{a}},t_{\vec{b}})=-\vec{a}\cdot \vec{b}^t$.
Hence by using the isomorphism
 $\iota\colon \mathfrak{g}_{\bbC}\to \mathfrak{g}_{\bbC}^*$
 defined as \eqref{Eq:identification1},
 we have for example
 $\epsilon'_1 = \mathbf{i}\cdot \iota(t_{(-1,0,\dots,0)})|_{\mathfrak{t}}$.
\end{remark}

Define $T_{s}:=(T\cap M)\times A$.
Then $T\cap M$ is  a maximal torus of $M$ and
 $T_{s}$ is a Cartan subgroup of $G$.
Let $n:=\lfloor \frac{m+2}{2}\rfloor$.
Note that $m=2n-2$ and $n=n'+1$ if $m$ is even;
 $m=2n-1$  and $n=n'$ if $m$ is odd.
Define $\epsilon_i\in (\mathfrak{t}_s)_{\bbC}^*$ by
\begin{align*}
&\epsilon_i=\epsilon'_i \text{ on $\mathfrak{t}\cap\mathfrak{m}$},
\quad \epsilon_i=0 \text{ on $\mathfrak{a}$}\ \text{ for $1\leq i<n$},\\
&\epsilon_n=0 \text{ on $\mathfrak{t}\cap\mathfrak{m}$},
\quad \epsilon_n=\lambda_0 \text{ on $\mathfrak{a}$}.
\end{align*}
The root system
 $\Delta=\Delta(\mathfrak{g}_{\bbC},(\mathfrak{t}_{s})_{\bbC})$
 is given by
\begin{align*}
&\{\pm{\epsilon_{i}}\pm \epsilon_{j}: 1\leq i<j\leq n\}
\ \text{ if $m$ is even and},\\
&\{\pm{\epsilon_{i}}\pm \epsilon_{j}, \pm{\epsilon_{k}}
 : 1\leq i<j\leq n,\ 1\leq k\leq n\}
\ \text{ if $m$ is odd},
\end{align*}
 where
 \[\{\pm{\epsilon_{i}}\pm{\epsilon_{j}},\pm{\epsilon_{k}}
 :1\leq i<j\leq n-1,\ 1\leq k \leq n-1\}\]
 are roots of $MA$.
Choose a positive system
\begin{align*}
&\Delta^+=\{\epsilon_{i}\pm \epsilon_{j}:1\leq i<j\leq n\}
\ \text{ if $m$ is even and},\\
&\Delta^+=\{\epsilon_{i}\pm \epsilon_{j}, \epsilon_{k}:1\leq i<j\leq n,\ 1\leq k\leq n\}
\ \text{ if $m$ is odd}.
\end{align*}
Then the corresponding simple roots are $\{\epsilon_{1}-\epsilon_{2},\dots,\epsilon_{n-1}-\epsilon_{n},
\epsilon_{n-1}+\epsilon_{n}\}$ for even $m$ and $\{\epsilon_{1}-\epsilon_{2},\dots,\epsilon_{n-1}-
\epsilon_{n},\epsilon_{n}\}$ for odd $m$. A weight of $T_{s}$ is of the form \[\gamma=c_1\epsilon_1+
\cdots+c_n\epsilon_n,\] which we denote by $(c_{1},\dots,c_{n})$. Put \[\mu=(c_{1},\dots,c_{n-1})
=c_1\epsilon_1+\cdots +c_{n-1}\epsilon_{n-1},\] which vanishes on $\mathfrak{a}$ and may be regarded
as a weight of $\mathfrak{t}\cap \mathfrak{m}$; put \[\nu=c_{n}\epsilon_{n},\] which vanishes on
$\mathfrak{t}\cap \mathfrak{m}$ and may be regarded as a weight of $\mathfrak{a}$. Then $\gamma=
(\mu,\nu)=\mu+\nu$.

%Choose a maximal torus $T_{c}$ of $K$. Then, it is also a maximal torus of $G_{1}$. Through Cayley transformation, one could
%identify weights of $T_{c}$ with weights of $T_{s}$. Write $(T_{1})_{s}$ (or $(T_{1})_{c}$) for the pre-image of $T_{s}$
%(or $T_{c}$) in $G$. Then, they are maximal tori of $G$.

%\textcolor{red}{Should be move to later section?}
%Write $\Lambda$ for the set of {\it integral weights} with respect to the root system $\Delta$, and write $\Lambda^{+}$
%for the set of {\it integral dominant weights}.
The vector $\rho:=\frac{1}{2}\sum_{\alpha\in\Delta^{+}}\alpha$ is given as
\[\Bigl(n-\frac{1}{2},\dots,\frac{3}{2},\frac{1}{2}\Bigr)
\text{ for $m$ odd}; \
 (n-1,\dots,1,0)
\text{ for $m$ even}.\]

\medskip

{\it Reflections.} For a row vector
 $0\neq x\in\mathbb{R}^{m}$, write
\begin{equation}\label{Eq:rx}
r_{x}=I_{m}-
\frac{2}{|x|^2}x^{t}x\in\OO(m),
\end{equation}
which is a reflection.
The action of $r_{x}$ on $\mathbb{R}^{m}$ is given by
\[r_{x}(y)=y-\frac{2yx^t}{|x|^2}x\quad (\forall y\in\mathbb{R}^{m}).\]
Let $r_{x}$ also denote the element \[\diag\{r_{x},I_{2}\}\in G_{2}=\OO(m+1,1).\]

Write
\begin{equation}\label{Eq:s}s=\diag\{I_{m},-1,1\}\in\OO(m+1,1).
\end{equation}
For $x\in\mathbb{R}^{m}$, write
\begin{align}\label{Eq:sx}
s_{x}=
\begin{pmatrix}I_{m}-\frac{2x^{t}x}{1+|x|^2}&
-\frac{2x^{t}}{1+|x|^2}&0\\
\frac{2x}{1+|x|^2}&\frac{1-|x|^2}{1+|x|^2}&0\\
0&0&1\\
\end{pmatrix}\in\OO(m+1,1).
\end{align}

\if 0
Let $y=(x,1)$.
Write $r'_{x}$ for both
\[I_{m+1}-\frac{2}{|y|^{2}}y^{t}y\in \OO(m+1)\] and
\[\diag\Bigl\{I_{m+1}-\frac{2}{|y|^2}y^{t}y,\, 1\Bigr\}\in\OO(m+1,1).\]
Then,
%\begin{equation}\label{Eq:sx2}
$s_{x}=sr'_{x}$.
%\end{equation}
\fi

\medskip

{\it Unitarily induced representations of $P$, $P_{1}$, $P_{2}$ and $P_{3}$.}
Through the map
\[Y\mapsto\begin{pmatrix}Y&\\&I_{2}\\\end{pmatrix},\]
one identifies $M_{1}$ with $\SO(m)$.
%\textcolor{red}{Remove this identification?}
%Through the map
%\begin{align*}%\label{Eq:R-A}
%a\mapsto e^{-(\log a)H_0}=
%\begin{pmatrix}I_{m}&&\\
%&\frac{a^{2}+1}{2a}&-\frac{a^{2}-1}{2a}\\
%&-\frac{a^{2}-1}{2a}&\frac{a^{2}+1}{2a}\\
%\end{pmatrix},
%\end{align*}
%one identifies $A$ with $\mathbb{R}_{>0}=\{a\in\mathbb{R}: a>0\}$.
With this identifications, the adjoint action of $M_{1}A$ on $\mathfrak{n}$ is given by
\[\Ad(Y,a)X_\alpha=e^{\lambda_0(\log a)} X_{\alpha Y^t}\ (\forall \alpha\in \bbR^{m},
\ \forall Y\in\SO(m),\ \forall a\in A).\] Moreover, identify $\mathbb{R}^{m}$ with
$\mathfrak{n}^{\ast}$ by $\xi(X_{\alpha})=\xi\alpha^t$. Then, the coadjoint action of $M_{1}A$
on $\mathfrak{n}^{\ast}$ is given by \begin{equation}\label{Eq:MAaction}
\Ad^{\ast}(Y,a)\xi = e^{-\lambda_0(\log a)} \xi Y^t\
(\forall \xi\in \bbR^{m},\ \forall Y\in\SO(m),\ \forall a\in A).
\end{equation}

Let \[\xi_0=(0,\dots,0,1)\in\mathfrak{n}^{\ast}.\] Put \[M'_{1}=\Stab_{M_{1}A}\xi_0.\]
Then,\begin{equation*}%\label{Eq:M'2}
M'_{1}=\Bigl\{\begin{pmatrix}Y&\\ &I_{3}\\ \end{pmatrix}:Y\in\SO(m-1)\Bigr\}.\end{equation*}
For a (not necessarily irreducible) unitary representation $(\tau,V_{\tau})$ of $M'_{1}$, let
\begin{equation*}%\label{Eq:IPtau}
I_{P_{1},\tau}=\Ind_{M'_{1}\ltimes N}^{M_{1}AN}(\tau\otimes e^{\mathbf{i}\xi_{0}})\end{equation*}
be a unitarily induced representation. It consists of functions
$h\colon M_{1}AN \rightarrow V_{\tau}$ with
\[h(pm'n)=(\tau\otimes e^{\mathbf{i}\xi_{0}})(m',n)^{-1}h(p)\]
 for all $(p,m',n)\in P_{1}\times M'_{1}\times N$
 and $\langle h,h\rangle<\infty$, where
\begin{equation*}%\label{Eq:Ip-inner}
\langle h_1,h_2\rangle:=\int_{M_{1}A/M_{1}'}
\langle  h_{1}(ma),h_{2}(ma)\rangle_{\tau}\d {}_{l}ma
\end{equation*}
for $h_1,h_2\in\Ind_{M'_{1}\ltimes N}^{M_{1}AN}(\tau\otimes e^{\mathbf{i}\xi_{0}})$.
Here $\d {}_{l}m_0a$ is a left $M_{1}A$ invariant measure on $(M_{1}A)/M'_{1}$,
 and $\langle\cdot,\cdot \rangle_{\tau}$
 denotes an $M'_{1}$-invariant inner product on $V_{\tau}$.
The action of $P_{1}$ on $I_{P_{1},\tau}$ is given by
$(p\cdot h)(x)=h(p^{-1}x)$ for $h\in I_{P_{1},\tau}$ and $p,x\in P_1$.

Similarly, put \begin{equation}\label{Eq:M'}M'=\Stab_{MA}\xi_0,\quad M'_{2}=\Stab_{M_{2}A_{2}}\xi_0,
\quad M'_{3}=\Stab_{M_{3}A_{3}}\xi_0,\end{equation} and define a unitarily induced representation
$I_{P,\tau}$ (or $I_{P_{2},\tau}$, $I_{P_{3},\tau}$) from a unitary representation $\tau$ of $M'$
(or $M'_{2}$, $M'_{3}$). One has \begin{equation*}%\label{Eq:M'4}
M'\cong\Spin(m-1),\quad M'_{2}\cong\OO(m-1)\times\OO(1),\quad M'_{3}\cong\SO(m-1)\times\OO(1).
\end{equation*}

\smallskip

{\it Induced representations of $G$.} For a finite-dimensional irreducible complex
linear representation $(\sigma,V_{\sigma})$ of $M$ and a character $e^\nu$ of $A$,
 form the smoothly induced representation
\[I(\sigma,\nu)
 =\Ind_{MA\bar{N}}^{G}(\sigma\otimes e^{\nu-\rho'}\otimes\mathbf{1}_{\bar{N}})\]
 which consists of smooth functions
 $h\colon G\rightarrow V_{\sigma}$ with
\[h(gma\bar{n})=\sigma(m)^{-1}e^{(-\nu+\rho')\log a}h(g)\]
for any $(g,m,a,\bar{n})\in G\times M\times A\times\bar{N}$.
The action of $G$ on $I(\sigma,\nu)$ is given by
$(g\cdot h)(x)=h(g^{-1}x)$ for $h\in I(\sigma,\nu)$ and $g,x\in G$.
Write $I(\sigma,\nu)_K$ for the space of functions
 $h\in I(\sigma,\nu)$ such that $h|_{K}$ is a $K$-finite function.
Then, $I(\sigma,\nu)_K$ is the space of $K$-finite vectors
 in $I(\sigma,\nu)$, and it is a $(\mathfrak{g},K)$-module.

When $\nu$ is a unitary character,
 let $\bar{I}(\sigma,\nu)$ denote the space of all functions
 $h\colon G\rightarrow V_{\sigma}$ with
\[h(gma\bar{n})=\sigma(m)^{-1}e^{(-\nu+\rho')\log a}h(g)\]
 for any $(g,m,a,\bar{n})\in G\times M\times A\times\bar{N}$,
 and $h_{N}=h|_{N}\in L^{2}(N,V_{\sigma},\d n)$.
This is called a unitary principal series representation of $G$.
The invariant inner product on $\bar{I}(\sigma,\nu)$ is defined by:
\begin{equation*}%\label{Eq:principal-inner}
\langle f_1,f_2\rangle=\int_{N}(f_{1}(n),
f_{2}(n))\d n
\end{equation*}
 for $f_{1},f_{2}\in L^{2}(N,V_{\sigma},\d n)$.

When $\sigma$ factors through $M_{1}$, the smoothly induced representation $I(\sigma,\nu)$ factors through $G_{1}$,
and the $(\mathfrak{g},K)$-module $I(\sigma,\nu)_K$ factors through a $(\mathfrak{g},K_{1})$-module. If $\nu$ is a unitary
character, then the unitarily induced representation $\bar{I}(\sigma,\nu)$ factors through $G_{1}$.

In the above, let $\mu$ denote the highest weight of $\sigma$.
Then for simplicity we denote
$I(\sigma,\nu), I(\sigma,\nu)_K, \bar{I}(\sigma,\nu)$
 by $I(\mu,\nu),I(\mu,\nu)_K, \bar{I}(\mu,\nu)$, respectively.

For a finite-dimensional irreducible complex linear representation $(\sigma,V_{\sigma})$ of $M_{2}$ and a character
$e^\nu$ of $A$, we define the smoothly induced representation \[I(\sigma,\nu)=\Ind_{M_{2}A\bar{N}}^{G_{2}}(\sigma\otimes e^{\nu-\rho'}\otimes\mathbf{1}_{\bar{N}})\] similarly.

\medskip

{\it $(\mathfrak{g},K)$-modules and $G$-representations.}
For a $(\mathfrak{g},K)$-module $V$,
 we denote by $V^{\sm}$ a Casselman-Wallach globalization of $V$
 (\cite{Casselman} and \cite{Wallach}).
If $V$ is unitarizable, we denote by $\bar{V}$
 for a Hilbert space completion of $V$.
For $(\mathfrak{g},K_{1})$-modules (or $(\mathfrak{g},K_{2})$-modules)
 and $G_{1}$-representations (or $G_{2}$-representations),
 we take similar conventions.

\medskip

{\it Irreducible finite-dimensional representations.}
Write $F_{\lambda}$ (resp.\ $V_{K,\lambda}$, $V_{M,\mu}$, $V_{M',\mu}$)
 for an irreducible finite-dimensional
 representation of $G$ (resp.\ $K$, $M$, $M'$) with highest weight $\lambda$,
 (resp.\ $\lambda$, $\mu$, $\mu$).

\medskip

{\it $L^{p}$ space.} We take Fourier transform on $N$ (or $\mathfrak{n}$). For a finite-dimensional Hilbert space $V$,
for brevity let $L^{p}$ denote the space of $V$-valued functions $h$ on $N$ (or $\mathfrak{n}$) such that $|h(x)|$ is
an $L^{p}$ integrable function.

\subsection{Iwasawa decomposition and Bruhat decomposition}\label{SS:group-algebra}

By a direct calculation,
 one shows the following opposite Iwasawa decomposition for elements in $N$.

\begin{lemma}\label{L:Iwasawa}
The opposite Iwasawa decomposition of a general element in $N$ is given by
\begin{equation}\label{Eq:Iwasawa3}
n_{x}=
\begin{pmatrix}
I_{m}-\frac{2x^{t}x}{1+|x|^2}&-\frac{2x^{t}}{1+|x|^2}&0\\
\frac{2x}{1+|x|^2}&\frac{1-|x|^2}{1+|x|^2}&0\\0&0&1\\
\end{pmatrix}\exp(-\log(1+|x|^2)H_{0})\bar{n}_{\frac{x}{1+|x|^2}}.
\end{equation}
\end{lemma}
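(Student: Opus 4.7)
The plan is to verify \eqref{Eq:Iwasawa3} by direct matrix multiplication, since every factor on the right-hand side has been given explicitly and the lemma reduces to an identity in $\GL_{m+2}(\bbR)$.

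First I would confirm that the asserted decomposition has the expected shape $K\cdot A\cdot \bar{N}$ of an opposite Iwasawa factorization. The leftmost factor coincides with $s_x$ from \eqref{Eq:sx}; writing $v=(x,-1)\in\bbR^{m+1}$, a short check shows that its upper $(m+1)\times(m+1)$ block equals $(I_{m+1}-\tfrac{2}{1+|x|^2}v^tv)\cdot \diag\{I_m,-1\}$, a product of two orthogonal matrices of determinant $-1$, hence an element of $\SO(m+1)$. So $s_x\in K_1\subset K_2$, and it lifts to $K$ (by continuity from $s_0=I$). The middle factor is manifestly in $A$, and the rightmost factor is in $\bar{N}$ by definition.

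To carry out the multiplication itself, I would use $H_0^2=\diag\{0_m,1,1\}$ and $H_0^3=H_0$ to expand
\[
\exp(-\log(1+|x|^2)H_0)=\begin{pmatrix} I_m & 0 & 0 \\ 0 & \cosh t & \sinh t \\ 0 & \sinh t & \cosh t\end{pmatrix},\qquad t=-\log(1+|x|^2),
\]
so that $\cosh t=\tfrac{1+(1+|x|^2)^2}{2(1+|x|^2)}$ and $\sinh t=\tfrac{1-(1+|x|^2)^2}{2(1+|x|^2)}$. Then I would compute $\exp(-\log(1+|x|^2)H_0)\cdot \bar{n}_{x/(1+|x|^2)}$ using the explicit formula for $\bar{n}_\bullet$ recorded before the lemma (with $\alpha=x/(1+|x|^2)$, so $|\alpha|^2=|x|^2/(1+|x|^2)^2$), and finally multiply on the left by $s_x$. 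Entry-by-entry comparison with the explicit formula for $n_x$ yields the identity, after the elementary collection of rational functions in $|x|^2$. A useful sanity check is $x=0$, where every factor is the identity.

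The main obstacle is bookkeeping, not mathematical depth: the last two rows and columns involve $\sinh$ and $\cosh$ terms mixing with the $\tfrac{1}{2}|\alpha|^2$ entries of $\bar{n}_\alpha$, so signs must be tracked carefully and several numerators of the form $(1+|x|^2)^2\pm 1$ and $(1-|x|^2)(1+|x|^2)$ must be reduced patiently to match the entries of $n_x$. Conceptually, uniqueness of the $K_2A\bar{N}$ factorization on the open Bruhat cell $NA\bar{N}\subset G_2$ (which contains $n_x$) would allow an alternative derivation by applying both sides to a vector whose $\bar{N}$-stabilizer is known, but this yields no real computational savings over the direct multiplication.
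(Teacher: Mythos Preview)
Your proposal is correct and is exactly the approach the paper takes: the paper simply asserts that the identity follows ``by a direct calculation'' and gives no further details, so your explicit plan to multiply out the three factors and compare with the formula for $n_x$ (together with the check that $s_x\in K_1$) is a faithful fleshing-out of that one-line proof.
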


One can write (\ref{Eq:Iwasawa3}) as
\[n_{x}=s_{x}\exp(-\log(1+|x|^2)H_{0})\bar{n}_{\frac{x}{1+|x|^2}},\]
where $s_{x}$ is as in (\ref{Eq:sx}).

By the Bruhat decomposition, \[G_{2}=NM_{2}A\bar{N}\sqcup sM_{2}A\bar{N}.\] By this, $sn_{x}\in NM_{2}A\bar{N}$ for any
$0\neq x\in\mathbb{R}^{m}$. By direct calculation one shows the following decomposition.

\begin{lemma}\label{L:barn-iwasawa}
For $0\neq x\in\mathbb{R}^{m}$, we have
\begin{equation*}%\label{Eq:snx}
sn_{x}=n_{\frac{x}{|x|^{2}}}r_{x}e^{-(2\log|x|)H_{0}}\bar{n}_{\frac{x}{|x|^{2}}},
\end{equation*}
where $r_{x}$ is as in (\ref{Eq:rx}).
\end{lemma}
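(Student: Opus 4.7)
The plan is to prove the identity by direct matrix multiplication in $\GL(m+2,\bbR)$. Since $sn_x$ lies in the big Bruhat cell $NM_2A\bar N$ whenever $x\neq 0$ (by the Bruhat decomposition stated just above the lemma), its $N\cdot M_2A\cdot\bar N$ decomposition is automatically unique; hence it suffices to check that the right-hand side, viewed as an element of $\GL(m+2,\bbR)$, coincides with $sn_x$.

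First I would write $sn_x$ explicitly as an $(m+2)\times(m+2)$ matrix, which is obtained from the formula for $n_x$ given in \S\ref{SS:notation} by negating its $(m+1)$-st row. Next I would compute the product on the right-hand side factor by factor. Set $y := x/|x|^2$ (so $|y|^2 = 1/|x|^2$) and $t := -2\log|x|$. The middle two factors $r_x\,e^{tH_0}$ are block-diagonal, with $m\times m$ block $I_m - \tfrac{2}{|x|^2}x^tx$ and lower $2\times 2$ block having entries $\cosh t = \tfrac{1}{2}(|x|^2+|x|^{-2})$ and $\sinh t = \tfrac{1}{2}(|x|^{-2}-|x|^2)$. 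One then left-multiplies this by $n_y$ and right-multiplies by $\bar n_y$ using the explicit nilpotent formulas from \S\ref{SS:notation}.

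The main obstacle is pure bookkeeping: one must track the cancellations in the $m\times m$ block, where the rank-one correction from $r_x$ interacts with rank-one contributions coming from $n_y$ and $\bar n_y$, and verify that the hyperbolic functions in the lower $2\times 2$ block combine with the $\tfrac{1}{2}|x|^{-2}$ terms to reproduce precisely the entries $\tfrac{1}{2}|x|^2-1,\ -\tfrac{1}{2}|x|^2,\ -\tfrac{1}{2}|x|^2,\ 1+\tfrac{1}{2}|x|^2$ of $sn_x$. A useful shortcut to cut down on cross-term arithmetic is to test both sides on the three families of vectors that diagonalize the structure: the $H_0$-eigenvector $e_{m+1}-e_{m+2}$ (which is fixed by $\bar N$ and is an eigenvector of $H_0$ with eigenvalue $-1$), the $H_0$-eigenvector $e_{m+1}+e_{m+2}$, and arbitrary vectors of the form $(v,0,0)^t$ with $v\in\bbR^m$; agreement on these three families pins down the full matrix, and on the first family the identity reduces to the simple scalar identity $|x|^2\cdot(1-|x|^{-2}) = |x|^2-1$.
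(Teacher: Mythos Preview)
Your approach is correct and is essentially the same as the paper's, which simply states that the identity follows ``by direct calculation'' without giving any details. Your plan to verify the matrix identity by direct multiplication (with the optional shortcut of testing on the $H_0$-eigenvectors $e_{m+1}\pm e_{m+2}$ and on vectors of the form $(v,0,0)^t$) is exactly the kind of bookkeeping computation the paper has in mind.
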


\subsection{Irreducible unitary representations of $P$}\label{SS:rep-P}

The classification of irreducible unitary representations of $P$ (or $P_{1},P_{2}$) is obtained by using Mackey's
little group method (see e.g.\ \cite{Wolf}). In the following we illustrate the classification of infinite-dimensional
irreducible unitary representations of $P$ (or $P_{1},P_{2}$).

\begin{proposition}\label{P:Mackey}
Any infinite-dimensional irreducible unitary representation of $P$ (or $P_{1},P_{2}$) is isomorphic to $I_{P,\tau}$
(or $I_{P_{1},\tau},I_{P_{2},\tau}$) for a unique (up to isomorphism) irreducible finite-dimensional unitary
representation $\tau$ of $M'$ (or $M'_{1},M'_{2}$).
\end{proposition}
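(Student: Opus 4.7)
The plan is to apply the Mackey little-group machine to the semidirect-product decomposition $P=(MA)\ltimes N$, in which $N$ is abelian and isomorphic to $\mathbb{R}^{m}$ via $\alpha\mapsto n_{\alpha}$. Pontryagin dualizing, $\widehat{N}$ is identified with $\mathbb{R}^{m}$ through $\xi(X_{\alpha})=\xi\alpha^{t}$, and the coadjoint action of $MA$ on $\widehat{N}$ is recorded in (\ref{Eq:MAaction}): rotation by $Y^{t}$ through $M\to\SO(m)$ composed with the positive scaling $e^{-\lambda_{0}(\log a)}$ through $A$.

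First I would determine the $(MA)$-orbits on $\widehat{N}$. Because $M\to\SO(m)$ is surjective and $A$ acts by all positive scalars, the combined action is transitive on $\mathbb{R}^{m}\setminus\{0\}$, so there are exactly two orbits: $\{0\}$ and $\mathbb{R}^{m}\setminus\{0\}$. Picking $\xi_{0}=(0,\dots,0,1)$ as representative of the non-trivial orbit, its stabilizer in $MA$ is $M'=\Stab_{MA}\xi_{0}$ of (\ref{Eq:M'}), with $M'\cong\Spin(m-1)$ compact. Since the orbit space has only two points it is trivially countably separated, i.e.\ the $(MA)$-action on $\widehat{N}$ is regular in Mackey's sense, so Mackey's theorem applies without qualification.

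Invoking Mackey's classification, the irreducible unitary representations of $P$ are parametrized by pairs $(\mathcal{O},\tau)$ consisting of an $(MA)$-orbit $\mathcal{O}\subset\widehat{N}$ and an irreducible unitary representation $\tau$ of the stabilizer of a chosen basepoint; the corresponding representation is $\Ind_{\Stab(\xi)\ltimes N}^{P}(\tau\otimes e^{\mathbf{i}\xi})$. The trivial orbit $\{0\}$ contributes exactly the representations that factor through $P/N=MA$, and since $M$ is compact and the irreducible unitary representations of $A\cong\mathbb{R}$ are one-dimensional, those are all finite-dimensional. Therefore the infinite-dimensional ones arise only from the orbit of $\xi_{0}$ and are precisely the representations $I_{P,\tau}=\Ind_{M'\ltimes N}^{P}(\tau\otimes e^{\mathbf{i}\xi_{0}})$ with $\tau$ an irreducible unitary representation of $M'\cong\Spin(m-1)$; any such $\tau$ is automatically finite-dimensional because $M'$ is compact. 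Uniqueness of $\tau$ up to isomorphism is part of the Mackey bijection: two basepoints in a single orbit are $(MA)$-conjugate, and the induced inner automorphism identifies the resulting stabilizer-representations.

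The arguments for $P_{1}$ and $P_{2}$ go through verbatim: only the $(MA)$-action on $\widehat{N}$ enters, and both $M_{1}\to\SO(m)$ and $M_{2}\to\OO(m)$ remain transitive on $\mathbb{R}^{m}\setminus\{0\}$; the sole change is that the little group becomes $M'_{1}\cong\SO(m-1)$ or $M'_{2}\cong\OO(m-1)\times\OO(1)$, respectively, both still compact, so the induction $\tau\mapsto I_{P_{i},\tau}$ exhausts the infinite-dimensional irreducible unitary dual. The main (rather mild) obstacle I anticipate is the bookkeeping verification that $\Stab_{MA}\xi_{0}=M'$ (and the analogues for $M'_{1},M'_{2}$), which follows directly from the explicit rotation-and-scaling formula in (\ref{Eq:MAaction}); beyond that, the proof is a textbook application of the Mackey machine.
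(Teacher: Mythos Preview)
Your proposal is correct and follows essentially the same approach as the paper: both apply the Mackey little-group method to $P=(MA)\ltimes N$, use the transitivity of $MA$ on $\mathfrak{n}^{\ast}\setminus\{0\}$ to reduce to the two orbits $\{0\}$ and $\mathbb{R}^{m}\setminus\{0\}$, and identify the infinite-dimensional irreducibles with the induced representations $I_{P,\tau}$. Your write-up is in fact more detailed than the paper's sketch (explicitly checking regularity of the action and why the $\{0\}$-orbit contributes only finite-dimensional representations), but the underlying argument is the same.
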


\begin{proof}
We sketch a proof for $P$. The proof for $P_{1}$ (or $P_{2}$) is similar. Let $\pi$ be an irreducible unitary
representation of $P$. If $\pi|_{N}$ is trivial, then $\pi$ factors through $P\to MA$ and is finite-dimensional.
Assume that $\pi|_{N}$ is non-trivial, then the support of the the spectrum of $\pi|_{N}$ is not equal to $\{0\}$.
As the spectrum of $\pi|_{N}$ is an $MA$-stable subset of $\mathfrak{n}^{\ast}$ and $MA$ acts transitively
on $\mathfrak{n}^{\ast}-\{0\}$, $\xi_0$ is in the support. By Mackey's method, one then shows $\pi\cong I_{P,\tau}$
for a unique finite-dimensional irreducible unitary representation $\tau$ of $M'$ up to an isomorphism.
\end{proof}

\subsection{Abstract classification of coadjoint orbits in $\mathfrak{p}^{\ast}$}\label{SS:P-orbit}

Write $L=MA$ and $\mathfrak{l}=\mathfrak{m}\oplus\mathfrak{a}$. Then, $P=N\rtimes L$ is a Levi decomposition of $P$.
Write $L_{1}=M_{1}A\subset P_{1}$. Then, $P_{1}=N\rtimes L_{1}$.
There are exact sequences of $P$-modules
\[0\rightarrow\mathfrak{n}\rightarrow\mathfrak{p}\rightarrow\mathfrak{l}\rightarrow 0
\ \text{ and }\
0\rightarrow\mathfrak{l}^{\ast}\rightarrow\mathfrak{p}^{\ast}\rightarrow\mathfrak{n}^{\ast}\rightarrow 0.\]
Note that the action of $P$ on $\mathfrak{p}$ (or $\mathfrak{p}^{\ast}$) factors through $P_{1}$.
We have
\[L_{1}=M_1 A \cong \SO(m)\times\mathbb{R}_{>0},\quad
 \mathfrak{n}^{\ast}\cong \mathbb{R}^{m},\]
 and $L_{1}$ acts on $\mathfrak{n}^{\ast}$
 as in \eqref{Eq:MAaction}.
%\[(Y,a)\cdot\alpha=tY\alpha\] for any $Y\in\SO(2n-1)$, $t\in\mathbb{R}_{>0}\]
% and $\alpha\in\mathbb{R}^{2n-1}$.
Thus, $\{0\}$ and $\mathfrak{n}^{\ast}-\{0\}$ are the only two $L$-orbits in
$\mathfrak{n}^{\ast}$.

Write $$\psi_{n}:\mathfrak{p}^{\ast}\rightarrow\mathfrak{n}^{\ast}\textrm{ and }\psi_{l}:\mathfrak{p}^{\ast}\rightarrow
\mathfrak{l}^{\ast}$$ for projection maps corresponding to the inclusions $\mathfrak{n}\hookrightarrow\mathfrak{p}$ and $\mathfrak{l}\hookrightarrow\mathfrak{p}$.
Write
\[\phi_{l}:\mathfrak{l}^{\ast}\rightarrow\mathfrak{p}^{\ast}\textrm{ and }
\phi_{n}:\mathfrak{n}^{\ast}\rightarrow\mathfrak{p}^{\ast}\]
 for inclusions corresponding to projections $\mathfrak{p}
\rightarrow\mathfrak{l}$ and $\mathfrak{p}\rightarrow\mathfrak{n}$ from $\mathfrak{p}=\mathfrak{l}+\mathfrak{n}$. Then,
$\psi_{n}$ and $\phi_{l}$ are $P$-equivariant maps. The maps $\psi_{l}$ and $\phi_{n}$ are $L$-equivariant, but not
$P$-equivariant.

For any $\xi\in\mathfrak{n}^{\ast}$, put $\tilde{\xi}=\phi_{n}(\xi)\in\mathfrak{p}^{\ast}$. Then,
\[\psi_{n}^{-1}(P\cdot \xi) =\mathfrak{l}^{\ast}+P\cdot\tilde{\xi}.\]
Write $P^{\xi}=\Stab_{P}(\xi)$ and $L^{\xi}=\Stab_{L}(\xi)$.
Then,
%\begin{equation*}%\label{Eq:Pn-Ln}
$P^{\xi}=N\rtimes L^{\xi}$.
%\end{equation*}

\begin{proposition}\label{P:orbit-reduction}
In the above setting, we have
\[(\mathfrak{l}^{\ast}+P\cdot\tilde{\xi})/P
\cong(\mathfrak{l}^{\ast}+\tilde{\xi})/P^{\xi}
\cong(\mathfrak{l}^{\ast}/\ad^*(\mathfrak{n})(\tilde{\xi}))/L^{\xi}\cong(\mathfrak{l}^{\xi})^{\ast}/L^{\xi}.\]
\end{proposition}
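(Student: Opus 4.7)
The plan is to establish the three isomorphisms in turn, each by a short direct argument. The key structural fact, used throughout, is that $\mathfrak{n}$ is abelian (the real rank of $G$ is one), so $\Ad(\exp Y)X=X+[Y,X]$ for $Y\in \mathfrak{n}$, $X\in \mathfrak{p}$, with $[Y,X]\in \mathfrak{n}$ and no higher-order terms in the expansion of $\exp(\ad Y)$.

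For the first isomorphism $(\mathfrak{l}^{\ast}+P\cdot \tilde{\xi})/P\cong (\mathfrak{l}^{\ast}+\tilde{\xi})/P^{\xi}$, I would first observe that $\phi_{l}(\mathfrak{l}^{\ast})\subset \mathfrak{p}^{\ast}$ is $P$-stable, since it is exactly the set of functionals vanishing on the ideal $\mathfrak{n}\subset \mathfrak{p}$. Thus $\mathfrak{l}^{\ast}+P\cdot \tilde{\xi}$ is $P$-stable, and any element $\lambda+p\cdot \tilde{\xi}$ can be translated by $p^{-1}$ into $\mathfrak{l}^{\ast}+\tilde{\xi}$; hence the inclusion $\mathfrak{l}^{\ast}+\tilde{\xi}\hookrightarrow \mathfrak{l}^{\ast}+P\cdot \tilde{\xi}$ induces a surjection on $P$-orbit spaces. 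For injectivity, if $p\cdot(\lambda_{1}+\tilde{\xi})=\lambda_{2}+\tilde{\xi}$, apply the $P$-equivariant projection $\psi_{n}$ to obtain $p\cdot \xi=\xi$, i.e.\ $p\in P^{\xi}$. The same $P$-equivariance of $\psi_{n}$ shows that $P^{\xi}$ preserves $\mathfrak{l}^{\ast}+\tilde{\xi}$ since $p\cdot \tilde{\xi}-\tilde{\xi}\in \ker \psi_{n}=\mathfrak{l}^{\ast}$ for $p\in P^{\xi}$.

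For the second isomorphism I use $P^{\xi}=N\rtimes L^{\xi}$ and quotient by $N$ first. Direct computation gives: $N$ acts trivially on $\mathfrak{l}^{\ast}$ (because $\ad(\mathfrak{n})\mathfrak{l}\subset \mathfrak{n}$ and $\lambda\in \mathfrak{l}^{\ast}$ kills $\mathfrak{n}$), and $\exp(Y)\cdot \tilde{\xi}=\tilde{\xi}+\ad^{\ast}(Y)(\tilde{\xi})$ with $\ad^{\ast}(Y)(\tilde{\xi})\in \mathfrak{l}^{\ast}$, since abelianness of $\mathfrak{n}$ forces $\ad^{\ast}(Y)(\tilde{\xi})$ to vanish on $\mathfrak{n}$. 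Consequently the $N$-orbit of $\lambda+\tilde{\xi}$ is $\lambda+\ad^{\ast}(\mathfrak{n})(\tilde{\xi})+\tilde{\xi}$, and $(\mathfrak{l}^{\ast}+\tilde{\xi})/N\cong \mathfrak{l}^{\ast}/\ad^{\ast}(\mathfrak{n})(\tilde{\xi})$ via $\lambda+\tilde{\xi}\mapsto [\lambda]$. I then check that $L^{\xi}$ acts naturally on $\mathfrak{l}^{\ast}$: it fixes $\tilde{\xi}$ (by $P$-equivariance of $\psi_{n}$, $l\cdot \tilde{\xi}-\tilde{\xi}$ projects to $l\cdot \xi-\xi=0$, while $l\cdot \tilde{\xi}$ vanishes on $\mathfrak{l}$ because $\Ad(l^{-1})\mathfrak{l}=\mathfrak{l}$), and it preserves $\ad^{\ast}(\mathfrak{n})(\tilde{\xi})$ via the identity $l\cdot \ad^{\ast}(Y)(\tilde{\xi})=\ad^{\ast}(\Ad(l)Y)(\tilde{\xi})$ together with $\Ad(l)\mathfrak{n}=\mathfrak{n}$.

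For the last isomorphism I identify $\mathfrak{l}^{\xi}\subset \mathfrak{l}$ as the annihilator of $\ad^{\ast}(\mathfrak{n})(\tilde{\xi})\subset \mathfrak{l}^{\ast}$: unwinding definitions, $X\in \mathfrak{l}^{\xi}$ iff $\xi([X,Y])=0$ for every $Y\in \mathfrak{n}$, which is precisely the vanishing of every $\ad^{\ast}(Y)(\tilde{\xi})$ on $X$. Restriction then induces the canonical $L^{\xi}$-equivariant linear isomorphism $\mathfrak{l}^{\ast}/\ad^{\ast}(\mathfrak{n})(\tilde{\xi})\cong (\mathfrak{l}^{\xi})^{\ast}$, and passing to $L^{\xi}$-orbits completes the chain. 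No serious obstacle is anticipated: the argument is essentially bookkeeping once one observes that abelianness of $\mathfrak{n}$ keeps the $N$-translate of $\tilde{\xi}$ inside the affine slice $\mathfrak{l}^{\ast}+\tilde{\xi}$.
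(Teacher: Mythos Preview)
Your proposal is correct and follows essentially the same route as the paper's proof: the paper also establishes the three isomorphisms in turn, using $P$-stability of $\mathfrak{l}^{\ast}$ and the projection $\psi_{n}$ for the first, the abelianness of $\mathfrak{n}$ to compute the $N$-action as an affine translation for the second, and the duality identity $(\ad^{\ast}(X)\xi)(Y)=-(\ad^{\ast}(Y)\tilde{\xi})(X)$ to identify $\mathfrak{l}^{\xi}$ as the annihilator of $\ad^{\ast}(\mathfrak{n})(\tilde{\xi})$ for the third. If anything, you are slightly more explicit than the paper in checking that $L^{\xi}$ fixes $\tilde{\xi}$ and preserves $\ad^{\ast}(\mathfrak{n})(\tilde{\xi})$, which the paper leaves implicit.
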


\begin{proof}
We have
\[\mathfrak{l}^{\ast}+\tilde{\xi}\subset\mathfrak{l}^{\ast}+P\cdot\tilde{\xi}
\ \text{ and }\
\mathfrak{l}^{\ast}+P\cdot \tilde{\xi}=P(\mathfrak{l}^{\ast}+\tilde{\xi}).\]
On the other hand, for any $g\in P$,
\[g(\mathfrak{l}^{\ast}+\tilde{\xi})
\cap(\mathfrak{l}^{\ast}+\tilde{\xi})\neq\emptyset\]
 if and only if $g\in P^{\xi}$. Thus,
\begin{equation*}%\label{Eq:orbit-reduc1}
(\mathfrak{l}^{\ast}+P\cdot\tilde{\xi})/P\cong(\mathfrak{l}^{\ast}+\tilde{\xi})/P^{\xi}.
\end{equation*}

As $\mathfrak{n}$ is abelian, it acts trivially on $\mathfrak{n}^{\ast}$.
Thus, $$\ad^*(\mathfrak{n})
(\mathfrak{p}^{\ast})\subset\mathfrak{l}^{\ast}.$$ Apparently, $\mathfrak{n}$ acts trivially on $\mathfrak{l}^{\ast}$.
By these, $N$ acts on $\mathfrak{l}^{\ast}+\tilde{\xi}$ through translations: for $X\in\mathfrak{n}$ and $\eta\in \mathfrak{l}^{\ast}$,
\begin{equation*}%\label{Eq:N-translation}
\exp(X)\cdot(\eta+\tilde{\xi})=(\eta+\ad^*(X)\tilde{\xi})+\tilde{\xi}.
\end{equation*}
Since $P^{\xi}=L^{\xi}N$, we get
\begin{equation*}%\label{orbit-reduc2}
(\mathfrak{l}^{\ast}+\tilde{\xi})/P^{\xi}\cong(\mathfrak{l}^{\ast}/\ad^*(\mathfrak{n})(\tilde{\xi}))/L^{\xi}.
\end{equation*}

For $X\in\mathfrak{l}$ and $Y\in\mathfrak{n}$, \[(\ad^*(X)(\xi))(Y)=(\ad^*(X)(\tilde{\xi}))(Y)=-\tilde{\xi}([X,Y])
=-(\ad^*(Y)(\tilde{\xi}))(X).\] By this, \[X\in\mathfrak{l}^{\xi}\Leftrightarrow\tilde{\xi}|_{\ad(X)(\mathfrak{n})}
=0\Leftrightarrow(\ad^*(\mathfrak{n})(\tilde{\xi}))(X)=0.\] Thus, the null space of $\mathfrak{l}^{\xi}$
($\subset\mathfrak{l}$) in $\mathfrak{l}^{\ast}$ is $\ad^{\ast}(\mathfrak{n})\tilde{\xi}$. Hence,
\begin{equation}\label{Eq:orbit-reduc3}\mathfrak{l}^{\ast}/\ad^*(\mathfrak{n})(\tilde{\xi})
\cong(\mathfrak{l}^{\xi})^{\ast}.\end{equation} This finishes the proof of the proposition.
\end{proof}

%\smallskip

Note that $\mathfrak{l}=\mathfrak{m}\oplus\mathfrak{a}$ with $\mathfrak{m}$ a compact semisimple Lie algebra
and $\mathfrak{a}$ a one-dimensional abelian Lie algebra. Thus, $\mathfrak{l}^{\ast}=\mathfrak{m}^{\ast}
\oplus\mathfrak{a}^{\ast}$.
Let
\[\mathfrak{l}^{\ast}_{\xi}
=\{\eta\in \mathfrak{l}^{\ast} \mid (\eta,\ad^*(\mathfrak{n})(\tilde{\xi}))=0\}.
\]
% be the kernel of $\ad(\mathfrak{n})(\tilde{\xi})$
%in $\mathfrak{l}^{\ast}$ with respect to the
Here, $(\cdot,\cdot)$ is the invariant form on $\mathfrak{l}^{\ast}$ induced by the
 restriction on $\mathfrak{l}$ of the invariant form defined in (\ref{Eq:form}).
We note that $L^{\xi}\cdot \tilde{\xi}=\tilde{\xi}$ and hence
 $L^{\xi}\cdot \ad^*(\mathfrak{n})(\tilde{\xi})=\ad^*(\mathfrak{n})(\tilde{\xi})$.

\begin{lemma}\label{L:p-standard1}
We have $\mathfrak{l}^{\ast}=\mathfrak{l}^{\ast}_{\xi}\oplus\ad^*(\mathfrak{n})(\tilde{\xi})$, and
$\mathfrak{l}^{\ast}_{\xi}\cong(\mathfrak{l}^{\xi})^{\ast}$ as $L^{\xi}$-modules.
\end{lemma}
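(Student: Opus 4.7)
The plan is to transport everything from $\mathfrak{l}^{\ast}$ back to $\mathfrak{l}$ via the $L$-equivariant isomorphism $\iota\colon \mathfrak{l}\to \mathfrak{l}^{\ast}$ induced by the nondegenerate invariant form $(\cdot,\cdot)$, and then to reduce both claims to the elementary fact that this form is nondegenerate on $\mathfrak{l}^{\xi}$.

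From the calculation already carried out in the proof of Proposition~\ref{P:orbit-reduction}, $\ad^{\ast}(\mathfrak{n})(\tilde{\xi})$ is precisely the annihilator of $\mathfrak{l}^{\xi}$ inside $\mathfrak{l}^{\ast}$ under the canonical pairing. Since annihilators under the canonical pairing correspond to orthogonal complements under $\iota$, this gives $\iota^{-1}(\ad^{\ast}(\mathfrak{n})(\tilde{\xi})) = (\mathfrak{l}^{\xi})^{\perp}$, and then by nondegeneracy of the form on all of $\mathfrak{l}$,
\[
\iota^{-1}(\mathfrak{l}^{\ast}_{\xi}) = \bigl((\mathfrak{l}^{\xi})^{\perp}\bigr)^{\perp} = \mathfrak{l}^{\xi},
\]
so in fact $\mathfrak{l}^{\ast}_{\xi} = \iota(\mathfrak{l}^{\xi})$. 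In particular, the decomposition $\mathfrak{l}^{\ast} = \mathfrak{l}^{\ast}_{\xi}\oplus \ad^{\ast}(\mathfrak{n})(\tilde{\xi})$ is equivalent, via $\iota$, to $\mathfrak{l} = \mathfrak{l}^{\xi}\oplus (\mathfrak{l}^{\xi})^{\perp}$, which in turn is equivalent to nondegeneracy of $(\cdot,\cdot)|_{\mathfrak{l}^{\xi}}$.

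To verify that nondegeneracy, I would use $L$-equivariance to reduce $\xi$ to a convenient representative. The only $L$-orbits in $\mathfrak{n}^{\ast}$ are $\{0\}$ (trivial case, $\mathfrak{l}^{\xi}=\mathfrak{l}$) and $\mathfrak{n}^{\ast}\setminus\{0\}$, so I may assume $\xi = \xi_{0}$. Since $A$ acts on $\xi_{0}$ by a nontrivial character, $\mathfrak{l}^{\xi} \subset \mathfrak{m}$ (in fact $\mathfrak{l}^{\xi}=\mathfrak{m}'$), and the form $(X,Y) = \tfrac{1}{2}\tr(XY)$ is negative definite on $\mathfrak{m}\cong \mathfrak{so}(m)$, hence nondegenerate on $\mathfrak{m}'$.

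For the $L^{\xi}$-module isomorphism $\mathfrak{l}^{\ast}_{\xi}\cong (\mathfrak{l}^{\xi})^{\ast}$, observe that the restriction map $r\colon \mathfrak{l}^{\ast}\to (\mathfrak{l}^{\xi})^{\ast}$ is $L^{\xi}$-equivariant with kernel $\ad^{\ast}(\mathfrak{n})(\tilde{\xi})$; composing with the inclusion $\mathfrak{l}^{\ast}_{\xi}\hookrightarrow \mathfrak{l}^{\ast}$ produces an $L^{\xi}$-equivariant injection $\mathfrak{l}^{\ast}_{\xi}\to (\mathfrak{l}^{\xi})^{\ast}$ which is an isomorphism by the direct sum decomposition just established and dimensions. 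There is no real obstacle in the argument — the only nontrivial point is the one-line verification of negative definiteness of $(\cdot,\cdot)$ on $\mathfrak{m}$.
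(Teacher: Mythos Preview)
Your proof is correct and is essentially the same as the paper's, just phrased dually: the paper shows $\mathfrak{a}^{\ast}\subset\ad^{\ast}(\mathfrak{n})(\tilde{\xi})$ and hence the form is nondegenerate on $\ad^{\ast}(\mathfrak{n})(\tilde{\xi})$, while you pull back via $\iota$ and show the form is nondegenerate on $\mathfrak{l}^{\xi}\subset\mathfrak{m}$; these are the same observation read from opposite sides of the orthogonal decomposition. The second assertion is handled identically in both (restriction map with kernel the annihilator, i.e.\ the quotient isomorphism \eqref{Eq:orbit-reduc3} composed with the direct-sum projection).
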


\begin{proof}
When $\xi=0$, we have $L^{\xi}=L$, $\ad^*(\mathfrak{n})(\tilde{\xi})=0$
 and $\mathfrak{l}^{\ast}_{\xi}=\mathfrak{l}^{\ast}$.
Then, the two statements in the lemma are clear.

When $\xi\neq 0$, one shows by a direct calculation that
\begin{equation*}%\label{Eq:a-ad(n)n}
\mathfrak{a}^{\ast}\subset\ad^{\ast}(\mathfrak{n})(\tilde{\xi}).
\end{equation*}
Then, the restriction of the induced form on $\mathfrak{l}^{\ast}$ of $(\cdot,\cdot)$
is nondegenerate when restricted to $\ad^{\ast}(\mathfrak{n})(\tilde{\xi})$. Hence,
%\begin{equation*}%\label{Eq:ln1}
$\mathfrak{l}^{\ast}=\mathfrak{l}^{\ast}_{\xi}\oplus\ad^{\ast}(\mathfrak{n})(\tilde{\xi})$.
%\end{equation*}
This is clearly a decomposition of $L^{\xi}$-modules. Combining with \eqref{Eq:orbit-reduc3}, we get
\begin{equation*}%\label{Eq:ln2}
\mathfrak{l}^{\ast}_{\xi}\cong\mathfrak{l}^{\ast}/\ad^{\ast}(\mathfrak{n})(\tilde{\xi})\cong
(\mathfrak{l}^{\xi})^{\ast}\qedhere
\end{equation*}
\end{proof}

\begin{lemma}\label{L:p-standard3}\begin{enumerate}
\item[(1)]Every $P$-orbit in $\mathfrak{l}^{\ast}+P\cdot\tilde{\xi}$ has a representative of the form
$\eta+\tilde{\xi}$, where $\eta\in\mathfrak{l}^{\ast}_{\xi}$.
\item[(2)]Two elements $\eta+\tilde{\xi}$ and $\eta'+\tilde{\xi}$ $(\eta,\eta'\in\mathfrak{l}^{\ast}_{\xi})$
are in the same $P$-orbit if and and only if $\eta$ and $\eta'$ are in the same $L^{\xi}$-orbit.
\item[(3)]For an element $\eta+\tilde{\xi}(\in\mathfrak{p}^{\ast})$ with $\eta\in\mathfrak{l}^{\ast}_{\xi}\cong
(\mathfrak{l}^{\xi})^{\ast}$, if $\xi\neq 0$, then $\Stab_{P}(\eta+\tilde{\xi})=\Stab_{L^{\xi}}(\eta)$;
if $\xi=0$, then $\Stab_{P}(\eta+\tilde{\xi})=\Stab_{L}(\eta)N$.
\end{enumerate}
\end{lemma}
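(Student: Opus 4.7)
The plan is to reduce all three statements to the explicit formula
\[\exp(X)\cdot(\eta+\tilde{\xi})=\eta+\ad^{\ast}(X)\tilde{\xi}+\tilde{\xi}\qquad(X\in\mathfrak{n},\ \eta\in\mathfrak{l}^{\ast})\]
already derived in the proof of Proposition~\ref{P:orbit-reduction}, combined with the $L^{\xi}$-module decomposition $\mathfrak{l}^{\ast}=\mathfrak{l}^{\ast}_{\xi}\oplus\ad^{\ast}(\mathfrak{n})(\tilde{\xi})$ of Lemma~\ref{L:p-standard1} and the $P$-equivariance of the projection $\psi_{n}\colon\mathfrak{p}^{\ast}\to\mathfrak{n}^{\ast}$. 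The Levi decomposition $P=N\rtimes L$ together with $P^{\xi}=L^{\xi}N$ will do essentially all of the structural work.

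For (1) I first invoke the equality $\mathfrak{l}^{\ast}+P\cdot\tilde{\xi}=P(\mathfrak{l}^{\ast}+\tilde{\xi})$ from the proof of Proposition~\ref{P:orbit-reduction} to reduce to an element of the form $\eta_{0}+\tilde{\xi}$ with $\eta_{0}\in\mathfrak{l}^{\ast}$. Decomposing $\eta_{0}=\eta+\ad^{\ast}(X)\tilde{\xi}$ by Lemma~\ref{L:p-standard1} and then acting by $\exp(-X)\in N$ yields the required representative $\eta+\tilde{\xi}$ with $\eta\in\mathfrak{l}^{\ast}_{\xi}$. For (2), the ``if'' direction uses that $\phi_{n}$ is $L$-equivariant, so every $l\in L^{\xi}$ fixes $\tilde{\xi}=\phi_{n}(\xi)$ and hence $l\cdot(\eta+\tilde{\xi})=l\cdot\eta+\tilde{\xi}$. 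Conversely, if $p\cdot(\eta+\tilde{\xi})=\eta'+\tilde{\xi}$, then applying $\psi_{n}$ gives $p\cdot\xi=\xi$, so $p\in P^{\xi}=L^{\xi}N$; writing $p=l\exp(X)$ with $l\in L^{\xi}$ and $X\in\mathfrak{n}$ and expanding, the equation becomes $l\cdot\eta+l\cdot\ad^{\ast}(X)\tilde{\xi}=\eta'$, and the $L^{\xi}$-stable direct sum of Lemma~\ref{L:p-standard1} separates it into $l\cdot\eta=\eta'$ (the $\mathfrak{l}^{\ast}_{\xi}$-component) and a harmless equation inside $\ad^{\ast}(\mathfrak{n})(\tilde{\xi})$.

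For (3) I specialize the argument of (2) to the case $\eta'=\eta$. When $\xi=0$ we have $\tilde{\xi}=0$ and $N$ acts trivially on $\mathfrak{l}^{\ast}$, so $\Stab_{P}(\eta)=\Stab_{L}(\eta)\,N$ is immediate. When $\xi\neq 0$, the only non-formal step is to show that the map $X\mapsto\ad^{\ast}(X)\tilde{\xi}$ from $\mathfrak{n}$ to $\ad^{\ast}(\mathfrak{n})(\tilde{\xi})$ is injective, so that the equation $\ad^{\ast}(X)\tilde{\xi}=0$ forces $X=0$ and hence $p=l\in\Stab_{L^{\xi}}(\eta)$. I expect this to be the main technical point of the lemma. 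It can be obtained either by dimension count, using that $L$ acts transitively on $\mathfrak{n}^{\ast}\setminus\{0\}$ so that $\dim\ad^{\ast}(\mathfrak{n})(\tilde{\xi})=\dim\mathfrak{l}-\dim\mathfrak{l}^{\xi}=m=\dim\mathfrak{n}$, or directly from the identity $(\ad^{\ast}(X)\tilde{\xi})(Z)=-\xi([X,Z])$ for $Z\in\mathfrak{l}$ together with the observation that $[\mathfrak{l},X]=\mathfrak{n}$ for every nonzero $X\in\mathfrak{n}$, which one reads off from the bracket formulas \eqref{Eq:brackets}. Apart from this injectivity, the proof is a formal consequence of the Levi decomposition and the translation formula.
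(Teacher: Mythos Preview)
Your argument is correct and follows essentially the same route as the paper's proof. Parts (1) and (2) are exactly what the paper has in mind when it says these follow from Proposition~\ref{P:orbit-reduction} and Lemma~\ref{L:p-standard1}; your proof of (3), including the dimension-count argument for the injectivity of $X\mapsto\ad^{\ast}(X)\tilde{\xi}$ via $\dim(L\cdot\xi)=\dim\mathfrak{n}$, coincides with the paper's, and your alternative direct argument from the bracket relations is a harmless variant.
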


\begin{proof}
(1) and (2) follow from Proposition~\ref{P:orbit-reduction} and Lemma~\ref{L:p-standard1}.
\if 0
 By Lemma \ref{L:p-standard1},
 $\mathfrak{l}^{\ast}=\mathfrak{l}^{\ast}_{n}\oplus\ad^{\ast}(\mathfrak{n})(\tilde{\xi})$.
From this and $N$ is abelian, we get $$N(\mathfrak{l}^{\ast}_{n}+\tilde{\xi})=\mathfrak{l}^{\ast}+\tilde{\xi}.$$ Thus,
every $P$-orbit in $\mathfrak{l}^{\ast}+P\cdot\tilde{\xi}$ has a representative of the form $p=l+\tilde{\xi}$
where $l\in\mathfrak{l}^{\ast}_{n}$.

(2) Let $l,l'\in\mathfrak{l}^{\ast}_{n}$ and suppose $g\cdot(l+\tilde{\xi})=l'+\tilde{\xi}$ for some $g\in P$. First,
projecting to $\mathfrak{n}^{\ast}$, we get $g\in P^{\xi}=L^{\xi}N$. Write $g=g_1g_2$ where $g_1\in L^{\xi}$
and $g_{2}\in N$. Then $g_{1}\cdot\tilde{\xi}=\tilde{\xi}$. It is clear that $g_{2}\cdot l=l$.
Thus, $$g\cdot(l+\tilde{\xi})=g_{1}\cdot l+g_{1}g_{2}g_{1}^{-1}\cdot\tilde{\xi}.$$ Hence,
\[g_{1}g_{2}g_{1}^{-1}\cdot\tilde{\xi}-\tilde{\xi}=l'-g_{1}\cdot l.\]
%By Lemma \ref{L:p-standard2} again, we have $$(\ad\mathfrak{l}^{n})(\ad(\mathfrak{n})\tilde{\xi})\subset\ad(\mathfrak{n})\tilde{\xi}.$$ Thus,
%We have $L^{\xi}\cdot(\ad(\mathfrak{n})\tilde{\xi})=\ad(\mathfrak{n})\tilde{\xi}$.
Since $L^{\xi}\cdot \mathfrak{l}^{\ast}_{n}=\mathfrak{l}^{\ast}_{n}$,
 it follows that $l'-g_{1}\cdot l\in\mathfrak{l}^{\ast}_{n}$.
On the other hand, $g_{1}g_{2}g_{1}^{-1}\in N$ implies
 that $g_{1}g_{2}g_{1}^{-1}\cdot\tilde{\xi}-\tilde{\xi}\in \ad^*(\mathfrak{n})\tilde{\xi}$.
Due to $\mathfrak{l}^{\ast}_{n}\cap\ad^{\ast}(\mathfrak{n})\tilde{\xi}=0$,
 we get $g_{1}g_{2}g_{1}^{-1}\cdot\tilde{\xi}-\tilde{\xi}=l'-g_{1}\cdot l=0$.
Therefore, $l'$ and $l$ are in the same $L^{\xi}$-orbit.

Conversely, if $l'=g\cdot l$ for some $g\in L^{\xi}$,
%By Lemma~\ref{L:p-standard2}
 then we have $g\cdot\tilde{\xi}=\tilde{\xi}$ and $g\cdot(l+\tilde{\xi})=l'+\tilde{\xi}$.
\fi

(3) Suppose that $g\cdot (\eta+\tilde{\xi})=(\eta+\tilde{\xi})$ where $\eta\in\mathfrak{l}^{\ast}_{\xi}$ and
$g\in P$. First, we have $g\in P^{\xi}=L^{\xi}N$ by projecting to $\mathfrak{n}^{\ast}$. Write $g=g_1g_2$ with
$g_{1}\in L^{\xi}$ and $g_{2}\in N$. Since $g_2\cdot \eta=\eta$, we have \[g\cdot(\eta+\tilde{\xi})=
g_1\cdot\eta+ g_1g_2\cdot \tilde{\xi}=g_1\cdot \eta + g_1g_2g_1^{-1}\cdot \tilde{\xi}.\]
Hence $g_1\cdot \eta=\eta$ and $g_1g_2g_1^{-1}\tilde{\xi}=\tilde{\xi}$.
Then we get $g_{1}\in\Stab_{L^{\xi}}(\eta)$, which shows the statement when $\xi=0$.
When $\xi\neq 0$, we have $L\cdot \xi=\mathfrak{n}^*-\{0\}$
 which implies $\dim(\mathfrak{l}\cdot \xi) = \dim\mathfrak{n}$.
Then by Lemma \ref{L:p-standard1},
$\dim \mathfrak{n} =\dim(\ad^{\ast}(\mathfrak{n})(\tilde{\xi}))$
 and hence the map
\[\mathfrak{n}\rightarrow\mathfrak{l}^{\ast},\quad Y\mapsto
\ad^{\ast}(Y)(\tilde{\xi})\] is injective.
Thus, $g_{1}g_{2}g_{1}^{-1}=1$ and $g_{2}=1$. Therefore,
$\Stab_{P}(\eta+\tilde{\xi})=\Stab_{L^{\xi}}(\eta)$.
\end{proof}

%\begin{remark}\label{R:p-standard}\end{remark}
For a given element $\xi\in\mathfrak{n}^{\ast}$, Proposition \ref{P:orbit-reduction} reduces the classification of $P$-orbits
intersecting with $\mathfrak{l}^{\ast}+\tilde{\xi}$ to the classification of coadjoint $L^{\xi}$-orbits in
$(\mathfrak{l}^{\xi})^{\ast}$. Moreover, for an element $\eta\in(\mathfrak{l}_{n})^{\ast}$, we showed that $\eta+\tilde{\xi}$
is a canonical form in Lemma \ref{L:p-standard3} (2); in Lemma \ref{L:p-standard3} (3), we calculated $\Stab_{P}(\eta+\tilde{\xi})$
in terms of $\Stab_{L^{\xi}}(\eta)$. Note that, when $\xi\neq 0$, one has $L^{\xi}\cong\Spin(m-1)$.

\begin{definition}\label{D:depth}
We say a coadjoint $P$-orbit $\mathcal{O}$ in $\mathfrak{p}^{\ast}$ has {\it depth zero} if it is contained
in $\mathfrak{l}^{\ast}$, otherwise we say $\mathcal{O}$ has {depth one}.
\end{definition}

\subsection{Explicit parametrization of coadjoint orbits in $\mathfrak{p}^{\ast}$}\label{SS:P-orbit2}

We now give a parametrization of depth one coadjoint $P$-orbits.
For $Y\in\mathfrak{so}(m)$, $\beta\in\mathbb{R}^{m}$ and $a\in\mathbb{R}$, put
\begin{equation*}%\label{Eq:XY-beta-t}
X_{Y,\beta,a}=
\begin{pmatrix}
Y&\beta^{t}&\beta^{t}\\
-\beta&0&a\\
\beta&a&0\\
\end{pmatrix}\in\overline{\mathfrak{p}}.
\end{equation*}
Then, for any
$X=\begin{pmatrix}
Y&\beta_{1}^{t}&\beta_2^{t}\\
-\beta_{1}&0&a\\
\beta_{2}&a&0\\
\end{pmatrix}\in\mathfrak{g},$
one has
%\begin{equation*}%\label{Eq:prX}
$\pr(X)=\pr(X_{Y,\frac{\beta_{1}+\beta_{2}}{2},a})$;
%\end{equation*}
for any
$f\in\mathfrak{p}^{\ast}$, there exists a unique triple
\[(Y,\beta,a)\in\mathfrak{so}(m)\times \mathbb{R}^{m}\times\mathbb{R}\]
 such that
%\begin{equation*}%\label{Eq:f-prX}
$f=\pr(X_{Y,\beta,a})$.
%\end{equation*}

\begin{lemma}\label{L:p-standard4}
For $0\neq\beta\in\mathbb{R}^{m}$, put
$\xi=\psi_{n}(\pr(X_{0,\beta,0}))\in\mathfrak{n}^{\ast}$.
In order that $\pr(X_{Y,0,a})\in\mathfrak{l}^{\ast}_{\xi}$
 it is necessary and sufficient that $a=0$ and $Y\beta^{t}=0$.
\end{lemma}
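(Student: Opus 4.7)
The plan is to transfer the orthogonality condition defining $\mathfrak{l}^{\ast}_{\xi}$ over to the Lie algebra $\mathfrak{l}$ via the invariant form $(\cdot,\cdot)$, then expand using the explicit brackets in \eqref{Eq:brackets}, and finally exploit the skew-symmetry of $Y$. First I would identify $\tilde{\xi}$ concretely: since $X_{0,\beta,0}=\bar{X}_{\beta}$ and a direct computation gives $(\bar{X}_{\beta},X_{\gamma})=2\beta\gamma^{t}$ together with $(\bar{X}_{\beta},\mathfrak{l})=0$, the functional $\tilde{\xi}=\phi_{n}(\xi)$ corresponds to $\bar{X}_{\beta}\in\bar{\mathfrak{n}}$ under the identification $\mathfrak{p}^{\ast}\cong\bar{\mathfrak{p}}$ from \eqref{Eq:identificaiton2}. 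Applying invariance of $(\cdot,\cdot)$ twice, for any $Z\in\mathfrak{l}$,
\[
(\ad^{\ast}(X_{\gamma})\tilde{\xi})(Z)=-\tilde{\xi}([X_{\gamma},Z])=-(\bar{X}_{\beta},[X_{\gamma},Z])=([X_{\gamma},\bar{X}_{\beta}],Z),
\]
so the element of $\mathfrak{l}$ representing $\ad^{\ast}(X_{\gamma})\tilde{\xi}$ is precisely $[X_{\gamma},\bar{X}_{\beta}]$, and $\ad^{\ast}(\mathfrak{n})(\tilde{\xi})\subset \mathfrak{l}^{\ast}$ corresponds to $[\mathfrak{n},\bar{X}_{\beta}]\subset \mathfrak{l}$.

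With this identification, $\pr(X_{Y,0,a})\in\mathfrak{l}^{\ast}_{\xi}$ is equivalent to $(X_{Y,0,a},[X_{\gamma},\bar{X}_{\beta}])=0$ for every $\gamma\in\bbR^{m}$. Applying invariance once more and using the brackets in \eqref{Eq:brackets},
\[
(X_{Y,0,a},[X_{\gamma},\bar{X}_{\beta}])=-(X_{\gamma},[X_{Y,0,a},\bar{X}_{\beta}])=-(X_{\gamma},\bar{X}_{\beta Y^{t}-a\beta}),
\]
because $[\diag\{Y,0_{2\times 2}\},\bar{X}_{\beta}]=\bar{X}_{\beta Y^{t}}$ and $[H_{0},\bar{X}_{\beta}]=-\bar{X}_{\beta}$. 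Since $(X_{\gamma},\bar{X}_{\delta})=2\delta\gamma^{t}$, the vanishing for every $\gamma$ reduces to the single vector equation $\beta Y^{t}=a\beta$, equivalently $Y\beta^{t}=a\beta^{t}$ after taking the transpose.

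To conclude, I would invoke the skew-symmetry of $Y$: the scalar $\beta Y\beta^{t}$ equals its own negative and hence vanishes, so pairing $Y\beta^{t}=a\beta^{t}$ on the left by $\beta$ yields $0=\beta Y\beta^{t}=a|\beta|^{2}$. Since $\beta\neq 0$, we get $a=0$ and then $Y\beta^{t}=0$. The converse is immediate: $a=0$ and $Y\beta^{t}=0$ imply $\beta Y^{t}-a\beta=0$, giving the orthogonality by the equivalence above. The main obstacle is really just bookkeeping: correctly identifying $\tilde{\xi}$ and $\ad^{\ast}(\mathfrak{n})(\tilde{\xi})$ with concrete elements of $\bar{\mathfrak{p}}$ and $\mathfrak{l}$, and keeping track of row/column-vector transposes when converting the abstract orthogonality into the equation $Y\beta^{t}=a\beta^{t}$; after that, the skew-symmetry step is a one-line calculation.
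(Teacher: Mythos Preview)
Your proof is correct and follows essentially the same route as the paper: both reduce the orthogonality condition to $([X_{Y,0,a},\bar{X}_{\beta}],\mathfrak{n})=0$, compute the bracket as $\bar{X}_{\beta Y^{t}-a\beta}$, and then pair the resulting equation $\beta Y^{t}=a\beta$ with $\beta^{t}$ to force $a=0$ via skew-symmetry. The only difference is cosmetic: you spell out the identification of $\tilde{\xi}$ with $\bar{X}_{\beta}$ and work with the transposed equation $Y\beta^{t}=a\beta^{t}$, whereas the paper keeps things in row-vector form.
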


\begin{proof}
We have
\begin{align*}
\pr(X_{Y,0,a})\in\mathfrak{l}^{\ast}_{\xi}
\Leftrightarrow
(\pr(X_{Y,0,a}), \ad^*(\mathfrak{n})(\tilde{\xi}))=0
\Leftrightarrow
([X_{Y,0,a}, X_{0,\beta,0}], \mathfrak{n})=0.
\end{align*}
Since $[X_{Y,0,a}, X_{0,\beta,0}]=X_{0,\beta Y^t - a\beta,0}\in\bar{\mathfrak{n}}$
 by \eqref{Eq:brackets},
 $\pr(X_{Y,0,a})\in\mathfrak{l}^{\ast}_{\xi}$
 is equivalent to $\beta Y^t - a\beta=0$.
If $\beta Y^t - a\beta=0$,
 then $(\beta Y^t - a\beta)\beta^t = - a\beta\beta^t=0$.
Hence $a=0$ and $\beta Y^t=0$.
The lemma follows.
\end{proof}

\begin{lemma}\label{p:standard5}
For a general triple $(Y,\beta,a)\in\mathfrak{so}(m)\times\mathbb{R}^{m}\times\mathbb{R}$ with
 $\beta\neq 0$, put $\xi=\psi_{n}(\pr(X_{Y,\beta,a}))\in\mathfrak{n}^{\ast}$.
Then, there exists a unique $\gamma\in\mathbb{R}^{m}$
 such that $\Ad^*(n_{\gamma})(\pr(X_{Y,\beta,a}))\in\mathfrak{l}^{\ast}_{\xi}+\tilde{\xi}$.
Moreover,
\[\Ad^*(n_{\gamma})(\pr(X_{Y,\beta,a}))=
 \pr(X_{Y-\frac{1}{|\beta|^2}(Y\beta^{t}\beta-\beta^{t}\beta Y^{t}),\beta,0}).\]
\end{lemma}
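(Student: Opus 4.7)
The plan is to make the element $\gamma$ explicit by directly computing the coadjoint action $\Ad^{\ast}(n_{\gamma})\pr(X_{Y,\beta,a})$ and matching it against the description of $\mathfrak{l}^{\ast}_{\xi}+\tilde{\xi}$ provided by Lemma~\ref{L:p-standard4}. Conceptually, existence and uniqueness of $\gamma$ are automatic: since $\pr(X_{Y,\beta,a})\in\mathfrak{l}^{\ast}+\tilde{\xi}$ and $N$ acts on $\mathfrak{l}^{\ast}+\tilde{\xi}$ by translation through $\ad^{\ast}(\mathfrak{n})(\tilde{\xi})$ (as noted in the proof of Proposition~\ref{P:orbit-reduction}), the decomposition $\mathfrak{l}^{\ast}=\mathfrak{l}^{\ast}_{\xi}\oplus\ad^{\ast}(\mathfrak{n})(\tilde{\xi})$ of Lemma~\ref{L:p-standard1} together with the injectivity of $X\mapsto \ad^{\ast}(X)\tilde{\xi}$ (valid for $\xi\neq 0$, as in the proof of Lemma~\ref{L:p-standard3}) already pins $\gamma$ down. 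The content of the lemma is the explicit formula.

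Turning to the computation: using the $P$-equivariance $\pr(\Ad(p)X)=\Ad^{\ast}(p)\pr(X)$ and the identification $\mathfrak{p}^{\ast}\cong\mathfrak{g}/\mathfrak{n}$, it suffices to compute $\Ad(n_{\gamma})X_{Y,\beta,a}$ modulo $\mathfrak{n}$. I would decompose $X_{Y,\beta,a}=\diag\{Y,0_{2\times 2}\}+aH_{0}+\bar{X}_{\beta}$ and apply the brackets in \eqref{Eq:brackets} termwise to obtain
\[
[X_{\gamma},X_{Y,\beta,a}]=-X_{\gamma Y^{t}}-aX_{\gamma}+\diag\{2(\gamma^{t}\beta-\beta^{t}\gamma),\,0_{2\times 2}\}+2(\gamma\beta^{t})H_{0}.
\]
The $\mathfrak{a}$-grading on $\mathfrak{g}$ takes only the values $-1,0,1$, so $\ad(X_{\gamma})^{2}X_{Y,\beta,a}\in\mathfrak{n}$ and $\ad(X_{\gamma})^{3}X_{Y,\beta,a}=0$. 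Hence $\Ad(n_{\gamma})=\exp(\ad X_{\gamma})$ is a finite sum, and reducing modulo $\mathfrak{n}$ yields
\[
\Ad^{\ast}(n_{\gamma})\pr(X_{Y,\beta,a})=\pr(X_{Y+2(\gamma^{t}\beta-\beta^{t}\gamma),\,\beta,\,a+2\gamma\beta^{t}}).
\]

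By Lemma~\ref{L:p-standard4}, this element belongs to $\mathfrak{l}^{\ast}_{\xi}+\tilde{\xi}$ precisely when the new $a$-coordinate vanishes and the new skew block annihilates $\beta^{t}$. The first condition gives $\gamma\beta^{t}=-a/2$. Substituting this into the second and using the skew-symmetry of $Y$ (so that $\beta Y\beta^{t}=0$), I would solve the linear equation $2|\beta|^{2}\gamma^{t}=-Y\beta^{t}-a\beta^{t}$ to get
\[
\gamma=-\frac{1}{2|\beta|^{2}}(\beta Y^{t}+a\beta);
\]
a direct back-check confirms that this $\gamma$ satisfies $\gamma\beta^{t}=-a/2$ as well, so the two conditions are compatibly met.

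Finally, substituting this $\gamma$ back into $Y+2(\gamma^{t}\beta-\beta^{t}\gamma)$ and simplifying, the $a\beta^{t}\beta$ contributions cancel and one arrives at $Y-\frac{1}{|\beta|^{2}}(Y\beta^{t}\beta-\beta^{t}\beta Y^{t})$, which is the claimed formula. The main obstacle is purely bookkeeping: keeping track of row versus column vector conventions and the various rank-one outer products $\beta^{t}\beta$, $\gamma^{t}\beta$, $\beta^{t}\gamma$ throughout the simplification.
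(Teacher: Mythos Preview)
Your proof is correct and follows essentially the same approach as the paper: compute $\pr(\Ad(n_{\gamma})X_{Y,\beta,a})=\pr(X_{Y+2(\gamma^{t}\beta-\beta^{t}\gamma),\,\beta,\,a+2\gamma\beta^{t}})$ via the brackets \eqref{Eq:brackets}, impose the two conditions from Lemma~\ref{L:p-standard4}, solve to get $\gamma=-\frac{1}{2|\beta|^{2}}(\beta Y^{t}+a\beta)$, and substitute back. Your added conceptual remark on existence/uniqueness (via Lemma~\ref{L:p-standard1} and the injectivity argument from Lemma~\ref{L:p-standard3}) and the explicit back-check $\gamma\beta^{t}=-a/2$ are sound elaborations of steps the paper leaves implicit.
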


\begin{proof}
For $\gamma\in \mathbb{R}^{m}$, we calculate by using \eqref{Eq:brackets}
\[\pr(\Ad(n_{\gamma})(X_{Y,\beta,a}))
=\pr(X_{Y+2\gamma^t\beta-2\beta^t\gamma,\,\beta,\,a+2\gamma\beta^t}).\]
By Lemma \ref{L:p-standard4}, in order that
 $\Ad^*(n_{\gamma})(\pr(X_{Y,\beta,a}))\in\mathfrak{l}^{\ast}_{\xi}+\tilde{\xi}$,
 it is necessary and sufficient that $a+2\gamma\beta^t=0$ and
$\beta(Y+2\gamma^t\beta-2\beta^t\gamma)^t=0$.
From these two equations, one solves that
\[\gamma=-\frac{1}{2|\beta|^2}(\beta Y^t+a\beta).\]
Then we have \[\Ad^*(n_{\gamma})(X_{Y,\beta,a})=
X_{Y-\frac{1}{|\beta|^2}(Y\beta^t\beta-\beta^t\beta Y^t),\,\beta,\,0}.\qedhere\]
\end{proof}

\begin{lemma}\label{L:class-matrix}
Assume that $\beta\neq 0$ and $Y\beta^{t}=0$. Then, the orbit $P\cdot\pr(X_{Y,\beta,0})$ is
determined by the class of $Z_{Y,\beta}$ with respect to the conjugation action of $\SO(m+1)$, where
\begin{equation*}%\label{Eq:ZY-beta}
Z_{Y,\beta}=
\begin{pmatrix}Y&\frac{\beta^t}{|\beta|}\\
-\frac{\beta}{|\beta|}&0\\
\end{pmatrix}.
\end{equation*}
\end{lemma}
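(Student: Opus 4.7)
The plan is to reduce the classification of $P$-orbits through $\pr(X_{Y,\beta,0})$ to an $\SO(m-1)$-conjugacy problem via the orbit reduction of Section~\ref{SS:P-orbit}, and then match this $\SO(m-1)$-invariant with the $\SO(m+1)$-conjugacy class of $Z_{Y,\beta}$ by an explicit spectral computation.

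First I would set $\xi=\psi_{n}(\pr(X_{0,\beta,0}))\in\mathfrak{n}^{*}$. By Lemma~\ref{L:p-standard4}, the hypothesis $Y\beta^{t}=0$ places $\pr(X_{Y,0,0})$ in $\mathfrak{l}^{*}_{\xi}$, so that $\pr(X_{Y,\beta,0})=\pr(X_{Y,0,0})+\tilde{\xi}$ is already in the canonical form of Lemma~\ref{L:p-standard3}. Part~(2) of that lemma then identifies the $P$-orbit of $\pr(X_{Y,\beta,0})$ with the $L^{\xi}$-orbit of $\pr(X_{Y,0,0})$ in $\mathfrak{l}^{*}_{\xi}\cong(\mathfrak{l}^{\xi})^{*}$. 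From \eqref{Eq:MAaction} the $A$-factor of $L=MA$ acts by scaling on $\xi\neq 0$, so $L^{\xi}\subset M$ and $L^{\xi}$ covers $\Stab_{\SO(m)}(\beta)\cong\SO(\beta^{\perp})\cong\SO(m-1)$. Using $L$-transitivity on $\mathfrak{n}^{*}-\{0\}$, I would normalize $\beta$ to a fixed unit vector; then $Y$ takes the form $\diag\{0,Y'\}$ with $Y'\in\mathfrak{so}(m-1)$, and the remaining $L^{\xi}$-action becomes the simultaneous conjugation action of $\SO(m-1)$ on $Y'$. Hence the $P$-orbit of $\pr(X_{Y,\beta,0})$ is determined exactly by the $\SO(m-1)$-conjugacy class of $Y|_{\beta^{\perp}}$, equivalently by its (unordered) eigenvalues.

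It remains to verify that this invariant is equivalent to the $\SO(m+1)$-conjugacy class of $Z_{Y,\beta}$. Using $Y\beta^{t}=0$, one checks that $Z_{Y,\beta}$ preserves the orthogonal decomposition $\mathbb{R}^{m+1}=\beta^{\perp}\oplus\spa\{\beta^{t}/|\beta|,\,e_{m+1}\}$: on the first summand it acts as $Y|_{\beta^{\perp}}$, and on the second as the standard rotation with eigenvalues $\pm\mathbf{i}$. The spectrum of $Z_{Y,\beta}$ is therefore $\{\pm\mathbf{i}\}$ together with that of $Y|_{\beta^{\perp}}$. Since $\SO(m+1)$-conjugacy classes in $\mathfrak{so}(m+1)$ are determined by the (unordered) list of eigenvalues, the $\SO(m+1)$-conjugacy class of $Z_{Y,\beta}$ encodes precisely the $\SO(m-1)$-conjugacy class of $Y|_{\beta^{\perp}}$ obtained in the previous step.

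The main points requiring care are (i) showing that the identification $\mathfrak{l}^{*}_{\xi}\cong(\mathfrak{l}^{\xi})^{*}$ of Lemma~\ref{L:p-standard1} intertwines the $L^{\xi}$-action on $\pr(X_{Y,0,0})$ with the conjugation action of $\SO(m-1)$ on $Y|_{\beta^{\perp}}$, which can be verified by unwinding the definitions via the bracket formulas \eqref{Eq:brackets}; and (ii) checking that $Z_{Y,\beta}$ is invariant under the $A$-scaling of $\beta$, which is immediate because the definition involves only $\beta/|\beta|$. I do not foresee any serious obstacle, since the lemma is essentially a dictionary between two natural presentations of the same invariant.
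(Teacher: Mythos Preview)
Your overall strategy coincides with the paper's: reduce the $P$-orbit of $\pr(X_{Y,\beta,0})$ via Lemma~\ref{L:p-standard3} to the $L^{\xi}$-orbit of $Y|_{\beta^{\perp}}$ under $\SO(m-1)$, and then identify this invariant with the $\SO(m+1)$-conjugacy class of $Z_{Y,\beta}$. The paper carries out the second step by exhibiting a common normal form: it finds $(W,a)\in\SO(m)\times\mathbb{R}_{>0}$ with $a\beta W^{t}=(0,\dots,0,1)$ and $WYW^{-1}=\diag\{x_{1}H',\dots,x_{n-1}H',0\}$, and then observes that both the $P$-orbit and the $\SO(m+1)$-class of $Z_{Y,\beta}\sim\diag\{x_{1}H',\dots,x_{n-1}H',H'\}$ are parametrized by the same tuple $(x_{1},\dots,x_{n-1})$ with $x_{1}\geq\cdots\geq x_{n-2}\geq|x_{n-1}|$.

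Your spectral matching, however, has a genuine gap. The assertion that ``$\SO(m+1)$-conjugacy classes in $\mathfrak{so}(m+1)$ are determined by the (unordered) list of eigenvalues'' is false precisely in the main case $m$ odd treated by the paper: for $\mathfrak{so}(2n)$ one needs the eigenvalues \emph{and} the sign of the Pfaffian (e.g.\ $\diag\{H',H'\}$ and $\diag\{H',-H'\}$ in $\mathfrak{so}(4)$ share eigenvalues but are not $\SO(4)$-conjugate). The same issue arises on the other side, since $m-1$ is then also even and the $\SO(m-1)$-class of $Y|_{\beta^{\perp}}$ is likewise not determined by eigenvalues alone. Your chain of equivalences therefore only yields the $\OO(m-1)$-class of $Y|_{\beta^{\perp}}$ from the $\OO(m+1)$-class of $Z_{Y,\beta}$, which is weaker than what is needed. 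The repair is straightforward: with the orientation on $\beta^{\perp}$ induced from the standard one on $\mathbb{R}^{m}$ via $\beta/|\beta|$, the ordered basis $(v_{1},\dots,v_{m-1},\beta/|\beta|,e_{m+1})$ of $\mathbb{R}^{m+1}$ is positively oriented and block-diagonalizes $Z_{Y,\beta}$ as $\diag\{Y|_{\beta^{\perp}},H'\}$, whence $\Pf(Z_{Y,\beta})=\Pf(Y|_{\beta^{\perp}})$. Adding this Pfaffian bookkeeping (or, equivalently, passing to the normal-form tuple as the paper does, where the sign of $x_{n-1}$ carries the Pfaffian) closes the gap.
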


\begin{proof}
We assume $m$ is odd and $m=2n-1$.
Put
\[H'=
\begin{pmatrix}0&1\\ -1&0\\
\end{pmatrix}.\]
Then, there exists
$(W,a) \in \SO(m)\times \mathbb{R}_{>0}$ such that
\[WYW^{-1}=\diag\{x_{1}H',\dots,x_{n-1}H',0\}\]
 and
$a\beta W^t=(\underbrace{0,\dots,0}_{m-1},1)$,
 where $x_1\geq x_{2}\geq\cdots\geq x_{n-2}\geq |x_{n-1}|$.
By Lemma~\ref{L:p-standard3},
 the orbit $P\cdot \pr(X_{Y,\beta,0})$ is determined by the tuple $(x_1,\dots,x_{n-1})$.
Since $Z_{Y,\beta}$ is conjugate to
\[\diag\{x_{1}H',\dots,x_{n-1}H',H'\},\]
 the class of $Z_{Y,\beta}$ with respect to the conjugation action of $\SO(m+1)$
 is also determined by the tuple $(x_1,\dots,x_{n-1})$.
Hence, the conclusion of the proposition follows.

The case where $m$ is even is similar.
\end{proof}

Suppose that $m$ is odd and $m+1=2n$.
Then it is known that
 the $\SO(2n)$-conjugacy class of $Z_{Y,\beta}$ is determined by its singular values
and the sign of its Pfaffian (which can be $1$, $-1$ or $0$).
Here, the singular values of $Z_{Y,\beta}$
 mean the square roots of eigenvalues of $(Z_{Y,\beta})^t Z_{Y,\beta}$.
From the proof of Lemma \ref{L:class-matrix},
 we see that singular values of $Z_{Y,\beta}$ are
\[\{x_1,x_1,x_2,x_2,\dots,x_{n-1},x_{n-1},1,1\}\]
 and the singular values of $Y$ are
\[\{x_1,x_1,x_2,x_2,\dots,x_{n-1},x_{n-1},0\}.\]
The sign of the Pfaffian of $Z_{Y,\beta}$ is
 equal to the sign of $x_{n-1}$.

Next, suppose that $m$ is even and $m+1=2n-1$.
Then $Z_{Y,\beta}$ has an eigenvalue $0$
 and the singular values are
\[\{x_1,x_1,x_2,x_2,\dots,x_{n-2},x_{n-2},1,1,0\}\]
 with $x_1\geq x_2\geq \cdots \geq x_{n-2}\geq 0$.
The singular values of $Y$ are
\[\{x_1,x_1,x_2,x_2,\dots,x_{n-2},x_{n-2},0,0\}.\]
The $\SO(2n-1)$-conjugacy class of $Z_{Y,\beta}$
 is determined by the tuple $(x_1,\dots,x_{n-2})$.
The Pfaffian does not appear in this case.

\begin{remark}
It is easy to see that if a coadjoint orbit in $\mathfrak{p}^{\ast}$
 is strongly regular (see Section~\ref{S:introduction}), then it has depth one.
In the above notation the orbit $P\cdot\pr(X_{Y,\beta,0})$ is
 strongly regular if and only if $x_1>\cdots >x_{n-2}>|x_{n-1}|>0$ for odd $m$
 and $x_1>\cdots > x_{n-2}>0$ for even $m$.
\end{remark}

\section{Restriction to $P$ of irreducible representations of $G$}\label{S:resP}

\subsection{Moderate growth smooth representations of $G$ (or $P$)}\label{SS:CW-Cloux}

Let $\mathcal{C}_{K}(G)$ denote the category of Harish-Chandra modules, i.e., finitely generated admissible
$(\mathfrak{g},K)$-modules. For a $G$-representation $\pi$, let $\pi_{K}$ be the space of $K$-finite vectors in $\pi$.
Let $\mathcal{C}(G)$ denote the category of moderate growth, smooth Fr\'echet $G$-representations $\pi$ such that
$\pi_{K}\in\mathcal{C}_{K}(G)$.
The morphisms in $\mathcal{C}(G)$ are defined to be continuous intertwiners with images that are direct summands in
the category of Fr\'echet spaces.
The {\it Casselman-Wallach theorem} asserts that the functor
\[\mathcal{C}(G)\rightarrow \mathcal{C}_{K}(G),\quad  \pi\mapsto \pi_{K}\]
gives an equivalence of abelian categories.
For an object $V\in\mathcal{C}_{K}(G)$, write $V^{\sm}\in\mathcal{C}(G)$ for a
{\it Casselman-Wallach globalization} of $V$. Then $(V^{\sm})_{K}\cong V$.

In \cite{duCloux}, du Cloux studied the category of moderate growth, smooth Fr\'echet representations of a real
semi-algebraic group. We recall some results of \cite{duCloux} in our setting. Let $\mathcal{C}(P)$
(resp.\ $\mathcal{C}(M')$) denotes the category of moderate growth, smooth Fr\'echet representations of $P$
(resp.\ $M'$). The morphisms are continuous intertwiners.
%such that their images are direct summands in the category of Fr\'echet spaces.

%It is clear that $\mathcal{K}(G)$ is a free abelian group with a basis $\pi^{\sm}$ where $\pi$ represents
%isomorphism classes of irreducible $(\mathfrak{g},K)$-modules.

Let $\mathscr{S}(\mathfrak{n})$ (resp.\ $\mathscr{S}(\mathfrak{n}^{\ast})$)
 be the Schwartz space on $\mathfrak{n}$ (resp.\ $\mathfrak{n}^{\ast}$)
 with the algebra structure by the convolution product
 (resp.\ by the usual multiplication) of functions.
The inverse Fourier transform gives an algebra isomorphism
 $\mathscr{S}(\mathfrak{n})\xrightarrow{\sim} \mathscr{S}(\mathfrak{n}^{\ast})$.
Let $\mathscr{S}(\mathfrak{n}^{\ast}-\{0\})$ be the Schwarz space on
$\mathfrak{n}^{\ast}-\{0\}$. In other words, it consists of $f|_{\mathfrak{n}^{\ast}-\{0\}}$ with
$f\in\mathscr{S}(\mathfrak{n}^{\ast})$ such that $f$ and its all (higher) derivatives vanish at
$0 (\in \mathfrak{n}^{\ast})$.

A representation $E\in \mathcal{C}(P)$ can be viewed as a moderate growth, smooth Fr\'echet representation of $N$
by restriction. Then via exponential map $\mathfrak{n}\cong N$, the Fr\'echet space $E$ becomes an
$\mathscr{S}(\mathfrak{n})$-module and then an $\mathscr{S}(\mathfrak{n}^{\ast}-\{0\})$-module by
$\mathscr{S}(\mathfrak{n}^{\ast}-\{0\})\subset \mathscr{S}(\mathfrak{n}^{\ast})\cong\mathscr{S}(\mathfrak{n})$.

We shall define a functor $\Psi\colon \mathcal{C}(P)\to \mathcal{C}(M')$ as follows. This functor is given as
$E\to E(x_0)$ in \cite[The\'or\`eme 2.5.8]{duCloux}. Recall that $\xi_0\in\mathfrak{n}^{\ast}-\{0\}$ is defined in
\S\ref{SS:notation} and the stabilizer of $\xi_0$ for the coadjoint action of $P$ on $\mathfrak{n}^{\ast}$
is $\Stab_{P}(\xi_0)=M'N$. Define the following algebra by adding the constant function $1$, which becomes the unit
of the algebra: \[\tilde{\mathscr{S}}(\mathfrak{n}^{\ast}-\{0\})=\bbC 1\oplus \mathscr{S}(\mathfrak{n}^{\ast}-
\{0\}).\]
Define an ideal $\mathfrak{m}_{\xi_0}$ by
 \[\mathfrak{m}_{\xi_0}=\{f\in \tilde{\mathscr{S}}(\mathfrak{n}^{\ast}-\{0\}) : f(\xi_0)=0\}.\]
%Then $\mathfrak{m}_{\xi_0}$ is an ideal in $\mathscr{S}(\mathfrak{n}^{\ast}\setminus\{0\})$.
For $E\in \mathcal{C}(P)$,
\cite[Lemme 2.5.7]{duCloux} shows that
 the subspace $\mathfrak{m}_{\xi_0}\cdot E$ is closed
 and stable by the action of $M'N$.
Hence the quotient $E/(\mathfrak{m}_{\xi_0}\cdot E)$
 is a Fr\'echet space with a natural $M'N$-action on it.
The action of $\tilde{\mathscr{S}}(\mathfrak{n}^*-\{0\})$ on $E/(\mathfrak{m}_{\xi_0}\cdot E)$
 factors through the evaluation map
 $\tilde{\mathscr{S}}(\mathfrak{n}^*-\{0\})\ni f \mapsto f(\xi_0)$.
Hence $N$ acts on $E/(\mathfrak{m}_{\xi_0}\cdot E)$ by $e^{\mathbf{i}\xi_0}$.
When we view $E/(\mathfrak{m}_{\xi_0}\cdot E)$ as a representation of $M'$ we write
\begin{equation*}%\label{Eq:J}
\Psi(E):=E/(\mathfrak{m}_{\xi_0}\cdot E).
\end{equation*}
By \cite[The\'or\`eme 2.5.8]{duCloux},
 $\Psi(E)\in \mathcal{C}(M')$ and then $\Psi$ defines a
functor $\mathcal{C}(P)\to \mathcal{C}(M')$.
%Let $J$ also denote the induced homomorphism on $K$ groups:
%\[J:\mathcal{K}(P)\to\mathcal{K}(M').\] See \cite{duCloux} for more details.

Let $F$ be a finite-dimensional representation of $P$ such that the $N$-action is trivial. Then it is easy to see that
there is a natural isomorphism
\begin{equation}\label{Jtensor}
\Psi(E)\otimes (F|_{M'}) \cong \Psi(E\otimes F).
\end{equation}

%\cite{duCloux} shows that the following induction functors give the ``inverse" of $J$.

Next, we define the induction from $M'$-representations to $P$-representations.
%Let $\xi_0$ also denote the the unitary character of $N$
% associated to $\xi_0\in\mathfrak{n}^{\ast}$.
%Let $E$ be an irreducible object in $\mathcal{R}mod(P)$ which does not factor through
%$P/N$.%Since $N$ is abelian and $P$ acts transitively on $\mathfrak{n}^{\ast}-\{0\}$, $E|_{N}$ is a multiple of $\xi_0$.
Let $(\tau,V_{\tau})\in \mathcal{C}(M')$.
Then $\tau\otimes e^{\mathbf{i}\xi_{0}}$ is a smooth Fr\'echet representation of $M'N$.
One defines in a natural way the smoothly induced representation
 $C^{\infty}\Ind_{M'N}^{P}(\tau\otimes e^{\mathbf{i}\xi_{0}})$.
Let $\mathscr{S}(P,V_{\tau})$ be the space of Schwartz functions on $P$
 taking values in $V_{\tau}$.
For $f\in\mathscr{S}(P,V_{\tau})$, define $\bar{f}\in C^{\infty}(P,V_{\tau})$ by
\[\bar{f}(g)=\int_{M'N}(\tau\otimes e^{\mathbf{i}\xi_{0}})(mn)f(gmn)\d m\d n.\]
Then one has $\bar{f}\in C^{\infty}\Ind_{M'N}^{P}(V_{\tau})$. Let
\[\mathscr{S}\Ind_{M'N}^{P}(\tau\otimes e^{\mathbf{i}\xi_{0}})=
\{\bar{f}:f\in\mathscr{S}(P,V_{\tau})\}.\] Then
$\mathscr{S}\Ind_{M'N}^{P}(\tau\otimes  e^{\mathbf{i}\xi_0})$ is a dense
subspace of $C^{\infty}\Ind_{M'N}^{P}(\tau\otimes  e^{\mathbf{i}\xi_0})$
 and $\mathscr{S}\Ind_{M'N}^{P}
(\tau\otimes  e^{\mathbf{i}\xi_0})\in\mathcal{C}(P)$.

Let \[\mathscr{O}_{M}(P,\tau\otimes e^{\mathbf{i}\xi_{0}})=\{f\in C^{\infty}(P,V_{\tau}): h\cdot f\in\mathscr{S}(P,V_{\tau})
\ (\forall h\in\mathscr{S}(P))\}.\]
Let
\begin{align*}
&\mathscr{O}_{M}\Ind_{M'N}^{P}(\tau\otimes e^{\mathbf{i}\xi_{0}}) \\
& =\{f\in\mathscr{O}_{M}(P,\tau\otimes e^{\mathbf{i}\xi_0}):
 f(gmn)=(\tau\otimes e^{\mathbf{i}\xi_{0}})(mn)^{-1}f(g)\ (\forall g\in P,mn\in M'N)\}.
\end{align*}
This is not a Fr\'echet space, but $P$ naturally acts on it.
Then
\[\mathscr{S}\Ind_{M'N}^{P}
(\tau\otimes e^{\mathbf{i}\xi_0})\subset\mathscr{O}_{M}\Ind_{M'N}^{P}(\tau\otimes e^{\mathbf{i}\xi_0})\subset C^{\infty}\Ind_{M'N}^{P}(\tau\otimes e^{\mathbf{i}\xi_0}).\]
Since $P/(M'N)\cong \mathfrak{n}^*-\{0\}$,
 these three spaces become $\mathscr{S}(\mathfrak{n}^*-\{0\})$-modules by multiplication.

%If $F'$ is a finite-dimensional representation of $P$,
%  there are isomorphisms
%\begin{align*}
%&\mathscr{S}\Ind_{M'N}^{P}(\tau\otimes\xi_{0})\otimes F'
%\cong \mathscr{S}\Ind_{M'N}^{P}((\tau\otimes\xi_{0})\otimes F'|_{M'N}),\\
%&\mathscr{O}_{M}\Ind_{M'N}^{P}(\tau\otimes\xi_{0})\otimes F'
%\cong \mathscr{O}_{M}\Ind_{M'N}^{P}((\tau\otimes\xi_{0})\otimes F'|_{M'N}),
%\end{align*}
%which respect $P$ actions.

Since $N$ is nilpotent and $M'$ is compact, the group $M'N$ is unimodular and the restriction to $M'N$
of the modulus character of $P$ is also trivial. Let $E\in\mathcal{C}(P)$. The natural map $E\to\Psi(E)$
is $M'$-intertwining and corresponds to a $P$-intertwiner
\[u\colon E\to C^{\infty}\Ind_{M'N}^{P}(\Psi(E)\otimes e^{\mathbf{i}\xi_{0}})\] by the Frobenius
reciprocity. The following is a part of \cite[Th\'eor\`eme 2.5.8]{duCloux} applying to the group $P$.

\begin{fact}\label{F:duCloux}
Let $E\in\mathcal{C}(P)$ and let $u\colon E\to C^{\infty}\Ind_{M'N}^{P}(\Psi(E)\otimes e^{\mathbf{i}\xi_{0}})$
be as above. Then \begin{align*}
&\mathscr{S}\Ind_{M'N}^{P}(\Psi(E)\otimes e^{\mathbf{i}\xi_{0}})\subset\Im(u)\subset
\mathscr{O}_{M}\Ind_{M'N}^{P}(\Psi(E)\otimes e^{\mathbf{i}\xi_{0}}),\text{ and }\\&\Ker(u)=
 \{v\in E : \mathscr{S}(\mathfrak{n}^*-\{0\}) \cdot v = 0\}.
\end{align*}
\end{fact}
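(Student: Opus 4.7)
The strategy is to put $E \in \mathcal{C}(P)$ into the framework of Fr\'echet modules over the Schwartz algebra $\mathscr{S}(\mathfrak{n}^{\ast})$ via Fourier transform on $N$, and to interpret the map $u$ in that language. Under the Fourier isomorphism $\mathscr{S}(\mathfrak{n}) \xrightarrow{\sim}\mathscr{S}(\mathfrak{n}^{\ast})$, the convolution action of $\mathscr{S}(\mathfrak{n})$ on $E$ (coming from the smooth $N$-representation) becomes pointwise multiplication. The moderate growth hypothesis makes the module structure jointly continuous and allows extension to the unitized algebra $\tilde{\mathscr{S}}(\mathfrak{n}^{\ast}-\{0\})$. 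In this picture $\mathfrak{m}_{\xi_0}\cdot E$ is the ``fiber kernel at $\xi_0$'' and $\Psi(E)$ is the associated fiber, with $M'$ acting as the stabilizer of $\xi_0$ and $N$ acting through $e^{\mathbf{i}\xi_0}$; by Frobenius reciprocity $u(v)(g) = \alpha(\pi(g^{-1})v)$, where $\alpha\colon E\twoheadrightarrow \Psi(E)$ is the quotient projection.

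For the kernel description, one direction is routine: if $\mathscr{S}(\mathfrak{n}^{\ast}-\{0\})\cdot v = 0$, choose $f\in\mathscr{S}(\mathfrak{n}^{\ast}-\{0\})$ with $f(\xi_0)=-1$ and note that $v=(1+f)\cdot v \in \mathfrak{m}_{\xi_0}\cdot E$; applying the same observation to $\pi(g^{-1})v$, which satisfies the same annihilation hypothesis by $P$-equivariance of the $\mathscr{S}(\mathfrak{n}^{\ast})$-action (up to the coadjoint twist), yields $u(v)=0$. Conversely, $u(v)=0$ translates to $v\in\mathfrak{m}_{g\xi_0}\cdot E$ for every $g\in P$; since $P$ acts transitively on $\mathfrak{n}^{\ast}-\{0\}$, the vector $v$ has zero fiber at every nonzero covector. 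A Schwartz partition of unity on $\mathfrak{n}^{\ast}-\{0\}$ together with continuity of the $\tilde{\mathscr{S}}(\mathfrak{n}^{\ast}-\{0\})$-action upgrades this pointwise vanishing into the global identity $\mathscr{S}(\mathfrak{n}^{\ast}-\{0\})\cdot v = 0$.

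For the image, the inclusion $\Im(u)\subset \mathscr{O}_M\Ind_{M'N}^{P}(\Psi(E)\otimes e^{\mathbf{i}\xi_0})$ follows from the moderate growth estimates on $E$: the function $g\mapsto u(v)(g)$ is polynomially bounded, while multiplication by an element of $\mathscr{S}(P)$, which along the fiber $P/(M'N)\cong \mathfrak{n}^{\ast}-\{0\}$ becomes a Schwartz function on $\mathfrak{n}^{\ast}-\{0\}$, produces a Schwartz function on $P$. For the harder inclusion $\mathscr{S}\Ind_{M'N}^{P}(\Psi(E)\otimes e^{\mathbf{i}\xi_0})\subset\Im(u)$, the approach is to construct explicit preimages: starting from $\bar f$ with a Schwartz representative $f\in\mathscr{S}(P,V_{\tau})$, pick a continuous local section of $\alpha$ near a basepoint (provided by the open mapping theorem applied to the Fr\'echet surjection $\alpha$), integrate against $f$, and use transitivity of $P$ on $\mathfrak{n}^{\ast}-\{0\}$ to propagate the construction globally and verify $u(v)=\bar f$.

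The main obstacle is exactly this last step: showing that Schwartz-type decay near infinity and near the puncture at $0$ in $\mathfrak{n}^{\ast}-\{0\}$ can actually be realized by vectors in $E$ through the map $u$. This is a genuine analytic statement about Fr\'echet modules over the Schwartz algebra and cannot be bypassed by soft arguments; in our setting it is precisely the content of \cite[Th\'eor\`eme 2.5.8]{duCloux}, whose analysis of the bundle-like structure of $E$ over $\mathfrak{n}^{\ast}-\{0\}$ we would invoke as the key input.
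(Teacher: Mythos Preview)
The paper does not prove this statement: it is recorded as a \emph{Fact} and attributed directly to \cite[Th\'eor\`eme~2.5.8]{duCloux}, with no argument supplied beyond the citation. Your proposal is consistent with this---you correctly set up the Schwartz-module framework, sketch the easy directions, and then explicitly identify the substantive analytic step (surjectivity onto the Schwartz-induced space, and the passage from pointwise fiber vanishing to global annihilation) as the content of du~Cloux's theorem, which is exactly the same external input the paper invokes.
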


We need several lemmas below.

\begin{lemma}\label{L:Frobenius}
Let $E\in\mathcal{C}(P)$ and $W\in \mathcal{C}(M')$. Let
$\varphi\colon E\hookrightarrow C^{\infty}\Ind_{M'N}^{P}(W\otimes e^{\mathbf{i}\xi_{0}})$
be an injective $P$-intertwining map that is also a homomorphism of $\mathscr{S}(\mathfrak{n}^*-\{0\})$-modules.
Then the kernel of the map $\bar{\varphi}\colon E\to W$ given by $\bar{\varphi}(v)=(\varphi(v))(e)$
equals $\mathfrak{m}_{\xi_0}\cdot E$.
\end{lemma}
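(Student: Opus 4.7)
The plan is to prove the two inclusions $\mathfrak{m}_{\xi_0}\cdot E \subseteq \Ker\bar\varphi$ and $\Ker\bar\varphi \subseteq \mathfrak{m}_{\xi_0}\cdot E$ separately. The forward inclusion is immediate: since $\varphi$ intertwines the $\mathscr{S}(\mathfrak{n}^{\ast}-\{0\})$-actions and the identification $P/(M'N)\cong \mathfrak{n}^{\ast}-\{0\}$ sends the identity coset to $\xi_0$, the multiplicative $\tilde{\mathscr{S}}(\mathfrak{n}^{\ast}-\{0\})$-action on $C^{\infty}\Ind_{M'N}^{P}(W\otimes e^{\mathbf{i}\xi_0})$ satisfies $(f\cdot h)(e)=f(\xi_0)h(e)$. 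Hence for $f\in \mathfrak{m}_{\xi_0}$ and $v\in E$, $\bar\varphi(fv)=(f\cdot \varphi(v))(e)=f(\xi_0)\varphi(v)(e)=0$.

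For the reverse inclusion, observe that $\bar\varphi\colon E\to W$ is $M'$-equivariant (since $\mathrm{ev}_e$ is $M'$-equivariant on the induced representation). Combined with the forward inclusion, $\bar\varphi$ descends to an $M'$-intertwiner $\pi\colon \Psi(E)\to W$ with $\bar\varphi=\pi\circ p$, where $p\colon E\to \Psi(E)=E/(\mathfrak{m}_{\xi_0}\cdot E)$ is the canonical projection. As $\Ker p=\mathfrak{m}_{\xi_0}\cdot E$, the required equality $\Ker\bar\varphi=\mathfrak{m}_{\xi_0}\cdot E$ is equivalent to the injectivity of $\pi$.

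To prove $\pi$ is injective, invoke Frobenius reciprocity. Let $u\colon E\to C^{\infty}\Ind_{M'N}^{P}(\Psi(E)\otimes e^{\mathbf{i}\xi_0})$ be the canonical $P$-intertwiner of Fact~\ref{F:duCloux}, and let $\pi_{\ast}$ denote the $P$-intertwiner between induced representations obtained by post-composing sections with $\pi$. Both $\varphi$ and $\pi_{\ast}\circ u$ are $P$-intertwiners $E\to C^{\infty}\Ind_{M'N}^{P}(W\otimes e^{\mathbf{i}\xi_0})$ whose compositions with $\mathrm{ev}_e$ both equal $\bar\varphi$, so Frobenius uniqueness forces $\varphi=\pi_{\ast}\circ u$. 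Since $\varphi$ is injective, $u$ is injective and $\pi_{\ast}$ is injective on $\Im(u)$; by Fact~\ref{F:duCloux}, $\mathscr{S}\Ind_{M'N}^{P}(\Psi(E)\otimes e^{\mathbf{i}\xi_0})\subseteq \Im(u)$, so $\pi_{\ast}$ is injective on this Schwartz-induced subspace.

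Finally, suppose for contradiction that $K:=\Ker\pi\neq 0$; it is $M'$-stable because $\pi$ is $M'$-equivariant. Averaging any nonzero vector $\bar v_0\in K$ against a suitable Schwartz function on $P$ over $M'N$ (using compactness of $M'$ and the $N$-character $e^{\mathbf{i}\xi_0}$) produces a nonzero section $h\in \mathscr{S}\Ind_{M'N}^{P}(K\otimes e^{\mathbf{i}\xi_0})\subseteq \mathscr{S}\Ind_{M'N}^{P}(\Psi(E)\otimes e^{\mathbf{i}\xi_0})$ on which $\pi_{\ast}$ vanishes, contradicting the injectivity of $\pi_{\ast}$ on the Schwartz subspace. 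The main obstacle is this last averaging construction: verifying simultaneously that the averaged section is nonzero and of Schwartz type requires the compactness of $M'$ and a careful choice of the test function on $P$ with support transverse to $M'N$.
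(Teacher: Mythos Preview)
Your proof is correct and follows essentially the same route as the paper's: both establish the easy inclusion $\mathfrak{m}_{\xi_0}\cdot E\subset\Ker\bar\varphi$, factor $\varphi$ through $u$ and the map $\pi_\ast$ induced by $\bar\varphi\colon\Psi(E)\to W$, and then use Fact~\ref{F:duCloux} to conclude that $\pi_\ast$ is injective on $\mathscr{S}\Ind_{M'N}^{P}(\Psi(E)\otimes e^{\mathbf{i}\xi_0})$. The paper simply asserts the final implication ``hence $\bar\varphi\colon\Psi(E)\to W$ is injective'' without further comment, whereas you spell out the standard reason (a nonzero $M'$-stable kernel would yield a nonzero Schwartz section annihilated by $\pi_\ast$); your caution about the averaging construction is fair but not a genuine obstacle, since $M'$ is compact and one may choose $f\in\mathscr{S}(P,K)$ supported in a slice transverse to $M'N$ so that $\bar f(e)\neq 0$.
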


\begin{proof}
Take any $f\in \mathfrak{m}_{\xi_0}$ and $v\in E$. Since $\phi$ is an $\mathscr{S}(\mathfrak{n}^{\ast}-\{0\})$-homomorphism,
$\phi(fv)=f\phi(v)$ and $\bar{\phi}(fv)=f(\xi_0)\bar{\phi}(v)$. Hence, $\Ker(\bar{\varphi})\supset
\mathfrak{m}_{\xi_0}\cdot E$. Then, $\bar{\varphi}$ descends to $\bar{\varphi}\colon \Psi(E)\to W$.
The map $\varphi$ factors as
\[E\xrightarrow{u} C^{\infty}\Ind_{M'N}^{P}(\Psi(E)\otimes e^{\mathbf{i}\xi_{0}})
 \to C^{\infty}\Ind_{M'N}^{P}(W\otimes e^{\mathbf{i}\xi_{0}}).\]
By Fact~\ref{F:duCloux},
 $\Im(u)\supset \mathscr{S}\Ind_{M'N}^{P}(\Psi(E)\otimes e^{\mathbf{i}\xi_{0}})$
 and then
$\mathscr{S}\Ind_{M'N}^{P}(\Psi(E)\otimes e^{\mathbf{i}\xi_{0}})
 \to C^{\infty}\Ind_{M'N}^{P}(W\otimes e^{\mathbf{i}\xi_{0}})$
 is injective.
Therefore, $\bar{\varphi}\colon \Psi(E)\to W$ is also injective.
\end{proof}

\begin{lemma}\label{L:exact}
Let $0\to E_1 \to E_2 \to E_3 \to 0$ be a sequence
 in $\mathcal{C}(P)$ which is exact as vector spaces.
Then the induced sequence $0\to \Psi(E_1) \to \Psi(E_2) \to \Psi(E_3) \to 0$
 in $\mathcal{C}(M')$ is also exact as vector spaces.
\end{lemma}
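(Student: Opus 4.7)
Plan: Unwinding definitions, the claim amounts to showing that the functor $E\mapsto \Psi(E)=E/(\mathfrak{m}_{\xi_0}\cdot E)$ is exact when applied to a short exact sequence in $\mathcal{C}(P)$. I will split the argument into right exactness and left exactness.

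Right exactness is formal. The Frobenius-reciprocity computation preceding Fact~\ref{F:duCloux} realizes $\Psi$ as the left adjoint of $\tau\mapsto C^{\infty}\Ind_{M'N}^{P}(\tau\otimes e^{\mathbf{i}\xi_0})$, so $\Psi$ is automatically right exact as a functor to vector spaces. Concretely, since morphisms in $\mathcal{C}(P)$ have Fr\'echet-split image, the map $E_2\twoheadrightarrow E_3$ is a topological quotient and therefore carries $\mathfrak{m}_{\xi_0}\cdot E_2$ onto $\mathfrak{m}_{\xi_0}\cdot E_3$, yielding the surjection $\Psi(E_2)\twoheadrightarrow\Psi(E_3)$. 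For exactness at the middle term, a class $[v_2]\in\Psi(E_2)$ vanishing in $\Psi(E_3)$ has a representative $v_2\in E_2$ whose image $v_3\in E_3$ lies in $\mathfrak{m}_{\xi_0}\cdot E_3$; I would then lift $v_3$ through the Fr\'echet splitting to an element $w_2\in\mathfrak{m}_{\xi_0}\cdot E_2$ with the same image in $E_3$, so that $v_2-w_2\in E_1$ represents the same class $[v_2]$.

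The main obstacle is left exactness, i.e., the inclusion $E_1\cap(\mathfrak{m}_{\xi_0}\cdot E_2)\subset \mathfrak{m}_{\xi_0}\cdot E_1$. My plan is to combine Fact~\ref{F:duCloux} with the full strength of \cite[Th\'eor\`eme 2.5.8]{duCloux}. The essential input is the intrinsic characterization provided there: $\mathfrak{m}_{\xi_0}\cdot E$ coincides with the kernel of the canonical map $E\to \Psi(E)$, which by Fact~\ref{F:duCloux} is testable on the associated section $u(w)\in C^{\infty}\Ind_{M'N}^{P}(\Psi(E)\otimes e^{\mathbf{i}\xi_0})$ through a local vanishing condition at the identity coset (hence near $\xi_0$ in $\mathfrak{n}^{\ast}-\{0\}$ via $P/(M'N)\cong \mathfrak{n}^{\ast}-\{0\}$). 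Given $v\in E_1$ with $v\in \mathfrak{m}_{\xi_0}\cdot E_2$, I will restrict attention to the cyclic $P$-subrepresentation of $E_1$ generated by $v$ and apply this local characterization inside $E_1$: since the $\tilde{\mathscr{S}}(\mathfrak{n}^{\ast}-\{0\})$-action is induced from the $N$-action and hence preserves $E_1\subset E_2$, the vanishing of $v$ at $\xi_0$ detected inside $E_2$ can be recovered inside $E_1$. The hard step will be rigorously justifying this transfer: that the $\mathscr{S}(\mathfrak{n}^{\ast}-\{0\})$-module structure on the $P$-stable subspace $E_1$ behaves compatibly with that of $E_2$ near $\xi_0$, so that elements of $E_1$ expressible as ``$\mathfrak{m}_{\xi_0}$-multiples'' in $E_2$ can also be so expressed in $E_1$. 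This localization-at-$\xi_0$ argument is the technical crux and relies on the bump-function and Taylor-expansion machinery of du Cloux.
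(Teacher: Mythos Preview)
Your handling of right exactness is fine and matches the paper, which simply declares it ``easy to see.''

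The gap is in your left-exactness argument. You want to show that $v\in E_1\cap(\mathfrak{m}_{\xi_0}\cdot E_2)$ implies $v\in\mathfrak{m}_{\xi_0}\cdot E_1$, and you propose to do this via a ``local vanishing condition at $\xi_0$'' detected through $u$. But the only canonical way to test $v\in\mathfrak{m}_{\xi_0}\cdot E_i$ is via the evaluation $u_{E_i}(v)(e)\in\Psi(E_i)$, and knowing that the image of $[v]$ in $\Psi(E_2)$ vanishes tells you nothing about $[v]\in\Psi(E_1)$ \emph{unless you already know $\Psi(E_1)\to\Psi(E_2)$ is injective}, which is the claim itself. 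The bump-function machinery you invoke does not resolve this: writing $v=\sum g_iw_i$ with $g_i\in\mathfrak{m}_{\xi_0}$ and $w_i\in E_2$, there is no reason the $w_i$ can be chosen in $E_1$, and multiplying by functions in $\mathscr{S}(\mathfrak{n}^*-\{0\})$ preserves $E_1$ but does not pull the auxiliary $w_i\in E_2$ into it.

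The paper sidesteps this entirely by using \emph{both} halves of Fact~\ref{F:duCloux}: not only the description $\Ker(u_i)=\{v:\mathscr{S}(\mathfrak{n}^*-\{0\})\cdot v=0\}$, but also the inclusion $\mathscr{S}\Ind_{M'N}^{P}(\Psi(E_i)\otimes e^{\mathbf{i}\xi_0})\subset\Im(u_i)$. The kernel description gives $\Ker(u_1)=E_1\cap\Ker(u_2)$, hence an injection $\Im(u_1)\hookrightarrow\Im(u_2)$. Composing with the Schwartz inclusion yields an injective map
\[
\mathscr{S}\Ind_{M'N}^{P}(\Psi(E_1)\otimes e^{\mathbf{i}\xi_0})\hookrightarrow
\mathscr{O}_M\Ind_{M'N}^{P}(\Psi(E_2)\otimes e^{\mathbf{i}\xi_0}),
\]
which is induced by $\Psi(E_1)\to\Psi(E_2)$. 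Since the Schwartz induction detects nonzero vectors of $\Psi(E_1)$, the map $\Psi(E_1)\to\Psi(E_2)$ must be injective. The key idea you are missing is to use the Schwartz sections as a supply of test functions that transfers injectivity from the level of images of the $u_i$ back down to the level of $\Psi$.
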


\begin{proof}
It is easy to see that
 $\Psi(E_1) \to \Psi(E_2) \to \Psi(E_3) \to 0$ is exact.

Assuming $E_1 \hookrightarrow E_2$ is an injective homomorphism in $\mathcal{C}(P)$,
 we will show that $\Psi(E_1) \to \Psi(E_2)$ is injective.
By Fact~\ref{F:duCloux},  we have
\begin{align*}
&u_i\colon E_i\to \mathscr{O}_{M}\Ind_{M'N}^{P}(\Psi(E_i)\otimes e^{\mathbf{i}\xi_{0}}),\\
&\Ker(u_i)=\{v\in E_i :
\mathscr{S}(\mathfrak{n}^*-\{0\}) \cdot v = 0\},\\
&\mathscr{S}\Ind_{M'N}^{P}(\Psi(E_i)\otimes e^{\mathbf{i}\xi_{0}})\subset\Im(u_i)\quad (i=1,2).
\end{align*}
By this description of $\Ker(u_i)$,
 we have $\Ker(u_1)=E_1\cap \Ker(u_2)$ and hence
 the natural map $\Im(u_1)\to \Im(u_2)$ is injective.
By composing
\[\mathscr{S}\Ind_{M'N}^{P}(\Psi(E_1)\otimes e^{\mathbf{i}\xi_{0}})\subset\Im(u_1)
\hookrightarrow \Im(u_2)
\subset \mathscr{O}_{M}\Ind_{M'N}^{P}(\Psi(E_2)\otimes e^{\mathbf{i}\xi_{0}}),\]
we obtain an injective map \[\mathscr{S}\Ind_{M'N}^{P}(\Psi(E_1)\otimes e^{\mathbf{i}\xi_{0}})
\hookrightarrow \mathscr{O}_{M}\Ind_{M'N}^{P}(\Psi(E_2)\otimes e^{\mathbf{i}\xi_{0}}).\]
This is induced from the map $\Psi(E_1)\to \Psi(E_2)$, which must be also injective.
\end{proof}

For $E\in \mathcal{C}(P)$, $\Psi(E)$ is the maximal Hausdorff quotient of $E$ on which $\mathfrak{n}$ acts by
$\mathbf{i}\xi_0$ in the following sense. Let $F$ be the linear span of the set
\[\{X\cdot v - \mathbf{i}\xi_0(X) v \mid v\in E,\ X\in\mathfrak{n}\}\] and let $F^{\operatorname{cl}}$ be the
closure of $F$ in $E$. Then $F^{\operatorname{cl}}$ is closed by the $M'N$-action and
\begin{equation}\label{Eq:max_quotient}
\Psi(E)\cong E/F^{\operatorname{cl}}.
\end{equation} Take any $f\in \mathfrak{m}_{\xi_0}$ and $v\in E$ and consider the inverse Fourier transform
$\mathcal{F}(f)$ of $f$. Calculate the vector $fv$ by the definition of $\mathscr{S}(\mathfrak{n})$-action on $E$, i.e. the
integration of $\mathcal{F}(f)(n)nv$ over $n\in N$. Since the projection $p\colon E\to E/F^{cl}$ respects the $N$-action and $N$
acts by $e^{\mathbf{i}\xi_0}$ on $E/F^{cl}$, we have \[p(fv)=\int_{N} \mathcal{F}(f)(n)e^{\mathbf{i}(\xi_0,n)}p(v) dn=f(\xi_0)p(v)=0.\]
Then, the map $E\to E/F^{\operatorname{cl}}$ factors through
$\Psi(E)\to E/F^{\operatorname{cl}}$. On the other hand, since $\mathfrak{n}$ acts on $E/(\mathfrak{m}_{\xi_0}\cdot E)$
by $\mathbf{i}\xi_0$, we get a map $E/F^{\operatorname{cl}}\to\Psi(E)$, which is the inverse of the above map.
Thus \eqref{Eq:max_quotient} follows.

For $V\in \mathcal{C}(G)$, write $V|_P\in \mathcal{C}(P)$ for the representation obtained by restriction of the action
of $G$ to $P$.

\begin{lemma}\label{L:tensor}
Let $V\in \mathcal{C}(G)$ and
 let $F$ be a finite-dimensional representation of $P$.
Then there exists an isomorphism of $M'$-representations:
\[\Psi(V|_P)\otimes (F|_{M'}) \cong \Psi((V\otimes F)|_P).\]
\end{lemma}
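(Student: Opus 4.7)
The plan is to reduce to the case where $N$ acts trivially on $F$ --- handled by \eqref{Jtensor} --- via a filtration of $F$.

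First, I would observe that $N$ must act unipotently on any finite-dimensional $P$-representation $F$. Indeed, a nonzero generalized $\mathfrak{n}$-weight $\lambda\in\mathfrak{n}_{\bbC}^{\ast}$ on $F$ would, by compatibility with the $A$-action on $\mathfrak{n}^{\ast}$ (which scales by $e^{-\lambda_0(\log a)}$), produce infinitely many distinct weights, contradicting $\dim F<\infty$. Since $\mathfrak{n}$ is abelian (the restricted root system $\{\pm\lambda_0\}$ of type $A_1$ contains no multiple $2\lambda_0$), the subspaces
\[F_i:=\{v\in F : X_1\cdots X_i\cdot v=0 \text{ for all } X_1,\dots,X_i\in\mathfrak{n}\}\]
form a strictly increasing chain $0=F_0\subsetneq F_1\subsetneq\cdots\subsetneq F_k=F$ of $P$-submodules (using that $MA$ normalizes $\mathfrak{n}$), with $N$ acting trivially on every successive subquotient $F_i/F_{i-1}$.

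Next, for each $i$ I would tensor the short exact sequence $0\to F_{i-1}\to F_i\to F_i/F_{i-1}\to 0$ with $V$ to obtain a sequence of objects in $\mathcal{C}(P)$ that is exact as vector spaces. Applying Lemma~\ref{L:exact} produces an exact sequence in $\mathcal{C}(M')$:
\[0\to\Psi(V\otimes F_{i-1})\to\Psi(V\otimes F_i)\to\Psi(V\otimes(F_i/F_{i-1}))\to 0.\]
Because $N$ acts trivially on $F_i/F_{i-1}$, the already-established isomorphism \eqref{Jtensor} identifies the rightmost term with $\Psi(V|_P)\otimes(F_i/F_{i-1})|_{M'}$.

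Finally, I would conclude by comparing $M'$-isotypic multiplicities. Since $M'$ is compact, isotypic projection is exact, so for every irreducible $M'$-representation $V_\tau$, iterating the sequences from the previous step gives
\[\dim\Hom_{M'}(V_\tau,\Psi((V\otimes F)|_P))=\sum_{i=1}^{k}\dim\Hom_{M'}(V_\tau,\Psi(V|_P)\otimes(F_i/F_{i-1})|_{M'}).\]
On the other hand, semisimplicity of continuous $M'$-representations yields $F|_{M'}\cong\bigoplus_i(F_i/F_{i-1})|_{M'}$ as $M'$-modules, so the same sum equals $\dim\Hom_{M'}(V_\tau,\Psi(V|_P)\otimes F|_{M'})$. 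Since objects in $\mathcal{C}(M')$ are determined up to isomorphism by their $M'$-isotypic multiplicities (Peter--Weyl), this yields the desired $M'$-isomorphism. The main obstacle I expect is the first step: $N$-unipotency on $F$ relies on the special rank-one structure of $P$, without which the reduction to \eqref{Jtensor} breaks down; the remaining bookkeeping with exactness and multiplicities is standard.
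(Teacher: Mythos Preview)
Your proof is correct and follows essentially the same approach as the paper: both take a $P$-filtration of $F$ with $N$-trivial successive quotients, then combine \eqref{Jtensor} with the exactness of $\Psi$ from Lemma~\ref{L:exact}. The paper simply asserts the filtration and runs the induction tersely, whereas you justify the filtration via the $A$-scaling argument and phrase the induction as a multiplicity count; the only minor point is that your final Peter--Weyl step implicitly uses that $\Psi(V|_P)$ (hence each $\Psi(V|_P\otimes F_i)$) is finite-dimensional, which is available from Remark~\ref{R:CHM}.
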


\begin{proof}
There exists a filtration $0=F_0\subset F_1\subset \cdots \subset F_k =F$
 of $P$-subrepresentaions such that $N$ acts trivially on $F_i/F_{i-1}$.
Then \eqref{Jtensor} and Lemma~\ref{L:exact} imply
\[\Psi(V|_P)\otimes (F_i |_{M'}) \cong \Psi(V|_P\otimes F_i)\] inductively and
we obtain the conclusion of the lemma.
\end{proof}

\begin{remark}\label{R:CHM}
For $V\in \mathcal{C}(G)$, the dimension of $\Psi(V|_P)$ is finite and
 $\Psi(V|_P)\cong H_0(\mathfrak{n}, V\otimes e^{-\mathbf{i}\xi_0})$,
 namely, the subspace $F$ defined above \eqref{Eq:max_quotient} is closed.
This is proved in \cite[\S8]{CHM}.
The exactness of $\Psi$ for representations of $G$ is also proved there.
Vectors in the dual space of $\Psi(V|_P)$ is no other than Whittaker vectors.
\end{remark}

We will apply the above lemmas to study the restriction of unitary representations.
Let $V$ be a non-trivial irreducible
unitarizable $(\mathfrak{g},K)$-module.
Write $\bar{V}$ for its Hilbert space completion and $V^{\sm}$ for the
Casselman-Wallach globalization. By Proposition~\ref{P:Mackey}, an irreducible unitary representation of $P$ is either
finite-dimensional or equal to $I_{P,\tau}$ for an irreducible representation $\tau$ of $M'$. In the former case, it
factors through $P/N (\cong MA)$. Hence these are parametrized by irreducible unitary representations
$\sigma\otimes e^{\nu}$ of $MA$. Then by general theory, the restriction of $\bar{V}$ to $P$ decomposes into
irreducibles as \begin{align*}%\label{directintegral}
\bar{V}|_P \cong \int^{\oplus} (\sigma\otimes e^\nu)^{m(\sigma,\nu)} d\mu
 \oplus \bigoplus_{\tau} (I_{P,\tau})^{m(\tau)}.
\end{align*}
Here, the first term on the right hand side is a direct integral of irreducible unitary representations and the second
term is a Hilbert space direct sum. We will show that actually the first term on the right hand side does not appear and
the second term is a finite sum. Since any vector $v\in\int^{\oplus}(\sigma\otimes e^\nu)^{m(\sigma,\nu)}d\mu$ is
$N$-invariant, \[V\ni v' \mapsto (v',v) \in \bbC \] defines an $\mathfrak{n}$-invariant vector of the algebraic dual space $V^*$.
Since it is known that $H^0(\mathfrak{n},V^*)$ is finite-dimensional (\cite[Corollary 2.4]{Casselman-Osborne}),
$\int^{\oplus} (\sigma\otimes e^\nu)^{m(\sigma,\nu)} d\mu$ is also finite-dimensional and in particular it only has a
discrete spectrum. Suppose that $\sigma\otimes \nu$ appears in $\bar{V}|_P$ as a direct summand. Then by the Frobenius
reciprocity, we obtain an intertwining map \[V\hookrightarrow \Ind_P^G (\sigma\otimes e^{\nu}\otimes\mathbf{1}_N)
\bigl(\cong \Ind_{\bar{P}}^{G}(\sigma\otimes e^{-\nu}\otimes\mathbf{1}_{\bar{N}})=I(\sigma,-\nu+\rho')\bigr).\]
Since $\nu\in \mathbf{i}\mathfrak{a}^*$, $V$ is isomorphic to the unique irreducible subrepresentation of $I(\sigma,-\nu+\rho')$.
By considering leading exponent of matrix coefficients of $V$, \cite[Theorem 9.1.4]{Collingwood} (which in turn is
implied by a theorem of Howe-Moore in \cite{Howe-Moore}) implies that $\nu=0$ and $V$ is trivial, which is not the
case. Hence, $\int^{\oplus}(\sigma\otimes e^\nu)^{m(\sigma,\nu)}d\mu=0$.

Let $\bar{\tau}:=\sum^{\oplus}_{\tau} \tau^{m(\tau)}$ be the Hilbert direct sum.
Let $\Ind_{M'N}^P(\bar{\tau}\otimes e^{\mathbf{i}\xi_0})$
be the unitarily ($L^2$-)induced representation. Then
\[\bar{V}|_P \cong \bigoplus_{\tau} (I_{P,\tau})^{m(\tau)}
 \cong \Ind_{M'N}^P(\bar{\tau}\otimes e^{\mathbf{i}\xi_{0}}).\]
Let $\bar{\tau}^{\infty}$ be the
 set of smooth vectors in $\bar{\tau}$ as a representation of $M'$.
Then by the Sobolev embedding theorem (on $P/M'N\cong\mathbb{R}^{m}-\{0\}$),
 the smooth vectors in $\bar{V}$ lies in
 $C^{\infty}\Ind_{M'N}^P(\bar{\tau}^{\infty} \otimes e^{\mathbf{i} \xi_{0}})$.
Hence we obtain an injective $P$-intertwining map
\[V^{\sm}\hookrightarrow
 C^{\infty}\Ind_{M'N}^P(\bar{\tau}^{\infty}\otimes e^{\mathbf{i}\xi_{0}}).\]
Now we apply Lemma~\ref{L:Frobenius}
 and use the denseness of $V^{\sm}$ in $\bar{V}$,
 we conclude that
 \[\Psi(V^{\sm}|_P)\cong\bar{\tau}^{\infty}.\]
By Remark~\ref{R:CHM} or Proposition~\ref{P:res-induced} shown later,
 $\Psi(V^{\sm}|_P)$ is always finite-dimensional.
Therefore, $\bar{\tau}^{\infty}=\bar{\tau}$. We thus obtain the following.

\begin{lemma}\label{L:unitary-J}
Suppose that $V$ is a non-trivial irreducible unitarizable $(\mathfrak{g},K)$-module. Then
\begin{align*}
\bar{V}|_{P}\cong \Ind_{M'N}^{P}(\Psi(V^{\sm}|_P)\otimes e^{\mathbf{i}\xi_{0}}).
\end{align*}
\end{lemma}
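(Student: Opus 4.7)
The plan is to combine Mackey's classification of irreducible unitaries of $P$ from Proposition~\ref{P:Mackey} with du Cloux's formalism through Lemma~\ref{L:Frobenius} and Fact~\ref{F:duCloux}. First I would write down the abstract direct-integral decomposition of $\bar V|_{P}$; since every infinite-dimensional irreducible unitary of $P$ is of the form $I_{P,\tau}$ and every finite-dimensional one factors through $MA$, one has
\[\bar V|_{P}\ \cong\ \int^{\oplus}(\sigma\otimes e^{\nu})^{m(\sigma,\nu)}\,d\mu\ \oplus\ \bigoplus_{\tau} (I_{P,\tau})^{m(\tau)}.\]
The proof then amounts to showing that the direct integral vanishes and that the Hilbert sum equals $\Ind_{M'N}^{P}(\Psi(V^{\sm}|_{P})\otimes e^{\mathbf{i}\xi_{0}})$.

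For the vanishing of the direct integral, I would note that every vector in the direct integral is $N$-invariant, so pairing with such a vector produces an element of the algebraic $\mathfrak{n}$-invariants $H^{0}(\mathfrak{n},V^{\ast})$, which is finite-dimensional by Casselman--Osborne. Consequently the direct integral is actually a finite discrete sum. If some $\sigma\otimes e^{\nu}$ appeared, Frobenius reciprocity would embed $V$ into $\Ind_{\bar{P}}^{G}(\sigma\otimes e^{-\nu}\otimes\mathbf{1}_{\bar N})\cong I(\sigma,-\nu+\rho')$; since $\nu\in\mathbf{i}\mathfrak{a}^{\ast}$, a leading-exponent / Howe--Moore argument (as in \cite[Theorem 9.1.4]{Collingwood}) then forces $V$ to be trivial, contradicting the hypothesis.

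Writing $\bar{\tau}=\bigoplus_{\tau}\tau^{m(\tau)}$, I now have $\bar V|_{P}\cong \Ind_{M'N}^{P}(\bar{\tau}\otimes e^{\mathbf{i}\xi_{0}})$. Applying Sobolev embedding to functions on the open orbit $P/M'N\cong\mathbb R^{m}\setminus\{0\}$, the smooth vectors $V^{\sm}$ inject $P$-equivariantly into $C^{\infty}\Ind_{M'N}^{P}(\bar{\tau}^{\infty}\otimes e^{\mathbf{i}\xi_{0}})$; this embedding is automatically a map of $\mathscr{S}(\mathfrak{n}^{\ast}-\{0\})$-modules since both sides carry the same convolution-via-Fourier-transform structure coming from the $N$-action. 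Lemma~\ref{L:Frobenius} then produces an $M'$-equivariant injection $\Psi(V^{\sm}|_{P})\hookrightarrow \bar{\tau}^{\infty}$. Density of $V^{\sm}$ in $\bar V$ implies density of the image in each isotypic component, and Remark~\ref{R:CHM} guarantees $\Psi(V^{\sm}|_{P})$ is finite-dimensional; these together force $\bar{\tau}^{\infty}=\bar{\tau}=\Psi(V^{\sm}|_{P})$, yielding the stated isomorphism.

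The main obstacle is the third step: verifying that the embedding $V^{\sm}\hookrightarrow C^{\infty}\Ind_{M'N}^{P}(\bar{\tau}^{\infty}\otimes e^{\mathbf{i}\xi_{0}})$ coming from Sobolev estimates genuinely respects the $\mathscr{S}(\mathfrak{n}^{\ast}-\{0\})$-module structure (so that Lemma~\ref{L:Frobenius} applies) and that smoothness of $\bar{\tau}$-valued functions on $\mathbb R^{m}\setminus\{0\}$ transfers to smoothness of the $\bar\tau$-valued coefficients. Once these analytic points are in place, everything else is formal consequence of Frobenius reciprocity and the functor $\Psi$.
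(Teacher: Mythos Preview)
Your proposal is correct and follows the paper's argument essentially step for step: the same direct-integral decomposition, the same Casselman--Osborne/Howe--Moore argument to kill the $MA$-part, the same Sobolev embedding into $C^{\infty}\Ind$, and the same appeal to Lemma~\ref{L:Frobenius} together with finite-dimensionality of $\Psi(V^{\sm}|_P)$. The ``obstacle'' you flag is not a genuine gap: the $\mathscr{S}(\mathfrak{n}^{\ast}-\{0\})$-module structure on both sides is \emph{defined} through the $N$-action (via Fourier transform of the convolution $\mathscr{S}(\mathfrak{n})$-action), so any $P$-intertwiner is automatically a $\mathscr{S}(\mathfrak{n}^{\ast}-\{0\})$-homomorphism, and the paper does not pause on this point either.
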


\subsection{Restrictions of principal series representations}\label{SS:ResPS}

We calculate $\Psi(I(\sigma,\nu)|_P)$, where $I(\sigma,\nu)$ is a (not necessarily unitary) principal series
representation of $G$.

\begin{proposition}\label{P:res-induced}
Let $I(\sigma,\nu)=\Ind_{\bar{P}}^{G}(V_{\sigma}\otimes e^{\nu-\rho'}\otimes \mathbf{1}_N)$
 be a principal series representation.
Then \[\Psi(I(\sigma,\nu)|_P)\cong V_{\sigma}|_{M'}.\]
\end{proposition}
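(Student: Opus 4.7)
The plan is to combine an algebraic computation on the Harish-Chandra module with Remark~\ref{R:CHM}---which identifies $\Psi(I(\sigma,\nu)|_P)$ with the $\mathfrak{n}$-coinvariant space $H_0(\mathfrak{n}, I(\sigma,\nu)\otimes e^{-\mathbf{i}\xi_0})$---together with a Fourier-evaluation map and a dimension count.

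For the algebraic input, the Poincar\'e--Birkhoff--Witt theorem applied to the vector-space decomposition $\mathfrak{g}=\mathfrak{n}\oplus\bar{\mathfrak{p}}$ gives an $\mathfrak{n}$-module isomorphism $I(\sigma,\nu)_K\cong U(\mathfrak{n})\otimes V_\sigma$. Since $\mathfrak{n}$ is abelian, $U(\mathfrak{n})\cong S(\mathfrak{n})$ is a polynomial ring in $m$ variables, and the ideal generated by $\{X-\mathbf{i}\xi_0(X):X\in\mathfrak{n}\}$ is maximal with one-dimensional quotient given by evaluation at $\mathbf{i}\xi_0$. Hence the algebraic $\mathfrak{n}$-coinvariants of $I(\sigma,\nu)_K\otimes e^{-\mathbf{i}\xi_0}$ are canonically isomorphic to $V_\sigma$ as vector spaces. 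Since $M'\subset M$ stabilizes $\xi_0$ under the coadjoint action on $\mathfrak{n}^{\ast}$ given in \eqref{Eq:MAaction}, the induced $M'$-action on the one-dimensional quotient of $U(\mathfrak{n})$ is trivial, making the isomorphism $M'$-equivariant with $M'$ acting on $V_\sigma$ through $\sigma|_{M'}$. Density of $I(\sigma,\nu)_K$ in $I(\sigma,\nu)$, combined with the finite-dimensionality assertion in Remark~\ref{R:CHM}, then yields a surjection $V_\sigma\twoheadrightarrow\Psi(I(\sigma,\nu)|_P)$, hence $\dim\Psi(I(\sigma,\nu)|_P)\leq\dim V_\sigma$.

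For the reverse direction, I would construct the Fourier-evaluation map $\bar\varphi\colon I(\sigma,\nu)\to V_\sigma$ by $\bar\varphi(h):=\widehat{h|_N}(\xi_0)$, where $\widehat{h|_N}$ is the tempered-distribution Fourier transform of the polynomially bounded smooth function $h|_N\in C^{\infty}(N,V_\sigma)$ coming from the non-compact picture. Since $\widehat{h|_N}$ is smooth on $\mathfrak{n}^{\ast}-\{0\}$ by standard arguments, the evaluation at $\xi_0\neq 0$ is well-defined. Surjectivity is immediate: given $v\in V_\sigma$, take $\phi\in C_c^\infty(N)$ with $\hat\phi(\xi_0)=1$ and realize $\phi\cdot v$ as $h|_N$ for a suitable $h\in I(\sigma,\nu)$. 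A direct Fourier-theoretic computation shows $\bar\varphi$ annihilates the subspace $\mathrm{span}\{X\cdot h-\mathbf{i}\xi_0(X)h\}$---after Fourier transform this subspace is generated by functions vanishing at $\xi_0$---so $\bar\varphi$ descends to a surjection $\Psi(I(\sigma,\nu)|_P)\twoheadrightarrow V_\sigma|_{M'}$. Combined with the opposite dimension inequality, both maps are forced to be $M'$-equivariant isomorphisms. The main obstacle is the passage between algebraic and topological $\mathfrak{n}$-coinvariants, handled by Remark~\ref{R:CHM} (proved in \cite{CHM}); the regularity and evaluation properties of $\widehat{h|_N}$ are routine consequences of tempered distribution theory.
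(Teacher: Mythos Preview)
Your first half contains a genuine error. The claim that PBW for $\mathfrak{g}=\mathfrak{n}\oplus\bar{\mathfrak{p}}$ yields an $\mathfrak{n}$-module isomorphism $I(\sigma,\nu)_K\cong U(\mathfrak{n})\otimes V_\sigma$ confuses the Harish-Chandra module of the principal series with the generalized Verma module $U(\mathfrak{g})\otimes_{U(\bar{\mathfrak{p}})}(V_\sigma\otimes e^{\nu-\rho'})$. These are different objects: in the non-compact picture the $K$-finite vectors are functions of the form $(1+|x|^2)^{-(\rho'-\nu)(H_0)}$ times a polynomial in the entries of $s_x$ (see Lemma~\ref{L:f-polynomial}), not polynomials in $x$, and they are not freely generated over $U(\mathfrak{n})$ acting by differentiation. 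That the twisted $\mathfrak{n}$-coinvariants of $I(\sigma,\nu)_K$ nonetheless equal $V_\sigma$ is exactly \cite[Lemma~8.5]{CHM}, proved there via the Bruhat filtration; it is not a PBW triviality. So your dimension upper bound $\dim\Psi\le\dim V_\sigma$ is unproven.

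Your second half---the Fourier-evaluation map $\bar\varphi(h)=\widehat{h|_N}(\xi_0)$---is essentially the paper's own approach, but you extract only surjectivity from it and then rely on the faulty upper bound to close the argument. The paper instead packages $\bar\varphi$ through the anti-trivialization (Lemma~\ref{L:anti-trivialization}) as a $P$-equivariant, $\mathscr{S}(\mathfrak{n}^*-\{0\})$-linear map $\varphi\colon I(\sigma,\nu)\to C^\infty\Ind_{M'N}^{P}(\sigma|_{M'}\otimes e^{\mathbf{i}\xi_0})$, and then Lemma~\ref{L:Frobenius} shows directly that $\Ker\bar\varphi=\mathfrak{m}_{\xi_0}\cdot E$ (after quotienting by the harmless kernel of $\varphi$, on which $\mathscr{S}(\mathfrak{n}^*-\{0\})$ acts by zero). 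This gives \emph{injectivity} of $\Psi(I(\sigma,\nu)|_P)\to V_\sigma|_{M'}$ without any separate dimension count; surjectivity is then the easy construction you already sketch. If you route your Fourier map through Lemma~\ref{L:Frobenius} in this way, the PBW step becomes unnecessary. Finally, the smoothness of $\widehat{h|_N}$ on $\mathfrak{n}^*-\{0\}$ for arbitrary smooth (not just $K$-finite) $h$ is not entirely routine: the paper's Lemma~\ref{L:FourierSmooth} establishes it by tracking how the action of $X_y\in\mathfrak{n}$ raises the vanishing order of $f(sn_x)$ at $x=0$ via the Bruhat decomposition.
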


\begin{remark}
Proposition~\ref{P:res-induced} is proved in \cite[Lemma 8.5]{CHM}
 by using the Bruhat filtration.
However, we include the proof here because the argument of the Fourier transform
 below will be used for concrete calculations in Appendix~\ref{S:trivial}.
\end{remark}

In order to prove the proposition, we consider the restriction of the functions
 $f\in I(\sigma,\nu)$ to $N$ and take inverse Fourier transform.

For $f\in I(\sigma,\nu)$, let $f_{N}=f|_{N}$. We have the map \[I(\sigma,\nu)\to C^{\infty}(N,V_{\sigma}),
\quad f\mapsto f_{N}.\] The action of $P=MAN$ on $I(\sigma,\nu)$ is compatible with the following $P$-action
on $C^{\infty}(N,V_{\sigma})$: for $F\in C^{\infty}(N,V_{\sigma})$ and $n\in N$,
\begin{align}\label{Eq:P-action} \nonumber
&(n'\cdot F)(n)=F(n'^{-1}n)\quad (n'\in N); \\
&(a\cdot F)(n)=e^{(\nu-\rho')\log a}F(a^{-1}na)\quad (a\in A); \\ \nonumber
&(m_0\cdot F)(n)=\sigma(m_0)F(m_0^{-1}nm_0)\quad (m_0\in M).
\end{align}

Next, define the inverse Fourier transform of $f_N \in C^{\infty}(N,V_{\sigma})$.
If $f_N$ is $L^1$, then its inverse Fourier transform is defined as
 a function on $\mathfrak{n}^*$ as
\begin{equation}\label{Eq:Ff}
\widehat{f_{N}}(\xi)=\mathcal{F}(f_{N})(\xi)=
(2\pi)^{-\frac{m}{2}}\int_{\bbR^{m}}e^{\mathbf{i}(\xi,x)}f(n_{x})\d x.
\end{equation}
In general, for $f\in I(\sigma,\nu)$,
 the function $f_{N}$ on $N\cong \bbR^m$ is of at most polynomial growth at infinity.
Hence $f_{N}$ is a tempered distribution.
The map \eqref{Eq:Ff} extends for tempered distributions and
 we obtain $\widehat{f_{N}}(\xi)$ as distributions on $\mathfrak{n}^*$.
The action of $G$ on the Fourier transformed picture is defined as
\begin{equation*}
%\label{Eq:Ff-action}
g(\widehat{f_{N}})=\widehat{(gf)_{N}}
\end{equation*}
 for $f\in I(\sigma,\nu)$ and $g\in G$.
Then the $P$-action is given as follows:
 for $f\in I(\sigma,\nu)$ and $\xi\in\mathfrak{n}^{\ast}$,
\begin{align}\label{Eq:P-action2}\nonumber
&(n_{x}\cdot \widehat{f_{N}})(\xi)
 =e^{\mathbf{i}(\xi,x)}\widehat{f_{N}}(\xi)\quad (x\in \bbR^{m});\\
&(a\cdot \widehat{f_{N}})(\xi)=e^{(\nu+\rho')\log a}
\widehat{f_{N}}(\Ad^{\ast}(a^{-1})\xi) \quad (a\in A);\\ \nonumber
&(m_0\cdot \widehat{f_{N}})(\xi)=\sigma(m_0)\widehat{f_{N}}(\Ad^{\ast}(m_0^{-1})\xi)
 \quad (m_0\in M).
\end{align}

\begin{lemma}\label{L:FourierSmooth}
Let $f\in I(\sigma,\nu)$.
Then the restriction of $\widehat{f_{N}}$ to
 $\mathfrak{n}^*-\{0\}$ is a $C^{\infty}$-function.
\end{lemma}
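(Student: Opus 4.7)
The plan is to show that every distributional derivative $\partial_\xi^\alpha \widehat{f_N}$ is represented by a continuous function on $\mathfrak{n}^{\ast}-\{0\}$, which suffices for $C^\infty$-smoothness. This will be achieved by combining a careful growth estimate on derivatives of $f_N$ at infinity with the Fourier identity
\[
|\xi|^{2k}\,\partial_\xi^\alpha \widehat{f_N}(\xi)=\widehat{(-\Delta)^k\bigl((\mathbf{i}x)^\alpha f_N\bigr)}(\xi),
\]
applied for $k$ sufficiently large.

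First, I would use Lemma \ref{L:barn-iwasawa} to describe $f_N$ near infinity. Unwinding $sn_x=n_{x/|x|^2}r_x e^{-(2\log|x|)H_0}\bar n_{x/|x|^2}$ and applying the right $MA\bar N$-equivariance $h(g\,ma\bar n)=\sigma(m)^{-1}e^{(-\nu+\rho')\log a}h(g)$, one obtains
\[
f_N(y)=|y|^{2(\nu-\rho')(H_0)}\,\sigma(r_y)\,\tilde f\bigl(y/|y|^2\bigr)\qquad(y\neq 0),
\]
where $\tilde f(x):=f(sn_x)$ is smooth in a neighbourhood of $0\in\mathbb{R}^m$. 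Geometrically, this expresses $f_N$ in terms of the smooth chart of $G/\bar P$ near the ``point at infinity'' $s\bar P$, the complement of the open Bruhat cell $N\bar P/\bar P\cong N$.

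Next, I would derive the pointwise estimate
\[
|\partial^\beta f_N(x)|\leq C_\beta\,(1+|x|)^{c-|\beta|},\qquad c:=2\Re(\nu-\rho')(H_0),
\]
for every multi-index $\beta$. Applying Leibniz to the above expansion, each partial derivative reduces the growth by one power of $|x|$: $\partial_j|x|^{2(\nu-\rho')(H_0)}=O(|x|^{c-1})$; $\sigma(r_x)$ depends only on the direction $x/|x|$ so that $\partial_j\sigma(r_x)=O(|x|^{-1})$ with analogous estimates for higher derivatives; and $\partial_j\bigl(\tilde f(x/|x|^2)\bigr)=O(|x|^{-2})$ by the chain rule. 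Iterating and using that $\partial^\beta f_N$ is continuous on the complementary compact set, the estimate holds globally after enlarging $C_\beta$.

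Finally, fix $\alpha$ and $\epsilon>0$ and choose an integer $k$ with $2k>c+|\alpha|+m$. Then $(-\Delta)^k\bigl((\mathbf{i}x)^\alpha f_N\bigr)$ is a continuous function on $\mathfrak{n}$ with pointwise bound $O\bigl((1+|x|)^{c+|\alpha|-2k}\bigr)\in L^1(\mathfrak{n})$, so by Riemann--Lebesgue its Fourier transform is continuous on $\mathfrak{n}^*$. Using the tempered-distributional identity displayed above and dividing by $|\xi|^{2k}$ on the open set $\{|\xi|\geq\epsilon\}$ displays $\partial_\xi^\alpha \widehat{f_N}$ as a continuous function there. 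Since $\alpha$ and $\epsilon$ are arbitrary, $\widehat{f_N}|_{\mathfrak{n}^*-\{0\}}$ is $C^\infty$.

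The main technical obstacle is the derivation of the uniform growth estimate: one must keep careful track of the direction-dependent factor $\sigma(r_y)$, and verify a small descent issue — Lemma \ref{L:barn-iwasawa} is stated in $G_2$, while $f$ lives in $I(\sigma,\nu)$ for $G$. This is harmless because the decomposition $G/\bar P=N\bar P/\bar P\sqcup\{s\bar P\}$ together with the transition $y\leftrightarrow y/|y|^2$ between the two affine charts is a purely geometric feature of the flag variety shared by all the groups $G,G_1,G_2$, and the equivariance needed for the asymptotic formula only involves the subgroups $M$, $A$, $\bar N$ that are common to them.
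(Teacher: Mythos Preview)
Your proof is correct and follows essentially the same strategy as the paper: use the second Bruhat chart via Lemma~\ref{L:barn-iwasawa} to obtain the asymptotic formula $f_N(y)=|y|^{2(\nu-\rho')(H_0)}\sigma(r_y)\tilde f(y/|y|^2)$, derive a decay estimate $|\partial^\beta f_N(x)|=O((1+|x|)^{c-|\beta|})$, and then invoke the Fourier correspondence between multiplication and differentiation to conclude $C^\infty$-smoothness on $\mathfrak{n}^*-\{0\}$. The only notable difference is in how the derivative decay is obtained: the paper argues inductively that each application of the Lie-algebra element $X_y\in\mathfrak{n}$ raises by one the vanishing order of $F(x)=f(sn_x)$ at $x=0$ (via a second use of Lemma~\ref{L:barn-iwasawa}), whereas you differentiate the asymptotic formula directly with Leibniz, and correspondingly the paper uses the multiplier $(\xi,y)^l$ rather than your $|\xi|^{2k}$ on the Fourier side --- but these are minor variations within the same overall argument, and your handling of the $G$ vs.\ $G_2$ descent matches the paper's remedy of replacing $s$ by a lift of $m_0s\in M_1$.
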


\begin{proof}
We first prove a similar claim for the group $G_2=\OO(m+1,1)$.
Let $I(\sigma,\nu)$ be a principal series representation of $G_2$
 for an irreducible representation $\sigma$ of $M_2$ and
 take $f\in I(\sigma,\nu)$.
To prove that $\widehat{f_N}|_{\mathfrak{n}^*-\{0\}}$ is a smooth function,
 we need to see the behavior of $f(n_x)$ as $x\to \infty$.
This is equivalent to the behavior of $f(s n_x)$ near $x=0$,
 where $s=\diag\{I_m,-1,1\}$.
Put $F(x):=f(sn_x)$ for $x\in \bbR^n$.
By Lemma~\ref{L:barn-iwasawa},
\[
F(x)=f\bigl(n_{\frac{x}{|x|^2}}r_x e^{-(2\log |x|)H_0}\bar{n}_{\frac{x}{|x|^2}}\bigr)
= |x|^{2(\nu-\rho')(H_0)} \sigma(r_x) f\bigl(n_{\frac{x}{|x|^2}}\bigr).
\]
Since $F$ is smooth, $|f(n_x)|$ is bounded by $C|x|^{2(-\nu+\rho')(H_0)}$
 as $x\to \infty$ for some constant $C>0$.

The $G$-action on $I(\sigma,\nu)$ differentiates to the $\mathfrak{g}$-action.
Take $X_y\in \mathfrak{n}$ for $y\in \bbR^m$ and consider the function
 $X_y\cdot f\in I(\sigma,\nu)$.
We have
\[(X_y\cdot f)(sn_x) = \frac{d}{dt}\Bigl|_{t=0} f(n_{ty}^{-1} s n_x)\]
By Lemma~\ref{L:barn-iwasawa} again,
\[n_{ty}^{-1} s n_x
=n_{-ty+\frac{x}{|x|^2}} r_x e^{-(2\log |x|)H_0}\bar{n}_{\frac{x}{|x|^2}}.\]
Putting $z:=-ty+\frac{x}{|x|^2}$, we have
\begin{align*}
&\quad n_{-ty+\frac{x}{|x|^2}} r_x e^{-(2\log |x|)H_0}\bar{n}_{\frac{x}{|x|^2}}\\
&=s n_{\frac{z}{|z|^2}} r_z e^{-(2\log |z|)H_0}\bar{n}_{\frac{z}{|z|^2}}
r_x e^{-(2\log |x|)H_0}\bar{n}_{\frac{x}{|x|^2}}\\
&\in s n_{\frac{z}{|z|^2}} r_z r_x e^{-(2\log |z|+2\log |x|)H_0}\bar{N}.
\end{align*}
Hence
\[
(X_y\cdot f)(sn_x)
 = \frac{d}{dt}\Bigl|_{t=0} |z|^{2(\nu-\rho')(H_0)} |x|^{2(\nu-\rho')(H_0)}
 \sigma(r_z r_x) F\Bigl(\frac{z}{|z|^2}\Bigr).
\]
Note that $r_z=r_{-t|x|^2 y + x}$.
We calculate
\begin{align*}
&\frac{d}{dt}\Bigl|_{t=0} |z|^{2(\nu-\rho')(H_0)} |x|^{2(\nu-\rho')(H_0)}
= - 2(\nu-\rho')(H_0) (y,x), \\
&\frac{d}{dt}\Bigl|_{t=0} r_z r_x
= 2 (y^t x - x^t y), \\
&\frac{d}{dt}\Bigl|_{t=0} F\Bigl(\frac{z}{|z|^2}\Bigr)
= \bigl(2 (x,y)x - y|x|^2\bigr) (\nabla_{y} F)(x).
\end{align*}
Combining above equations, we see that if $F(x)$ vanishes at $x=0$ of order $k$,
 then $(X_y\cdot f)(sn_x)$ vanishes at $x=0$ of order $k+1$.
%In other words, we apply elements in $\mathfrak{n}$ to $f$,
Hence $(X_y\cdot f)(n_x)$ is bounded by $C|x|^{2(-\nu+\rho')(H_0)-1}$ for some $C$.
By repeating this, $(X_y^l\cdot f)(n_x)$ is bounded by $C|x|^{2(-\nu+\rho')(H_0)-l}$ for $l>0$.
Then for any $k>0$, if $l$ is sufficiently large,
 then $P(x)(X_y^l \cdot f)(n_x)$ is in $L^1$ for any polynomial $P(x)$ of degree $k$.
Therefore, its inverse Fourier transform is continuous.
By
\begin{align}
\label{Eq:Mult-Differ}
\mathcal{F}(x_{j}h)=-\mathbf{i}\partial_{\xi_j} \mathcal{F}(h),\quad
\mathcal{F}(\partial_{x_j}h) =-\mathbf{i}\xi_j \mathcal{F}(h),
\end{align}
we have
\[
\mathcal{F} (P(x)(X_y^l \cdot f)(n_x))
= P(-\mathbf{i}\partial_{\xi}) (-\mathbf{i}\xi,y)^l \cdot \widehat{f_N}(\xi).
\]
Hence $\widehat{f_N}(\xi)$ is $C^k$ in $(\xi,y)\neq 0$.
Since $k$ and $y$ are arbitrary, we proved that $\widehat{f_N}$ is $C^{\infty}$ on
 $\mathfrak{n}^*-\{0\}$.

To prove the claim for $G=\Spin(m+1,1)$, fix $m_0\in M_2$ such that
 $m_0s\in M_1=\SO(m)$ and use a lift of $m_0s$ in $M=\Spin(m)$ instead of $s$
 in the above argument.
\end{proof}

Recall $\xi_0=(0,\dots,0,1)\in \mathfrak{n}^*$.
For $h\in C^{\infty}(\mathfrak{n}^{\ast}-\{0\},V_{\sigma})$,
 define a function $h_{at,\nu}$ on $P$ by
\begin{align*}
h_{at,\nu}(p)
=(p^{-1}\cdot h)(\xi_0) \quad (p\in P).
\end{align*}
More concretely,
\begin{align}\label{Eq:anti-trivialization}
&\quad h_{at,\nu}(p)\\ \nonumber
&=e^{-\mathbf{i}(\xi_0,x)}
 e^{(-\nu-\rho')\log a}\sigma(m_0)^{-1}h(\Ad^*(m_0a)\xi_{0}) \\ \nonumber
&=e^{-\mathbf{i}(\xi_0,x)}
 |\Ad^*(m_0a)(\xi_{0})|^{\frac{2\nu(H_0)+m}{2}}
 \sigma(m_0)^{-1}h(\Ad^*(m_0a)\xi_{0})
\end{align}
for $p=m_0an_{x}\in P$.
We call $h_{at,\nu}$ the anti-trivialization of $h$.
%Anti-trivialization comes from two reasons: (1)
The term `anti-trivialization' comes from:
 $h$ is a function on $\mathfrak{n}^{\ast}-\{0\}$,
 i.e., a section of the trivial bundle on $P/M'N\cong\mathfrak{n}^{\ast}-\{0\}$, and $h_{at,\nu}$
 is a section of the vector bundle
 $P\times_{M'N}(\sigma|_{M'}\otimes e^{\mathbf{i} \xi_{0}})$
 on $P/M'N$.
%(2) from the spectral viewpoint, the anti-trivialization process matches the difference between the naive Fourier transform on $N$ and the Fourier transform with respect to the $N$-action on $I(\sigma,\nu)$.

\begin{lemma}\label{L:anti-trivialization}
The image of the map
 $C^{\infty}(\mathfrak{n}^{\ast}-\{0\},V_{\sigma})\ni h\mapsto h_{at,\nu}$
 is equal to the representation space of the smoothly induced representation
 $C^{\infty}\Ind_{M'N}^{P}(\sigma|_{M'}\otimes e^{\mathbf{i}\xi_{0}})$.
The map $h\mapsto h_{at,\nu}$ respects the actions of $P$
 and $\mathscr{S}(\mathfrak{n}^*-\{0\})$.
\end{lemma}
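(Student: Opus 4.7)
My plan is to verify directly each of the four claims: (i) the right transformation law under $M'N$, (ii) surjectivity of $h\mapsto h_{at,\nu}$ onto the induced representation, (iii) $P$-equivariance, and (iv) $\mathscr{S}(\mathfrak{n}^*-\{0\})$-equivariance. All four reduce to bookkeeping with the explicit $P$-action \eqref{Eq:P-action2} and the formula \eqref{Eq:anti-trivialization}. For the transformation law, I would compute $h_{at,\nu}(pm'n_y) = ((pm'n_y)^{-1}\cdot h)(\xi_0)$ by applying $m'^{-1}$ and then $n_y^{-1}$. Since $M'=\Stab_{MA}\xi_0$, the $M$-rule gives a factor $\sigma(m')^{-1}$, while the $N$-rule evaluated at $\xi_0$ gives $e^{-\mathbf{i}(\xi_0,y)}$; combining these yields exactly $(\sigma|_{M'}\otimes e^{\mathbf{i}\xi_0})(m'n_y)^{-1}h_{at,\nu}(p)$, placing the image inside $C^{\infty}\Ind_{M'N}^{P}(\sigma|_{M'}\otimes e^{\mathbf{i}\xi_0})$.

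For surjectivity, I would use that $MA$ acts transitively on $\mathfrak{n}^{\ast}-\{0\}$ with stabilizer $M'$, so $MA/M'\xrightarrow{\sim}\mathfrak{n}^{\ast}-\{0\}$, $m_0aM'\mapsto \Ad^*(m_0a)\xi_0$, is a diffeomorphism. Given $F$ in the induced representation, I would define
\[ h(\Ad^*(m_0a)\xi_0)=e^{(\nu+\rho')\log a}\sigma(m_0)F(m_0a).\]
Well-definedness uses $M\cap A=\{e\}$ (so any ambiguity lies entirely in $M'\subset M$) together with the right $M'$-transformation law of $F$; smoothness of $h$ follows because the map $MA\to\mathfrak{n}^{\ast}-\{0\}$ is a smooth submersion, hence pulling back along it identifies $C^{\infty}(\mathfrak{n}^{\ast}-\{0\},V_{\sigma})$ with the $M'$-equivariant smooth sections on $MA$. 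Comparing with \eqref{Eq:anti-trivialization} gives $h_{at,\nu}=F$, so the map is surjective (injectivity is immediate from \eqref{Eq:anti-trivialization}).

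For $P$-equivariance, the definition $h_{at,\nu}(p)=(p^{-1}\cdot h)(\xi_0)$ together with the representation property yields
\[(g\cdot h)_{at,\nu}(p)=((p^{-1}g)\cdot h)(\xi_0)=h_{at,\nu}(g^{-1}p)=(g\cdot h_{at,\nu})(p),\]
matching the left-translation action on the induced picture. For $\mathscr{S}(\mathfrak{n}^{\ast}-\{0\})$-equivariance, I would note that on the source the action is by pointwise multiplication, while on the induced picture the action coming from $N$ through the Fourier isomorphism $\mathscr{S}(\mathfrak{n})\xrightarrow{\sim}\mathscr{S}(\mathfrak{n}^{\ast})$ becomes pointwise multiplication after passing to $P/M'N\cong\mathfrak{n}^{\ast}-\{0\}$ via $p=m_0an_x\mapsto \Ad^*(m_0a)\xi_0$. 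Then \eqref{Eq:anti-trivialization} shows $(f\cdot h)_{at,\nu}(p)=f(\Ad^*(m_0a)\xi_0)\,h_{at,\nu}(p)=(f\cdot h_{at,\nu})(p)$.

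The whole lemma is essentially formal; I expect the only point requiring any care to be the well-definedness step in surjectivity, where one must check that the formula for $h$ does not depend on the choice of lift in $MA$. This uses precisely that $M'\cap A=\{e\}$ so the redundancy is entirely in $M'$, and is absorbed by the $M'$-transformation law of $F$. All other steps are substitutions into \eqref{Eq:P-action2} and \eqref{Eq:anti-trivialization}.
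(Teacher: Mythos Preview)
Your proposal is correct and follows essentially the same approach as the paper's proof: verify the right $M'N$-transformation law directly from the definition $h_{at,\nu}(p)=(p^{-1}\cdot h)(\xi_0)$ using $\Ad^*(m')\xi_0=\xi_0$, observe that $P$-equivariance is tautological from this definition, note that the $\mathscr{S}(\mathfrak{n}^*-\{0\})$-action is pointwise multiplication on both sides, and exhibit the inverse explicitly. Your inverse formula $h(\Ad^*(m_0a)\xi_0)=e^{(\nu+\rho')\log a}\sigma(m_0)F(m_0a)$ agrees with the paper's $h'_{t,\nu}(\xi)=|\xi|^{-\frac{m+2\nu(H_0)}{2}}\sigma(m_0)h'(m_0a)$ once one uses $|\Ad^*(m_0a)\xi_0|=e^{-\lambda_0(\log a)}$, and your added remark on well-definedness (reducing the ambiguity to $M'$ via $M\cap A=\{e\}$ and absorbing it with the transformation law of $F$) makes explicit a point the paper leaves implicit.
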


\begin{proof}
For any $m'\in M'$, $n_{x}\in N$ and $p\in P$, we have
\begin{align*}
&\quad h_{at,\nu}(pm'n_{x})\\
&=(n_{x}^{-1}(m')^{-1}p^{-1}\cdot h)(\xi_0)\\
&=e^{-\mathbf{i}(\xi_0,x)}
 \sigma(m')^{-1}(p^{-1}\cdot h)(\Ad^*(m')\xi_{0})\\
&=(\sigma\otimes e^{\mathbf{i} \xi_{0}})(m',n_{x})^{-1}h_{at,\nu}(p),
\end{align*}
 where we used $\Ad^*(m')\xi_0=\xi_0$.
This shows that $h_{at,\nu}$ is a section of the vector bundle
 $P\times_{M'N}(\sigma|_{M'}\otimes  e^{\mathbf{i}\xi_{0}})$.

It directly follows from the definition of $h_{at,\nu}$ that
 the map $h\mapsto h_{at,\nu}$ respects the $P$-actions.
The actions of $\mathscr{S}(\mathfrak{n}^*-\{0\})$ on
 $C^{\infty}\Ind_{M'N}^{P}(\sigma|_{M'}\otimes e^{\mathbf{i}\xi_{0}})$
 and $C^{\infty} (\mathfrak{n}^{\ast}-\{0\},V_{\sigma})$
 are given by multiplications.
Hence the map $h\mapsto h_{at,\nu}$ is a $\mathscr{S}(\mathfrak{n}^*-\{0\})$-homomorphism.

The inverse map
\[C^{\infty}\Ind_{M'N}^{P}(\sigma|_{M'}\otimes e^{\mathbf{i}\xi_{0}})
\to C^{\infty} (\mathfrak{n}^{\ast}-\{0\},V_{\sigma}),
\quad h' \mapsto h'_{t,\nu}\]
is given as follows:
 for any $\xi\in \mathfrak{n}^{\ast}-\{0\}$, choose
 $m_0a\in MA$ such that $\xi=\Ad^*(m_0a)\xi_0$ and define
\[h'_{t,\nu}(\xi)=|\xi|^{-\frac{m+2\nu(H_0)}{2}}\sigma(m_0)h'(m_0a).\]
%Similarly, one shows that $h'_{t,\nu}\in L^{2}(\mathfrak{n}^{\ast}-\{0\},V_{\sigma},\d x)$.
It is easy to see that the maps $h\mapsto h_{at,\nu}$
 and $h'\mapsto h'_{t,\nu}$ are inverse to each other.
\end{proof}

\begin{proof}[Proof of Proposition~\ref{P:res-induced}]
By Lemmas~\ref{L:FourierSmooth} and \ref{L:anti-trivialization},
 we obtain a map
\[
\varphi\colon
 I(\sigma,\nu) \to C^{\infty}\Ind_{M'N}^{P}(\sigma|_{M'}\otimes e^{\mathbf{i}\xi_{0}}),
\quad f\mapsto (\widehat{f_N})_{at,\nu}
\]
which respects the actions of $P$ and $\mathscr{S}(\mathfrak{n}^*-\{0\})$.
If $f\in \Ker \varphi$, then the support of $\widehat{f_N}$ is contained in $\{0\}$,
 or equivalently, $f_N$ is a polynomial.
Hence $\mathscr{S}(\mathfrak{n}^*-\{0\})\cdot (\Ker \varphi)=0$
 and $\Psi(I(\sigma,\nu)|_P)\cong \Psi(I(\sigma,\nu)|_P/\Ker \varphi)$.
Then by Lemma~\ref{L:Frobenius}, there exists an injective map
 $\Psi(I(\sigma,\nu)|_P)\to \sigma|_{M'}$.
To show the surjectivity, take any vector $v\in \sigma|_{M'}$ and take a function
$h\in \mathscr{S}\Ind_{M'N}^{P}(\sigma|_{M'}\otimes e^{\mathbf{i}\xi_{0}})$
(or a function $h\in C^{\infty}\Ind_{M'N}^{P}(\sigma|_{M'}\otimes e^{\mathbf{i}\xi_{0}})$ which is
compactly supported modulo $M'N$) such that $h(e)=v$. Then there exists $f\in I(\sigma,\nu)$ such that
$(\widehat{f_N})_{at,\nu}=h$, which implies that the map $\Psi(I(\sigma,\nu)|_P)\to \sigma|_{M'}$ is surjective.
\end{proof}

Using Lemma \ref{L:unitary-J} and the determination of the restriction to $P$ of irreducible unitary representations
of $G$ with trivial infinitesimal character and some complementary series in Appendix \ref{S:trivial}, one can show
Proposition~\ref{P:res-induced} using translation principle similarly as that in Theorem \ref{T:branching-ds}. However, it
is much more complicated than the above proof.

\smallskip

By Casselman's subrepresentation theorem, any moderate growth,
 irreducible admissible smooth Fr\'echet representation $\pi$ of $G$ is
 a subrepresentation of a principal series representation $I(\sigma,\nu)$.
Then by Lemma~\ref{L:exact} and Proposition~\ref{P:res-induced},
 $\Psi(\pi|_P)\subset \Psi(I(\sigma,\nu)|_P)\cong\sigma|_{M'}$.
In particular, $\Psi(\pi|_P)$ is finite-dimensional.

Let $K(G)$ (resp.\ $K(M')$) be the Grothendieck group of
 the category of Harish-Chandra modules
 (resp.\ the category of finite-dimensional representations of $M'$).
By Lemma~\ref{L:exact},
 $\mathcal{C}(G)\ni \pi \mapsto \Psi(\pi|_{P})$ induces a homomorphism
 $\Psi \colon K(G)\to K(M')$.

\subsection{Classification of irreducible representations of $G$}\label{SS:irreducible}

In this subsection we recall the classification of irreducible
 admissible representations $\pi\in \mathcal{C}(G)$.

Suppose first that $m$ is odd and then $m=2n-1$ and $G=\Spin(2n,1)$.
The infinitesimal character $\gamma$ of $\pi$ is conjugate to
\[(\mu+\rho_{M},\nu)=\Bigl(a_{1}+n-\frac{3}{2},\cdots,a_{n-1}+\frac{1}{2},a\Bigr),\]
 where $\mu=(a_{1},\dots,a_{n-1})$ and $\nu=a\lambda_{0}$.
We have $a_{1}\geq \cdots\geq a_{n-1}\geq 0$;
 and $a_{1},\dots,a_{n-1}$ are all integers or all half-integers.
The weight $\gamma$ is integral if and only if $a-(a_{j}+\frac{1}{2})\in\mathbb{Z}$.
The singularity of integral $\gamma$ has several possibilities:
\begin{enumerate}
\item If $a\neq a_{j}+n-j-\frac{1}{2}$ for $1\leq j\leq n-1$ and $a\neq 0$,
 then $\gamma$ is regular.
Write $\Lambda_{0}$ for the set of integral regular dominant weights.
\item If $a=a_{j}+n-j-\frac{1}{2}$ for some $1\leq j \leq n - 1$, then up to conjugation
\[\gamma=\Bigl(a_{1}+n-\frac{3}{2},\dots,a_{j}+n-j-\frac{1}{2},a_{j}+n-j-\frac{1}{2},
 \dots,a_{n-1}+\frac{1}{2}\Bigr),\]
 and $\alpha_{j}=\epsilon_{j}-\epsilon_{j+1}$ is the only simple root orthogonal to $\gamma$.
Write $\Lambda_{j}$ for the set of such integral dominant weights.
\item If $a=0$, then
\[\gamma=\Bigl(a_{1}+n-\frac{3}{2},\dots,a_{n-1}+\frac{1}{2},0\Bigr).\]
$a_{j}$ ($1\leq j\leq n-1$) are half-integers, and $\alpha_{n}=\epsilon_{n}$
 is the only simple root orthogonal to $\gamma$.
Write $\Lambda_{n}$ for the set of such integral dominant weights.
%A weight in $\Lambda_{n}$ is integral if and only if $b_{j}$ ($1\leq j\leq n-1$) are half-integers.
\end{enumerate}

%If $\pi$ has integer infinitesimal character, then $b_{j}+\frac{1}{2}$ ($1\leq j\leq n-1$) has the same parity as $b$.

To describe irreducible representations with the infinitesimal character $\gamma$,
 we introduce several notation for every type of $\gamma$.

For a weight
\[\gamma=\Bigr(a_{1}+n-\frac{1}{2},\cdots,a_{n-1}+\frac{3}{2},a_{n}+\frac{1}{2}\Bigr)
 \in\Lambda_{0}\]
with $a_1\geq \cdots \geq a_{n-1}\geq a_n\geq 0$, let
\[\mu_{j}=(a_{1}+1,\cdots,a_{j}+1,a_{j+2},\cdots,a_{n})
 \textrm{ and }\nu_{j}=\Bigl(a_{j+1}+n-\frac{1}{2}-j\Bigr)\lambda_{0}\]
for $0\leq j\leq n-1$.
Put
\[I_{j}^{\pm}(\gamma)= I(\mu_j,\pm\nu_j)
 =\Ind_{MA\bar{N}}^{G}(V_{M,\mu_{j}}\otimes e^{\pm{\nu_j-\rho'}}
\otimes\mathbf{1}_{\bar{N}}).\]
For each $j$, there are nonzero intertwining operators
\[J_{j}^{+}(\gamma)\colon I_{j}^{+}(\gamma)
\rightarrow I_{j}^{-}(\gamma)
\text{ and }
J_{j}^{-}(\gamma)\colon I_{j}^{-}(\gamma)\rightarrow I_{j}^{+}(\gamma).\]
Write $\pi_{j}(\gamma)$ (resp.\ $\pi'_{j}(\gamma)$)
 for the image of $J_{j}^{-}(\gamma)$ (resp.\ $J_{j}^{+}(\gamma)$).
Put \[\lambda^+=
(a_{1}+1,\dots,a_{n-1}+1,a_{n}+1)\textrm{ and }
\lambda^-=(a_{1}+1,\dots,a_{n-1}+1,-(a_{n}+1)).\] Write $\pi^{+}(\gamma)$ for a
discrete series with the lowest $K$-type $V_{K,\lambda^+}$,
 and write $\pi^{-}(\gamma)$ for a discrete series with the lowest $K$-type $V_{K,\lambda^-}$.

Let $1\leq j\leq n-1$. For a weight
\[\gamma=\Bigl(a_{1}+n-\frac{3}{2},\dots,a_{j}+n-j-\frac{1}{2},
a_{j}+n-j-\frac{1}{2},\dots,a_{n-1}+\frac{1}{2}\Bigr)\in\Lambda_{j},\]
write \[\mu=(a_{1},\dots,a_{n-1})\textrm{ and }
 \nu=\Bigl( a_{j}+n-j-\frac{1}{2}\Bigr)\lambda_0.\]
Put
\[\pi(\gamma)=I(\mu,\nu).\]

For a weight
\[\gamma=\Bigl(a_{1}+n-\frac{3}{2},a_{2}+n-\frac{5}{2},\dots,a_{n-1}+\frac{1}{2},0\Bigr)
 \in\Lambda_{n},\]
write
\[\mu=(a_{1},\dots,a_{n-1}) \text{ and } I(\gamma)=I(\mu,0).\]
By Schmid's identity (\cite[Theorem 12.34]{Knapp})
 $I(\gamma)$ is the direct sum of two limits of discrete series
(\cite[Theorem 12.26]{Knapp}).
Write $\pi^{+}(\gamma)$ (resp.\ $\pi^{-}(\gamma)$)
 for a limits of discrete series with
 the lowest $K$-type $V_{K,\lambda^+}$ (resp.\ $V_{K,\lambda^-}$), where
\[\lambda^+=\Bigl(a_{1},a_{2},\dots,a_{n-1},\frac{1}{2}\Bigr)\textrm{ and }
\lambda^-=\Bigl(a_{1},a_{2},\dots,a_{n-1},-\frac{1}{2}\Bigr).\]

For a non-integral weight
\[\gamma=\Bigr(a_{1}+n-\frac{3}{2},\cdots,a_{n-1}+\frac{1}{2},a\Bigr),\]
write \[\mu=(a_{1},\dots,a_{n-1})
 \textrm{ and } \nu=a\lambda_0.\]
Put
\[\pi(\gamma)=I(\mu,\nu).\]
Note that $I(\mu,\nu)\cong I(\mu,-\nu)$.

%Recall that $\mathcal{C}(G)$ denotes the category of
% moderate growth, smooth Fr\'echet representations of $G$
% whose $K$-fintie vectors are Harish-Chandra modules.
By using the above notation,
 the Langlands classification of irreducible representations of $G$
 is given as follows.
In Fact~\ref{F:gamma-classification},
 an irreducible representation of $G$ means
 an irreducible, moderate growth, smooth Fr\'echet representation.

\begin{fact}\label{F:gamma-classification}
\begin{enumerate}
\item
For $\gamma\in\Lambda_{0}$, any irreducible representation of $G$
 with infinitesimal character $\gamma$ is
 equivalent to one of
\[\{\pi_{0}(\gamma),\dots,\pi_{n-1}(\gamma),\pi^{+}(\gamma),\pi^{-}(\gamma)\}.\]
When $0\leq j\leq n-2$,
$\pi_{j+1}(\gamma)\cong \pi'_{j}(\gamma)$; $\pi_{0}(\gamma)$ is a finite-dimensional module;
 and $\pi'_{n-1}(\gamma)\cong\pi^{+}(\gamma)\oplus \pi^{-}(\gamma)$.
\item
 For $\gamma\in\Lambda_{j}$ ($1\leq j\leq n-1$),
 any irreducible representation of $G$
 with infinitesimal character $\gamma$ is
 equivalent to $\pi(\gamma)$.
\item
For $\gamma\in\Lambda_{n}$, any irreducible representation of $G$
 with infinitesimal character $\gamma$ is
 equivalent to  $\pi^{+}(\gamma)$ or $\pi^{-}(\gamma)$.
\item
For a non-integral weight $\gamma$,
 any irreducible representation of $G$
 with infinitesimal character $\gamma$ is
 equivalent to $\pi(\gamma)$.
\end{enumerate}
\end{fact}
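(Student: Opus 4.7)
The plan is to deduce the classification from Casselman's subrepresentation theorem combined with the analysis of principal series of the rank-one group $G=\Spin(2n,1)$ via Knapp-Stein intertwining operators, Schmid's identity, and Zuckerman's translation principle.

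First, by Casselman's subrepresentation theorem, every irreducible $(\mathfrak{g},K)$-module $\pi$ embeds in some principal series $I(\mu,\nu)$ and shares its infinitesimal character. For fixed $\gamma$, the principal series with this infinitesimal character are parametrized, modulo the little Weyl group $W(G,A)=\{\pm 1\}$ acting by $\nu\mapsto -\nu$, by those pairs $(\mu,\nu)$ with $(\mu+\rho_M,\nu)$ conjugate to $\gamma$ under the complex Weyl group $W(\mathfrak{g}_{\bbC},(\mathfrak{t}_{s})_{\bbC})$. Direct enumeration produces the families $(\mu_j,\pm\nu_j)$ in the regular integral case $\Lambda_{0}$, a single representative in $\Lambda_{j}$ ($1\leq j\leq n-1$) and in the non-integral case, and $(\mu,0)$ in $\Lambda_{n}$, exactly matching the standard modules named in the statement.

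For regular integral $\gamma\in\Lambda_{0}$, each $I_j^{\pm}(\gamma)$ has composition series of length two (a rank-one fact implicit in Knapp-Stein), with the image of $J_j^-$ defining $\pi_j(\gamma)$ and the image of $J_j^+$ defining $\pi_j'(\gamma)$; matching Langlands-Casselman asymptotic exponents on $A$ then yields $\pi_{j+1}(\gamma)\cong \pi_j'(\gamma)$ for $0\leq j\leq n-2$. The extremes are classical: $\pi_0(\gamma)$ is the finite-dimensional representation of highest weight $\gamma-\rho$, realized as the Langlands quotient of the spherical endpoint, while $\pi_{n-1}'(\gamma)\cong \pi^{+}(\gamma)\oplus\pi^{-}(\gamma)$ follows from Harish-Chandra's parametrization of the discrete series of $\Spin(2n,1)$ by lowest $K$-types $V_{K,\lambda^{\pm}}$. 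For $\gamma\in\Lambda_{j}$ with $1\leq j\leq n-1$, the orthogonal simple root $\epsilon_j-\epsilon_{j+1}$ is a compact root rather than a restricted root of $\mathfrak{a}$, so the Knapp-Stein operator is non-singular and $I(\mu,\nu)=\pi(\gamma)$ is irreducible; the non-integral case is handled identically since no integrality pole of Knapp-Stein arises. For $\gamma\in\Lambda_{n}$, the orthogonal root $\epsilon_{n}$ forces $\nu=0$, and Schmid's identity \cite[Theorem 12.34]{Knapp} decomposes $I(\mu,0)=\pi^{+}(\gamma)\oplus\pi^{-}(\gamma)$ into limits of discrete series with lowest $K$-types $V_{K,\lambda^{\pm}}$.

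The main obstacle is exhaustiveness and pairwise non-isomorphism of the listed modules with infinitesimal character $\gamma$. The cleanest route is the Jantzen-Zuckerman translation principle: translate from the trivial infinitesimal character, where the composition series of the spherical principal series is completely known through Hirai and Knapp-Stein, to a generic regular integral $\gamma\in\Lambda_{0}$. The translation functor is exact, preserves the composition factor count inside an infinitesimal character block, and sends distinct lowest $K$-types to distinct lowest $K$-types; thus the classification at regular integral $\gamma$ is recovered from the baseline. The strata $\Lambda_{j}$ and $\Lambda_{n}$ are then obtained by crossing Zuckerman walls: crossing the wall orthogonal to the compact root $\epsilon_j-\epsilon_{j+1}$ merges the adjacent constituents $\pi_{j-1}(\gamma)$ and $\pi_{j}(\gamma)$ into a single irreducible $\pi(\gamma)$ (yielding part (2)), while crossing the wall orthogonal to $\epsilon_{n}$ at $\nu=0$ collapses $\pi_{n-1}'(\gamma)$ to the Schmid splitting $\pi^{+}(\gamma)\oplus\pi^{-}(\gamma)$ (yielding part (3)).
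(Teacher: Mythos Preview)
The paper does not prove this statement at all: it is labeled a \emph{Fact} and introduced with the phrase ``the Langlands classification of irreducible representations of $G$ is given as follows,'' i.e., it is quoted as a known result (the standard references being Collingwood~\cite{Collingwood} and Hirai~\cite{Hirai}, which the paper cites elsewhere for closely related statements). So there is no proof in the paper to compare against; your proposal supplies an argument where the paper simply invokes the literature.

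As an outline your approach is reasonable and follows the standard route: Casselman's subrepresentation theorem to reduce to principal series, enumeration of the finitely many $(\mu,\nu)$ with the given infinitesimal character, Knapp-Stein analysis of the composition series in the regular integral case, and Schmid's identity at $\nu=0$. A few steps are more asserted than argued, however. The claim that each $I_j^{\pm}(\gamma)$ has length exactly two in the regular integral case, and the identification $\pi_{j+1}(\gamma)\cong\pi'_j(\gamma)$, require the actual computation of the $K$-type or lowest-$K$-type structure (or a careful Langlands-quotient/subrepresentation comparison), not just ``matching asymptotic exponents.'' Your irreducibility argument for $\gamma\in\Lambda_j$ (``the orthogonal simple root is compact, so Knapp-Stein is non-singular'') is heuristic: the relevant criterion is whether the intertwining operator has a zero or pole at that $\nu$, which one checks via the $K$-type spectrum or the explicit gamma-factor formula, not directly from the root being compact. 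Finally, the translation-principle paragraph conflates two different mechanisms: translation across a wall does not ``merge'' constituents in the way you describe; rather, one uses that the translation functor from regular to singular infinitesimal character sends certain irreducibles to zero and others to the unique irreducible at the wall. If you want a self-contained proof, the cleanest reference to follow is \cite[Chapter~5]{Collingwood}, where the full composition series for rank-one principal series are worked out directly.
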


Among these representations, unitarizable ones are given as follows (\cite{Hirai}).

\begin{fact}\label{F:unitarizable}
\begin{enumerate}
\item
For $\gamma\in\Lambda_{0}$, $\pi^{+}(\gamma)$ and $\pi^{-}(\gamma)$ are unitarizable
 (discrete series).
 $\pi_j(\gamma)$ is unitarizable if and only if $a_i=0$ for any $j<i\leq n$.
\item
For $\gamma\in\Lambda_{j}$ ($1\leq j\leq n-1$),
 $\pi(\gamma)$ is unitarizable if and only if $a_i=0$ for any $j\leq i\leq n-1$.
\item
For $\gamma\in\Lambda_{n}$,
 $\pi^+(\gamma)$ and $\pi^-(\gamma)$ are unitarizable
 (limit of discrete series).
\end{enumerate}
\end{fact}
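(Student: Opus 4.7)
The plan is to split the statement according to the three families of infinitesimal character in Fact~\ref{F:gamma-classification} and handle each separately, invoking standard constructions on the ``unitarizable'' side and standard signature computations via Knapp--Stein intertwining operators on the ``not unitarizable'' side.

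First I would dispose of the tempered representations. For $\gamma\in\Lambda_0$, the representations $\pi^{\pm}(\gamma)$ are discrete series of $G=\Spin(2n,1)$, and their unitarizability is Harish-Chandra's classical construction via $L^2$-matrix coefficients. For $\gamma\in\Lambda_n$, the representations $\pi^{\pm}(\gamma)$ are non-degenerate limits of discrete series obtained from Schmid's identity applied to $I(\gamma)$; since the simple root $\alpha_n=\epsilon_n$ that kills $\gamma$ is not compact (or rather satisfies the Knapp--Zuckerman positivity criterion with respect to the Borel containing $\Delta^+$), these limits are unitarizable.

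Next I would produce the remaining unitarizable representations. The candidates are $\pi_j(\gamma)$ for $\gamma\in\Lambda_0$ (under $a_i=0$ for $j<i\leq n$) and $\pi(\gamma)$ for $\gamma\in\Lambda_j$, $1\leq j\leq n-1$ (under $a_i=0$ for $j\leq i\leq n-1$). With these vanishing conditions the parameter $\nu_j=(n-\tfrac12-j)\lambda_0$ (respectively $\nu=(n-j-\tfrac12)\lambda_0$) sits at a distinguished real point in the complementary-series strip, and the corresponding representation is realized as a cohomologically induced module $A_{\mathfrak{q}}(\lambda)$ for a $\theta$-stable parabolic subalgebra $\mathfrak{q}\subset\mathfrak{g}_{\bbC}$ whose Levi is isomorphic to $\mathfrak{u}(1)^{j}\oplus\mathfrak{so}(2(n-j),1)$ (or the singular analogue when $\gamma\in\Lambda_j$); Vogan--Zuckerman's unitarizability theorem then supplies a positive-definite invariant Hermitian form. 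Equivalently, one may normalize the Knapp--Stein operator $J_j^{-}(\gamma)$ at these discrete parameters and verify directly that its image carries a positive pairing inherited from the induced inner product; this is the path originally taken by Hirai.

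Finally, to rule out unitarizability in the remaining cases I would run the standard signature argument. The induced representations $I_j^{\pm}(\gamma)$ and $I(\mu,\nu)$ are Hermitian (with respect to the Cartan involution and complex conjugation) at the parameters in question, so the normalized Knapp--Stein intertwining operator endows the irreducible Langlands subquotient with an invariant Hermitian form. The operator acts on each $K$-type $V_{K,\lambda}$ by a scalar obtained as a product of gamma-function ratios indexed by the $M$-isotypic components of $V_{K,\lambda}$, by the classical formulas of Knapp--Stein on $\SO_e(m+1,1)$ (which then lift to $\Spin(m+1,1)$). It suffices to exhibit two $K$-types on which these scalars have opposite signs, which one does by perturbing $a_{i_0}$ away from $0$ for some $i_0$ violating the claimed vanishing range and tracking the sign change of a single gamma factor. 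The main obstacle is precisely this bookkeeping: one has to identify, among all $K$-types appearing in the subquotient, a pair whose $M$-content causes the relevant gamma factors to straddle a pole/zero of the normalizer exactly when the vanishing hypothesis of Fact~\ref{F:unitarizable} is violated, and then verify that the subquotient (as opposed to the full principal series) actually contains both $K$-types. Once this is done, the presence of an indefinite Hermitian form precludes unitarizability, completing the classification.
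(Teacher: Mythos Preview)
The paper does not supply its own proof of this statement: it is recorded as a \emph{Fact} and attributed to Hirai's classification of the unitary dual (\cite{Hirai}). So there is nothing to compare your argument against within the paper itself.

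Your outline is a standard modern route to Hirai's result. The tempered cases (discrete series for $\gamma\in\Lambda_0$, limits of discrete series for $\gamma\in\Lambda_n$) are handled exactly as you say. For the positive direction in the non-tempered cases, invoking the $A_{\mathfrak{q}}(\lambda)$ realization together with the Vogan--Zuckerman unitarizability theorem is cleaner than Hirai's original direct construction of the invariant inner product; the paper itself records this realization in \S\ref{SS:irreducible} (see Remark~\ref{R:complAq} and the surrounding discussion), so your argument is entirely consistent with the paper's framework. For the negative direction, the signature analysis via normalized Knapp--Stein operators acting on $K$-types is indeed the standard mechanism.

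One caution: you have correctly identified the genuine content as the bookkeeping in the last step, but you have not actually done it. Exhibiting two $K$-types in the Langlands subquotient (not merely the full principal series) on which the intertwining scalars have opposite sign requires knowing the $K$-type decomposition of $\pi_j(\gamma)$, which for rank-one groups is explicit (Blattner-type formulas, or the description in \cite{Collingwood}) but still needs to be written down. As a proof sketch this is fine; as a proof it is incomplete at exactly the point you flagged. Also, your remark that ``perturbing $a_{i_0}$ away from $0$'' detects the sign change is slightly misleading, since the parameters $a_i$ are discrete (integers or half-integers); what one actually does is fix $\gamma$ with some $a_{i_0}\neq 0$ in the forbidden range and compare two specific $K$-types directly.
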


\begin{fact}
For a non-integral weight $\gamma$,
 $\pi(\gamma)$ is unitarizable if and only if
 at least one of the following two conditions holds.
\begin{enumerate}
\item
 $a\in \mathbf{i}\bbR$\ (unitary principal series).
\item
 $a\in \bbR$, $|a|< n-\frac{1}{2}$, $a_i\in \bbZ$ for $1\leq i\leq n-1$
 and $a_j=0$ for any $n-|a|-\frac{1}{2}<j\leq n-1$ (complementary series).
\end{enumerate}
\end{fact}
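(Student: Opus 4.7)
The plan is to handle the two cases of the statement independently.

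For \emph{Case 1} ($a\in\mathbf{i}\bbR$), the induced representation $I(\mu,\nu)$ is by construction a unitary principal series: the Hilbert completion $\bar{I}(\sigma,\nu)$ defined in Section~\ref{SS:notation} carries the $G$-invariant inner product $\langle f_1,f_2\rangle=\int_N(f_1(n),f_2(n))\,dn$, and for non-integral $\gamma$ the standard Knapp--Stein analysis shows that this representation is already irreducible (any reducibility on the unitary axis would force $\gamma$ to lie on the integral lattice). Hence $\pi(\gamma)$ is tautologically unitarizable, and conversely every irreducible principal series with $\nu\in\mathbf{i}\mathfrak{a}^*$ arises this way.

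For \emph{Case 2} ($a\in\bbR$) I would follow Hirai's strategy, which is the method already cited for Fact~\ref{F:unitarizable}. For non-integral real $\nu$ the normalized long-intertwining Knapp--Stein operator
\[A(\mu,\nu)\colon I(\mu,\nu)\longrightarrow I(\mu,-\nu)\]
is an isomorphism of $(\mathfrak{g},K)$-modules satisfying $A(\mu,-\nu)\circ A(\mu,\nu)=\id$. Combining it with the canonical $G$-invariant sesquilinear pairing $I(\mu,\nu)\times I(\mu,-\nu)\to\bbC$ produces a $G$-invariant Hermitian form on $I(\mu,\nu)$, unique up to a real scalar; unitarizability of $\pi(\gamma)$ is equivalent to this form being definite.

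To decide definiteness, Frobenius reciprocity identifies the $K$-types of $I(\mu,\nu)$ with those $V_{K,\lambda}$ for which $\Hom_M(V_{M,\mu},V_{K,\lambda}|_M)\neq 0$, and Kostant's product formula gives the eigenvalue $c_\lambda(a)$ of $A$ on each such isotypic component as an explicit ratio of Gamma functions in $a$ and the coordinates of $\lambda$ and $\mu$. Normalizing so that $c_{\lambda_{\min}}(a)=1$ on a minimal $K$-type, the ratio $c_{\lambda'}(a)/c_\lambda(a)$ between $K$-types differing by $\pm\epsilon'_i$ factors as a product of terms of the form $(b-a)/(b+a)$ with explicit half-integers $b$ depending on $\mu$, $\lambda$, $i$. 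Positive-definiteness is then equivalent to $c_\lambda(a)>0$ for every occurring $\lambda$.

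Chasing these signs as $|a|$ grows from $0$ yields both directions of the Fact: the first sign change occurs at $|a|=n-\frac{1}{2}$, forcing the bound $|a|<n-\frac{1}{2}$; further sign changes at $|a|=n-j-\frac{1}{2}$ are avoided exactly when $a_j=0$, producing the condition $a_j=0$ for $j>n-|a|-\frac{1}{2}$; and when $\mu$ is half-integral ($a_i\notin\bbZ$) the signs fail already for arbitrarily small real $a$, ruling out all complementary series in that regime. Conversely, whenever a stated condition is violated one locates an explicit $K$-type on which $c_\lambda(a)<0$. The main obstacle is the combinatorial bookkeeping for which $K$-types actually occur when several of the $a_j$ vanish, together with the careful evaluation of $c_\lambda(a)$ in those degenerate cases; this is the heart of Hirai's original computation, and it adapts to the non-integral setting with minor simplifications, since no reducibility-induced subquotients intervene.
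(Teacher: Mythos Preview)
The paper does not prove this statement: it is recorded as a \emph{Fact} taken from Hirai~\cite{Hirai}, with no argument supplied beyond the citation. Your sketch therefore goes well beyond what the paper provides, and the method you outline---the normalized Knapp--Stein intertwining operator combined with a sign analysis of its eigenvalues on $K$-types via explicit Gamma-function ratios---is indeed Hirai's original approach and the standard route to classifying complementary series for real-rank-one groups.

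One genuine gap remains in your outline: your case split assumes from the outset that $a$ is either purely imaginary or real, but the ``only if'' direction of the Fact must also exclude $a\in\bbC\setminus(\bbR\cup\mathbf{i}\bbR)$. This is not covered by your sign analysis, since for such $a$ the pairing you construct is not Hermitian at all. The missing step is the standard observation that an irreducible $(\mathfrak{g},K)$-module admits a nonzero invariant Hermitian form only if it is isomorphic to its Hermitian dual; since the Hermitian dual of $I(\mu,\nu)$ is $I(\mu,-\bar{\nu})$ and the paper records $I(\mu,\nu)\cong I(\mu,-\nu)$, this forces $\nu\in\{\bar{\nu},-\bar{\nu}\}$, i.e.\ $a\in\bbR\cup\mathbf{i}\bbR$. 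Once this reduction is in place, your outline of the sign analysis is correct in spirit, with the combinatorics carried out in detail in Hirai's paper.
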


\begin{remark}\label{R:complAq}
The unitarizable representations $\pi(\gamma)$ for $\gamma\in \Lambda_{j}$
 in Fact~\ref{F:unitarizable} (2) can be regarded as a complementary series
 and also as $A_{\mathfrak{q}}(\lambda)$ as we see below.
\end{remark}

The unitarizable $(\mathfrak{g},K)$-modules with integral infinitesimal character
 are isomorphic to Vogan-Zuckerman's derived functor module $A_{\mathfrak{q}}(\lambda)$.
General references for $A_{\mathfrak{q}}(\lambda)$
 are e.g.\ \cite{Knapp-Vogan}, \cite{Vogan}.
For $0\leq j\leq n-1$
 let $\mathfrak{q}_j$ be a $\theta$-stable parabolic subalgebra of $\mathfrak{g}_{\bbC}$
 such that the real form of the Levi component of $\mathfrak{q}_j$
 is isomorphic to $\mathfrak{u}(1)^{j}+\mathfrak{so}(2(n-j),1)$.
For the normalization of parameters, we follow the book of Knapp-Vogan \cite{Knapp-Vogan}.
In particular, $A_{\mathfrak{q}}(\lambda)$ has infinitesimal character $\lambda+\rho$.

\begin{remark}\label{R:Aqsingular}
The parameter $\lambda=(a_1,\dots,a_{j},0,\dots,0)$ for $\mathfrak{q}_j$
 is in the good range if and only if
 $a_1\geq a_2\geq \cdots \geq a_{j} \geq 0$.
It is in the weakly fair range if and only if
 $a_i+1\geq a_{i+1}$ for $1\leq i\leq j-1$ and $a_j\geq -n+j$.
When $\lambda$ is in the weakly fair range, $A_{\mathfrak{q}}(\lambda)$ is nonzero
 if and only if $a_1\geq \cdots \geq a_{j}$ and $a_{j-1} \geq -1$.
\end{remark}

Let $\gamma\in \Lambda_0$ and $0\leq j\leq n-1$ such that $\pi_j(\gamma)$ is unitarizable.
Then
\begin{align*}
\pi_j(\gamma)_K \cong A_{\mathfrak{q}_{j}}(\lambda),
\end{align*}
where $\lambda=(a_1,\dots,a_{j},0,\dots,0)$.

Let $\gamma\in \Lambda_j\ (1\leq j\leq n-1)$ and $1\leq i\leq j$.
Assume that $a_{i}=\cdots=a_n=0$.
Then
\begin{align*}
\pi(\gamma)_K \cong A_{\mathfrak{q}_{i}}(\lambda),
\end{align*}
where $\lambda=(a_1-1,\dots,a_{i-1}-1,i-j-1,0,\dots,0)$.

\bigskip

Suppose next that $m$ is even and then $m=2n-2$ and $G=\Spin(2n-1,1)$.
This case is similar to and simpler than the previous case.
The infinitesimal character $\gamma$ of $\pi\in\mathcal{C}(G)$ is conjugate to
\[(\mu+\rho_{M},\nu)=(a_{1}+n-2,a_{2}+n-3,\cdots,a_{n-1},a),\]
 where $\mu=(a_{1},\dots,a_{n-1})$ and $\nu=a\lambda_{0}$.
We have $a_{1}\geq \cdots\geq a_{n-1}\geq 0$;
 and $a_{1},\dots,a_{n-1}$ are all integers or all half-integers.
The weight $\gamma$ is integral if and only if $a-a_{j}\in\mathbb{Z}$.
The singularity of integral $\gamma$ has the following possibilities:
\begin{enumerate}
\item If $a\neq a_{j}+n-j-1$ for $1\leq j\leq n-1$, then $\gamma$ is regular.
Write $\Lambda_{0}$ for the set of integral regular dominant weights.
\item If $a=a_{j}+n-j-1$ for some $1\leq j \leq n - 1$, then up to conjugation
\[\gamma=(a_{1}+n-2,\dots,a_{j}+n-j-1,a_{j}+n-j-1,\dots,a_{n-1}).\]
Write $\Lambda_{j}$ for the set of such integral dominant weights.
\end{enumerate}

We introduce several notation for every type of $\gamma$.

For a weight
\[\gamma=(a_{1}+n-1,a_{2}+n-2,\cdots,a_{n-1}+1,a_{n})
 \in\Lambda_{0}\]
with $a_1\geq \cdots \geq a_{n-1}\geq a_n\geq 0$, let
\[\mu_{j}=(a_{1}+1,\cdots,a_{j}+1,a_{j+2},\cdots,a_{n})
 \textrm{ and }\nu_{j}=(a_{j+1}+n-j-1)\lambda_{0}\]
for $0\leq j\leq n-1$.
Put
\[I_{j}^{\pm}(\gamma)=I(\mu_{j},\pm \nu_j).\]
For each $j$, there are nonzero intertwining operators
\[J_{j}^{+}(\gamma)\colon I_{j}^{+}(\gamma)
\rightarrow I_{j}^{-}(\gamma)
\text{ and }
J_{j}^{-}(\gamma)\colon I_{j}^{-}(\gamma)\rightarrow I_{j}^{+}(\gamma).\]
Write $\pi_{j}(\gamma)$ (resp.\ $\pi'_{j}(\gamma)$)
 for the image of $J_{j}^{-}(\gamma)$ (resp.\ $J_{j}^{+}(\gamma)$).

Let $1\leq j\leq n-1$. For a weight
\[\gamma=(a_{1}+n-2,\dots,a_{j}+n-j-1,
a_{j}+n-j-1,\dots,a_{n-1})\in\Lambda_{j},\]
write
\[\mu=(a_{1},\dots,a_{n-1})\textrm{ and }
 \nu=(a_{j}+n-j-1)\lambda_0.\]
Put
\[\pi(\gamma)=I(\mu,\nu).\]

For a non-integral weight
\[\gamma=(a_{1}+n-2,a_{2}+n-3,\cdots,a_{n-1},a),\]
write \[\mu=(a_{1},\dots,a_{n-1})  \textrm{ and } \nu=a\lambda_0.\]
Put
\[\pi(\gamma)=I(\mu,\nu).\]
Note that $I(\mu,\nu)  \cong I(\mu, -\nu)$.

Using these notation, the Langlands classification is given as follows.

\begin{fact}\label{F:gamma-classification2}
\begin{enumerate}
\item
For $\gamma\in\Lambda_{0}$,
 any irreducible representation of $G$
 with infinitesimal character $\gamma$ is
 equivalent to one of
\[\{\pi_{0}(\gamma),\dots,\pi_{n-1}(\gamma)\}.\]
When $0\leq j\leq n-2$,
$\pi_{j+1}(\gamma)\cong \pi'_{j}(\gamma)$;
 $\pi_{n-1}(\gamma)\cong \pi'_{n-1}(\gamma)$; and
 $\pi_{0}(\gamma)$ is a finite-dimensional module.
If $a_n=0$, then $\pi_{n-1}(\gamma)$ is tempered.
\item
 For $\gamma\in\Lambda_{j}$ ($1\leq j\leq n-1$),
 any irreducible representation of $G$
 with infinitesimal character $\gamma$ is
 equivalent to $\pi(\gamma)$.
\item
For a non-integral weight $\gamma$,
 any irreducible representation of $G$
 with infinitesimal character $\gamma$ is
 equivalent to $\pi(\gamma)$.
\end{enumerate}
\end{fact}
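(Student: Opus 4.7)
The plan is to follow the standard Langlands classification strategy for real rank one groups, specializing to $G = \Spin(2n-1,1)$. Since $m = 2n-2$ is even, the Cartan subalgebra $\mathfrak{t}_s = (\mathfrak{t}\cap\mathfrak{m})\oplus\mathfrak{a}$ is (up to $G$-conjugation) the unique conjugacy class of Cartan subalgebras, and crucially $G$ has no compact Cartan subgroup, so no discrete series occur; every tempered representation is a unitary principal series. This simplification is the main structural difference from the odd-$m$ case treated in Fact~\ref{F:gamma-classification}.

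By Casselman's subrepresentation theorem, every $\pi \in \mathcal{C}(G)$ embeds into some $I(\sigma, \nu)$, so the classification reduces to analyzing composition series of principal series. For non-integral $\gamma$ one checks that no root pairs integrally with $\gamma$, so the normalized Knapp-Stein intertwiner is an isomorphism and $I(\mu,\nu)$ is irreducible, giving part (3). For $\gamma \in \Lambda_j$ with $1 \leq j \leq n-1$ (singular integral), the parameter meets exactly one wall and a direct application of Knapp's reducibility criteria in \cite{Knapp} shows that the normalized intertwiner remains an isomorphism, so $\pi(\gamma) = I(\mu,\nu)$ is irreducible, which is part (2).

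The main case is part (1), the regular integral $\gamma \in \Lambda_0$. I would compute the composition factors of the chain $\{I_j^{\pm}(\gamma)\}_{j=0}^{n-1}$ by applying Zuckerman's translation principle from the infinitesimal character of the finite-dimensional representation of highest weight $(a_1, \dots, a_n)$; coherent continuation yields exactly $n$ irreducible constituents distinguished by their lowest $K$-types. Tracking the kernels and images of the intertwiners $J_j^{\pm}(\gamma)$ identifies these constituents with $\pi_0(\gamma), \dots, \pi_{n-1}(\gamma)$. The identifications $\pi_{j+1}(\gamma) \cong \pi'_j(\gamma)$ for $0\leq j\leq n-2$ come from composing $J_{j+1}^-$ with $J_j^+$ and observing that the composite acts by a (nonzero) scalar on the common constituent. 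The identification $\pi_{n-1}(\gamma) \cong \pi'_{n-1}(\gamma)$ reflects the absence of discrete series: at $j=n-1$ the two principal series $I_{n-1}^{\pm}(\gamma)$ share the same Langlands data and have equivalent irreducible quotients. Finally, temperedness of $\pi_{n-1}(\gamma)$ when $a_n=0$ is automatic because $\nu_{n-1} = 0$ makes $I_{n-1}^{\pm}(\gamma)$ a unitary principal series (which is then tempered and irreducible at this parameter).

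The main obstacle is the uniform verification of the intertwiner identifications $\pi_{j+1}(\gamma) \cong \pi'_j(\gamma)$ across all $j$, and the self-pairing at $j=n-1$. The cleanest route is via Vogan's duality combined with coherent continuation of characters from $\Lambda_0$; this is in fact easier in the present even-$m$ setting than in the odd-$m$ case precisely because the chain terminates in a tempered principal series rather than splitting off a discrete series pair $\pi^{\pm}(\gamma)$.
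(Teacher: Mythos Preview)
The paper does not give a proof of this statement: it is labeled a \emph{Fact} and is quoted from the literature as the Langlands classification for $\Spin(2n-1,1)$, with the references being essentially \cite{Collingwood} and \cite{Hirai} (cf.\ also Fact~\ref{L:rho-classification}, where the special case $\gamma=\rho$ is explicitly attributed to these sources). So there is no ``paper's own proof'' to compare against; the authors simply import the result.

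Your outline is a reasonable sketch of how one would recover this classification from first principles, and the structural observation that drives it---no compact Cartan, hence no discrete series, hence the chain of principal series terminates with $\pi_{n-1}(\gamma)\cong\pi'_{n-1}(\gamma)$ rather than splitting into $\pi^{\pm}(\gamma)$---is exactly the right heuristic. A couple of the justifications are imprecise, though. For part~(3), ``no root pairs integrally with $\gamma$'' is too strong: non-integrality only means $a-a_j\notin\mathbb{Z}$, while the compact roots $\epsilon_i\pm\epsilon_j$ ($i,j<n$) do pair integrally with $\gamma$; irreducibility comes instead from the standard reducibility criterion for rank-one principal series (e.g.\ \cite[Theorem 5.2.1]{Collingwood}). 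For part~(1), saying that $I_{n-1}^{\pm}(\gamma)$ ``share the same Langlands data'' is not literally correct when $a_n>0$ since $\nu_{n-1}=a_n\lambda_0\neq 0$; what actually happens is that $I_{n-1}^{\pm}(\gamma)$ are each irreducible for this group (the $M$-type $\mu_{n-1}$ does not extend to the disconnected group in a way that forces reducibility), so both intertwiners are isomorphisms and $\pi_{n-1}(\gamma)=\pi'_{n-1}(\gamma)=I_{n-1}^{+}(\gamma)$. These are refinements rather than gaps, and the overall strategy is sound.
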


Among these representations, unitarizable ones are given as follows (\cite{Hirai}).

\begin{fact}\label{F:unitarizable2}
\begin{enumerate}
\item
For $\gamma\in\Lambda_{0}$,
 $\pi_j(\gamma)$ is unitarizable if and only if $a_i=0$ for any $j<i\leq n$.
\item
For $\gamma\in\Lambda_{j}$ ($1\leq j\leq n-1$),
 $\pi(\gamma)$ is unitarizable if and only if $a_i=0$ for any $j\leq i\leq n-1$.
\end{enumerate}
\end{fact}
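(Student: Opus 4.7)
The plan is to reduce the statement to the Vogan-Zuckerman unitarizability theorem for $A_{\mathfrak{q}}(\lambda)$ modules, together with the classical analysis of Langlands subquotients of rank-one principal series. For the unitarizability direction, I would first establish, in parallel to the identifications stated for odd $m$ immediately before the statement, that for $\gamma \in \Lambda_{0}$ satisfying $a_{j+1} = \cdots = a_n = 0$ one has $\pi_j(\gamma)_K \cong A_{\mathfrak{q}_j}(\lambda)$ with $\lambda = (a_1, \ldots, a_j, 0, \ldots, 0)$, and that for $\gamma \in \Lambda_{j}$ satisfying $a_j = \cdots = a_{n-1} = 0$ one has $\pi(\gamma)_K \cong A_{\mathfrak{q}_i}(\lambda)$ for an appropriate $i$ and a suitably shifted parameter $\lambda$. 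Under the stated vanishing conditions, $\lambda$ falls in the weakly fair range by the same calculation as in Remark~\ref{R:Aqsingular}, so $A_{\mathfrak{q}}(\lambda)$ is unitarizable by the Vogan-Zuckerman theorem.

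For the converse, I would argue that whenever a forbidden $a_i$ is nonzero the invariant Hermitian form on the corresponding Langlands subquotient changes sign between two $K$-types, and hence the subquotient is non-unitary. Using the rank-one structure, the Hermitian form on the image of the normalized Knapp-Stein intertwining operator $J(\mu, \nu)\colon I(\mu, \nu) \to I(\mu, -\nu)$ is controlled by the signature of $J$ on successive $K$-types; the reducibility points of $J$ and its signature jumps for $\Spin(2n-1,1)$ are classical. Tracking the signature from a $K$-type on which the form is normalized positive across the reducibility points would produce a $K$-type carrying negative sign, ruling out unitarizability outside the range asserted in the statement.

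The main obstacle is performing the signature bookkeeping uniformly in $j$, especially at the boundary parameter values where unitarizability transitions to non-unitarizability. Conveniently, this signature calculation is precisely what Hirai~\cite{Hirai} carried out in the original determination of the unitary dual of $\SO_e(n,1)$; thus the cleanest proof amounts to translating Hirai's classification into the Langlands parametrization of Fact~\ref{F:gamma-classification2} and matching the cases. Alternatively, one could appeal to Salamanca-Riba's theorem in the strongly regular part of the parameter space and handle the singular boundary by direct cohomological induction arguments using $\mathfrak{q}_j$, but the unified signature approach via Hirai is the most economical route.
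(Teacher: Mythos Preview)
The paper does not prove this statement: it is labeled a \emph{Fact} and is attributed to Hirai~\cite{Hirai} without further argument. Your proposal is therefore more detailed than anything the paper provides, and you correctly identify at the end that invoking Hirai's classification is the cleanest route; that is exactly, and only, what the paper does.

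Your two-pronged strategy is sound as an independent proof. One small logical caution: in the paper the identifications $\pi_j(\gamma)_K\cong A_{\mathfrak{q}_j}(\lambda)$ are stated \emph{after} Fact~\ref{F:unitarizable2}, so if you want to use them for the sufficiency direction you must establish them directly (by computing the Langlands parameters of $A_{\mathfrak{q}_j}(\lambda)$ via the standard cohomological-induction-to-real-parabolic-induction transfer) rather than quoting the paper. This is routine but should be flagged so that the argument is not circular. With that in place, Vogan--Zuckerman handles sufficiency, and for necessity your plan to track the signature of the normalized intertwining operator across $K$-types is exactly Hirai's method; there is no gap.
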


\begin{fact}
For a non-integral weight $\gamma$,
 $\pi(\gamma)$ is unitarizable if and only if
 at least one of the following two conditions holds.
\begin{enumerate}
\item
 $a\in \mathbf{i}\bbR$\ (unitary principal series).
\item
 $a\in \bbR$, $|a|<n-1$, $a_i\in \bbZ$ for $1\leq i\leq n-1$
 and $a_j=0$ for any $n-|a|-1<j\leq n-1$ (complementary series).
\end{enumerate}
\end{fact}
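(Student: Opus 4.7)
The plan is to follow the standard strategy for classifying the unitary dual of a real rank one group, combining the Langlands parametrization from Fact~\ref{F:gamma-classification2} with a Hermitian form analysis on principal series. The setting is comparatively clean because the non-integrality of $\gamma$ means $\pi(\gamma)=I(\mu,\nu)$ is irreducible for every $\nu=a\lambda_0$ with $a\notin\bbZ$, so the only question is unitarizability.

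For the sufficiency of (1), when $a\in\mathbf{i}\bbR$ the representation $I(\mu,\nu)$ coincides with the unitarily induced $\bar{I}(\mu,\nu)$ and so carries the canonical invariant Hilbert inner product by construction. For the sufficiency of (2) with $a\in\bbR$, I would invoke the Knapp-Stein intertwining operator $A(w_0,\mu,\nu)\colon I(\mu,\nu)\to I(w_0\mu,-\nu)$. Since $M=\Spin(2n-2)$ and the restricted Weyl group reflection $w_0$ acts trivially on $\mathfrak{m}$, we have $w_0\mu=\mu$, so composing $A(w_0,\mu,\nu)$ with the canonical $G$-invariant pairing between $I(\mu,\nu)$ and $I(\mu,-\nu)$ produces an invariant Hermitian form. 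One then checks that under the three conditions $|a|<n-1$, $a_i\in\bbZ$, and $a_j=0$ for $n-|a|-1<j\leq n-1$, this Hermitian form is positive definite, yielding a unitary structure.

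For necessity, any invariant Hermitian form on $\pi(\gamma)$ forces $\pi(\gamma)^h\cong\pi(\gamma)$, equivalently $-\bar{\nu}$ lies in the $W(\mathfrak{g},\mathfrak{a})$-orbit of $\nu$. Since this restricted Weyl group is just $\{\pm 1\}$, we obtain either $\bar{\nu}=-\nu$ (case (1)) or $\bar{\nu}=\nu$, i.e.\ $a\in\bbR$. In the real case, positivity of the Hermitian form from $A(w_0,\mu,\nu)$ must be tested $K$-type by $K$-type: decomposing $I(\mu,\nu)|_K\cong\Ind_M^K V_{M,\mu}$ via Frobenius reciprocity and then branching from $K=\Spin(2n-1)$ to $M=\Spin(2n-2)$, the operator acts on each $K$-isotypic component by an explicit scalar expressible as a ratio of Gamma functions in $a$ and the $a_i$'s. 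The bound $|a|<n-1$ comes from avoiding the first reducibility point of the spherical principal series, and the condition $a_j=0$ for $j>n-|a|-1$ is exactly the combinatorial constraint preventing sign changes in those Gamma-function ratios.

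The main obstacle is the $K$-type positivity analysis: one has to identify every $K$-type appearing in $I(\mu,\nu)$, compute the eigenvalue of the normalized intertwiner on each, and then carry out the sign-tracking argument that pins down the precise conditions stated. A cleaner alternative is to appeal directly to Hirai's original classification of the unitary dual of $\Spin(2n-1,1)$ in \cite{Hirai}, which is how the fact is attributed in the text; Hirai's method uses essentially this scheme, supplemented by an induction on the rank via Dirac-operator-type arguments for handling the highest weight parameters of $M$.
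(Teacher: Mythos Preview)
The paper does not prove this statement at all: it is labeled a \emph{Fact} and attributed directly to Hirai's classification \cite{Hirai} of the unitary dual of $\Spin(2n-1,1)$. So there is no ``paper's own proof'' to compare against; your final paragraph already identifies this correctly.

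Your sketch is the standard rank-one strategy and is essentially what Hirai carries out. One small imprecision: for $G=\Spin(2n-1,1)$ we have $M=\Spin(2n-2)$, and the nontrivial restricted Weyl element does \emph{not} act trivially on $\widehat{M}$ in general---it sends $\mu=(a_1,\dots,a_{n-1})$ to $(a_1,\dots,a_{n-2},-a_{n-1})$. Thus $w_0\mu\cong\mu$ only when $a_{n-1}=0$, which is automatically forced by condition~(2) (since $|a|>0$ non-integral implies $n-|a|-1<n-1$, hence $a_{n-1}=0$), but is not a general fact about $M$ as you state it. This matters for the necessity direction: the existence of a nonzero invariant Hermitian form on $I(\mu,\nu)$ with $\nu$ real already requires $a_{n-1}=0$, which should be folded into your dichotomy argument rather than asserted up front.
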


For $0\leq j\leq n-1$
 let $\mathfrak{q}_j$ be a $\theta$-stable parabolic subalgebra of $\mathfrak{g}_{\bbC}$
 such that the real form of the Levi component of $\mathfrak{q}_j$
 is isomorphic to $\mathfrak{u}(1)^{j}+\mathfrak{so}(2(n-j)-1,1)$.

Remarks~\ref{R:complAq} and \ref{R:Aqsingular} are valid without change of words.

Let $\gamma\in \Lambda_0$ and $0\leq j\leq n-1$ such that $\pi_j(\gamma)$ is unitarizable.
Then
\begin{align*}
\pi_j(\gamma)_K \cong A_{\mathfrak{q}_{j}}(\lambda),
\end{align*}
where $\lambda=(a_1,\dots,a_{j},0,\dots,0)$.

Let $\gamma\in \Lambda_j\ (1\leq j\leq n-1)$ and $1\leq i\leq j$.
Assume that $a_{i}=\cdots=a_n=0$.
Then
\begin{align*}
\pi(\gamma)_K \cong A_{\mathfrak{q}_{i}}(\lambda),
\end{align*}
where $\lambda=(a_1-1,\dots,a_{i-1}-1,i-j-1,0,\dots,0)$.

\subsection{Branching laws for the restriction from $\Spin(2n,1)$ to $P$}\label{SS:branchinglaw}

We deduce branching laws for the restriction to $P$
 of all irreducible unitary representations of $G$.

In this subsection suppose $G=\Spin(2n,1)$.
A similar result for the group $\Spin(2n-1,1)$ will be given in the next subsection.

By Fact~\ref{F:unitarizable},
 many of irreducible unitary representations of $G$
 are the completion of principal series representations.
This is the case if the infinitesimal character $\gamma$ lies in
 $\Lambda_j\ (1\leq j\leq n-1)$ or $\gamma$ is not integral.

\begin{theorem}\label{T:branching-ps}
Suppose that an irreducible unitary representation $\pi$ of $Spin(2n,1)$ is
 isomorphic to the completion of a principal series representation
 $I(\mu,\nu)$, where $\mu=(a_1,\dots,a_{n-1})$ and $a_1\geq a_2\geq \dots\geq a_{n-1}\geq 0$.
Then
\[ \pi|_{P}\cong \bigoplus_{\tau}
 \Ind_{M'N}^P(V_{M',\tau}\otimes e^{\mathbf{i}\xi_0}), \]
where $\tau=(b_1,\dots,b_{n-1})$ runs over tuples such that
 \[a_{1}\geq b_1 \geq a_{2}\geq b_2 \geq
  \cdots \geq a_{n-1} \geq |b_{n-1}|\] and $b_{i}-a_{1}\in\mathbb{Z}$.
\end{theorem}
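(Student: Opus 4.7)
The plan is to combine three ingredients: Lemma~\ref{L:unitary-J}, Proposition~\ref{P:res-induced}, and the classical branching rule for $\Spin(2n-1) \downarrow \Spin(2n-2)$. Since $\pi$ is a non-trivial irreducible unitary representation, Lemma~\ref{L:unitary-J} applies and gives
\[
\bar\pi|_P \cong \Ind_{M'N}^P\bigl(\Psi(\pi^{\sm}|_P) \otimes e^{\mathbf{i}\xi_0}\bigr),
\]
so the problem reduces to computing the finite-dimensional $M'$-module $\Psi(\pi^{\sm}|_P)$.

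Under the hypothesis that $\pi$ is the Hilbert completion of the principal series $I(\mu,\nu)$ (and is irreducible), one has $\pi^{\sm}\cong I(\mu,\nu)$ via Casselman--Wallach, so Proposition~\ref{P:res-induced} yields
\[
\Psi(\pi^{\sm}|_P) \cong V_{M,\mu}|_{M'}.
\]
Here $m=2n-1$, so $M=\Spin(2n-1)$ and $M'=\Spin(2n-2)$, and the classical multiplicity-free branching rule from $\Spin(2n-1)$ to $\Spin(2n-2)$ decomposes
\[
V_{M,\mu}|_{M'} \cong \bigoplus_\tau V_{M',\tau},
\]
where $\tau=(b_1,\ldots,b_{n-1})$ ranges exactly over tuples satisfying the interlacing condition $a_1\geq b_1\geq a_2\geq b_2\geq\cdots\geq a_{n-1}\geq |b_{n-1}|$ together with the integrality constraint $b_i-a_1\in\bbZ$. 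Combining this decomposition with the compatibility of unitary parabolic induction with orthogonal Hilbert direct sums of the inducing module yields the asserted formula.

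The main technical point is to justify that $\pi^{\sm} \cong I(\mu,\nu)$ as a whole smooth principal series, rather than only as a proper subquotient; this is what allows Proposition~\ref{P:res-induced} to be applied to $\pi^{\sm}$ itself. In other words, one must verify that $I(\mu,\nu)$ is irreducible in each of the ranges where its Hilbert completion occurs as an irreducible unitary representation, which amounts to a case analysis based on the classification in \S\ref{SS:irreducible} and Fact~\ref{F:unitarizable} (covering unitary principal series, complementary series, and the unitarizable members of the families $\pi(\gamma)$ for $\gamma\in\Lambda_j$). Once this irreducibility is in hand, the remaining steps of the argument are essentially formal.
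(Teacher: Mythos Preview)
Your proof is correct and follows exactly the same approach as the paper: apply Lemma~\ref{L:unitary-J} and Proposition~\ref{P:res-induced}, then invoke the classical branching rule for $\Spin(2n-1)\downarrow\Spin(2n-2)$. Your final paragraph is slightly overcautious: the hypothesis that the irreducible unitary $\pi$ is the completion of $I(\mu,\nu)$ already forces $I(\mu,\nu)$ to be irreducible (so $\pi^{\sm}\cong I(\mu,\nu)$ by Casselman--Wallach), and no separate case analysis is needed.
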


\begin{proof}
By Lemma~\ref{L:unitary-J} and Proposition~\ref{P:res-induced},
 the theorem follows from the well-known branching law from $M=\Spin(2n-1)$ to $M'=\Spin(2n-2)$
 (see e.g.\ \cite[Theorem 8.1.3]{Goodman-Wallach}).
\end{proof}

Next, let $\gamma\in \Lambda_0$, namely, $\gamma$ is a regular integral weight.
Recall that in \S\ref{SS:irreducible}
 we defined $\pi_j(\gamma)$ to be the image of the intertwining operator
 $J_j^{-}(\gamma)\colon I_j^-(\gamma) \to I_j^+(\gamma)$.
We give branching laws for $\pi_j(\gamma)$ for $1\leq j\leq n-1$
 when it is unitarizable.

\begin{theorem}\label{T:branching-regular}
Let $1\leq j\leq n-1$ and let
\[\gamma=\Bigl(a_1+n-\frac{1}{2},\dots,a_{j}+n-j+\frac{1}{2},
 n-j-\frac{1}{2},\dots,\frac{1}{2} \Bigr),\]
where $a_1\geq \cdots \geq a_{j}\geq 0$ are integers.
Then
\[ \bar{\pi}_j(\gamma)|_{P}\cong
\bigoplus_{\tau}
 \Ind_{M'N}^P(V_{M',\tau}\otimes e^{\mathbf{i}\xi_0}), \]
where $\tau=(b_1,\dots,b_{j-1},0,\dots,0)$ runs over tuples of integers such that
 \[a_{1}+1 \geq b_1 \geq a_{2}+1 \geq b_2 \geq
 \cdots \geq a_{j-1}+1 \geq b_{j-1}\geq a_{j}+1.\]
\end{theorem}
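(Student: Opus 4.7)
The plan is to reduce via Lemma~\ref{L:unitary-J} to the computation of the finite-dimensional $M'$-module $\Psi(\pi_j(\gamma)^{\sm}|_P)$, and then to deduce the general case from an explicit base case at infinitesimal character $\rho$ via Zuckerman's translation principle. Once I show
\[ \Psi(\pi_j(\gamma)^{\sm}|_P) \;\cong\; \bigoplus_{\tau} V_{M',\tau} \]
with $\tau=(b_1,\ldots,b_{j-1},0,\ldots,0)$ ranging over the tuples in the statement, Lemma~\ref{L:unitary-J} yields the branching law directly by inducing from $M'N$ up to $P$.

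First I would treat the base case $\gamma=\rho$, where $a_1=\cdots=a_j=0$. The theorem then predicts $\Psi(\pi_j(\rho)^{\sm}|_P)\cong V_{M',\tau_0}$ with $\tau_0=(1,\ldots,1,0,\ldots,0)$ having exactly $j-1$ ones. Since the $(\mathfrak{g},K)$-module of $\pi_j(\rho)$ is $A_{\mathfrak{q}_j}(0)$, an irreducible unitary representation with infinitesimal character $\rho$ for which an explicit $L^2$-realization is constructed in Appendix~\ref{S:trivial}, I can combine that realization with the anti-trivialization of Lemma~\ref{L:anti-trivialization} to read off the $M'$-fiber over $\xi_0$ and identify it with $V_{M',\tau_0}$.

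For general $\gamma\in\Lambda_0$ with $a_{j+1}=\cdots=a_n=0$, set $\lambda:=(a_1,\ldots,a_j,0,\ldots,0)$ and let $F_\lambda$ denote the irreducible finite-dimensional $G$-module of highest weight $\lambda$. I will invoke the translation functor $V\mapsto (V\otimes F_\lambda)_{\gamma}$ (projection onto the $\gamma$-primary component), which sends $\pi_j(\rho)_K$ to $\pi_j(\gamma)_K$ since $\lambda$ is in the weakly fair range for $\mathfrak{q}_j$ (compare the $A_{\mathfrak{q}_j}(\lambda)$ description in Section~\ref{SS:irreducible}, together with Fact~\ref{F:gamma-classification}). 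By the exactness of $\Psi$ (Lemma~\ref{L:exact}) and the tensor product formula $\Psi((V\otimes F_\lambda)^{\sm}|_P)\cong \Psi(V^{\sm}|_P)\otimes F_\lambda|_{M'}$ (Lemma~\ref{L:tensor}), decomposing $\pi_j(\rho)\otimes F_\lambda$ by infinitesimal character yields, in the Grothendieck group $K(M')$,
\[ V_{M',\tau_0}\otimes F_\lambda|_{M'} \;=\; \Psi(\pi_j(\gamma)^{\sm}|_P) \;+\; \sum_{\gamma'\neq\gamma} \Psi\bigl((\pi_j(\rho)\otimes F_\lambda)_{\gamma'}^{\sm}|_P\bigr). \]

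The main obstacle will be isolating the $\gamma$-term on the right. My approach is downward induction on $j$, so that the contributions of the $\gamma'\neq\gamma$ primary summands involve either finite-dimensional modules (killed by $\Psi$) or representations of the form $\pi_i(\gamma')$ with $i$ already settled in an earlier step. The combinatorial engine of the matching is the classical $B_{n-1}\downarrow D_{n-1}$ interlacing branching rule from $M=\Spin(2n-1)$ to $M'=\Spin(2n-2)$: it governs $F_\lambda|_{M'}$ through the chain $G\downarrow K\downarrow M\downarrow M'$ and, when combined with the interlacing description of $V_{M',\tau_0}\otimes F_\lambda|_{M'}$, produces precisely the tuples $\tau$ appearing in the statement. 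Matching multiplicities on the two sides should pin down $\Psi(\pi_j(\gamma)^{\sm}|_P)$ as the asserted direct sum of $V_{M',\tau}$.
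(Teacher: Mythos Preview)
Your translation-principle approach is substantially more complicated than necessary, and the ``isolating'' step has a real gap. The paper's proof avoids translation entirely: it uses the relation $[\pi_i(\gamma)]+[\pi_{i+1}(\gamma)]=[I_i^+(\gamma)]$ in the Grothendieck group $K(G)$ for $0\leq i<j$, together with Proposition~\ref{P:res-induced} which gives $\Psi([I_i^+(\gamma)])=[V_{M,\mu_i}|_{M'}]$, and the vanishing $\Psi([\pi_0(\gamma)])=0$ since $\pi_0(\gamma)$ is finite-dimensional. A short upward induction on $i$ then produces $\Psi([\pi_j(\gamma)])$ as an alternating sum of explicit $M'$-modules, and the interlacing branching rule for $\Spin(2n-1)\supset\Spin(2n-2)$ collapses this sum to the stated answer. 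Lemma~\ref{L:unitary-J} finishes. No base case at $\rho$ and no tensoring with $F_\lambda$ are needed; the theorem follows in a few lines once Proposition~\ref{P:res-induced} is in hand.

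The gap in your proposal is in the induction scheme. When both $\rho$ and $\gamma$ are regular, the translation functor $T_\rho^{\gamma'}$ for any regular $\gamma'=\rho+\mu$ (with $\mu$ a weight of $F_\lambda$) is an equivalence of categories and \emph{preserves the Langlands position}: it sends $\pi_j(\rho)$ to $\pi_j(\gamma')$, not to some $\pi_i(\gamma')$ with $i\neq j$. Hence the regular primary components of $\pi_j(\rho)\otimes F_\lambda$ other than the $\gamma$-component are again modules of the form $\pi_j(\gamma')$ with the \emph{same} index $j$, so ``downward induction on $j$'' does not reduce them to previously treated cases. You could try to rescue the argument by induction on $|\gamma-\rho|$ (in the spirit of the paper's proof of Theorem~\ref{T:branching-ds} for discrete series), but you would then need to handle all intermediate $\gamma'$ simultaneously and control the primary components at singular infinitesimal characters; the combinatorics of subtracting these from $V_{M',\tau_0}\otimes F_\lambda|_{M'}$ is far heavier than the paper's direct alternating-sum argument. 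In the paper, translation is reserved for Theorem~\ref{T:branching-ds}, and even there it is used only to separate the two discrete series $\pi^\pm(\gamma)$ after the combined $\Psi([\pi^+(\gamma)]+[\pi^-(\gamma)])$ has already been computed by the same Grothendieck-group method.
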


\begin{proof}
Let $0\leq i < j$.
It is known that $[\pi_i(\gamma)]+[\pi_{i+1}(\gamma)]=[I_i^+(\gamma)]$
 in the Grothendieck group $K(G)$.
Hence by Proposition~\ref{P:res-induced}
\[
\Psi([\pi_i(\gamma)])+\Psi([\pi_{i+1}(\gamma)]) = [V_{M,\mu_i}|_{M'}],
\]
where $\mu_i=(a_1+1,\dots,a_{i}+1,a_{i+2},\dots,a_{j},0,\dots,0)$.
Since $\pi_0(\gamma)$ is finite-dimensional, $\Psi([\pi_0(\gamma)])=0$.
Then by induction on $i$, we have
\[\Psi([\pi_i(\gamma)])= \bigoplus_{\tau} [V_{M',\tau}],
\]
where $\tau=(b_1,\dots,b_{j-1},0,\dots,0)$ runs over tuples of integers such that
\begin{align*}
&a_{1}+1 \geq b_1 \geq a_{2}+1 \geq \cdots \geq   b_{i-1} \geq a_{i}+1, \text{ and } \\
&a_{i+1}\geq b_{i} \geq a_{i+2} \geq b_{i+1} \geq \dots\geq a_{j}\geq  |b_{j-1}|.
\end{align*}
Hence the theorem follows from Lemma~\ref{L:unitary-J}.
\end{proof}

We have the following formula for $A_{\mathfrak{q}}(\lambda)$
 by  Theorems~\ref{T:branching-ps} and \ref{T:branching-regular}.
For $0\leq j\leq n-1$
 let $\mathfrak{q}_j$ be a $\theta$-stable parabolic subalgebra of $\mathfrak{g}_{\bbC}$
 such that the real form of the Levi component of $\mathfrak{q}_j$
 is isomorphic to $\mathfrak{u}(1)^{j}+\mathfrak{so}(2(n-j),1)$.
For a weakly fair parameter $\lambda=(a_1,\dots,a_j,0,\dots,0)$, we have
\[ \overline{A_{\mathfrak{q}_{j}}(\lambda)}|_{P}\cong
\bigoplus_{\tau}
 \Ind_{M'N}^P(V_{M',\tau}\otimes e^{\mathbf{i}\xi_0}), \]
where $\tau=(b_1,\dots,b_{j-1},0,\dots,0)$ runs over tuples of integers such that
 \[a_{1}+1 \geq b_1 \geq a_{2}+1 \geq b_2 \geq
 \cdots \geq a_{j-1}+1 \geq b_{j-1}\geq \max\{a_{j}+1,0\}.\]

The remaining representations are (limit of) discrete series representations.
The following formula is proved by the translation principle
 and the case where $\gamma=\rho$.
The proof for the case $\gamma=\rho$ involves an explicit calculation
 for the lowest $K$-type and will be later proved in Proposition~\ref{P:P-restriction2}.

\begin{theorem}\label{T:branching-ds}
Let
\[\gamma=\Bigl(a_1+n-\frac{1}{2}, a_2+n-\frac{3}{2},
 \dots, a_{n}+\frac{1}{2} \Bigr),\]
where $a_1\geq \cdots \geq a_{n}\geq -\frac{1}{2}$ are all integers
 or all half-integers.
Then
\[ \bar{\pi}^{\pm}(\gamma)|_{P}\cong
\bigoplus_{\tau}
 \Ind_{M'N}^P(V_{M',\tau}\otimes e^{\mathbf{i}\xi_0}), \]
where $\tau=(b_1,\dots,b_{n-1})$ runs over tuples such that
\[a_{1}+1 \geq b_1 \geq a_{2}+1 \geq \cdots \geq b_{n-2}
 \geq a_{n-1}+1 \geq {\mp} b_{n-1}\geq a_{n}+1\]
 and $b_{i}-a_{1}\in\mathbb{Z}$.
\end{theorem}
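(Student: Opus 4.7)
The plan is to reduce the theorem to the base case $\gamma = \rho$ (the integer case with all $a_i = 0$) via the Zuckerman translation principle. By Lemma~\ref{L:unitary-J}, it suffices to compute $\Psi(\pi^\pm(\gamma)|_P)$ as an $M'$-representation. The base case is handled separately in Proposition~\ref{P:P-restriction2} (Appendix~\ref{S:trivial}): one realizes $\pi^\pm(\rho)$ as the image of a normalized Knapp-Stein intertwining operator between two non-unitary principal series, passes to the Fourier-transformed non-compact picture of \S\ref{SS:ResPS}, and directly evaluates the anti-trivialization at $\xi_0$ of the Fourier transform of a vector in the lowest $K$-type $V_{K,\lambda^\pm}$. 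This identifies $\Psi(\pi^\pm(\rho)|_P) \cong V_{M',(1,\dots,1,\mp 1)}$, which matches the formula at $a_1 = \cdots = a_n = 0$. The half-integer base case is reached by tensoring once with the spin representation of $G$ to shift parity.

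For general $\gamma \in \Lambda_0$, set $\lambda = \gamma - \rho = (a_1,\dots,a_n)$ and let $F_\lambda$ be the finite-dimensional irreducible $G$-module of highest weight $\lambda$. Since $\{\pi^\pm(\gamma)\}_{\gamma \in \Lambda_0}$ is a coherent family of discrete series, one has the identity
\[
[\pi^\pm(\rho) \otimes F_\lambda] = \sum_\mu \dim(F_\lambda)_\mu \, [\pi^\pm_{\mathrm{coh}}(\rho+\mu)] \quad \text{in } K(G),
\]
where $\mu$ ranges over weights of $F_\lambda$ and $\pi^\pm_{\mathrm{coh}}$ denotes coherent continuation. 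Applying the exact functor $\Psi$ (Lemma~\ref{L:exact}) and using Lemma~\ref{L:tensor} on the tensor product yields
\[
\Psi(\pi^\pm(\rho)|_P) \otimes F_\lambda|_{M'} = \sum_\mu \dim(F_\lambda)_\mu \, \Psi(\pi^\pm_{\mathrm{coh}}(\rho+\mu)|_P)
\]
in $K(M')$. Since $\lambda$ appears with multiplicity one as the highest weight of $F_\lambda$, the $\mu = \lambda$ summand on the right is exactly $\Psi(\pi^\pm(\gamma)|_P)$. Induction on $\lambda$ in the dominance order lets us solve for this term: the left-hand side is known from the base case together with the classical branching of $F_\lambda|_{M'}$, while the other summands involve $\pi^\pm_{\mathrm{coh}}$ at $\rho+\mu$ with $\mu \ne \lambda$, which either vanish on singular walls or correspond (after a Weyl reflection) to strictly smaller dominant parameters and hence are controlled by the induction hypothesis.

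The principal obstacle is the base case itself. Unlike unitary principal series (Theorem~\ref{T:branching-ps}), whose Fourier model is directly available via Proposition~\ref{P:res-induced}, extracting a discrete series from a non-unitary principal series via Knapp-Stein and tracking the lowest $K$-type vector through the Fourier transform requires delicate control of both the polynomial growth at infinity and the distributional singularity at $\xi = 0$---this is the technical core of Appendix~\ref{S:trivial}. Once the base case is established, the translation step reduces to a combinatorial verification that
\[
\Psi(\pi^\pm(\rho)|_P) \otimes F_\lambda|_{M'} - \sum_{\mu \ne \lambda} \dim(F_\lambda)_\mu \, \Psi(\pi^\pm_{\mathrm{coh}}(\rho+\mu)|_P)
\]
collapses precisely to the interlacing sum claimed in the theorem. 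This final check is driven by the $\Spin(2n-1) \to \Spin(2n-2)$ branching of $V_{K,\lambda^\pm}|_{M'}$, together with the asymmetric condition $\mp b_{n-1} \geq a_n + 1$, which distinguishes $\pi^+$ from $\pi^-$ through the last-coordinate sign of their respective lowest $K$-types.
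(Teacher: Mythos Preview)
Your overall plan---reduce to $\gamma=\rho$ via Proposition~\ref{P:P-restriction2} and propagate by translation---matches the paper, but the translation step you outline is substantially harder than what the paper actually does, and you have not carried out its main step.

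The paper first observes, using only exactness of $\Psi$ (Lemma~\ref{L:exact}) and Proposition~\ref{P:res-induced}, that
\[
\Psi([\pi^{+}(\gamma)])+\Psi([\pi^{-}(\gamma)])=\Psi([\pi'_{n-1}(\gamma)])
=\bigoplus_{\tau}[V_{M',\tau}],
\]
with $\tau=(b_1,\dots,b_{n-1})$ running over the full interlacing range $a_{n-1}+1\geq |b_{n-1}|\geq a_n+1$ (see \eqref{Eq:ds-pair}). This comes straight from the composition series of $I^{\pm}_{n-1}(\gamma)$ and needs no translation at all. After this, the only remaining task is qualitative: show that the constituents of $\Psi(\pi^{+}(\gamma))$ are exactly those with $b_{n-1}\leq -1$. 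For this the paper tensors with a single \emph{fundamental} representation $F_{\omega_k}$, writes $\gamma=\gamma'+\omega_k$ with $|\gamma'|<|\gamma|$, and uses only the crude bound that every weight of $F_{\omega_k}|_{M'}$ has last coordinate in $\{\pm 1,\pm\tfrac12\}$. Induction gives $b_{n-1}\leq 0$, and \eqref{Eq:ds-pair} then forces $b_{n-1}\leq -(a_n+1)\leq -1$. No multiplicity bookkeeping is needed.

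By contrast, you tensor with the full $F_{\lambda}$ and invoke the coherent-family expansion $[\pi^{\pm}(\rho)\otimes F_{\lambda}]=\sum_{\mu}\dim(F_{\lambda})_{\mu}\,[\pi^{\pm}_{\mathrm{coh}}(\rho+\mu)]$. Solving this for $\Psi(\pi^{\pm}(\gamma))$ requires you to know $\Psi(\pi^{\pm}_{\mathrm{coh}}(\rho+\mu))$ for \emph{every} weight $\mu$ of $F_{\lambda}$, with its Kostant multiplicity, including the singular and non-dominant ones. Your claim that these ``either vanish on singular walls or correspond (after a Weyl reflection) to strictly smaller dominant parameters'' glosses over real issues: on the wall $\Lambda_n$ the coherent continuation is a nonzero limit of discrete series, not zero; and on the other walls $\Lambda_j$ you must track exactly which virtual character the continuation becomes. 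The ``combinatorial verification'' you defer to the last paragraph is precisely where the proof lives in your scheme, and you have not done it. The paper's device of first pinning down the \emph{sum} $\Psi(\pi^+)+\Psi(\pi^-)$ and then using a one-line inequality for the sign is what makes the argument short; without it you are left with a genuine alternating-sum identity in $K(M')$ that is not routine.
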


\begin{proof}
By Lemma \ref{L:unitary-J},
 it suffices to calculate $\Psi([\pi^{\pm}(\gamma)])$ for
 $\gamma\in\Lambda_{0}\sqcup \Lambda_{n}$.

The same argument as in the proof of Theorem~\ref{T:branching-regular} yields
\begin{align}\label{Eq:ds-pair}
\Psi([\pi^+(\gamma)])+\Psi([\pi^-(\gamma)])
=\Psi([\pi'_{n-1}(\gamma)])
=\bigoplus_{\tau} [V_{M',\tau}],
\end{align}
where $\tau=(b_1,\dots,b_{n-1})$ runs over tuples of integers such that
\begin{align*}
&a_{1}+1 \geq b_1 \geq a_{2}+1 \geq \cdots \geq  b_{n-2} \geq a_{n-2}+1\geq |b_{n-1}|\geq a_{n-1}+1.
\end{align*}
Therefore, it suffices to show that the two modules $\Psi([\pi^+(\gamma)])$ and $\Psi([\pi^-(\gamma)])$
are separated by the sign of $b_{n-1}$.

First, prove the statement for $\gamma\in\Lambda_0$
 by induction on $|\gamma|$.
When $\gamma=\rho$, the conclusion follows from Proposition~\ref{P:P-restriction2}.
Let $\gamma\not\in\Lambda_{0}-\{\rho\}$
 and assume that the conclusion holds for weights in $\Lambda_{0}$
 having norm strictly smaller than $|\gamma|$.
Write $\omega_{k}$ for the $k$-th fundamental weight, namely,
\[
\omega_{k}=(\underbrace{1,\dots,1}_{k},\underbrace{0,\dots,0}_{n-k})
\text{ for $1\leq k\leq n-1$ and }
\omega_{n}=
\Bigl(\underbrace{\frac{1}{2},\dots,\frac{1}{2}}_{n}\Bigr).
\]
Then, one finds $\gamma'\in\Lambda_{0}$ and a fundamental weight $\omega_{k}$
 such that $\gamma=\gamma'+\omega_{k}$.
By the Zuckerman translation principle (\cite{Vogan}, \cite{Zuckerman}),
 $\pi^{\pm}(\gamma)$ occurs as a composition factor of
 $\pi^{\pm}(\gamma')\otimes F_{\omega_{k}}$.
Hence by Lemma~\ref{L:tensor},
 if an irreducible $M'$-representation $V_{M',\mu}$
 with $\mu=(b_{1},\dots,b_{n-1})$ occurs in $\Psi([\pi^{+}(\gamma)])$,
 then it also occurs in $\Psi([\pi^{+}(\gamma')])\otimes [F_{\omega_{k}}|_{M'}]$.
For any irreducible $V_{M',\mu'}$ in $\Psi([\pi^{+}(\gamma')])$
 with $\mu'=(b'_{1},\dots,b'_{n-1})$, one has $b'_{n-1}\leq -1$
 by induction hypothesis and
 for any weight $\mu''$ appearing in $F_{\omega_{k}}|_{M'}$
 with $\mu''=(b''_{1},\dots,b''_{n-1})$,
 we have $b''_{n-1} \in\{1,-1,\frac{1}{2},-\frac{1}{2}\}$.
Hence $b_{n-1}\leq 0$.
Therefore, we get $b_{n-1}\leq -1$ from \eqref{Eq:ds-pair}.
The statement for $\pi^{-}(\gamma)$ is similarly proved.

Next, suppose that $\gamma\in\Lambda_{n}$.
Let $\gamma'=\gamma+\omega_{n}\in \Lambda_{0}$.
Then again by the translation principle,
 $\pi^{\pm}(\gamma)$ occurs as a composition factor of
 $\pi^{\pm}(\gamma')\otimes F_{\omega_{n}}$.
Then by using the result for $\Psi([\pi^{\pm}(\gamma')])$ proved above,
 the statement for $\Psi([\pi^{\pm}(\gamma)])$ is similarly obtained.
\end{proof}

\subsection{Branching laws for the restriction from $\Spin(2n-1,1)$ to $P$}\label{SS:branchinglaw2}

Let $G=\Spin(2n-1,1)$.
Branching laws for the restriction to $P$ are similar to the previous case where $G=\Spin(2n,1)$.

\begin{theorem}\label{T:branching-ps2}
Suppose that an irreducible unitary representation $\pi$ of $Spin(2n-1,1)$ is
 isomorphic to the completion of a principal series representation
 $I(\mu,\nu)$, where $\mu=(a_1,\dots,a_{n-1})$ and $a_1\geq \dots\geq a_{n-2}\geq |a_{n-1}|$.
Then
\[ \pi|_{P}\cong \bigoplus_{\tau}
 \Ind_{M'N}^P(V_{M',\tau}\otimes e^{\mathbf{i}\xi_0}), \]
where $\tau=(b_1,\dots,b_{n-2})$ runs over tuples such that
 \[a_{1}\geq b_1 \geq a_{2}\geq b_2 \geq
  \cdots \geq a_{n-1} \geq b_{n-2}\geq |a_{n-1}|\] and $b_{i}-a_{1}\in\mathbb{Z}$.
\end{theorem}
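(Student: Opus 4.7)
The plan is to follow exactly the same three-step route used in the proof of Theorem~\ref{T:branching-ps}, only adapted to the parity $m=2n-2$ so that $M=\Spin(2n-2)$ and $M'=\Spin(2n-3)$. The representation $\pi$ is the Hilbert space completion of a principal series $I(\mu,\nu)$ with $\nu$ unitary (or one of the complementary parameters for which $I(\mu,\nu)$ is irreducible and unitarizable), so in particular $I(\mu,\nu)$ itself is irreducible; thus $\pi^{\sm}\cong I(\mu,\nu)$ as Casselman--Wallach globalizations.

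First I would apply Lemma~\ref{L:unitary-J}, which reduces the computation of $\pi|_P$ to the determination of the finite-dimensional $M'$-module $\Psi(\pi^{\sm}|_P)=\Psi(I(\mu,\nu)|_P)$. Next, Proposition~\ref{P:res-induced} (whose statement is independent of the parity of $m$, since its proof via Fourier transform of $f_N$ on $N\cong\mathbb{R}^m$ and anti-trivialization goes through verbatim) identifies this module with $V_{M,\mu}|_{M'}$, where $V_{M,\mu}$ is the irreducible $M$-module of highest weight $\mu=(a_1,\dots,a_{n-1})$. Finally, I would invoke the classical branching rule from $\Spin(2n-2)$ to $\Spin(2n-3)$ (see e.g.\ \cite[Theorem 8.1.4]{Goodman-Wallach}): the restriction $V_{M,\mu}|_{M'}$ is multiplicity-free and decomposes as $\bigoplus_{\tau} V_{M',\tau}$, the sum being over $\tau=(b_1,\dots,b_{n-2})$ with
\[
a_1\geq b_1\geq a_2\geq b_2\geq\cdots\geq a_{n-2}\geq b_{n-2}\geq |a_{n-1}|
\]
and $b_i-a_1\in\mathbb{Z}$. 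Substituting this description into the formula from Lemma~\ref{L:unitary-J} yields the claimed branching law.

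There is essentially no hard step here: all the substantive work has already been done in Section~\ref{SS:CW-Cloux} (the functor $\Psi$ and its properties), in Proposition~\ref{P:res-induced} (the identification $\Psi(I(\sigma,\nu)|_P)\cong V_\sigma|_{M'}$), and in the classical theory of branching for orthogonal groups. The only point meriting care is to ensure that $\pi^{\sm}=I(\mu,\nu)^{\sm}$, i.e.\ that the principal series is already irreducible at the given parameter; this is covered by Facts~\ref{F:gamma-classification2} and \ref{F:unitarizable2} together with the unitarizability criterion recalled immediately after, which cover both the unitary principal series case ($\nu\in\mathbf{i}\mathfrak{a}^*$) and the complementary series case, so that no issue of composition factors intervenes before applying Proposition~\ref{P:res-induced}.
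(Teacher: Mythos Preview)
Your proposal is correct and follows exactly the same route as the paper: the paper does not give a separate proof of Theorem~\ref{T:branching-ps2} but simply notes that the case $G=\Spin(2n-1,1)$ is analogous to Theorem~\ref{T:branching-ps}, whose proof is precisely the combination of Lemma~\ref{L:unitary-J}, Proposition~\ref{P:res-induced}, and the classical branching rule for the compact pair $M'\subset M$. Your additional remark about verifying the irreducibility of $I(\mu,\nu)$ at the given parameter is a sensible precaution, though strictly speaking the hypothesis already posits that $\pi$ is the completion of the full principal series.
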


\begin{theorem}\label{T:branching-regular2}
Let
\[\gamma=(a_1+n-1,\dots,a_{j}+n-j, n-j-1,\dots, 0),\]
where $a_1\geq \cdots \geq a_{j}\geq 0$ are integers
 and let $1\leq j\leq n-1$.
Then
\[ \bar{\pi}_j(\gamma)|_{P}\cong
\bigoplus_{\tau}
 \Ind_{M'N}^P(V_{M',\tau}\otimes e^{\mathbf{i}\xi_0}), \]
where $\tau=(b_1,\dots,b_{j-1},0,\dots,0)$ runs over tuples of integers such that
 \[a_{1}+1 \geq b_1 \geq a_{2}+1 \geq b_2 \geq
 \cdots \geq a_{j-1}+1 \geq b_{j-1}\geq a_{j}+1.\]
\end{theorem}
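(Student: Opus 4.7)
The plan is to mimic the proof of Theorem~\ref{T:branching-regular}, with the branching rule from $\Spin(2n-1)$ to $\Spin(2n-2)$ replaced by its even-to-odd analogue from $M=\Spin(2n-2)$ to $M'=\Spin(2n-3)$. By Lemma~\ref{L:unitary-J}, it suffices to compute $\Psi([\pi_j(\gamma)])$ in $K(M')$. For $0\leq i\leq n-2$ one has the standard composition-series relation $[I_i^+(\gamma)]=[\pi_i(\gamma)]+[\pi_{i+1}(\gamma)]$ in $K(G)$; under the identification $\pi_{i+1}(\gamma)\cong\pi'_i(\gamma)$ of Fact~\ref{F:gamma-classification2}, this is formally identical to the relation used in the odd case. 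Applying $\Psi$ and invoking Proposition~\ref{P:res-induced} yields
\[
\Psi([\pi_i(\gamma)])+\Psi([\pi_{i+1}(\gamma)])=[V_{M,\mu_i}|_{M'}],
\]
where $\mu_i=(a_1+1,\dots,a_i+1,a_{i+2},\dots,a_n)$ and $a_{j+1}=\cdots=a_n=0$. Since $\pi_0(\gamma)$ is finite dimensional, the action of $\mathfrak{n}$ has spectrum $\{0\}$, so $\Psi([\pi_0(\gamma)])=0$, providing the base of the induction.

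Next I would expand $[V_{M,\mu_i}|_{M'}]$ using the classical even-to-odd spin branching law: the irreducible constituents are the $V_{M',(b_1,\dots,b_{n-2})}$ with
\[
(\mu_i)_1\geq b_1\geq(\mu_i)_2\geq\cdots\geq(\mu_i)_{n-2}\geq b_{n-2}\geq|(\mu_i)_{n-1}|,
\]
noting that $(\mu_i)_{n-1}=a_n=0$ so the last bound reads simply $b_{n-2}\geq 0$. I then prove by induction on $i$, for $1\leq i\leq j$, that
\[
\Psi([\pi_i(\gamma)])=\bigoplus_{\tau}[V_{M',\tau}]
\]
where $\tau=(b_1,\dots,b_{n-2})$ ranges over integer tuples satisfying the joint chain
\[
a_1+1\geq b_1\geq a_2+1\geq\cdots\geq b_{i-1}\geq a_i+1
\ \text{ and }\
a_{i+1}\geq b_i\geq a_{i+2}\geq\cdots\geq a_{n-1}\geq b_{n-2}\geq 0.
\]
The inductive step uses $\Psi([\pi_{i+1}(\gamma)])=[V_{M,\mu_i}|_{M'}]-\Psi([\pi_i(\gamma)])$: the two constraint sets differ only in the range for $b_i$, where the branching gives $b_i\in[a_{i+2},a_i+1]$ while the induction hypothesis restricts to $b_i\in[a_{i+2},a_{i+1}]$, so the difference leaves exactly $b_i\in[a_{i+1}+1,a_i+1]$, which is precisely the shape of the $(i+1)$-st pattern.

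Setting $i=j$ completes the argument: the hypothesis $a_{j+1}=\cdots=a_n=0$ collapses the second chain to $0\geq b_j\geq 0\geq\cdots\geq b_{n-2}\geq 0$, forcing $b_j=\cdots=b_{n-2}=0$, while the first chain becomes the asserted interlacing $a_1+1\geq b_1\geq a_2+1\geq\cdots\geq b_{j-1}\geq a_j+1$; Lemma~\ref{L:unitary-J} then converts this into the stated $P$-decomposition. I do not anticipate any serious obstacle beyond careful bookkeeping of the interlacing chains; the only point that is genuinely different from Theorem~\ref{T:branching-regular} is that, since $M$ is now an even spin group, the terminal coordinate $b_{n-2}$ is bounded in absolute value by $|(\mu_i)_{n-1}|$ rather than allowed to carry a free sign, and one must verify that this interacts correctly with the singular pattern $a_{j+1}=\cdots=a_n=0$—which the interval calculation above confirms term by term throughout the induction.
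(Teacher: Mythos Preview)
Your proposal is correct and follows essentially the same approach as the paper: the paper does not give an explicit proof of Theorem~\ref{T:branching-regular2} but simply remarks that the even case is parallel to Theorem~\ref{T:branching-regular}, and what you have written is precisely that parallel argument with the branching rule for $M=\Spin(2n-2)\supset M'=\Spin(2n-3)$ in place of the $\Spin(2n-1)\supset\Spin(2n-2)$ rule. Your bookkeeping of the interlacing chains and the observation that $(\mu_i)_{n-1}=a_n=0$ collapses the terminal absolute-value condition to $b_{n-2}\geq 0$ are exactly the points one needs to check.
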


For $0\leq j\leq n-1$
 let $\mathfrak{q}_j$ be a $\theta$-stable parabolic subalgebra of $\mathfrak{g}_{\bbC}$
 such that the real form of the Levi component of $\mathfrak{q}_j$
 is isomorphic to $\mathfrak{u}(1)^{j}+\mathfrak{so}(2(n-j)-1,1)$.
For a weakly fair parameter $\lambda=(a_1,\dots,a_j,0,\dots,0)$, we have
\[ \overline{A_{\mathfrak{q}_{j}}(\lambda)}|_{P}\cong
\bigoplus_{\tau}
 \Ind_{M'N}^P(V_{M',\tau}\otimes e^{\mathbf{i}\xi_0}), \]
where $\tau=(b_1,\dots,b_{j-1},0,\dots,0)$ runs over tuples of integers such that
 \[a_{1}+1 \geq b_1 \geq a_{2}+1 \geq b_2 \geq
 \cdots \geq a_{j-1}+1 \geq b_{j-1}\geq \max\{a_{j}+1,0\}.\]

\section{Moment map for elliptic coadjoint orbits}\label{S:elliptic}

In this section and the next section we calculate the projection of semisimple coadjoint orbits
 for $G$ by the natural map $\mathfrak{g}^*\to \mathfrak{p}^*$.
We treat elliptic orbits for $G=\Spin(2n,1)$ in this section.
Non-elliptic orbits and the case $G=\Spin(2n-1,1)$ will be treated in the next section.

Throughout this section we assume $m$ is odd and then $G=\Spin(2n,1)$.
In Definition~\ref{D:depth},
 we divided the coadjoint orbits $\mathcal{O}$ for $P$ into two types:
 depth zero and depth one according to
 $\mathcal{O}\subset \mathfrak{l}^*$ or not.
Then we saw at the end of \S\ref{SS:P-orbit2} that
 the depth one coadjoint orbits for $P$ are parametrized by
 singular values of the matrix $Y$ there
 and the sign of the Pfaffian of $Z_{Y,\beta}$.

\subsection{$P$-orbits in $\mathcal{O}_{f}$}\label{SS:doubleCoset}

For $a_1\geq a_2\geq\cdots\geq a_{n-1}\geq|a_{n}|\geq 0$, write
$\vec{a}=(a_{1},a_{2},\dots,a_{n}).$
As in \eqref{Eq:ta}, put
\begin{equation*}
t_{\vec{a}}=
\begin{pmatrix}
 0&  a_1&&&&\\
  -a_{1}&0&&&&\\
 &&\ddots&&& \\
 &&&0& a_{n}&\\
 &&& -a_{n}&0&\\
 &&&&&0\\
\end{pmatrix}.
\end{equation*}
By the isomorphism
 $\iota\colon \mathfrak{g}\xrightarrow{\sim} \mathfrak{g}^*$ in
 \eqref{Eq:identification1},
 we put \[f=f_{\vec{a}}=\iota(t_{\vec{a}}),\] which is an elliptic
 element in $\mathfrak{g}^{\ast}$.
Moreover, each elliptic coadjoint orbit in $\mathfrak{g}^{\ast}$ contains
 $f_{\vec{a}}$ for a unique vector $\vec{a}$.
We first consider regular orbits, i.e., assume that
 \[a_1>a_2>\cdots >a_{n-1}>|a_{n}|>0.\]
Write $G^{f}$ for the stabilizer of $G$ at $f$.
Then, $G^{f} = T$, where $T$ is the pre-image in $G$
 of the maximal torus
\begin{equation*}
T_1=\Biggl\{
\begin{pmatrix}
 y_{1}&z_1&&&&\\
 -z_{1}&y_{1}&&&&\\
 &&\ddots&&&\\
 &&&y_{n}&z_{n}&\\
 &&&-z_{n}&y_{n}&\\
 &&&&&1\\
\end{pmatrix}:
 y_{1}^{2}+z_{1}^{2}=\cdots=y_{n}^{2}+z_{n}^{2}=1
\Biggr\}
\end{equation*}
 of $G_{1}$.
Put $\mathcal{O}_{f}=G\cdot f$ and then $\mathcal{O}_{f}\cong G/G^{f}$.
To parametrize $P$-orbits in $\mathcal{O}_{f}$ is equivalent
 to parametrize double cosets in $P\backslash G/G^{f}$.
Since the map
\[ P\backslash G/G^{f}\ni P g G^{f}\mapsto G^{f} g^{-1} P\in G^{f}\backslash G/P\]
 is an isomorphism,
 it is also equivalent to
 parametrize $G^{f}$-orbits in $G/P$.
Write
\begin{equation*}
X_{n}=\{\vec{x}=(x_1,\dots,x_{2n},x_{0}):
 x_{0}^{2}=\sum_{i=1}^{2n} x_{i}^{2} \text{ and } x_{0}>0\}/\sim.
\end{equation*}
Here, for two vectors $\vec{x}$ and $\vec{x'}$, we defined
\[\vec{x}\sim\vec{x'}\Leftrightarrow\exists s>0
 \textrm{ such that }\vec{x'}=s\vec{x}.\]
As a manifold, $X_{n}\cong S^{2n-1}$.
The group $G$ acts on $X_{n}$ transitively as
\[g\cdot[\vec{x}]=[\vec{x}g_{1}^{t}],\quad
 G\ni g\mapsto g_{1} \in G_1.\]
Put
\begin{equation*}
%\label{Eq:v0}
v_{0}=[(0,\dots,0,1,1)].
\end{equation*}
Then $\Stab_{G}(v_{0})=P$ and hence $X_{n}\cong G/P.$
Therefore, to parametrize $G^{f}$-orbits in
 $G/P$ is equivalent to parametrize $T$-orbits in $X_{n}$.

Let
\begin{equation*}
%\label{Eq:B}
B=\Bigl\{\vec{b}=(b_1,\dots,b_{n}):b_1,\dots,b_n\geq 0,\
 \sum_{i=1}^{n-1} b_{i}^{2}=1-2b_{n}\Bigr\}.
\end{equation*}
Then, $0\leq b_{n}\leq\frac{1}{2}$ for any $\vec{b}\in B$.
Write
\begin{equation*}
%\label{Eq:alphab}
\alpha=\alpha_{\vec{b}}=(0,b_1,0,b_{2},\dots,0,b_{n-1},0)
%\end{equation}
\text{ and }
%\begin{equation}\label{Eq:Yb2}
\bar{X}_{\vec{b}}=
\begin{pmatrix}
 0_{2n-1}&\alpha^{t}&\alpha^{t}\\
 -\alpha&0&0\\
 \alpha&0&0\\
\end{pmatrix}.
\end{equation*}
Put
\begin{equation*}
%\label{Eq:gb}
\bar{n}_{\vec{b}}=
\exp(\bar{X}_{\vec{b}}) =
\begin{pmatrix}
 I_{2n-1}&\alpha^{t}&\alpha^{t}\\
 -\alpha&1-\frac{1}{2}|\alpha|^2&-\frac{1}{2}|\alpha|^2\\
 \alpha&\frac{1}{2}|\alpha|^2&1+\frac{1}{2}|\alpha|^2\\
\end{pmatrix}\in\bar{N}.
\end{equation*}
Then,
\begin{equation*}
%\label{Eq:gv0}
\bar{n}_{\vec{b}}^{-1}\cdot v_{0}=[(0,-b_1,0,-b_2,\dots,0,-b_{n-1},0,b_{n},1-b_{n})].
\end{equation*}

%Write $$w=\diag\{I_{2n-1},-I_{2}\}\in G.$$ Then, $$(wg)^{-1}\cdot v_{0}=[-(0,-b_1,0,-b_2,\dots,0,-b_{n-1},0,b_{n},1-b_{n})].$$

\begin{lemma}\label{L:T-orbits}
The map $B\rightarrow X_{n}/T$ defined by
\begin{equation*}
%\label{Eq:B-Xn}
\vec{b}\mapsto(\bar{n}_{\vec{b}})^{-1} \cdot v_{0}
\end{equation*} is a bijection.
\end{lemma}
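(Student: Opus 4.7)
The plan is to reduce the statement to a concrete parametrization of $T$-orbits on the sphere $S^{2n-1}$. Using the equivalence $\sim$, every class in $X_n$ has a unique representative with $x_0=1$, which identifies $X_n$ with the unit sphere in the $(x_1,\dots,x_{2n})$-coordinates. The torus $T$ acts through its image $T_1\subset G_1$, which is a product of $n$ copies of $\SO(2)$ rotating each pair $(x_{2i-1},x_{2i})$ and fixing $x_0$. Hence a $T$-orbit in $X_n$ is determined by the tuple of invariants
\[
r_i := \sqrt{x_{2i-1}^2+x_{2i}^2},\qquad 1\leq i\leq n,
\]
subject to $r_i\geq 0$ and $\sum_{i=1}^n r_i^2=1$. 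Call this parameter space $\Sigma_{n-1}$.

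Next I would compute the invariants $r_i$ for $(\bar n_{\vec b})^{-1}\cdot v_0$ directly from the formula given just before the lemma:
\[
(\bar n_{\vec b})^{-1}\cdot v_0 =[(0,-b_1,0,-b_2,\dots,0,-b_{n-1},0,b_n,1-b_n)].
\]
Since $b_n\leq \tfrac12<1$, one may rescale by $1/(1-b_n)>0$, yielding
\[
r_i=\frac{b_i}{1-b_n}\ (1\leq i\leq n-1),\qquad r_n=\frac{b_n}{1-b_n}.
\]
The defining relation $\sum_{i=1}^{n-1}b_i^2=1-2b_n$ of $B$ is equivalent to the sphere condition $\sum_{i=1}^n r_i^2=1$, since
\[
(1-2b_n)+b_n^2=(1-b_n)^2.
\]
This defines a well-defined map $\Phi\colon B\to\Sigma_{n-1}\cong X_n/T$.

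Finally, I would exhibit the inverse explicitly: given $(r_1,\dots,r_n)\in\Sigma_{n-1}$, set
\[
b_n=\frac{r_n}{1+r_n},\qquad b_i=\frac{r_i}{1+r_n}\ (1\leq i\leq n-1).
\]
A short calculation using $\sum_{i=1}^{n-1}r_i^2=1-r_n^2$ gives
\[
\sum_{i=1}^{n-1} b_i^2=\frac{1-r_n^2}{(1+r_n)^2}=\frac{1-r_n}{1+r_n}=1-2b_n,
\]
so $(b_1,\dots,b_n)\in B$, and the two maps are visibly mutual inverses. This proves both injectivity and surjectivity of $\Phi$.

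The argument is essentially a bookkeeping exercise; the only point requiring some care is the first step — that the scalar normalization $x_0=1$ and the coordinate-pair decomposition are both preserved by, and adapted to, the $T$-action — but once this is observed, the sphere parametrization of $X_n/T$ is immediate and the remaining computations are routine.
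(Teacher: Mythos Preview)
Your proof is correct and takes essentially the same approach as the paper: both identify $X_n/T$ via the radii of the coordinate pairs under the diagonal $\SO(2)^n$-action. The paper's version is slightly terser in that it picks the representative $[(0,-b_1,\dots,0,-b_{n-1},0,b_n,1-b_n)]$ directly (normalizing so that $x_0=1-b_n$ rather than $x_0=1$) and reads off the constraint $\sum_{i=1}^{n-1}b_i^2=1-2b_n$ from $x_0^2=\sum x_i^2$, whereas you pass through the intermediate simplex $\Sigma_{n-1}$ and write out the explicit inverse; the content is the same.
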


\begin{proof}
Identify the image of $T$ in $G_{1}$ with $\U(1)^{n}$.
Then, $T$ acts on $X_{n}$ by
\begin{align*}&
(y_{1}+z_{1}\mathbf{i},\dots,y_{n}+z_{n}\mathbf{i})\cdot[(x_1,\dots,x_{2n},x_{0})]&\\&=[(y_{1}x_{1}+z_{1}x_{2},
-z_{1}x_{1}+y_{1}x_{2},\dots,y_{n}x_{2n-1}+z_{n}x_{2n},-z_{n}x_{2n-1}+y_{n}x_{2n},
x_{0})].&\end{align*}
Then each $T$-orbit in $X_{n}$ has a unique representative of the form
\[ [(0,-b_1,\dots,0, -b_{n-1},0,b_{n},1-b_{n})],\]
 where $b_{i}\geq 0$ ($1\leq i\leq n$).
Moreover, the equation \[\sum_{i=1}^{2n} x_{i}^{2}=x_{0}^{2}\] leads to the equation
\[\sum_{i=1}^{n-1} b_{i}^{2}=1-2b_{n}.\]
By this, the map $\vec{b}\mapsto(\bar{n}_{\vec{b}})^{-1}\cdot v_{0}$ is a bijection.
\end{proof}

By Lemma~\ref{L:T-orbits}, we proved

\begin{lemma}\label{L:P-orbits}
Each $P$-orbit in $\mathcal{O}_{f}=G\cdot f$ contains some $\bar{n}_{\vec{b}}\cdot f$
 for a unique tuple $\vec{b}\in B$.
\end{lemma}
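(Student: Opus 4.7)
The plan is to reduce the statement to the classification of $T$-orbits in $X_{n}$ already established in Lemma~\ref{L:T-orbits}. Since the stabilizer $G^{f}$ equals $T$, we have $\mathcal{O}_{f}\cong G/G^{f}$, and the $P$-orbits in $\mathcal{O}_{f}$ correspond bijectively to the double coset space $P\backslash G/G^{f}$.

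First I would invoke the standard bijection
\[P\backslash G/G^{f}\xrightarrow{\sim} G^{f}\backslash G/P,\qquad PgG^{f}\mapsto G^{f}g^{-1}P.\]
Combined with the $G$-equivariant identification $G/P\cong X_{n}$, $gP\mapsto g\cdot v_{0}$ (recall that $\Stab_{G}(v_{0})=P$), this identifies $P$-orbits in $\mathcal{O}_{f}$ with $T$-orbits in $X_{n}$: the double coset $PgG^{f}$ corresponds to the $T$-orbit of $g^{-1}\cdot v_{0}$. In particular, the $P$-orbit through $\bar{n}_{\vec{b}}\cdot f$ corresponds to the $T$-orbit through $\bar{n}_{\vec{b}}^{-1}\cdot v_{0}$.

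By Lemma~\ref{L:T-orbits}, the map $\vec{b}\mapsto\bar{n}_{\vec{b}}^{-1}\cdot v_{0}$ is a bijection $B\xrightarrow{\sim} X_{n}/T$. Composing with the identifications above yields a bijection $\vec{b}\mapsto P\bar{n}_{\vec{b}}G^{f}$ from $B$ onto $P\backslash G/G^{f}$, which is precisely the existence and uniqueness assertion of the lemma.

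The substantive computation was already carried out in Lemma~\ref{L:T-orbits}: the explicit $T$-action on $X_{n}$ in the coordinates $(x_{1},\dots,x_{2n},x_{0})$, the exploitation of the constraint $\sum_{i=1}^{2n}x_{i}^{2}=x_{0}^{2}$ to get the relation $\sum_{i=1}^{n-1}b_{i}^{2}=1-2b_{n}$ cutting out $B$, and the verification that the opposite unipotent elements $\bar{n}_{\vec{b}}^{-1}$ produce canonical representatives. The present lemma is obtained from that by transporting the classification across the canonical double-coset identifications, so no genuine obstacle arises here beyond bookkeeping of the correspondence.
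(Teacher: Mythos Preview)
Your proof is correct and follows essentially the same approach as the paper: the paper also sets up the identifications $P\backslash G/G^{f}\cong G^{f}\backslash G/P\cong X_{n}/T$ in the paragraph preceding Lemma~\ref{L:T-orbits}, and then deduces Lemma~\ref{L:P-orbits} as an immediate consequence of Lemma~\ref{L:T-orbits}.
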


\subsection{The moment map $\mathcal{O}_{f}\rightarrow\mathfrak{p}^{\ast}$}\label{SS:moment}

In \S\ref{SS:P-orbit2}, we showed an explicit parametrization of coadjoint $P$-orbits.

In this subsection, we use the results in \S\ref{SS:P-orbit2} to calculate
 the image of the moment map $q\colon \mathcal{O}_f \rightarrow\mathfrak{p}^{\ast}$.
Here, the moment map $q$ is defined by the composition
 of the inclusion $\mathcal{O}_f\hookrightarrow \mathfrak{g}^*$
 and the dual map $\mathfrak{g}^*\to \mathfrak{p}^*$
 of the inclusion $\mathfrak{p}\hookrightarrow \mathfrak{g}$.

Recall the map $\pr$ defined in \eqref{Eq:identificaiton2}.
Put
\begin{equation*}
%\label{Eq:H'0}
H'=
\begin{pmatrix}
0&1\\
-1&0\\
\end{pmatrix}.
\end{equation*}

\begin{lemma}\label{L:gf}
We have $q(\bar{n}_{\vec{b}}\cdot f)=\pr(X_{Y,\beta,0})$, where
\begin{align*}
&\beta=(-a_{1}b_{1},0,\dots,-a_{n-1}b_{n-1},0,a_{n}b_{n})\neq 0,\\
&Y=\diag\{a_{1}H',\dots,a_{n-1}H',0\} + (\beta')^t \alpha - \alpha^t \beta',
 \text{ and }
\beta'=(\underbrace{0,\dots,0}_{2n-2},a_{n}).
\end{align*}
\end{lemma}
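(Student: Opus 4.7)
The goal is to compute $q(\bar{n}_{\vec{b}}\cdot f)$, which under the identifications $\iota$ in \eqref{Eq:identification1} and $\pr$ in \eqref{Eq:identificaiton2} equals the $\bar{\mathfrak{p}}$-component of $\Ad(\bar{n}_{\vec{b}})\,t_{\vec{a}}$ relative to the splitting $\mathfrak{g}=\mathfrak{n}\oplus\bar{\mathfrak{p}}$. My plan is to decompose $t_{\vec{a}}$ into pieces lying in $\mathfrak{m}$, $\bar{\mathfrak{n}}$, $\mathfrak{n}$ respectively, apply $e^{\ad(\bar{X}_{\alpha})}$ with $\alpha=\alpha_{\vec{b}}$ term by term using \eqref{Eq:brackets}, and read off the $\mathfrak{l}$- and $\bar{\mathfrak{n}}$-components.

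Set $Y_0=\diag\{a_1 H',\dots,a_{n-1}H',0\}\in\mathfrak{so}(2n-1)$, so that $\diag\{Y_0,0_{2\times 2}\}\in\mathfrak{m}$ accounts for all but the final $a_n H'$ block of $t_{\vec{a}}$. This remaining block (occupying positions $(2n-1,2n)$ and $(2n,2n-1)$) does not lie in $\mathfrak{l}$; a direct comparison with the matrix forms of $X_{e_{2n-1}}$ and $\bar{X}_{e_{2n-1}}$, where $e_{2n-1}$ denotes the last standard basis vector of $\bbR^{m}=\bbR^{2n-1}$, shows that this block equals $\tfrac{a_n}{2}(\bar{X}_{e_{2n-1}}-X_{e_{2n-1}})$. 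This exhibits $t_{\vec{a}}$ as an $\mathfrak{m}$-vector plus a $\bar{\mathfrak{n}}$-vector minus an $\mathfrak{n}$-vector.

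Now I apply the bracket relations three times. Acting on $\diag\{Y_0,0_{2\times 2}\}$, the series $e^{\ad(\bar{X}_{\alpha})}$ terminates after one step and yields $\diag\{Y_0,0_{2\times 2}\}-\bar{X}_{\alpha Y_0^{t}}$. Acting on $\bar{X}_{e_{2n-1}}$ the series is trivial by abelian-ness of $\bar{\mathfrak{n}}$. Acting on $X_{e_{2n-1}}$, the key observation is that $\alpha\cdot e_{2n-1}^{t}=\alpha_{2n-1}=0$ for $\alpha=\alpha_{\vec{b}}$, which kills the $H_0$-summand in the $[X,\bar{X}]$-formula; two brackets then give $e^{\ad(\bar{X}_{\alpha})}X_{e_{2n-1}}=X_{e_{2n-1}}-2\diag\{Y_1,0_{2\times 2}\}+\bar{X}_{\alpha Y_1^{t}}$ with $Y_1:=e_{2n-1}^{t}\alpha-\alpha^{t}e_{2n-1}\in\mathfrak{so}(2n-1)$.

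Collecting and projecting to $\bar{\mathfrak{p}}$ (i.e., dropping the $\mathfrak{n}$-component $-\tfrac{a_n}{2}X_{e_{2n-1}}$), the $\mathfrak{l}$-part is $\diag\{Y_0+a_n Y_1,\,0_{2\times 2}\}$, and $a_n Y_1=(\beta')^{t}\alpha-\alpha^{t}\beta'$ because $\beta'=a_n e_{2n-1}$, matching the claimed $Y$. The $\bar{\mathfrak{n}}$-part is $\bar{X}_{\gamma}$ with $\gamma=\tfrac{a_n}{2}e_{2n-1}-\alpha Y_0^{t}-\tfrac{a_n}{2}\alpha Y_1^{t}$. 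A routine coordinate computation using $\alpha Y_0=(-a_1 b_1,0,\dots,-a_{n-1}b_{n-1},0,0)$, $\alpha Y_1=-|\alpha|^{2}e_{2n-1}$, and the defining constraint $|\alpha|^{2}=\sum b_i^{2}=1-2b_n$ of $\vec{b}\in B$ then collapses $\gamma$ to $\beta$; nonvanishing of $\beta$ follows from regularity of $f$ since all $b_i=0$ for $1\leq i\leq n-1$ forces $b_n=\tfrac12$ and $a_n\neq 0$. The main obstacle is the careful bookkeeping of the $\mathfrak{n}/\mathfrak{l}/\bar{\mathfrak{n}}$ decomposition of the $a_n H'$ block together with the sign juggling from $Y_0^{t}=-Y_0$ and $Y_1^{t}=-Y_1$.
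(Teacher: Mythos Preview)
Your proof is correct and follows essentially the same approach as the paper's: decompose $t_{\vec{a}}=\diag\{Y_0,0\}+\bar{X}_{\beta'/2}-X_{\beta'/2}$ (your $\tfrac{a_n}{2}e_{2n-1}$ is exactly $\tfrac{\beta'}{2}$), apply $e^{\ad(\bar{X}_\alpha)}$ using the bracket relations \eqref{Eq:brackets}, and read off the $\mathfrak{l}$- and $\bar{\mathfrak{n}}$-components. The paper carries this out in one block rather than term by term, but the computation and the key simplification $\alpha(\beta')^t=0$ are identical.
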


\begin{proof}
Write $Y'=\diag\{a_{1}H',\dots,a_{n-1}H',0\}$.
Then following notation in \S\ref{SS:notation} we have
$t_{\vec{a}}=
 \diag\{Y',0_{2\times 2}\} - X_{\frac{\beta'}{2}}+\bar{X}_{\frac{\beta'}{2}}$.
Using \eqref{Eq:brackets} we calculate
\begin{align*}
&\Ad(\bar{n}_{\vec{b}}) (t_{\vec{a}})
= t_{\vec{a}} + [\bar{X}_{\alpha}, t_{\vec{a}}]
  + \frac{1}{2} [[\bar{X}_{\alpha}, [\bar{X}_{\alpha}, t_{\vec{a}}]] \\
&= \bigl( \diag\{Y',0_{2\times 2}\} - X_{\frac{\beta'}{2}}
 + \bar{X}_{\frac{\beta'}{2}}\bigr) \\
&\quad - \bigl(\bar{X}_{\alpha(Y')^t}
  - \diag\{(\beta')^t \alpha - \alpha^t \beta', 0_{2\times 2}\}\bigr)
  - \frac{1}{2}
 \bar{X}_{\alpha \alpha^t \beta' - \alpha(\beta')^t \alpha}.
\end{align*}
Hence the lemma follows from
\[
Y'+(\beta')^t \alpha - \alpha^t \beta' = Y \text{ and }
\frac{\beta'}{2}-\alpha(Y')^t
 - \frac{1}{2}(\alpha \alpha^t \beta' - \alpha(\beta')^t \alpha)
=\beta.
\qedhere
\]
\end{proof}

%We note that in the proof of Lemma~\ref{L:gf},
%\[\frac{\beta_{2}-\beta_{1}}{2}=-\frac{\beta'}{2}=
%\Bigl(\underbrace{0,\dots,0}_{2n-2},-\frac{a_{n}}{2}\Bigr).\]

%\smallskip

Put
\begin{align*}
%\label{Eq:YbZb}
%&c=c_{\vec{b}}=|\beta|=\sqrt{\sum_{1\leq i\leq n}(a_{i}b_{i})^{2}}, \\
&Y_{\vec{b}}=Y-\frac{1}{|\beta|^2}(Y\beta^{t}\beta-
(Y\beta^{t}\beta)^{t}), \quad
Z_{\vec{b}}=
\begin{pmatrix}
 Y_{\vec{b}}&\frac{\beta^{t}}{|\beta|}\\
 -\frac{\beta}{|\beta|}&0\\
\end{pmatrix}.
\end{align*}
By Lemmas \ref{L:p-standard4}, \ref{p:standard5} and \ref{L:class-matrix},
 the $P$-conjugacy class of $q(\bar{n}_{\vec{b}}\cdot f)$ is determined by
 the sign of the Pfaffian of $Z_{\vec{b}}$
 and singular values of $Y_{\vec{b}}$.
Put
\begin{align*}
&\gamma_{1}=(a_{1}b_{1},\dots,a_{n-1}b_{n-1},-a_{n}b_{n}),\\
&\gamma_{2}=((a_{1}^{2}-a_{n}^{2}b_{n})b_{1}, \dots,(a_{n-1}^{2}-a_{n}^{2}b_{n})b_{n-1},0).
\end{align*}
For a permutation $\sigma$ on $\{1,2,\dots,2n\}$, let
$Q_{\sigma}=(x_{ij})_{1\leq i,j\leq 2n}$ be the permutation matrix corresponding to $\sigma$,
 that is, $x_{i,j}=1$ if $j=\sigma(i)$; and $x_{i,j}=0$ if $j\neq\sigma(i)$.

\begin{lemma}\label{L:Zb}
Let $\sigma$ be the permutation
\[\sigma(i) =\begin{cases} 2i-1 &(1\leq i\leq n) \\
 2(i-n) & (n+1\leq i\leq 2n) \end{cases}.\]
Then
\[Q_{\sigma}Z_{\vec{b}}Q_{\sigma}^{-1}
 =\begin{pmatrix}
 0_{n}&Z\\
 -Z^{t}&0_{n}\\
\end{pmatrix},\]
 where
\begin{equation}\label{Eq:M2}
Z= \begin{pmatrix} a_{1}&\ldots&0&\frac{-a_{1}b_{1}}{|\beta|}\\
 \vdots&\ddots&\vdots& \vdots\\
 0&\ldots&a_{n-1}&\frac{-a_{n-1}b_{n-1}}{|\beta|}\\
 a_{n}b_{1}&\ldots&a_{n}b_{n-1} &\frac{a_{n}b_{n}}{|\beta|}\\
\end{pmatrix}-\frac{\gamma_{1}^{t}\gamma_{2}}{|\beta|^{2}}.
\end{equation}
\end{lemma}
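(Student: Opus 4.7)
The plan is a direct entry-by-entry computation, organized around the following observation: if we partition $\{1,\dots,2n\}$ into the odd set $\{1,3,\dots,2n-1\}$ and the even set $\{2,4,\dots,2n-2,2n\}$ --- precisely the two subsets that $\sigma$ pulls back to $\{1,\dots,n\}$ and $\{n+1,\dots,2n\}$ respectively --- then $Z_{\vec{b}}$ has vanishing (odd,odd) and (even,even) blocks. Conjugation by $Q_\sigma$ therefore automatically delivers the off-diagonal block form claimed, and the remaining task is to identify the upper-right block with the matrix $Z$ in \eqref{Eq:M2}.

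The crux is the parity pattern of $Y$. Since $\alpha=(0,b_1,0,b_2,\dots,0,b_{n-1},0)$ is supported at even positions, $\beta'=(0,\dots,0,a_n)$ at the single odd position $2n-1$, and $Y'$ only at the pairs $(2k-1,2k)$ and $(2k,2k-1)$, each summand of $Y=Y'+(\beta')^t\alpha-\alpha^t\beta'$ lives in the union of the (odd,even) and (even,odd) positions. In particular all (odd,odd) and (even,even) entries of $Y$ vanish. Next, $\beta$ is supported at odd positions, with $\beta_{2k-1}=-a_kb_k$ for $k<n$ and $\beta_{2n-1}=a_nb_n$, so $Y\beta^t$ is supported at even positions and $(Y\beta^t)_{2k}=(a_k^2-a_n^2b_n)b_k$. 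Hence the correction $\tfrac{1}{|\beta|^2}\bigl(Y\beta^t\beta-(Y\beta^t\beta)^t\bigr)$ also lives in (odd,even)$\cup$(even,odd), so $Y_{\vec{b}}$ inherits the parity pattern of $Y$. Passing from $Y_{\vec{b}}$ to $Z_{\vec{b}}$: the appended column $\beta^t/|\beta|$ is nonzero only at odd rows, the appended row $-\beta/|\beta|$ only at odd columns, and the $(2n,2n)$ corner is $0$, so if we declare the index $2n$ to be ``even'' (consistent with $\sigma(2n)=2n$) the parity pattern persists. Skew-symmetry of $Z_{\vec{b}}$ then forces the lower-left block of $Q_\sigma Z_{\vec{b}}Q_\sigma^{-1}$ to be minus the transpose of the upper-right block.

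Finally I identify the upper-right block with $Z$. Its $(k,n+j)$-entry equals $Z_{\vec{b}}[2k-1,2j]$, using $\sigma(n)=2n-1$ and $\sigma(2n)=2n$. Splitting into the four cases $k\le n-1$ vs.\ $k=n$ and $j\le n-1$ vs.\ $j=n$ produces exactly the four contributions visible in \eqref{Eq:M2}: the diagonal $a_k\delta_{kj}$ from $Y'$; the last-column entries $\beta_{2k-1}/|\beta|=-a_kb_k/|\beta|$ and $\beta_{2n-1}/|\beta|=a_nb_n/|\beta|$ from the appended column of $Z_{\vec{b}}$; the last-row entry $a_nb_j$ coming from the $(\beta')^t\alpha$ piece of $Y$; and the rank-one correction $-\gamma_1^t\gamma_2/|\beta|^2$ from the antisymmetrization, which one verifies by pairing $(\gamma_1)_k\in\{-a_kb_k,a_nb_n\}$ with $(\gamma_2)_j=(a_j^2-a_n^2b_n)b_j$. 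The proof is essentially bookkeeping; the only subtlety is the sign change between $\beta_{2k-1}$ for $k<n$ and $\beta_{2n-1}$, which propagates into the last row and column of $Z$.
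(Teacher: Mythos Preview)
Your argument is correct and essentially the same direct computation as the paper's proof --- the paper simply computes $Y\beta^t$, plugs it into $Z_{\vec{b}}$, and asserts the block form, whereas you organize the same calculation around the odd/even parity pattern, which makes the block-off-diagonal structure more transparent. One small slip: in your last clause you list the entries of $\gamma_1$ with reversed signs (the paper has $(\gamma_1)_k=a_kb_k$ for $k<n$ and $(\gamma_1)_n=-a_nb_n$, i.e.\ $(\gamma_1)_k=-\beta_{2k-1}$), but this does not affect the logic since the rank-one correction $-\gamma_1^t\gamma_2/|\beta|^2$ is what you actually match.
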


\begin{proof}
By calculation
\[ Y\beta^{t}=(0,(a_{1}^{2}-a_{n}^{2}b_{n})b_{1},\dots,0,(a_{n-1}^{2}-a_{n}^{2}b_{n})b_{n-1},
0)^{t}.\]
%As \[Z_{\vec{b}}
% = \begin{pmatrix}
% Y-\frac{1}{|\beta|^{2}}(Y\beta^{t})\beta
%  + \frac{1}{|\beta|^2}\beta^{t} (Y\beta^{t})^{t}&\frac{\beta^{t}}{|\beta|}\\
% -\frac{\beta}{|\beta|}&0\\
%\end{pmatrix}.\]
By inputting the forms of $Y$, $\beta$, $Y\beta^{t}$
 in
\[Z_{\vec{b}}
 = \begin{pmatrix}
 Y-\frac{1}{|\beta|^{2}}(Y\beta^{t})\beta
  + \frac{1}{|\beta|^2}\beta^{t} (Y\beta^{t})^{t}&\frac{\beta^{t}}{|\beta|}\\
 -\frac{\beta}{|\beta|}&0\\
\end{pmatrix}.\]
 we get the form of $Z_{\vec{b}}$.
It is easy to see that
 $Q_{\sigma}Z_{\vec{b}}Q_{\sigma}^{-1}$ is of the block diagonal form as in the lemma.
\end{proof}

\begin{lemma}\label{L:Zb2}
The Pfaffian of $Z_{\vec{b}}$ is equal to
\[\frac{1-b_{n}}{|\beta|}\prod_{1\leq i\leq n}a_{i}.\]
\end{lemma}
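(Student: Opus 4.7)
The plan is to reduce $\Pf(Z_{\vec b})$ to an $n\times n$ determinant via the block form in Lemma~\ref{L:Zb}, and then to evaluate that determinant using the matrix determinant lemma together with an explicit linear-algebra computation. Since $Q_\sigma$ is a permutation matrix, the standard identities $\Pf(QAQ^t)=\det(Q)\Pf(A)$ and $\Pf\bigl(\begin{smallmatrix}0&W\\ -W^t&0\end{smallmatrix}\bigr)=(-1)^{n(n-1)/2}\det W$ give
\[
\Pf(Z_{\vec b})=\sgn(\sigma)\,(-1)^{n(n-1)/2}\det Z.
\]
A direct inversion count shows that the perfect shuffle $\sigma$ has exactly $n(n-1)/2$ inversions, so $\sgn(\sigma)=(-1)^{n(n-1)/2}$ and the two sign factors cancel: $\Pf(Z_{\vec b})=\det Z$.

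Next, write $Z=M-\tfrac{1}{|\beta|^2}\gamma_1^t\gamma_2$, where $M$ denotes the first matrix on the right-hand side of \eqref{Eq:M2}. I will compute $\det M$ by adding $(b_i/|\beta|)$ times the $i$-th column to the $n$-th column, for $i=1,\dots,n-1$. Because of the defining identity $\sum_{i<n}b_i^2=1-2b_n$ of the set $B$, the new last column becomes $(0,\dots,0,\,a_n(1-b_n)/|\beta|)^t$, leaving an upper triangular matrix with the desired diagonal. Hence $\det M=\tfrac{1-b_n}{|\beta|}\prod_{i=1}^{n}a_i$, which already matches the claimed value.

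It then remains to show that the rank-one correction contributes trivially, i.e.\ that the matrix determinant lemma
\[
\det Z=\det M\,\Bigl(1-\tfrac{1}{|\beta|^2}\gamma_2 M^{-1}\gamma_1^t\Bigr)
\]
reduces to $\det Z=\det M$. The heart of the argument is the explicit computation $M^{-1}\gamma_1^t=(0,\dots,0,-|\beta|)^t$: solving $Mv=\gamma_1^t$ row by row, the first $n-1$ equations give $v_i=b_i(1+v_n/|\beta|)$, and substituting into the last equation and once more invoking $\sum_{i<n}b_i^2=1-2b_n$ forces $v_n=-|\beta|$, whence $v_i=0$ for $i<n$. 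Since $(\gamma_2)_n=0$ by definition of $\gamma_2$, the pairing $\gamma_2 M^{-1}\gamma_1^t$ vanishes, and combining this with the previous steps yields $\Pf(Z_{\vec b})=\det Z=\det M=\tfrac{1-b_n}{|\beta|}\prod_{i=1}^{n}a_i$.

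The only delicate point is the vanishing $M^{-1}\gamma_1^t=(0,\dots,0,-|\beta|)^t$: the cancellation that kills the first $n-1$ components uses the constraint $\sum_{i<n}b_i^2=1-2b_n$ in an essential way, and combined with $(\gamma_2)_n=0$ it is really where the geometry of the slice $\vec b\in B$ enters. Modulo this observation and the sign bookkeeping in the first step, the rest is routine algebra.
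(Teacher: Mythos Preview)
Your proof is correct and follows the same approach as the paper: reduce to $\det Z$ via Lemma~\ref{L:Zb}, show the rank-one correction does not change the determinant, and compute the determinant of the first matrix $M$. The paper's treatment of the correction is slightly more direct: it simply observes that $\gamma_1^t$ is $-|\beta|$ times the last column of $M$, so (since $(\gamma_2)_n=0$) subtracting $\tfrac{1}{|\beta|^2}\gamma_1^t\gamma_2$ only adds multiples of the last column of $M$ to the others and leaves the determinant unchanged---this is exactly what your computation $M^{-1}\gamma_1^t=(0,\dots,0,-|\beta|)^t$ encodes, without invoking the matrix determinant lemma or the constraint $\sum_{i<n}b_i^2=1-2b_n$ explicitly.
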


\begin{proof}
By Lemma \ref{L:Zb}, the Pfaffian of $Z_{\vec{b}}$ is equal to $\det Z$.
Since $\gamma_{1}^{t}$ is proportional
 to the right most column of the first matrix in the right hand side of \eqref{Eq:M2}
 and the last entry of $\gamma_2$ is zero,
 the term $\frac{1}{|\beta|^2}
\gamma_{1}^{t}\gamma_{2}$ makes no contribution to $\det Z$.
Therefore,
\begin{align*}
\det Z&=
 \det \begin{pmatrix} a_{1}&\ldots&0&\frac{-a_{1}b_{1}}{|\beta|}\\
 \vdots&\ddots&\vdots&\vdots\\0&\ldots&  a_{n-1}&\frac{-a_{n-1}b_{n-1}}{|\beta|}\\
 a_{n}b_{1}&\ldots&a_{n}b_{n-1}&\frac{a_{n}b_{n}}{|\beta|}\\
 \end{pmatrix}\\
&=\frac{1-b_{n}}{|\beta|}\prod_{1\leq i\leq n}a_{i}. \qedhere
\end{align*}
\end{proof}

Write $Z'$ for the $n\times (n-1)$ matrix obtained from $Z$ by removing the last column.
Put \begin{equation*}
%\label{Eq:hb}
h_{\vec{b}}(x)=\det(xI_{n-1}-(Z')^{t}Z').
\end{equation*}
Then we claim that the singular values of $Y_{\vec{b}}$
 are square roots of zeros of $h_{\vec{b}}(x)$.
Indeed, let $W\in \SO(n)$ be a matrix such that
 the right most column of $WZ$ is $(0,0,\dots,0,1)^t$.
Then $WZ=\diag\{Z'_0,1\}$ for some $(n-1)\times (n-1)$ matrix $Z'_0$.
Hence the eigenvalues of $Z^t Z$ are
 the eigenvalues of $(Z')^{t}Z'=(Z'_0)^tZ'_0$ plus $1$.
Since the eigenvalues of $Z^t Z$ are the same as
 those of $(Z_{\vec{b}})^t Z_{\vec{b}}$
 and they are the singular values of $Y_{\vec{b}}$ plus $1$
 (see the last part of \S\ref{SS:P-orbit2}),
 the claim follows.

\begin{proposition}\label{P:hb3}
We have
\begin{equation}\label{Eq:hb2}
h_{\vec{b}}(x)
 = \sum_{1\leq i\leq n}\frac{a_{i}^{2}b_{i}^{2}}{|\beta|^{2}}
 \prod_{1\leq j\leq n,\, j\neq i}(x-a_{j}^{2}).
\end{equation}
For any $1\leq i\leq n$, $a_{i}^{2}$ is a root of $h_{\vec{b}}(x)$ if and only if $b_{i}=0$.
\end{proposition}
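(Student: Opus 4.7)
The plan is to exploit the fact that the matrix $(Z')^{t}Z'$ is a rank-at-most-$2$ perturbation of a diagonal matrix, which makes $h_{\vec{b}}$ computable via the matrix-determinant lemma. Writing $Z$ from \eqref{Eq:M2} as $Z = D'' - \frac{1}{|\beta|^{2}}\gamma_{1}^{t}\gamma_{2}$ with $D''$ the first matrix on the right-hand side of \eqref{Eq:M2}, and letting $D'_{0}$ (resp.\ $\gamma_{2}'$) be obtained by dropping the last column of $D''$ (resp.\ the last, vanishing entry of $\gamma_{2}$), I would first observe that $Z' = D'_{0} - \frac{1}{|\beta|^{2}}\gamma_{1}^{t}\gamma_{2}'$. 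The two elementary identities
\[ (D'_{0})^{t}\gamma_{1}^{t} = (\gamma_{2}')^{t}, \qquad |\gamma_{1}|^{2} = |\beta|^{2}, \]
both verified by direct calculation, force the cross terms in the expansion of $(Z')^{t}Z'$ to collapse, leaving
\[ (Z')^{t}Z' = \diag(a_{1}^{2},\dots,a_{n-1}^{2}) + a_{n}^{2}\vec{b}\,\vec{b}^{\,t} - \tfrac{1}{|\beta|^{2}}(\gamma_{2}')^{t}\gamma_{2}', \]
where $\vec{b} = (b_{1},\dots,b_{n-1})^{t}$.

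The rank-$2$ matrix-determinant lemma then yields
\[ h_{\vec{b}}(x) = \prod_{k=1}^{n-1}(x-a_{k}^{2})\cdot \det\bigl(I_{2} + M(x)\bigr), \]
where $M(x)$ is a $2\times 2$ matrix whose entries are rational functions of $x$ built from the sums
\[ s_{m}(x) := \sum_{k=1}^{n-1}\frac{b_{k}^{2}(a_{k}^{2}-a_{n}^{2}b_{n})^{m}}{x-a_{k}^{2}}\qquad (m=0,1,2). \]

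To compare this with \eqref{Eq:hb2}, I would note that both sides are monic polynomials in $x$ of degree $n-1$: for the right-hand side of \eqref{Eq:hb2}, the leading coefficient is $\tfrac{1}{|\beta|^{2}}\sum_{i=1}^{n} a_{i}^{2}b_{i}^{2} = 1$. Their difference thus has degree at most $n-2$, and it suffices to verify equality at the $n-1$ distinct points $x = a_{k}^{2}$, $k=1,\dots,n-1$ (distinctness following from the standing regularity assumption $a_{1} > a_{2} > \cdots > a_{n-1} > |a_{n}| > 0$). The right-hand side of \eqref{Eq:hb2} at $x = a_{k}^{2}$ is immediate: only the $k$-th summand survives and gives $\frac{a_{k}^{2}b_{k}^{2}}{|\beta|^{2}}\prod_{j\neq k}(a_{k}^{2}-a_{j}^{2})$. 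For the left-hand side, $\prod_{j=1}^{n-1}(x-a_{j}^{2})$ has a simple zero at $a_{k}^{2}$ while $M(x)$ has at most a simple pole there — the potentially double pole in $s_{0}s_{2} - s_{1}^{2}$ cancels — so the limit is finite, and the algebra reduces to the single identity
\[ -a_{n}^{2}|\beta|^{2} + (a_{k}^{2}-a_{n}^{2}b_{n})^{2} + a_{n}^{2}\!\!\sum_{j\neq k,\, j\leq n-1}\!\! b_{j}^{2}(a_{j}^{2}-a_{k}^{2}) = a_{k}^{2}(a_{k}^{2}-a_{n}^{2}), \]
which follows from $\sum_{j=1}^{n-1}b_{j}^{2} = 1-2b_{n}$ (from the definition of $B$) together with $|\beta|^{2} = \sum_{i=1}^{n}a_{i}^{2}b_{i}^{2}$.

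The final assertion is then immediate from \eqref{Eq:hb2}: at $x = a_{i}^{2}$ only the $i$-th summand is nonzero, and under the standing regularity assumption the product $\prod_{j\neq i}(a_{i}^{2}-a_{j}^{2})$ does not vanish, so $h_{\vec{b}}(a_{i}^{2}) = 0$ if and only if $b_{i} = 0$. I expect the residue/limit reduction at $x = a_{k}^{2}$ to be the main obstacle, since it is the one place where the two constraints on $\vec{a}$ and $\vec{b}$ are genuinely used together; everything else is bookkeeping.
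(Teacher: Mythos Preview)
Your proposal is correct and follows essentially the same route as the paper. Both arguments reduce $(Z')^{t}Z'$ to $\diag(a_{1}^{2},\dots,a_{n-1}^{2})$ plus the same rank-two perturbation (your $a_{n}^{2}\vec{b}\,\vec{b}^{\,t}-\tfrac{1}{|\beta|^{2}}(\gamma_{2}')^{t}\gamma_{2}'$ is exactly the paper's $\gamma_{3}^{t}\gamma_{3}-\gamma_{4}^{t}\gamma_{4}$), then evaluate $h_{\vec{b}}$ at the $n-1$ points $a_{1}^{2},\dots,a_{n-1}^{2}$ and interpolate using monicity; the paper simply asserts the value $h_{\vec{b}}(a_{i}^{2})=\tfrac{a_{i}^{2}b_{i}^{2}}{|\beta|^{2}}\prod_{j\neq i}(a_{i}^{2}-a_{j}^{2})$ where you supply the matrix-determinant-lemma/residue details.
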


\begin{proof}
Put
\begin{align*}
&\gamma_{3}=(a_{n}b_{1},\dots,a_{n}b_{n-1}), \\
&\gamma_{4}=\frac{1}{|\beta|}
\bigl(
(a_{1}^{2}-a_{n}^{2}b_{n})b_{1},\dots,
 (a_{n-1}^{2}-a_{n}^{2}b_{n})b_{n-1}\bigr).
\end{align*}
By a direct calculation we see that
\[(Z')^{t}Z'=\diag\{a_{1}^{2},\dots,a_{n-1}^{2}\}
 +\gamma_{3}^{t}\gamma_{3}- \gamma_{4}^{t}\gamma_{4}.\]
From this we calculate that
\[h_{\vec{b}}(a_{i}^{2})=\frac{a_{i}^{2}b_{i}^{2}}{|\beta|^{2}}
\prod_{1\leq j\leq n,\, j\neq i}(a_{i}^{2}-a_{j}^{2})\] for $1\leq i\leq n-1$.
Since $h_{\vec{b}}(x)$ is a monic polynomial of degree $n-1$,
 we get
\[h_{\vec{b}}(x)=
 \sum_{1\leq i\leq n}\frac{a_{i}^{2}b_{i}^{2}}{|\beta|^{2}}
 \prod_{1\leq j\leq n,\, j\neq i}(x-a_{j}^{2}). \qedhere\]
\end{proof}

%The following corollary follows immediately.

\begin{corollary}\label{C:hb4}
The polynomial $h_{\vec{b}}(x)$
 has $n-1$ positive roots, which lie in the intervals
 \[[a_{n}^{2},a_{n-1}^{2}],\dots,[a_{2}^{2},a_{1}^{2}],\]
 respectively.
\end{corollary}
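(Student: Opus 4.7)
The plan is to locate the roots of $h_{\vec b}(x)$ by evaluating it at the $n$ points $a_1^2>a_2^2>\cdots>a_n^2$ and applying the intermediate value theorem. From the computation in the proof of Proposition~\ref{P:hb3},
\[
h_{\vec b}(a_i^2)=\frac{a_i^2 b_i^2}{|\beta|^2}\prod_{\substack{1\leq j\leq n\\ j\neq i}}(a_i^2-a_j^2).
\]
Under the regularity assumption $a_1>\cdots>a_{n-1}>|a_n|>0$ the squares $a_1^2,\dots,a_n^2$ are distinct and positive, so the product has sign $(-1)^{i-1}$. Hence $(-1)^{i-1}h_{\vec b}(a_i^2)\geq 0$, with strict inequality precisely when $b_i\neq 0$. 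Moreover, since $|\beta|^2=\sum_i a_i^2 b_i^2$, inspection of the leading coefficient in \eqref{Eq:hb2} shows that $h_{\vec b}$ is monic of degree $n-1$, and therefore has exactly $n-1$ roots in $\bbC$ counted with multiplicity.

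Next I fix $i\in\{1,\dots,n-1\}$ and produce a root in $[a_{i+1}^2,a_i^2]$ by a three-way case analysis. If $b_i=0$ then $a_i^2$ is already a root of $h_{\vec b}$ and lies in this interval; similarly if $b_{i+1}=0$. Otherwise both $h_{\vec b}(a_i^2)$ and $h_{\vec b}(a_{i+1}^2)$ are nonzero with opposite signs, and the intermediate value theorem produces a root in the open interval $(a_{i+1}^2,a_i^2)$.

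To reconcile this with the total root count $n-1$, I set $I=\{1\leq i\leq n:b_i\neq 0\}$, which is nonempty because $\beta\neq 0$. For $j\notin I$ the point $a_j^2$ is a root of $h_{\vec b}$, so one can factor
\[
h_{\vec b}(x)=\Bigl(\prod_{j\notin I}(x-a_j^2)\Bigr)\tilde h(x),
\]
where $\tilde h$ has degree $|I|-1$. Writing $I=\{i_1<\cdots<i_k\}$, a direct sign computation using the formula above gives $\sgn\tilde h(a_{i_l}^2)=(-1)^{l-1}$, so $\tilde h$ has $|I|-1$ distinct real roots, one in each open interval $(a_{i_{l+1}}^2,a_{i_l}^2)$. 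Adding the $n-|I|$ roots at $\{a_j^2:j\notin I\}$ accounts for all $n-1$ roots of $h_{\vec b}$, all of which lie in $[a_n^2,a_1^2]$. Distributing them among the adjacent intervals according to the case analysis of the previous paragraph yields at least one root in each $[a_{i+1}^2,a_i^2]$.

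The only mild subtlety is the bookkeeping when several $b_i$ vanish: an endpoint root $a_i^2$ then belongs to two adjacent intervals simultaneously, and one must verify that the existence of a root in every one of the $n-1$ intervals is consistent with $h_{\vec b}$ having only $n-1$ roots in total. This compatibility is precisely what the factorization and the strict sign alternation of $\tilde h$ guarantee.
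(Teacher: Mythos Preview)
Your argument is correct and follows essentially the same route as the paper: evaluate $h_{\vec b}$ at the points $a_i^2$ via Proposition~\ref{P:hb3}, note the alternating signs, and in the general case factor out $\prod_{j\notin I}(x-a_j^2)$ and apply the intermediate value theorem to the remaining degree $|I|-1$ factor. Your final paragraph on the endpoint bookkeeping is in fact more explicit than the paper, which simply asserts the interval placement after exhibiting the factorization.
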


\begin{proof}
First assume that none of $b_{i}$ is zero.
Then by Proposition~\ref{P:hb3}, $h_{\vec{b}}(a_{i}^{2})$ and $h_{\vec{b}}
(a_{i+1}^{2})$ have different signs.
Thus, $h_{\vec{b}}(x)$ has a zero in $(a_{i+1}^{2},a_{i}^{2})$
 for each $1\leq i\leq n-1$.
Hence, the $n-1$ zeros of $h_{\vec{b}}(x)$ lie in the intervals
\[(a_{n}^{2},a_{n-1}^{2}),\dots,(a_{2}^{2},a_{1}^{2}),\]
 respectively. Therefore, $h_{\vec{b}}(x)$ has no double zeros.

In general, among $\{b_1,\dots,b_{n}\}$
 let $b_{i_{1}},\dots,b_{i_{l}}$ with $1\leq i_{1}<\cdots<i_{l}\leq n$
 be all nonzero members.
Write $I=\{i_{1},\dots,i_{l}\}$ and $J=\{1,\dots,n\}-\{i_{1},\dots,i_{l}\}$.
By Proposition~\ref{P:hb3},
\[h_{\vec{b}}(x)=\Bigl(\sum_{1\leq j\leq l}\frac{a_{i_{j}}^{2}b_{i_{j}}^{2}}{|\beta|^{2}}
 \prod_{1\leq k\leq l,\, k\neq j}(x-a_{i_{k}}^{2})\Bigr)\prod_{i\in J}(x-a_{i}^{2}).\]
Thus, $a_{i}^{2}$ ($i\in J$) are zeros of
$h_{\vec{b}}(x)$. By a similar argument as above, one shows that other $l-1$ zeros of $h_{\vec{b}}(x)$ lie in
the intervals  \[(a_{i_{l}}^{2},a_{i_{l-1}}^{2}),\dots,(a_{i_{2}}^{2},a_{i_{1}}^{2})\] respectively. This shows
that: $h_{\vec{b}}(x)$ has $n-1$ positive roots, which lie in the intervals \[[a_{n}^{2},a_{n-1}^{2}],\dots,
[a_{2}^{2},a_{1}^{2}]\] respectively.
\end{proof}

By Corollary \ref{C:hb4}, $h_{\vec{b}}(x)$ has at most double zero,
 and the only possible double zeros are
 $a_{2}^{2},\dots,a_{n-1}^{2}$;
 for each $2\leq i\leq n-2$, $a_{i}^{2}$ and $a_{i+1}^{2}$ cannot be
both double zeros.
By \eqref{Eq:hb2}, in order that $a_{i}^{2}$ for $2\leq i\leq n-1$ is a double zero of
$h_{\vec{b}}(x)$ it is necessary and sufficient that
 $b_{i}=0$ and
\[\sum_{1\leq k\leq n,\, k\neq i}\frac{a_{k}^{2}b_{k}^{2}}{|\beta|^{2}}
 \prod_{1\leq j\leq n,\, j\neq i,k}(a_{i}^{2}-a_{j}^{2})=0.\]

\smallskip

Let $x_1\geq\cdots\geq x_{n-1}\geq 0$ be square roots of zeros of $h_{\vec{b}}(x)$.
By Corollary \ref{C:hb4},
$a_{i+1}\leq x_{i}\leq a_{i}$ for each $1\leq i\leq n-2$,
 and $|a_{n}|\leq x_{n-1}\leq a_{n-1}$.
Write \[\vec{x}=(x_{1},\dots,x_{n-1}).\]

\begin{corollary}\label{C:characteristic3}
The map $\vec{b}\mapsto\vec{x}$
 gives a bijection from $B$ to
\[[a_{2},a_{1}]\times\cdots\times[a_{n-1},a_{n-2}]\times[|a_{n}|,a_{n-1}].\]
\end{corollary}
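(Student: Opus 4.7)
The plan is to realize the map $\vec{b}\mapsto \vec{x}$ as a composition of two essentially bijective steps via the auxiliary coefficients $c_i := a_i^2 b_i^2/|\beta|^2$. By construction, $c_i\geq 0$, and $\sum_{i=1}^n c_i=1$ since $|\beta|^2=\sum_i a_i^2 b_i^2$. Rewriting Proposition~\ref{P:hb3},
\[
h_{\vec{b}}(x)=\sum_{i=1}^n c_i\prod_{\substack{1\leq j\leq n\\ j\neq i}}(x-a_j^2),
\]
which is just the partial fraction expansion of $\prod_{k=1}^{n-1}(x-x_k^2)\big/\prod_{i=1}^n(x-a_i^2)$. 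This yields the explicit formula
\[
c_i=\frac{\prod_{k=1}^{n-1}(a_i^2-x_k^2)}{\prod_{j\neq i}(a_i^2-a_j^2)}.
\]
In particular, the tuple $(c_1,\dots,c_n)$ determines $\vec{x}$ (as the set of roots of $h_{\vec{b}}$) and is determined by $\vec{x}$ via this formula.

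Next, I would show that the correspondence $(c_1,\dots,c_n)\leftrightarrow \vec{x}$ is a bijection between $\{(c_1,\dots,c_n):c_i\geq 0,\ \sum c_i=1\}$ and the product $[a_2,a_1]\times\cdots\times[a_{n-1},a_{n-2}]\times[|a_n|,a_{n-1}]$. One direction uses Corollary~\ref{C:hb4} to place the roots of $h_{\vec{b}}$ in the prescribed box. For the other direction, given $\vec{x}$ in the box, the interlacing $a_{i+1}^2\leq x_i^2\leq a_i^2$ (and $a_n^2\leq x_{n-1}^2$) makes both the numerator and denominator of the formula for $c_i$ carry sign $(-1)^{i-1}$, so $c_i\geq 0$; and $\sum c_i=1$ follows from comparing leading coefficients in the partial fraction identity.

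Finally, I would recover $\vec{b}\in B$ from $(c_1,\dots,c_n)$ uniquely. Since $b_i\geq 0$, we must have $b_i=\sqrt{c_i}\,|\beta|/|a_i|$, so the defining relation $\sum_{i=1}^{n-1} b_i^2+2b_n=1$ becomes the quadratic
\[
S u^2+2T u-1=0,\qquad u:=|\beta|,\ \ S:=\sum_{i=1}^{n-1}\frac{c_i}{a_i^2},\ \ T:=\frac{\sqrt{c_n}}{|a_n|},
\]
with $S,T\geq 0$. This has a unique nonnegative root (a single positive root when $S>0$; and $u=1/(2T)$ when $S=0$, which forces $c_n=1$ and $T>0$), determining $|\beta|$ and hence all $b_i$. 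The converse, reading $(c_i)$ off from $\vec{b}\in B$, is tautological. Composing the two bijections gives the claim.

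The main obstacle is bookkeeping at the boundary of the box, where some $x_k$ coincides with $a_k$ or $a_{k+1}$: in such degenerate cases certain $c_i$ and the corresponding $b_i$ vanish, and one must check that the formulas above continue to produce a valid $\vec{b}\in B$ (rather than, e.g., forcing $|\beta|=0$). A brief case analysis when $S=0$ or when some $c_i=0$ suffices to rule out degeneracies, since the regularity hypothesis $a_1>\cdots>a_{n-1}>|a_n|>0$ keeps all denominators nonzero.
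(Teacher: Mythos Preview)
Your argument is correct and follows essentially the same route as the paper: both proofs pass through the partial-fraction coefficients $c_i$ of $h_{\vec{b}}(x)$ relative to the nodes $a_i^2$, use Corollary~\ref{C:hb4} to show the roots land in the box, and use the sign pattern $(-1)^{i-1}p(a_i^2)\geq 0$ to get $c_i\geq 0$ for surjectivity. The paper's injectivity is proved by evaluating $h_{\vec{b}}(a_i^2)$ (your $c_i$), and its surjectivity step simply asserts ``Hence there exists a unique tuple $\vec{b}\in B$ such that $h_{\vec{b}}(x)=p(x)$'' without further detail; your explicit quadratic $Su^2+2Tu-1=0$ for $u=|\beta|$ fills in precisely this step, making your write-up strictly more complete than the paper's at that point.
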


\begin{proof}
For $\vec{b}=(b_1,\dots,b_{n})\in B$ and $\vec{b'}=(b'_{1},\dots,b'_{n})\in B$, suppose $h_{\vec{b}}(x)$ and $h_{\vec{b'}}(x)$ have the same zeros.
Then, $h_{\vec{b}}=h_{\vec{b'}}$.
Thus, $h_{\vec{b}}(a_{i}^{2})=h_{\vec{b'}}(a_{i}^{2})$ for each $1\leq i\leq n$.
By Proposition \ref{P:hb3}, this implies that
$b_{i}=b'_{i}$ for each $i$. Thus, $\vec{b}=\vec{b'}$.
This shows the injectivity.

The singular values $x_1,\dots,x_{n-1}$ gives a polynomial
\[p(x)=\prod_{1\leq i\leq n-1}(x-x_{i}^2).\]
Since $(-1)^{i-1}f(a_{i}^{2})\geq 0$, we can write
\[p(x)= \sum_{1\leq i\leq n} c_i \prod_{1\leq j\leq n,\, j\neq i}(x-a_{j}^2).\]
 for some $c_i\geq 0$ with $\sum_{i=1}^n c_i=1$.
Hence there exists a unique tuple $\vec{b}\in B$
 such that $h_{\vec{b}}(x)=p(x)$.
This shows the surjectivity.
\end{proof}

\begin{proposition}\label{P:pf1}
The image of the moment map $q(\mathcal{O}_{f})$
 consists of all depth one coadjoint orbits of $P$
 with the sign of the Pfaffian equal to the sign of $a_{n}$,
 and with singular values $(x_1,\dots, x_{n-1})$
 such that
\[a_1\geq x_1\geq a_2\geq x_2\geq \cdots\geq
 a_{n-1}\geq x_{n-1}\geq |a_{n}|.\]
Moreover, $q$ maps different $P$-orbits in
 $\mathcal{O}_{f}$ to different $P$-orbits in $\mathfrak{p}^{\ast}$.
\end{proposition}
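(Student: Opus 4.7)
The plan is to assemble the preceding structural results: the orbit parametrization on the source (Lemma~\ref{L:P-orbits}), the explicit formula for the moment map image (Lemma~\ref{L:gf}), the classification of depth-one $P$-orbits in $\mathfrak{p}^*$ via singular values of $Y_{\vec{b}}$ and sign of $\Pf(Z_{\vec{b}})$ (Lemmas~\ref{L:p-standard4}, \ref{p:standard5}, \ref{L:class-matrix}), the Pfaffian computation (Lemma~\ref{L:Zb2}), and the bijection between $B$ and the product of intervals (Corollary~\ref{C:characteristic3}). Essentially no new computation is needed; the proof is pure assembly.

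First I would check that $q(\bar{n}_{\vec{b}}\cdot f)$ always has depth one, which reduces by definition to $\beta\neq 0$. Using the explicit form $\beta=(-a_{1}b_{1},0,\ldots,-a_{n-1}b_{n-1},0,a_n b_n)$ from Lemma~\ref{L:gf}, if $\beta=0$ then $a_n b_n=0$ forces $b_n=0$ (since $|a_n|>0$), whence $\sum_{i<n}b_i^2=1$, so some $b_i$ with $i<n$ is nonzero; then $a_i b_i\neq 0$ gives a contradiction. Hence $\beta\neq 0$, and Lemma~\ref{p:standard5} produces the canonical representative $\pr(X_{Y_{\vec{b}},\beta,0})\in \mathfrak{l}^{*}_{\xi}+\tilde{\xi}$ in the orbit. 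Then Lemma~\ref{L:class-matrix} and the subsequent remarks in \S\ref{SS:P-orbit2} reduce the $P$-orbit of this element to the $\SO(m+1)$-conjugacy class of $Z_{\vec{b}}$, which is determined precisely by the singular values $(x_1,\dots,x_{n-1})$ of $Y_{\vec{b}}$ together with the sign of $\Pf(Z_{\vec{b}})$.

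Next I would read off the Pfaffian sign: Lemma~\ref{L:Zb2} gives
\[
\Pf(Z_{\vec{b}})=\frac{1-b_n}{|\beta|}\prod_{i=1}^{n}a_i,
\]
and since $0\le b_n\le \tfrac{1}{2}$ and $a_1,\dots,a_{n-1}>0$, this sign always equals $\sgn(a_n)$. For the singular values, Corollary~\ref{C:characteristic3} provides a bijection $B\xrightarrow{\sim}[a_2,a_1]\times\cdots\times[a_{n-1},a_{n-2}]\times[|a_n|,a_{n-1}]$, sending $\vec{b}\mapsto\vec{x}$. Combining these: (i) every depth-one $P$-orbit with the stated Pfaffian sign and interlacing singular values is hit by some $\bar{n}_{\vec{b}}\cdot f$, and (ii) distinct $\vec{b}\in B$ (equivalently, distinct $P$-orbits in $\mathcal{O}_f$ by Lemma~\ref{L:P-orbits}) produce distinct singular value tuples and hence distinct target $P$-orbits.

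There is no real obstacle to the argument — every nontrivial ingredient has already been established. The only point requiring any attention is verifying $\beta\neq 0$ so that Lemma~\ref{p:standard5} applies and the image really lies in the depth-one stratum; after that, the two assertions of the proposition follow at once by matching the $P$-orbit invariants $(\vec{x},\sgn\Pf)$ through Corollary~\ref{C:characteristic3} and Lemma~\ref{L:Zb2}.
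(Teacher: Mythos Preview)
Your proposal is correct and follows essentially the same route as the paper's proof: use Lemma~\ref{L:gf} to see $\beta\neq 0$ (depth one), Lemma~\ref{L:Zb2} for the Pfaffian sign, and Corollary~\ref{C:characteristic3} for the bijection between $B$ and the interlacing singular-value tuples. The paper's proof is simply a terser version of your assembly, citing the same three ingredients without spelling out the $\beta\neq 0$ verification or the intermediate role of Lemmas~\ref{p:standard5} and~\ref{L:class-matrix}.
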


\begin{proof}
By the form of $q(\bar{n}_{\vec{b}}\cdot f)$ in Lemma \ref{L:gf}, we have $\beta\neq 0$. Thus, the $P$-orbit containing
$q(\bar{n}_{\vec{b}}\cdot f)$ has depth one. By Lemma \ref{L:Zb2}, the Pfaffian of $Z_{\vec{b}}$ has the same sign as
the sign of $a_{n}$. The other statements follow from Corollary \ref{C:characteristic3}.
\end{proof}

The stabilizers of orbits are given an follows.

\begin{proposition}\label{P:pf3}
For any $\vec{b}\in B$,
\[\Stab_{P_{1}}(\bar{n}_{\vec{b}}\cdot f)\cong\SO(2)^{r}
\text{ and }
\Stab_{P_{1}}(q(\bar{n}_{\vec{b}}\cdot f)) \cong
 \U(2)^{s}\times\SO(2)^{n-1-2s},\]
 where $r$ is the number of zeros among $b_{1},\dots,b_{n}$
 and $s$ is the number of double zeros of $h_{\vec{b}}(x)$.
\end{proposition}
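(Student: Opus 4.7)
For the first assertion, the $G_1$-stabilizer of $\bar{n}_{\vec{b}}\cdot f$ is $\bar{n}_{\vec{b}}T_1\bar{n}_{\vec{b}}^{-1}$, so using $P_1=\Stab_{G_1}(v_0)$ on $X_n\cong G_1/P_1$ we have
\[
\Stab_{P_1}(\bar{n}_{\vec{b}}\cdot f)
=\bar{n}_{\vec{b}}(T_1\cap \bar{n}_{\vec{b}}^{-1}P_1\bar{n}_{\vec{b}})\bar{n}_{\vec{b}}^{-1}
\cong \Stab_{T_1}(\bar{n}_{\vec{b}}^{-1}\cdot v_0).
\]
With the identification $T_1\cong \U(1)^n$ from the proof of Lemma~\ref{L:T-orbits} and the explicit representative $\bar{n}_{\vec{b}}^{-1}\cdot v_0=[(0,-b_1,\dots,0,-b_{n-1},0,b_n,1-b_n)]$, the $i$-th circle rotates only the $(2i-1,2i)$-coordinates. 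For $1\leq i\leq n-1$ this pair equals $(0,-b_i)$, which is a rotation fixed point iff $b_i=0$; for $i=n$ the pair is $(0,b_n)$, and since the light-cone coordinate $1-b_n>0$ is unchanged under the rotation, the rotation projectively fixes the point iff $b_n=0$. Hence $\Stab_{T_1}(\bar{n}_{\vec{b}}^{-1}\cdot v_0)\cong \U(1)^r\cong \SO(2)^r$.

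For the second assertion, Lemma~\ref{L:gf} gives $q(\bar{n}_{\vec{b}}\cdot f)=\pr(X_{Y,\beta,0})$ with $\beta\neq 0$. Applying Lemmas~\ref{p:standard5} and \ref{L:p-standard4} brings this into the canonical form $\pr(X_{Y_{\vec{b}},\beta,0})$ with $Y_{\vec{b}}\beta^t=0$. Setting $\xi=\psi_n(\pr(X_{Y_{\vec{b}},\beta,0}))\in\mathfrak{n}^{\ast}-\{0\}$, Lemma~\ref{L:p-standard3}(3) together with Lemma~\ref{L:p-standard1} reduces the computation to
\[
\Stab_{P_1}(q(\bar{n}_{\vec{b}}\cdot f))\cong \Stab_{L_{1}^{\xi}}(\eta),
\]
where $\eta\in (\mathfrak{l}^{\xi})^{\ast}$ corresponds to $Y_{\vec{b}}$ restricted to $\beta^{\perp}$. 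Since $A$ acts nontrivially on $\xi$ and $M_1$ acts orthogonally on $\mathfrak{n}^{\ast}$, $L_{1}^{\xi}=\Stab_{M_1}(\xi)\cong \SO(\beta^{\perp})\cong \SO(2n-2)$, and the question becomes: what is the $\SO(2n-2)$-centralizer of the skew-symmetric operator $Y_{\vec{b}}|_{\beta^{\perp}}$, whose eigenvalues are $\{\pm\mathbf{i}x_j\}_{j=1}^{n-1}$ (with $x_j$ the positive square roots of the roots of $h_{\vec{b}}$, all positive under the regularity assumption by Corollary~\ref{C:hb4})?

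A standard linear-algebra analysis then finishes the argument: each simple eigenvalue $\pm\mathbf{i}x_j$ contributes an $\SO(2)$-factor acting on its $2$-dimensional real invariant plane, while each coincidence $x_j=x_{j+1}$ (one for each of the $s$ double zeros of $h_{\vec{b}}$ in the sense of Corollary~\ref{C:hb4}) combines two such planes into a $4$-dimensional invariant subspace on which $\frac{1}{x_j}Y_{\vec{b}}|_{\beta^{\perp}}$ is a complex structure whose $\SO(4)$-stabilizer equals $\U(2)=\SO(4)\cap \GL(2,\mathbb{C})$. This yields the factor $\U(2)^s\times \SO(2)^{n-1-2s}$. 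The only nontrivial technical step is the $\SO(2)\times \SO(2)$-to-$\U(2)$ enhancement at a coincident eigenvalue, which rests on identifying the invariant $4$-plane with $\mathbb{C}^2$ via the complex structure and on the identity $\SO(2k)\cap \GL(k,\mathbb{C})=\U(k)$ for $k=2$; the rest is direct bookkeeping using the parametrization developed in \S\ref{SS:P-orbit2}.
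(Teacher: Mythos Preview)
Your proof is correct and follows the same approach as the paper's own (very terse) argument: the first isomorphism via $\Stab_{P_1}(\bar{n}_{\vec{b}}\cdot f)\cong\Stab_{T_1}(\bar{n}_{\vec{b}}^{-1}\cdot v_0)$, and the second via Lemma~\ref{L:p-standard3}(3) together with the singular-value description of $Y_{\vec{b}}|_{\beta^\perp}$. Your write-up simply unpacks the details (the coordinate-by-coordinate stabilizer in $T_1$, the identification $L_1^{\xi}\cong\SO(2n-2)$, and the centralizer computation using that $h_{\vec{b}}$ has at most double zeros) that the paper leaves to the reader.
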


\begin{proof}
The first isomorphism
 follows from $\Stab_{P_{1}}(\bar{n}_{\vec{b}}\cdot f)
 \cong \Stab_{T_{1}}(\bar{n}_{\vec{b}}^{-1}\cdot v_0)$.
The second isomorphism follows from
 the description of orbit $\Stab_{P_{1}}(q(\bar{n}_{\vec{b}}\cdot f))$ above
 and Lemma~\ref{L:p-standard3}~(3).
\end{proof}

\begin{lemma}\label{L:proper1}
For any compact set $\Omega\subset\mathfrak{p}^{\ast}-\mathfrak{l}^{\ast}$, $q^{-1}(\Omega)$ is compact.
%
%For any $\epsilon>0$, let
%\[\Omega_{\epsilon}=\{l+\phi_{n}(n):l\in\mathfrak{l}^{\ast},n\in\mathfrak{n}^{\ast},
%|l|\leq\epsilon,|n|\leq\epsilon\}.\] Then $q^{-1}(\Omega_{\epsilon})$ is not compact.
\end{lemma}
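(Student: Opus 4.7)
The plan is to show $q^{-1}(\Omega)$ is sequentially compact. Since $q$ is continuous and $\Omega$ is closed, $q^{-1}(\Omega)$ is closed in the smooth manifold $\mathcal{O}_f$, so it suffices to extract a convergent subsequence from any sequence $\{y_i\}\subset q^{-1}(\Omega)$. By Lemma~\ref{L:P-orbits}, I can write $y_i = p_i\cdot\bar{n}_{\vec{b}_i}\cdot f$ with $p_i = m_i a_i n_{x_i}\in MAN$ and $\vec{b}_i\in B$. Compactness of $B$ and $M$ gives, upon passing to a subsequence, $\vec{b}_i\to \vec{b}_\infty\in B$ and $m_i\to m_\infty\in M$, so it remains to show that $\{a_i\}\subset A$ and $\{x_i\}\subset \mathbb{R}^m\cong N$ are bounded.

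For the $A$-component, apply the $P$-equivariant projection $\psi_n\colon \mathfrak{p}^*\to \mathfrak{n}^*$. Since $\Omega$ is a compact subset of $\mathfrak{p}^*-\mathfrak{l}^* = \psi_n^{-1}(\mathfrak{n}^*-\{0\})$, its image $\psi_n(\Omega)$ is compact in $\mathfrak{n}^*-\{0\}$, hence $|\psi_n(q(y_i))|$ is bounded between two positive constants. By Lemma~\ref{L:gf}, the $\mathfrak{n}^*$-component $\xi_i:=\psi_n(q(\bar{n}_{\vec{b}_i}\cdot f))$ is identified with $\beta_{\vec{b}_i}$; under the regularity hypothesis $|a_n|>0$ one has $\beta_{\vec{b}}\neq 0$ for every $\vec{b}\in B$ (otherwise $b_1=\cdots=b_{n-1}=0$ would force $b_n=1/2$, and then the last coordinate $a_nb_n\neq 0$ gives a contradiction), so $|\xi_i|$ lies in a compact subinterval of $(0,\infty)$. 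The equivariance formula \eqref{Eq:MAaction} yields $|\psi_n(q(y_i))|=e^{-\lambda_0(\log a_i)}|\xi_i|$, which confines $a_i$ to a compact subset of $A$.

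For the $N$-component, decompose $q(\bar{n}_{\vec{b}_i}\cdot f) = \zeta_i + \tilde{\xi}_i$ with $\zeta_i\in\mathfrak{l}^*$ and $\tilde{\xi}_i=\phi_n(\xi_i)$. As in the proof of Proposition~\ref{P:orbit-reduction}, $n_{x_i}$ acts on $\mathfrak{l}^*+\tilde{\xi}_i$ by translating the $\mathfrak{l}^*$-component by $\ad^*(X_{x_i})\tilde{\xi}_i$, while $m_ia_i$ preserves the splitting $\mathfrak{p}^*=\mathfrak{l}^*\oplus\mathfrak{n}^*$; hence
\[
\psi_l(q(y_i)) = \Ad^*(m_ia_i)\bigl(\zeta_i + \ad^*(X_{x_i})\tilde{\xi}_i\bigr).
\]
The left side lies in the compact set $\psi_l(\Omega)$, $\Ad^*(m_ia_i)$ is a bounded family of linear automorphisms by the previous step, and $\zeta_i$ varies in a compact set, so $\ad^*(X_{x_i})\tilde{\xi}_i$ stays bounded in $\mathfrak{l}^*$. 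By Lemma~\ref{L:p-standard1}, the linear map $\mathfrak{n}\ni X\mapsto\ad^*(X)\tilde{\xi}$ is an isomorphism onto its $m$-dimensional image for every $\xi\neq 0$ and depends continuously on $\xi$; since $\xi_i$ remains in a compact subset of $\mathfrak{n}^*-\{0\}$, the left inverses have uniformly bounded operator norms, forcing $|x_i|$ to be bounded.

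A final subsequence then yields $a_i\to a_\infty$ and $x_i\to x_\infty$, whence $p_i\to p_\infty\in P$ and $y_i\to p_\infty\cdot\bar{n}_{\vec{b}_\infty}\cdot f\in\mathcal{O}_f$, as required. The main technical point is the boundedness of the $N$-component $n_{x_i}$: precisely the injectivity and continuous dependence on $\xi$ of the ``translation direction'' map $X\mapsto\ad^*(X)\tilde{\xi}$ for $\xi\neq 0$ prevents $n_{x_i}$ from escaping to infinity, and explains why the failure of properness of $q$ over $\mathfrak{l}^*$ (the $\xi=0$ regime) does not obstruct properness over compact subsets disjoint from $\mathfrak{l}^*$.
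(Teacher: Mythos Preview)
Your proof is correct and follows essentially the same approach as the paper's: parametrize points of $q^{-1}(\Omega)$ by $P\times B$ via Lemma~\ref{L:P-orbits}, use compactness of $M$ and $B$ to dispose of those factors, bound the $A$-component via the $\mathfrak{n}^*$-projection (using $|\beta_{\vec b}|$ bounded away from $0$), and bound the $N$-component via the $\mathfrak{l}^*$-projection and the injectivity of $X\mapsto\ad^*(X)\tilde\xi$ for $\xi\neq 0$. The only cosmetic differences are that the paper writes elements as $an'm\bar n_{\vec b}\cdot f$ rather than $m a n_x\bar n_{\vec b}\cdot f$ and argues boundedness directly rather than via sequential compactness; your citation of Lemma~\ref{L:p-standard1} for the injectivity is fine (the dimension count there gives it), though the paper states the injectivity explicitly in the proof of Lemma~\ref{L:p-standard3}~(3).
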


\begin{proof}
Write a general element in $q^{-1}(\Omega)$ as
\[f'=an'm \bar{n}_{\vec{b}}\cdot f, \]
 where $a\in A$, $n'\in N$, $m\in M$, and $\vec{b}\in B$.
Then \[q(f')=an'm\cdot q(\bar{n}_{\vec{b}}\cdot f)\in\Omega.\]
Note that $M$ and $B$ are compact.
Hence, $m$ and $\vec{b}$ are bounded.
Write \[m\cdot q(\bar{n}_{\vec{b}}\cdot f)=\eta_{1}+\phi_{n}(\xi_{1}),\]
 where $m^{-1}\cdot \xi_1$ is given by the vector $\beta$ as in Lemma \ref{L:gf}.
Then
\[\frac{1}{2}|a_{n}|\leq|a_{n}||\vec{b}|\leq |\xi_{1}|
 =|\beta|\leq|a_{1}||\vec{b}|\leq|a_{1}|,\]
 where we used $\frac{1}{2}\leq|\vec{b}|=1-b_{n}\leq 1$.
Write $an'm\cdot q(\bar{n}_{\vec{b}}\cdot f)=\eta+\phi_{n}(\xi)$,
 where $\eta\in\mathfrak{l}^{\ast}$ and $\xi\in\mathfrak{n}^{\ast}$.
By the compactness of
 $\Omega\subset\mathfrak{p}^{\ast}-\mathfrak{l}^{\ast}$,
 $|\eta|$ is bounded from above,
 and $|\xi|$ is bounded from both above and below.
We have \[an'\cdot(\eta_{1}+\phi_{n}(\xi_{1}))=\eta+\phi_{n}(\xi).\]
Write $n'=\exp(X)$ and $\xi_{1}=\pr(Y)$, where $X\in\mathfrak{n}$ and $Y\in\bar{\mathfrak{n}}$.
Then,
\[an'\cdot(\eta_{1}+\phi_{n}(\xi_{1}))
 =\eta_{1}+\pr([X,Y])+e^{-\lambda_{0}\log a}\phi_{n}(\xi_{1}).\]
Thus, $\eta=\eta_{1}+\pr([X,Y])$ and $\xi=e^{-\lambda_{0}\log a} \xi_{1}$.
Now, $|\xi_{1}|,|Y|,|\xi|$ are bounded both from above and below,
 and $|\eta|,|\eta_{1}|$ are bounded from above.
Thus, $\log a$ is bounded both from above and below, and $|X|$ is bounded from above.
This shows the compactness of $q^{-1}(\Omega)$.
\if 0
Considering $q^{-1}(\Omega_{\epsilon})$ instead,
 then in the above $|X|$ ($n'=\exp(X)$, $X\in\mathfrak{n}$) is bounded from above,
 and $\log a$ is only bounded from below.
Write \[\Ad (m\cdot \bar{n}_{\vec{b}}) t_{\vec{a}}=H_{1}+X_{1}+Y_{1},\]
where $H_{1}\in\mathfrak{l}$, $X_{1}\in\bar{\mathfrak{n}}$, $Y_{1}\in\mathfrak{n}$.
Then, $\Ad(m^{-1})X_{1}$ is given by the vector
 $\beta$ in Lemma \ref{L:gf}, and $\Ad(m^{-1})Y_{1}$ is given by the vector
$-\frac{\beta'}{2}$ there.
Now $|H_{1}|$ is bounded from above, $|X_{1}|$ is bounded both from above
 and below, and $|Y_{1}|$ has fixed positive norm. Also, $n'=\exp(X)$ with $X$ in a compact set of $\mathfrak{n}$,
and $\log a$ can be arbitrarily large. Moreover, the bound of $|X|$ depends on $\epsilon$ linearly. By taking
$\epsilon$ sufficiently small, we see that the $\mathfrak{n}$-part of
 $\Ad (an'm\bar{n}_{\vec{b}}) t_{\vec{a}}$ can have
arbitrarily large norm. Hence, $q^{-1}(\Omega_{\epsilon})$ is not compact.
\fi
\end{proof}

\begin{proposition}\label{P:pf2}
The moment map $q\colon \mathcal{O}_{f}=G\cdot f\rightarrow\mathfrak{p}^{\ast}$ has image lies in the set of depth
one elements. For any $g\in G$, the reduced space $q^{-1}(q(g\cdot f))/\Stab_{P}(q(g\cdot f))$ is a singleton.
The moment map $q$ is weakly proper, but not proper.
\end{proposition}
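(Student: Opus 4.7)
The first claim is immediate from Lemma~\ref{L:gf} and Proposition~\ref{P:pf1}: the representative $q(\bar{n}_{\vec{b}}\cdot f)=\pr(X_{Y,\beta,0})$ always has $\beta\neq 0$, so its $\mathfrak{n}^*$-component is nonzero and the whole $P$-orbit sits outside $\mathfrak{l}^*$. For the singleton reduced space, set $\eta := q(g\cdot f)$. The $P$-equivariance of $q$ together with the injectivity-on-$P$-orbits part of Proposition~\ref{P:pf1} forces $q^{-1}(P\cdot\eta)$ to be a single $P$-orbit $P\cdot x$ with $q(x)=\eta$. The restricted equivariant surjection $P\cdot x\twoheadrightarrow P\cdot\eta$ then has fiber $q^{-1}(\eta) = \Stab_P(\eta)\cdot x$, so $q^{-1}(\eta)/\Stab_P(\eta)$ is a single point. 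Weak properness is then immediate: by the Remark at the end of \S\ref{SS:P-orbit2}, strongly regular $P$-orbits in $\mathfrak{p}^*$ have depth one, so any compact $\Omega\subset q(\mathcal{O}_f)\cap\Upsilon_{sr}$ lies in $\mathfrak{p}^*-\mathfrak{l}^*$ and Lemma~\ref{L:proper1} gives the compactness of $q^{-1}(\Omega)$.

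To show $q$ is not proper, the plan is to produce a single sequence in $\mathcal{O}_f$ whose $q$-image converges in $\mathfrak{p}^*$ but which itself escapes to infinity in $\mathfrak{g}^*$. Fix any $\vec{b}\in B$, put $v:=\bar{n}_{\vec{b}}\cdot f$, and act by $a_t := \exp(tH_0)\in A$ for $t\geq 0$. The calculation inside the proof of Lemma~\ref{L:gf} gives
\[\Ad(\bar{n}_{\vec{b}})t_{\vec{a}} = \diag\{Y,0_{2\times 2}\}+\bar{X}_\beta - X_{\beta'/2},\]
and the bracket relations~\eqref{Eq:brackets} then yield
\[\Ad(a_t\bar{n}_{\vec{b}})t_{\vec{a}} = \diag\{Y,0_{2\times 2}\}+e^{-t}\bar{X}_\beta - e^{t}X_{\beta'/2}.\]
Projecting via $\pr$ kills the $\mathfrak{n}$-term, so $q(a_t\cdot v) = \pr(X_{Y,e^{-t}\beta,0})$ converges as $t\to+\infty$ to $\pr(X_{Y,0,0})\in\mathfrak{l}^*$; hence $K := \{q(a_t\cdot v):t\geq 0\}\cup\{\pr(X_{Y,0,0})\}$ is compact in $\mathfrak{p}^*$. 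The $-e^{t}X_{\beta'/2}$ term on the other hand shows that $\{a_t\cdot v\}_{t\geq 0}\subset q^{-1}(K)$ is unbounded in $\mathfrak{g}^*$ (here I use $\beta'\neq 0$, which is automatic for regular $f$ since $a_n\neq 0$). A compact subset of $\mathcal{O}_f$ must have bounded image under the continuous inclusion $\mathcal{O}_f\hookrightarrow\mathfrak{g}^*$, so $q^{-1}(K)$ is not compact.

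The main obstacle is the non-properness step: one needs \emph{both} the contraction of $\bar{X}_\beta$ (which drags the $q$-image down into $\mathfrak{l}^*$) and the expansion of $X_{\beta'/2}$ (which pushes the orbit point to infinity) to be simultaneously visible. The latter is precisely what fails for $\mathcal{O}_f=\{0\}$, consistent with the expectation stated in the proposition that $q$ becomes proper exactly on the zero orbit.
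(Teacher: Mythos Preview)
Your proof is correct. The first three claims are handled essentially as in the paper (you supply a few more details for the reduced-space assertion, but the core is still Proposition~\ref{P:pf1} plus $P$-equivariance, and weak properness via Lemma~\ref{L:proper1}).

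For non-properness you take a genuinely different route. The paper argues topologically: by Proposition~\ref{P:pf1} the image $q(\mathcal{O}_f)$ consists only of depth-one orbits, and since every depth-one orbit has a depth-zero orbit in its closure, $q(\mathcal{O}_f)$ is not closed in $\mathfrak{p}^*$; a proper map must have closed image, so $q$ is not proper. Your argument instead exhibits an explicit witness: the $A$-flow $a_t\cdot v$ on a fixed representative, using the $\mathfrak{m}\oplus\bar{\mathfrak{n}}\oplus\mathfrak{n}$ decomposition of $\Ad(\bar n_{\vec b})t_{\vec a}$ from the proof of Lemma~\ref{L:gf}, so that $e^{-t}\bar X_\beta$ drives the $q$-image into $\mathfrak{l}^*$ while $e^{t}X_{\beta'/2}$ forces divergence in $\mathfrak{g}^*$. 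The paper's argument is shorter and more conceptual; yours is concrete and makes transparent exactly which direction in $P$ destroys properness and why the zero orbit is the sole exception (there $\beta'=0$). One small remark: your final sentence attributes to the proposition itself the claim that $q$ is proper precisely on the zero orbit, but Proposition~\ref{P:pf2} only treats regular elliptic $f$ (with $a_n\neq 0$); the zero-orbit exception appears elsewhere in the paper.
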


\begin{proof}
The first and the second statements follow from Proposition \ref{P:pf1}.
By Lemma \ref{L:proper1}, we see that $q$ is weakly proper.
Since the closure of every depth one orbit contains a depth zero orbit,
 $q(\mathcal{O}_{f})$ is not closed in $\mathfrak{p}^{\ast}$.
Hence $q\colon \mathcal{O}_{f}\to \mathfrak{p}^*$ is not proper.
\end{proof}

\subsection{Singular elliptic coadjoint orbits}\label{SS:singular-elliptic}

Now we consider singular elliptic coadjoint orbits. That is, we allow some of the singular values $a_{1},\dots,a_{n}$
to be equal.

Consider the case when $a_{n}\neq 0$.
Let $0=i_{0}<i_{1}<\cdots<i_{l}=n$ be such that $|a_{i}|=|a_{j}|$
 if $i_{k-1}<i\leq j\leq i_{k}$ for some $1\leq k\leq l$,
 and $|a_{i_{k}}|>|a_{i_{k}+1}|$ for any $1\leq k\leq l-1$.
Write $n_{k}=i_{k}-i_{k-1}$ for $1\leq k\leq l$.
Then
\begin{equation*}
%\label{Eq:Stab2}
\Stab_{G_{1}}(f)=\U(n_{1})\times \cdots\times\U(n_{l}).
\end{equation*}
Put
\begin{equation*}
%\label{Eq:Bm}
B_{l}=\{\vec{b}=(b'_{1},\dots,b'_{l}):b'_{i}
\geq 0,\sum_{1\leq i\leq l-1}b_{i}'^{2}=1-2b'_{l}\}.
\end{equation*}
For any $\vec{b}=(b'_{1},\dots,b'_{l})\in B_{l}$,
 we have $0\leq b'_{l}\leq\frac{1}{2}$.
For $\vec{b}\in B_{l}$, write
\begin{equation*}
%\label{Eq:alphab3}
\alpha= \alpha_{\vec{b}}=(0,b_1,0,b_{2},\dots,0,b_{n-1},0),
\end{equation*}
 where $b_{i}=b'_{k}$ if $i=i_{k}$ for some $1\leq k\leq l-1$,
 and $b_{i}=0$ if otherwise.
Put $b_{n}=b'_{l}$.
Put
\begin{equation*}
\bar{X}_{\vec{b}}=\begin{pmatrix}
 0_{2n-1}&\alpha^{t}&\alpha^{t}\\
 -\alpha&0&0\\
 \alpha&0&0\\
\end{pmatrix} \text{ and }
\bar{n}_{\vec{b}}=\exp(\bar{X}_{\vec{b}})
\end{equation*}
Then
\begin{equation*}
\bar{n}_{\vec{b}}^{-1}\cdot
v_{0}=[(0,-b_1,0,-b_2,\dots,0,-b_{n-1},0,
b_{n},1-b_{n})].
\end{equation*}

We have the following analogue of Lemma \ref{L:P-orbits}.

\begin{lemma}\label{L:P-orbits3}
Each $P$-orbit in $\mathcal{O}_{f}=G\cdot f$ contains some $\bar{n}_{\vec{b}}\cdot f$
 for a unique tuple $\vec{b} \in B_{l}$.
\end{lemma}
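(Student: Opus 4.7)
The plan is to follow the proof of Lemma~\ref{L:P-orbits} with only one essential modification: the stabilizer $G^f$ is now the preimage in $G$ of $\U(n_1)\times\cdots\times\U(n_l)$ rather than a maximal torus. First I would reduce, exactly as in the regular case, to classifying $G^f$-orbits on $G/P\cong X_n\cong S^{2n-1}$ via the bijection $PgG^f\leftrightarrow G^fg^{-1}P$.

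Next, I would unwind the action of $G^f_1=\U(n_1)\times\cdots\times\U(n_l)$ on $X_n$. The block structure of $t_{\vec{a}}$ partitions the coordinates $(x_1,\dots,x_{2n})$ into consecutive blocks of sizes $2n_1,\dots,2n_l$, and identifies the $k$-th block with $\mathbb{C}^{n_k}$ via the pairing $(x_{2j-1},x_{2j})\leftrightarrow x_{2j-1}+\mathbf{i}x_{2j}$; under this identification each factor $\U(n_k)$ acts by its standard unitary representation on $\mathbb{C}^{n_k}$, fixes the other blocks, and fixes the light-cone coordinate $x_0$. The essential new input compared with the torus case is that $\U(n_k)$ acts transitively on spheres in $\mathbb{C}^{n_k}$. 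Using this, within each block I would rotate the $n_k$-tuple of complex coordinates to a canonical form in which only the last complex component is nonzero, and then further arrange by the diagonal $\U(1)\subset\U(n_k)$ that this component lies on the non-negative imaginary axis. Carrying this out block by block yields, for each $G^f$-orbit, a representative of the form $[(0,-b_1,0,-b_2,\dots,0,-b_{n-1},0,b_n,1-b_n)]$ with $b_{i_k}=b'_k\geq 0$ for $1\leq k\leq l-1$, with $b_n=b'_l\geq 0$, and with all other $b_i$ equal to zero. The light-cone equation $x_0^2=\sum_i x_i^2$ then reduces to $\sum_{k=1}^{l-1}(b'_k)^2=1-2b'_l$, which is precisely the defining condition of $B_l$.

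Uniqueness of $\vec{b}\in B_l$ should follow at once: the Euclidean norm within each block is a $\U(n_k)$-invariant and determines $b'_k$ for $k<l$, while $b'_l$ is read off from the light-cone coordinate $1-b'_l$. The main obstacle I anticipate is purely organizational, namely aligning the block indexing with the coordinate labels so that position $i_k$ (the last position of block $k$) is exactly where the nonzero entry of the $k$-th block lands in the canonical form, and verifying that the element $\bar{n}_{\vec{b}}^{-1}\cdot v_0$ defined in the singular setup agrees with the canonical representative produced by the reduction above. No new conceptual difficulty beyond the proof of Lemma~\ref{L:P-orbits} is expected to arise.
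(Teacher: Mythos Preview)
Your proposal is correct and is precisely the argument the paper has in mind: the paper states Lemma~\ref{L:P-orbits3} without proof, simply as ``the following analogue of Lemma~\ref{L:P-orbits}'', and your extension---replacing the torus $T$ by $\U(n_1)\times\cdots\times\U(n_l)$ and using transitivity of each $\U(n_k)$ on spheres in $\mathbb{C}^{n_k}$ to collapse each block to a single non-negative radius---is exactly the intended generalization. The only cosmetic point is a sign convention: your phrase ``non-negative imaginary axis'' does not literally match the representative $(0,-b_1,\dots,0,-b_{n-1},0,b_n,1-b_n)$, but this is immaterial since the $\U(1)$ in each block lets you choose either sign.
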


With this, we employ an argument similarly to the case of regular orbits.
Then Lemma \ref{L:Zb2}, Proposition \ref{P:hb3}, Corollary \ref{C:hb4}, Corollary \ref{C:characteristic3}, Proposition \ref{P:pf1} and Lemma \ref{L:proper1}
 all extend to this singular case.
Then Proposition \ref{P:pf2} extends without change of words.

Similarly to Proposition \ref{P:pf3}, we can describe $\Stab_{P_{1}}(\bar{n}_{\vec{b}}\cdot f)$ and
$\Stab_{P_{1}}(q(\bar{n}_{\vec{b}}\cdot f))$ for $\vec{b}\in B_{l}$. Let $n_{k}+s_{k}$ be the multiplicity
of $a_{k}^{2}$ for $1\leq k\leq l$ as a zero of $h_{\vec{b}}(x)$.
Then $s_{k}\in\{-1,0,1\}$; and $s_{k}\in\{0,1\}$ if and only if $b'_{k}=0$.
Put \[r_{k}=\Bigl\lfloor\frac{s_{k}}{2}\Bigr\rfloor\text{ and }s=-\sum_{1\leq k\leq l}s_{k}.\]
Then $r_{k}\in\{-1,0\}$, and $r_{k}=0$ if and only if $b'_{k}=0$.
The stabilizers are given as follows.

\begin{proposition}\label{P:Stab2}
For $\vec{b}\in B_{l}$,
\[\Stab_{P_{1}}(\bar{n}_{\vec{b}}\cdot f)\cong\U(n_{1}+r_{1})\times\cdots\times\U(n_{l}+r_{l})\]
and \[\Stab_{P_{1}}(q(\bar{n}_{\vec{b}}\cdot f))\cong\U(n_{1}+s_{1})
 \times\cdots\times\U(n_{l}+s_{l})\times\U(1)^{s}.\]
\end{proposition}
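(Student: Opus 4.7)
The plan is to parallel the proofs of the regular case (Propositions~\ref{P:pf3} and the intermediate lemmas of \S\ref{SS:moment}), upgrading the block structure from $\U(1)^n$ to $\U(n_1)\times\cdots\times\U(n_l)$.

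For the first isomorphism, I would begin with the canonical bijection
\[
\Stab_{P_1}(\bar n_{\vec b}\cdot f)\;\cong\;\Stab_{G_1^{f}}(\bar n_{\vec b}^{-1}\cdot v_0),
\]
coming from the interchange $P_1\backslash G_1/G_1^{f}\cong G_1^{f}\backslash G_1/P_1$ together with the identification $G_1/P_1\cong X_n$ and $\Stab_{G_1}(v_0)=P_1$, exactly as in the proof of Lemma~\ref{L:T-orbits} and Proposition~\ref{P:pf3}. In the singular setting, $G_1^{f}=\U(n_1)\times\cdots\times\U(n_l)$ acts block-diagonally, the $k$-th factor acting on the coordinates $(x_{2i_{k-1}+1},\dots,x_{2i_k})$ viewed as $\bbC^{n_k}$. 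Because $\alpha_{\vec b}$ is supported only at the positions $2i_k$, the image of $\bar n_{\vec b}^{-1}\cdot v_0$ in the $k$-th block has at most its last complex coordinate nonzero, namely $-b'_k$ (respectively $b'_l$ when $k=l$). Hence the stabilizer in $\U(n_k)$ is $\U(n_k)$ if $b'_k=0$ and $\U(n_k-1)$ if $b'_k\neq 0$; since $r_k=\lfloor s_k/2\rfloor$ equals $0$ in the former case and $-1$ in the latter (by the characterization of $s_k$), this yields the factor $\U(n_k+r_k)$.

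For the second isomorphism, apply Lemma~\ref{L:p-standard3}(3): since $\beta\neq 0$ for any $\vec b\in B_l$ (depth one), $\Stab_{P_1}(q(\bar n_{\vec b}\cdot f))=\Stab_{L_1^{\xi}}(\eta)$, where $L_1^{\xi}\cong \SO(2n-2)$ is the stabilizer of $\xi$ in $M_1A$ and $\eta\in(\mathfrak{l}^{\xi})^{*}$ corresponds to $Y_{\vec b}$ via Lemma~\ref{L:p-standard1}. After conjugating $\beta$ to point along the last standard basis vector, the relation $Y_{\vec b}\beta^{t}=0$ forces $Y_{\vec b}=\diag\{Y'_{\vec b},0\}$ with $Y'_{\vec b}\in\mathfrak{so}(2n-2)$, and then the computation reduces to identifying the centralizer of $Y'_{\vec b}$ in $\SO(2n-2)$, which is dictated by the multiplicities of its singular values. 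These multiplicities are read off from the factorization $h_{\vec b}(x)=\prod_k(x-A_k)^{n_k-1}\tilde h(x)$, with $\tilde h$ monic of degree $l-1$, whose leading coefficient $\sum_k A_k(b'_k)^2/|\beta|^2$ equals $1$: each $A_k$ occurs in $h_{\vec b}$ with multiplicity $n_k+s_k$, contributing a factor $\U(n_k+s_k)$ to the stabilizer, while the remaining roots of $\tilde h$ (those distinct from all $A_k$) contribute the trailing $\U(1)^s$ factor.

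The main obstacle will be the second part: verifying by a careful case analysis on $\tilde h$ (how many $b'_k$ vanish, whether $A_k$ is a root of $\tilde h$ and of what multiplicity, whether residual roots of $\tilde h$ are simple and distinct) that the stabilizer organizes exactly into $\prod_k\U(n_k+s_k)\times\U(1)^{s}$ with $s=-\sum_k s_k$. The cleanest way is to argue separately in each degeneracy pattern of $\tilde h$, using the explicit expressions for $\tilde h(A_k)$ and $\tilde h'(A_k)$ that parallel the proof of Proposition~\ref{P:hb3} and Corollary~\ref{C:hb4}. The first part is routine block-wise bookkeeping once the duality with $G_1^{f}\backslash G_1/P_1$ is set up, and the strong analogy with Proposition~\ref{P:pf3} is apparent in every step.
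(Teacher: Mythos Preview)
Your approach is essentially the same as the paper's: the paper does not give a separate proof of Proposition~\ref{P:Stab2} but only states that it is obtained ``similarly to Proposition~\ref{P:pf3}'', whose two parts rest precisely on the duality $\Stab_{P_1}(\bar n_{\vec b}\cdot f)\cong\Stab_{G_1^{f}}(\bar n_{\vec b}^{-1}\cdot v_0)$ and on Lemma~\ref{L:p-standard3}(3) combined with the singular-value description of the $P$-orbit. Your blockwise computation of the stabilizer in $G_1^{f}=\prod_k\U(n_k)$ and your factorization $h_{\vec b}(x)=\prod_k(x-A_k)^{n_k-1}\tilde h(x)$ with $\tilde h$ monic of degree $l-1$ are the right refinements of the regular-case argument; the remaining bookkeeping you flag is exactly what the analogy requires.
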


%\medskip

Elliptic orbits for $a_{n}=0$
 can be regarded as degenerations of
 the non-elliptic regular orbits which are studied in the next section.
Thus, we treat them in the next section.

\section{Moment map for non-elliptic semisimple coadjoint orbits}\label{S:non-elliptic}

We continue the study of moment map of coadjoint orbits.
In this section we treat the remaining types of orbits.

Suppose first that $m$ is odd and $m=2n-1$.
The case where $m$ is even will be mentioned at the end of this section.

\subsection{$P$-orbits in $\mathcal{O}_{f}$}\label{SS:doubleCoset2}

Let $a_1\geq a_2\geq\cdots\geq a_{n-1}\geq 0$ and $a_{n}\geq 0$.
Write $\vec{a}=(a_1,\dots,a_{n})$ and
\begin{align*}
t'_{\vec{a}}
=\begin{pmatrix}
0&a_1&& &&&&\\
-a_{1}&0&&&&&&\\
&&\ddots&&&&&\\
&&&0&a_{n-1}&&&\\&&&-a_{n-1}&0&&&\\
&&&&&0&&\\&&&&&&0&a_{n}\\&&&&&&a_{n}&0\\
\end{pmatrix}.
\end{align*}
Put $f=\iota(t'_{\vec{a}})$.
Then $\mathcal{O}_{f}=G\cdot f$ is a non-elliptic semisimple
 coadjoint orbit in $\mathfrak{g}^{\ast}$ if $a_n>0$.
All non-elliptic semisimple coadjoint orbits in $\mathfrak{g}^{\ast}$
 are of this form.
We first consider regular orbits,
 that is, we assume that $a_1>a_2>\cdots>a_{n-1}>0$ and $a_{n}>0$.
Let $G^{f}$ be the stabilizer of $G$ at $f$.
Then $G^{f}=T_{s}$, where $T_{s}$ is the pre-image in $G$ of a maximal torus
\begin{align*}
&\Biggl\{
\begin{pmatrix}
y_{1}&z_{1}&&&&&&\\
-z_{1}&y_{1}&&&&&&\\
&&\ddots&&&&&\\
&&&y_{n-1}&z_{n-1}&&&\\
&&&-z_{n-1}&y_{n-1}&&&\\
&&&&&1&&\\
&&&&&&y_{n}&z_{n}\\
&&&&&&z_{n}&y_{n}\\
\end{pmatrix}\\
 &\qquad\qquad
:y_{1}^{2}+z_{1}^{2}=\cdots=y_{n-1}^{2}+z_{n-1}^{2}
 =y_{n}^{2}-z_{n}^{2}=1,\, y_{n}>0
\Biggr\}
\end{align*} of $G_{1}$.
As in the previous section,
 we identify $G/P$ with the set
\[X_{n}=\{\vec{x}=(x_1,\dots,x_{2n},x_{0}):x_{0}^{2}
 =\sum_{1\leq i\leq 2n}x_{i}^{2},\, x_{0}>0\}/\sim\]
 and consider $T_s$-orbits in $X_n$.

Set \begin{equation*}
B'=\{(b_1,\dots,b_{n-1},b_{n}):b_1,\dots,b_{n-1} \geq 0,
 \sum_{1\leq i\leq n}b_{i}^{2}=1\}.
\end{equation*}
For each $\vec{b}\in B'$, write
\begin{align}\label{Eq:Xbnb}
&\alpha=\alpha_{\vec{b}}=
 \Bigl(\frac{a_{n}b_1}{\sqrt{a_{n}^{2}+a_{1}^{2}}},\frac{-a_{1}b_1}
{\sqrt{a_{n}^{2}+a_{1}^{2}}},\dots,\frac{a_{n}b_{n-1}}{\sqrt{a_{n}^{2}+a_{n-1}^{2}}},
\frac{-a_{n-1}b_{n-1}}{\sqrt{a_{n}^{2}+a_{n-1}^{2}}},b_{n}\Bigr), \\ \nonumber
&\qquad \bar{X}_{\vec{b}}
 =\begin{pmatrix} 0_{2n-1}&\alpha^{t}&\alpha^{t}\\
  -\alpha&0&0\\
  \alpha&0&0\\
 \end{pmatrix}
\text{ and }
\bar{n}_{\vec{b}}=\exp(\bar{X}_{\vec{b}}).
\end{align}
Note that $|\alpha|=1$. Let $g_{\infty}=1$ and let $g'_{\infty}$ be a pre-image in $G$ of
 $\diag\{I_{2n-2},-1,-1,1\}\in G_1$.
Then $\bar{n}_{\vec{b}}^{-1}\cdot v_{0}$ is equal to
\begin{align}\label{Eq:nbv0}
\Bigl[\Bigl(\frac{-a_{n}b_1}{\sqrt{a_{n}^{2}+a_{1}^{2}}},
 \frac{a_{1}b_1}{\sqrt{a_{n}^{2}+a_{1}^{2}}},\dots,
 \frac{-a_{n}b_{n-1}}{\sqrt{a_{n}^{2}+a_{n-1}^{2}}},
 \frac{a_{n-1}b_{n-1}}{\sqrt{a_{n}^{2}+a_{n-1}^{2}}},
-b_{n},0,1\Bigr)\Bigr],
\end{align}
and
\[g_{\infty}^{-1}\cdot v_{0}=v_{0}, \quad (g'_{\infty})^{-1}\cdot v_{0}=v'_{0},\]
 where $v'_0:=[(0,\dots,0,-1,1)]$.
 Note that $v_0$ and $v'_{0}$ are the only $T_{s}$-fixed points in $X_{n}$.
The following lemma is easy to show.

\begin{lemma}\label{L:Ts-orbits}
The map
\[B'\rightarrow(X_{n}-\{v_{0},v'_{0}\})/T_{s},\quad
\vec{b}\mapsto (\bar{n}_{\vec{b}})^{-1} \cdot v_{0}\]
 is a bijection.
\end{lemma}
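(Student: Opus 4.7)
The plan is to analyze the $T_s$-action on $X_n$ via the block structure of its image in $G_1$: a product of $n-1$ compact $\SO(2)$ blocks acting on the pairs $(x_{2i-1},x_{2i})$ for $1\leq i\leq n-1$, the trivial action on $x_{2n-1}$, and an $\SO(1,1)^+$ block on $(x_{2n},x_0)$ which preserves $x_0^2-x_{2n}^2$ and the sign of $x_0$. Modulo the positive rescaling built into $X_n$, a complete set of $T_s$-invariants on any orbit can be read off from the normalized representative with $x_0=1$: namely the $n-1$ squared radii $x_{2i-1}^2+x_{2i}^2$, the coordinate $x_{2n-1}$, and $x_0^2-x_{2n}^2$.

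First I would verify that the two classes $v_0$ and $v'_0$ are precisely the $T_s$-fixed points in $X_n$ and also the only classes where $x_0^2=x_{2n}^2$: this equality, combined with $x_0^2=\sum_{i=1}^{2n}x_i^2$ and $x_0>0$, forces every other $x_i$ to vanish and singles out the two classes. For $[\vec x]\in X_n-\{v_0,v'_0\}$ one therefore has $x_0^2>x_{2n}^2$. The formula \eqref{Eq:nbv0} shows directly that $\bar{n}_{\vec{b}}^{-1}\cdot v_0$ has $x_{2n}=0$ and $x_0=1$, so it is never proportional to $v_0$ or $v'_0$ (whose $x_{2n}$-coordinate equals $\pm 1$), giving well-definedness.

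For surjectivity, given any $[\vec x]\in X_n-\{v_0,v'_0\}$ I would apply the unique $\SO(1,1)^+$ boost sending $(x_{2n},x_0)$ to $(0,\sqrt{x_0^2-x_{2n}^2})$ and then positively rescale to achieve $x_0=1$, arriving at a representative with $x_{2n}=0$ and $\sum_{i<2n}x_i^2=1$. Setting $b_i:=\sqrt{x_{2i-1}^2+x_{2i}^2}$ for $1\leq i\leq n-1$ and $b_n:=-x_{2n-1}$ gives $\sum_{i=1}^n b_i^2=1$ immediately from $x_0^2=\sum x_i^2$, so $\vec b\in B'$; the compact $\SO(2)$ factors can rotate each $(x_{2i-1},x_{2i})$ onto the prescribed unit direction $(-a_n,a_i)/\sqrt{a_n^2+a_i^2}$ of \eqref{Eq:nbv0}, producing $\bar{n}_{\vec{b}}^{-1}\cdot v_0$. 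For injectivity, two representatives $\bar{n}_{\vec{b}}^{-1}\cdot v_0$ and $\bar{n}_{\vec{b}'}^{-1}\cdot v_0$ already share the normalization $(x_{2n},x_0)=(0,1)$, which forces the connecting element of $T_s$ to be compact and the remaining positive scale to be $1$; matching the invariants $x_{2i-1}^2+x_{2i}^2$ with the convention $b_i,b'_i\geq 0$ and reading off $x_{2n-1}$ then yield $\vec b=\vec{b}'$.

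The main subtlety I anticipate is bookkeeping the interaction between the positive rescaling built into $X_n$ and the noncompact $\SO(1,1)^+$ boost, both of which can alter $(x_{2n},x_0)$ and $x_0^2-x_{2n}^2$ in related ways; the proof should fix the canonical normalization $x_0=1$, $x_{2n}=0$ once and for all before matching invariants. After that step the argument becomes a transparent parametrization of circles together with one linear coordinate, and no further obstacle arises.
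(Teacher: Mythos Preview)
Your proposal is correct and follows essentially the same approach the paper takes: the paper gives no proof for this lemma (it just says ``easy to show''), but for the analogous elliptic case (Lemma~\ref{L:T-orbits}) it argues exactly as you do, by exploiting the block structure of the torus action to read off a canonical representative via the radii $b_i$ and a residual coordinate. Your write-up simply makes explicit the extra step needed here---using the $\SO(1,1)^+$ factor together with the positive rescaling to normalize $(x_{2n},x_0)=(0,1)$---which is the only new feature compared to the compact case.
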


The following lemma directly follows form Lemma \ref{L:Ts-orbits}.

\begin{lemma}\label{L:P-orbits2}
Each $P$-orbit in $\mathcal{O}_{f}-(P\cdot f\sqcup Pg'_{\infty}\cdot f)$
 contains some $\bar{n}_{\vec{b}}\cdot f$ for a unique tuple $\vec{b}\in B'$.
\end{lemma}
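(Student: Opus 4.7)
The plan is to deduce the lemma directly from Lemma~\ref{L:Ts-orbits} by transferring the classification of $P$-orbits on $\mathcal{O}_{f}$ to the classification of $T_{s}$-orbits on $G/P\cong X_{n}$. Since $\vec{a}$ is regular, the stabilizer $G^{f}$ equals $T_{s}$, so $\mathcal{O}_{f}\cong G/T_{s}$ as a $G$-set and the $P$-orbits on $\mathcal{O}_{f}$ are in bijection with the double coset space $P\backslash G/T_{s}$. The involution $PgT_{s}\mapsto T_{s}g^{-1}P$ gives a bijection between $P\backslash G/T_{s}$ and $T_{s}\backslash G/P$, so equivalently I parametrize $T_{s}$-orbits on $G/P$. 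Under $G/P\xrightarrow{\sim}X_{n}$, $gP\mapsto g\cdot v_{0}$, this identifies the $P$-orbit of $g\cdot f$ with the $T_{s}$-orbit of $g^{-1}\cdot v_{0}$.

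Next I would identify the two distinguished orbits that must be excluded. A point of $X_{n}$ is $T_{s}$-fixed if and only if the $G_{1}$-image of $T_{s}$ stabilizes it; a direct inspection of the action of the block-diagonal torus from \S\ref{SS:doubleCoset2} shows the only $T_{s}$-fixed points in $X_{n}$ are $v_{0}$ and $v'_{0}=[(0,\dots,0,-1,1)]$. Since $g_{\infty}^{-1}\cdot v_{0}=v_{0}$ and $(g'_{\infty})^{-1}\cdot v_{0}=v'_{0}$, these two $T_{s}$-orbits correspond exactly to the $P$-orbits $P\cdot f$ and $Pg'_{\infty}\cdot f$, which is precisely what is being removed on the left-hand side.

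Finally, I invoke Lemma~\ref{L:Ts-orbits}: the map $B'\to (X_{n}-\{v_{0},v'_{0}\})/T_{s}$, $\vec{b}\mapsto \bar{n}_{\vec{b}}^{-1}\cdot v_{0}$, is a bijection. Translating this back through the correspondences above, the map $\vec{b}\mapsto P\cdot \bar{n}_{\vec{b}}\cdot f$ is a bijection from $B'$ onto the set of $P$-orbits in $\mathcal{O}_{f}-(P\cdot f\sqcup Pg'_{\infty}\cdot f)$, which is the claim.

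There is essentially no obstacle here: all the real content sits in Lemma~\ref{L:Ts-orbits}. The only delicate point is book-keeping, namely checking that the two $T_{s}$-fixed points in $X_{n}$ match the two $P$-orbits $P\cdot f$ and $Pg'_{\infty}\cdot f$ under the bijection $PgT_{s}\leftrightarrow T_{s}g^{-1}P$, and this is immediate from the choices $g_{\infty}=1$ and $(g'_{\infty})^{-1}\cdot v_{0}=v'_{0}$ made above \eqref{Eq:nbv0}.
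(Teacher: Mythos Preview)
Your proposal is correct and matches the paper's approach exactly: the paper simply states that the lemma ``directly follows from Lemma~\ref{L:Ts-orbits},'' and what you have written is precisely the standard unwinding of the bijections $P\backslash \mathcal{O}_{f}\cong P\backslash G/T_{s}\cong T_{s}\backslash G/P\cong X_{n}/T_{s}$ together with the identification of the two $T_{s}$-fixed points with the two excluded $P$-orbits. There is nothing to add.
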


Put
\[H'=\begin{pmatrix}0 & 1 \\ -1 & 0 \end{pmatrix}
 \text{ and } H=\begin{pmatrix}0 & 1 \\ 1 & 0 \end{pmatrix}.\]
By Lemma \ref{L:P-orbits2}, $\{\bar{n}_{\vec{b}}\cdot f:\vec{b}\in B'\}$
 together with $g_{\infty}\cdot f=f$ and
\[g'_{\infty}\cdot f=\iota(\diag\{a_{1}H',\dots,a_{n-1}H',0,-a_{n}H\})\]
 represent all $P$-orbits in $\mathcal{O}_{f}=G\cdot f$.

\subsection{The moment map $\mathcal{O}_{f}\rightarrow\mathfrak{p}^{\ast}$}\label{SS:moment2}

Let $Y=\diag\{a_{1}H',\dots,a_{n-1}H',0\}$.
We have $t'_{\vec{a}}=\diag\{Y,0_{2\times 2}\}+a_nH_0\in \mathfrak{m}+\mathfrak{a}$.
For each $\vec{b}\in B'$, by \eqref{Eq:brackets},
\begin{align*}
\Ad(\bar{n}_{\vec{b}}) (t'_{\vec{a}})
&= t_{\vec{a}} + [\bar{X}_{\alpha}, t_{\vec{a}}] \\
&= \diag\{Y,0_{2\times 2}\} + a_nH_0
 + \bar{X}_{\alpha Y} + a_n \bar{X}_{\alpha}.
\end{align*}
Hence
\[\Ad(\bar{n}_{\vec{b}})(t'_{\vec{a}})
=\begin{pmatrix}
  Y& \beta^{t}&\beta^{t}\\
 -\beta &0&a_{n}\\
 \beta&a_{n}&0\\
\end{pmatrix},
\] where
\[\beta=a_{n}\alpha+\alpha Y
=(b_{1}\sqrt{a_{n}^{2}+a_{1}^{2}},0,\dots, b_{n-1}\sqrt{a_{n}^{2}+a_{n-1}^{2}},0,a_{n}b_{n}).\]
Put
\[Y_{\vec{b}}=Y-\frac{1}{|\beta|^2}(Y\beta^{t}\beta-(Y\beta^{t}\beta)^{t}),
\quad
Z_{\vec{b}}=\begin{pmatrix}
 Y_{\vec{b}}&\frac{\beta}{|\beta|}\\-\frac{\beta}{|\beta|}&0\\
\end{pmatrix}.\]
We have
\[Y\beta^{t}
 =(0,-a_{1}b_{1}\sqrt{a_{n}^{2}+a_{1}^{2}},\dots,
 0,-a_{n-1}b_{n-1}\sqrt{a_{n}^{2}+a_{n-1}^{2}},0)^{t}.\]
Note that we always have $\beta\neq 0$.
By Lemmas \ref{L:p-standard4}, \ref{p:standard5} and \ref{L:class-matrix},
 the $P$-conjugacy class of $q(\bar{n}_{\vec{b}}\cdot f)$
 is determined by the sign of the Pfaffian of $Z_{\vec{b}}$
 and singular values of $Y_{\vec{b}}$.

Put
\begin{align*}
&\gamma_{1}=(b_{1}\sqrt{a_{n}^{2}+a_{1}^{2}},\dots,b_{n-1}\sqrt{a_{n}^{2}+a_{n-1}^{2}},a_{n}
b_{n}),\\
&\gamma_{2}=(a_{1}b_{1}\sqrt{a_{n}^{2}+a_{1}^{2}},\dots,
 a_{n-1}b_{n-1}\sqrt{a_{n}^{2}+a_{n-1}^{2}},-|\beta|).
\end{align*}
By a direct calculation we have
\begin{lemma}\label{L:Zb3}
Let $\sigma$ be the permutation
\[\sigma(i) =\begin{cases} 2i-1 &(1\leq i\leq n) \\
 2(i-n) & (n+1\leq i\leq 2n) \end{cases}.\]
Then
\[Q_{\sigma}Z_{\vec{b}} Q_{\sigma}^{-1}=
\begin{pmatrix} 0_{n}&Z\\
 -Z^{t}&0_{n}\\
\end{pmatrix},\] where
\[Z=\diag\{a_{1},\dots,a_{n-1},0\}-\frac{1}{|\beta|^2}\gamma_{1}^{t}\gamma_{2}.\]
\end{lemma}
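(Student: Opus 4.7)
The plan is to imitate the proof of Lemma~\ref{L:Zb} in the elliptic case, exploiting the parity pattern of the indices on which $Y$ and $\beta$ are supported. I first record that $\beta=(b_1\sqrt{a_n^2+a_1^2},0,\dots,b_{n-1}\sqrt{a_n^2+a_{n-1}^2},0,a_nb_n)$ is supported on the odd indices in $\{1,\dots,2n-1\}$, while $Y=\diag\{a_1H',\dots,a_{n-1}H',0\}$ has nonzero entries only at positions $(2i-1,2i)$ and $(2i,2i-1)$ for $1\le i\le n-1$. This immediately gives $(Y\beta^t)_{2k}=-a_k\beta_{2k-1}$ with all other entries zero, which is exactly the vector written just above the statement of the lemma, and in particular $Y\beta^t$ is supported on even indices.

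Next I use these parity patterns to see that each summand of $Y_{\vec b}=Y-\frac{1}{|\beta|^2}(Y\beta^t\beta-(Y\beta^t\beta)^t)$ is supported at odd-row/even-column or even-row/odd-column positions: the outer product $Y\beta^t\beta$ sits at even-row/odd-column positions, its transpose at odd-row/even-column positions, and $Y$ itself has only such mixed-parity entries. Since $\beta^t/|\beta|$ is supported on odd indices and $\sigma(2n)=2n$ fixes the extra index, the last row and column of $Z_{\vec b}$ are also compatible with this splitting. Conjugation by $Q_\sigma$, which moves the odd indices $1,3,\dots,2n-1$ into positions $1,\dots,n$ and the even indices $2,4,\dots,2n$ into positions $n+1,\dots,2n$, therefore turns $Z_{\vec b}$ into a matrix of the shape $\begin{pmatrix}0_n&Z\\ \ast&0_n\end{pmatrix}$; skew-symmetry of $Z_{\vec b}$ then forces the lower-left block to be $-Z^t$.

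It remains to identify $Z$. For $1\le i,j\le n-1$, $Z_{ij}=(Y_{\vec b})_{2i-1,2j}$, and a direct substitution using $(Y\beta^t)_{2j}=-a_j\beta_{2j-1}$ yields $a_i\delta_{ij}-a_j\beta_{2i-1}\beta_{2j-1}/|\beta|^2$; the remaining entries $Z_{in}=\beta_{2i-1}/|\beta|$ for $i<n$, $Z_{nj}=-a_j\beta_{2j-1}\beta_{2n-1}/|\beta|^2$ for $j<n$, and $Z_{nn}=a_nb_n/|\beta|$ are read off from the last row and column of $Z_{\vec b}$. Matching these against $\gamma_1^t\gamma_2/|\beta|^2$ via $(\gamma_1)_i=\beta_{2i-1}$ for $i<n$, $(\gamma_1)_n=a_nb_n$, $(\gamma_2)_j=a_j\beta_{2j-1}$ for $j<n$, and $(\gamma_2)_n=-|\beta|$ produces exactly $Z=\diag\{a_1,\dots,a_{n-1},0\}-\gamma_1^t\gamma_2/|\beta|^2$. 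No serious obstacle is expected: the argument is pure parity bookkeeping, completely parallel to Lemma~\ref{L:Zb}. The only mild subtlety is that $\beta_{2n-1}=a_nb_n$ sits at an odd index, which is what ensures that $\sigma$ takes the same form as in the elliptic case and that the $(n,n)$-entry of the diagonal part of $Z$ vanishes.
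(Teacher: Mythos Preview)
Your proposal is correct and matches the paper's approach: the paper simply states ``by a direct calculation'' before the lemma, so your parity bookkeeping and entry-by-entry verification is precisely that direct calculation spelled out, in direct analogy with the more detailed proof given for Lemma~\ref{L:Zb}.
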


By Lemma~\ref{L:Zb3},
 the Pfaffian of $Z_{\vec{b}}$ equals $\det Z$.
Hence
\[\Pf(Z_{\vec{b}})=\frac{b_{n}}{|\beta|}\prod_{1\leq i\leq n}a_{i}.\]

Write $Z'$ for the $n\times (n-1)$ submatrix of $Z$ by removing the last column.
Then the singular values of $Y_{\vec{b}}$ are square roots of eigenvalues of $(Z')^{t}Z'$
 as we saw in \S\ref{SS:moment}.
Write
\[h_{\vec{b}}(x)=\det(xI_{n-1}-(Z')^{t}Z').\]

\begin{lemma}\label{Ls:characteristic1}
We have \[h_{\vec{b}}(x)=\prod_{1\leq i\leq n-1}(x-a_{i}^{2})
+\sum_{1\leq i\leq n-1}\Bigl(\prod_{1\leq j\leq n-1,\, j\neq i}(x-a_{j}^{2})\Bigr)
\frac{a_{i}^{2}b_{i}^{2}(a_{n}^{2}+a_{i}^{2})}{|\beta|^2}.\]
\end{lemma}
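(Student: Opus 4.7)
The plan is to compute $(Z')^t Z'$ explicitly as a rank--one perturbation of a diagonal matrix, and then evaluate the characteristic polynomial via the matrix determinant lemma.

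First I would recall from Lemma~\ref{L:Zb3} that $Z=D-\tfrac{1}{|\beta|^2}\gamma_1^{t}\gamma_2$ where $D=\diag\{a_1,\dots,a_{n-1},0\}$. Let $D'$ denote the $n\times(n-1)$ submatrix of $D$ obtained by dropping the last column, and let $\gamma_2'$ denote the vector consisting of the first $n-1$ entries of $\gamma_2$. Since the last column of $D$ is zero and the last entry of $\gamma_2$ only enters $Z$ through its last column, the deletion of the last column gives
\[
Z' = D'-\tfrac{1}{|\beta|^2}\gamma_1^{t}\gamma_2'.
\]

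Next I would establish two identities that make the cross terms collapse. Write $c_i=(\gamma_1)_i$, so that $c_i=b_i\sqrt{a_n^2+a_i^2}$ for $1\leq i\leq n-1$ and $c_n=a_nb_n$, while $(\gamma_2)_i=a_ic_i$ for $i\leq n-1$ and $(\gamma_2)_n=-|\beta|$. A direct comparison gives
\[
|\gamma_1|^2=\sum_{i=1}^{n-1}b_i^{2}(a_n^{2}+a_i^{2})+a_n^{2}b_n^{2}=|\beta|^2,
\qquad
\gamma_1 D'=\gamma_2'.
\]
Expanding $(Z')^{t}Z'$ and using these two identities, the four terms simplify:
\[
(Z')^{t}Z' = \diag\{a_1^{2},\dots,a_{n-1}^{2}\}
-\tfrac{1}{|\beta|^{2}}(\gamma_2')^{t}\gamma_2',
\]
since the two cross terms each equal $\tfrac{1}{|\beta|^2}(\gamma_2')^{t}\gamma_2'$ and the rank--one quartic term reduces to the same quantity (using $|\gamma_1|^2=|\beta|^2$).

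Finally, setting $M=xI_{n-1}-\diag\{a_1^{2},\dots,a_{n-1}^{2}\}$, the matrix determinant lemma yields
\[
h_{\vec{b}}(x)=\det\!\bigl(M+\tfrac{1}{|\beta|^2}(\gamma_2')^{t}\gamma_2'\bigr)
=\det(M)\Bigl(1+\tfrac{1}{|\beta|^2}\gamma_2'\,M^{-1}(\gamma_2')^{t}\Bigr),
\]
and substituting $\det(M)=\prod_{i=1}^{n-1}(x-a_i^{2})$ and $\gamma_2' M^{-1}(\gamma_2')^{t}=\sum_{i=1}^{n-1}\frac{a_i^{2}b_i^{2}(a_n^{2}+a_i^{2})}{x-a_i^{2}}$ gives the claimed formula (the identity extends to all $x$ by polynomiality).

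The only slightly delicate step is spotting the two cancellation identities $|\gamma_1|=|\beta|$ and $\gamma_1D'=\gamma_2'$; once these are in hand the rest is routine. The computation is robust enough that it should carry over verbatim to the degenerate case $a_n\geq 0$ and to the analogous non-elliptic orbits appearing in the even--$m$ setting.
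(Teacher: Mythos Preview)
Your proposal is correct and follows essentially the same route as the paper. The paper simply asserts $(Z')^{t}Z'=\diag\{a_1^{2},\dots,a_{n-1}^{2}\}-\gamma_3^{t}\gamma_3$ with $\gamma_3=\tfrac{1}{|\beta|}\gamma_2'$ (in your notation) and then says ``one shows easily'' for the determinant; you have filled in the two hidden identities $|\gamma_1|=|\beta|$ and $\gamma_1 D'=\gamma_2'$ that make the cross terms collapse, and you name the matrix determinant lemma explicitly where the paper leaves it implicit.
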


\begin{proof}
Put
\[\gamma_{3}=\frac{1}{|\beta|}
\bigl(a_{1}b_{1}\sqrt{a_{n}^{2}+a_{1}^{2}},\dots,
a_{n-1}b_{n-1}\sqrt{a_{n}^{2}+a_{n-1}^{2}}\bigr).\]
We calculate \[(Z')^{t}Z'=\diag\{a_{1}^{2},\dots,a_{n-1}^{2}\}-\gamma_{3}^{t}\gamma_{3}.\]
Then, one shows easily that
\begin{align*}
&\det(xI_{n-1}-(Z')^{t}Z') \\
&=\prod_{1\leq i\leq n-1}(x-a_{i}^{2})
 +\sum_{1\leq i\leq n-1}\Bigl(\prod_{1\leq j\leq n-1,\, j\neq i}(x-a_{j}^{2})\Bigr)
\frac{a_{i}^{2}b_{i}^{2}(a_{n}^{2}+a_{i}^{2})}{|\beta|^2}. \qedhere
\end{align*}
\end{proof}

\begin{proposition}\label{Ps:determinant}
For $1\leq i\leq n-1$,
\[h_{\vec{b}}(a_{i}^{2})=\frac{a_{i}^{2}b_{i}^{2}(a_{n}^{2}+
a_{i}^{2})}{|\beta|^{2}}\prod_{1\leq j\leq n-1,\, j\neq i}(a_{i}^{2}-a_{j}^{2});\]
 and \[h_{\vec{b}}(0)=(-1)^{n-1}
\frac{a_{n}^{2}b_{n}^{2}}{|\beta|^{2}}\prod_{1\leq i\leq n-1}a_{i}^{2}.\]
\end{proposition}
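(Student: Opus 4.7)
The plan is to derive both formulas directly from the explicit expression for $h_{\vec{b}}(x)$ given in Lemma \ref{Ls:characteristic1}, by carefully plugging in the two evaluation points and using the definition of $\beta$ to simplify.

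For the evaluation at $x=a_i^2$ with $1\leq i\leq n-1$: the leading product $\prod_{1\leq k\leq n-1}(x-a_k^2)$ vanishes because of the factor $(a_i^2-a_i^2)$. In the sum, each term indexed by $k\neq i$ contains the factor $(x-a_i^2)$ inside its product $\prod_{j\neq k}(x-a_j^2)$ and hence also vanishes. Only the term $k=i$ survives, and it yields precisely the stated formula. This step is essentially automatic.

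For the evaluation at $x=0$, I would first compute $\prod_{k=1}^{n-1}(-a_k^2)=(-1)^{n-1}\prod_{k=1}^{n-1}a_k^2$, and for each $i$ rewrite the summand as
\[
\Bigl(\prod_{j\neq i}(-a_j^2)\Bigr)\frac{a_i^2 b_i^2(a_n^2+a_i^2)}{|\beta|^2}
=(-1)^{n-2}\Bigl(\prod_{k=1}^{n-1}a_k^2\Bigr)\frac{b_i^2(a_n^2+a_i^2)}{|\beta|^2}.
\]
Summing and factoring out $(-1)^{n-1}\prod_{k=1}^{n-1}a_k^2$ produces
\[
h_{\vec{b}}(0)=(-1)^{n-1}\Bigl(\prod_{k=1}^{n-1}a_k^2\Bigr)\Bigl[1-\frac{1}{|\beta|^2}\sum_{i=1}^{n-1}b_i^2(a_n^2+a_i^2)\Bigr].
\]

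The remaining step, which is the only place any thought is required, is to recognize that the definition of $\beta$ given just before Lemma \ref{Ls:characteristic1}, namely $\beta=(b_1\sqrt{a_n^2+a_1^2},0,\ldots,b_{n-1}\sqrt{a_n^2+a_{n-1}^2},0,a_n b_n)$, yields
\[
|\beta|^2=\sum_{i=1}^{n-1}b_i^2(a_n^2+a_i^2)+a_n^2 b_n^2.
\]
Substituting this identity collapses the bracket above to $a_n^2 b_n^2/|\beta|^2$, which gives the claimed formula for $h_{\vec{b}}(0)$. There is no substantive obstacle: the proof is a direct calculation, and the only care required is in correctly extracting $|\beta|^2-a_n^2b_n^2$ from the sum so that the telescoping at $x=0$ goes through.
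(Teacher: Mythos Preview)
Your proof is correct and is exactly the ``easy calculation from Lemma~\ref{Ls:characteristic1}'' that the paper invokes without details; you have simply written out the substitution and the identification $|\beta|^2=\sum_{i=1}^{n-1}b_i^2(a_n^2+a_i^2)+a_n^2b_n^2$ explicitly.
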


\begin{proof}
This follows by an easy calculation from Lemma \ref{Ls:characteristic1}.
\end{proof}

With Proposition \ref{Ps:determinant} substituting Proposition \ref{P:hb3}, the following corollary can be shown in the same way as for Corollary \ref{C:hb4}.

\begin{corollary}\label{Cs:characteristic2}
The polynomial $h_{\vec{b}}(x)$ has $n-1$ non-negative roots,
 which lie in the intervals
\[ [0,a_{n-1}^{2}], [a_{n-1}^{2},a_{n-2}^{2}],\dots,[a_{2}^{2},a_{1}^{2}]\]
 respectively; $a_{i}^{2}$ ($1\leq i\leq n-1$)
 is a root if and only if $b_{i}=0$; $0$ is a root if and only if $b_{n}=0$.
\end{corollary}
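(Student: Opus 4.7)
The plan is to mimic the proof of Corollary \ref{C:hb4}, using Proposition \ref{Ps:determinant} in place of Proposition \ref{P:hb3}. First I will handle the generic case in which every $b_i$ ($1\le i\le n$) is nonzero. By Proposition \ref{Ps:determinant}, for $1\le i\le n-1$ the sign of $h_{\vec{b}}(a_i^2)$ equals the sign of $\prod_{1\le j\le n-1,\, j\neq i}(a_i^2-a_j^2)$, which is $(-1)^{i-1}$ because exactly the $i-1$ factors with $j<i$ are negative; and $h_{\vec{b}}(0)$ has sign $(-1)^{n-1}$. Hence the values of $h_{\vec{b}}$ at the consecutive points $a_1^2 > a_2^2 > \cdots > a_{n-1}^2 > 0$ alternate in sign, and the intermediate value theorem produces at least one root in each of the $n-1$ open intervals $(a_2^2,a_1^2), (a_3^2,a_2^2), \dots, (a_{n-1}^2,a_{n-2}^2), (0,a_{n-1}^2)$. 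Since $\deg h_{\vec{b}} = n-1$, these account for all the roots and they are simple.

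For the degenerate case, put $I := \{i\in\{1,\dots,n-1\} : b_i=0\}$. By Proposition \ref{Ps:determinant} we have $h_{\vec{b}}(a_i^2)=0$ for $i\in I$, and from the explicit formula in Lemma \ref{Ls:characteristic1} one reads off directly that $\prod_{i\in I}(x-a_i^2)$ divides $h_{\vec{b}}(x)$, with quotient of degree $n-1-|I|$. Likewise, if $b_n=0$ then $x$ divides $h_{\vec{b}}(x)$. Applying the generic sign-alternation argument to the surviving values of $a_j^2$ (and $0$ when $b_n\neq 0$) places the remaining roots in the corresponding subintervals, and repackaging with the factored roots yields the interval statement as asserted.

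The last two equivalences are immediate from Proposition \ref{Ps:determinant}: since $a_1,\dots,a_{n-1}$ are distinct positive numbers, $\prod_{j\neq i}(a_i^2-a_j^2)\neq 0$, so $h_{\vec{b}}(a_i^2)=0$ iff $b_i=0$, and similarly $h_{\vec{b}}(0)=0$ iff $b_n=0$.

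The only genuine obstacle is bookkeeping in the degenerate case, namely checking that after dividing out the known roots the reduced polynomial still has the required sign-alternation pattern at the surviving sample points, so that the remaining roots land in the correct subintervals. This is mechanical given Lemma \ref{Ls:characteristic1} and parallels the analogous step already carried out in the proof of Corollary \ref{C:hb4}.
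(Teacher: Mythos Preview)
Your proposal is correct and follows essentially the same approach as the paper: the paper simply says the result is proved ``in the same way as for Corollary~\ref{C:hb4}'' with Proposition~\ref{Ps:determinant} replacing Proposition~\ref{P:hb3}, and you have written out exactly that argument---sign alternation at the sample points $a_1^2>\cdots>a_{n-1}^2>0$ in the generic case, then factoring out the known roots via Lemma~\ref{Ls:characteristic1} in the degenerate case.
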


By Corollary \ref{Cs:characteristic2}, $h_{\vec{b}}(x)$ has at most double zeros; the only possible double zeros of it are $a_{2}^{2},\dots,a_{n-1}^{2}$;
 $a_{i}^{2}$ and $a_{i+1}^{2}$ cannot both be double zeros.
Moreover, $a_{i}^{2}$ for $2\leq i\leq n-1$ is a double zero if and only if $b_{i}=0$ and
\[\sum_{1\leq k\leq n,\, k\neq i}\frac{a_{k}^{2}b_{k}^{2}}{|\beta|^{2}}
 \prod_{1\leq j\leq n,\, j\neq i,k}(a_{i}^{2}-a_{j}^{2})=0.\]

By Corollary \ref{Cs:characteristic2},
 write $x_1^{2}\geq\cdots\geq x_{n-1}^{2}$ for zeros of $h_{\vec{b}}(x)$.
Choose $x_1,\dots,x_{n-1}$ such that $x_i\geq 0$ for $1\leq i\leq n-2$
 and $\sgn x_{n-1} = \sgn b_n$.
Then
\[a_1\geq x_1\geq a_2\geq x_2\geq \cdots \geq a_{n-1}\geq |x_{n-1}|.
\]
Write
$\vec{x}=(x_{1},\dots,x_{n-1})$.

The following corollary and propositions are analogues of Corollary \ref{C:characteristic3}, Proposition \ref{P:pf1},
Proposition \ref{P:pf3} respectively. The proofs are similar.

\begin{corollary}\label{Cs:characteristic3}
The map $\vec{b}\mapsto \vec{x}$
 gives a bijection from $B'$ to
\[[a_{2},a_{1}]\times\cdots\times[a_{n-1},a_{n-2}]
\times[-a_{n-1}, a_{n-1}].\]
\end{corollary}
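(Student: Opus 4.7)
The plan is to mimic the proof of Corollary~\ref{C:characteristic3}, adapted to the non-elliptic setting with the added sign data on $x_{n-1}$. For injectivity, suppose $\vec{x}$ is given, so that $h_{\vec{b}}(x) = \prod_{i=1}^{n-1}(x-x_i^2)$ is determined. By Proposition~\ref{Ps:determinant}, evaluating $h_{\vec{b}}$ at $x=a_i^2$ for $1\leq i\leq n-1$ recovers the quantities $b_i^2(a_n^2+a_i^2)/|\beta|^2$, and evaluation at $x=0$ recovers $a_n^2 b_n^2/|\beta|^2$. Combined with the identity $|\beta|^2 = \sum_{i=1}^{n-1}b_i^2(a_n^2+a_i^2) + a_n^2b_n^2$ (from the formula for $\beta$) and the normalization $\sum_{i=1}^n b_i^2 = 1$, this pins down $|\beta|^2>0$ and hence each $b_i^2$. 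Since $b_1,\dots,b_{n-1}\geq 0$ by the definition of $B'$ and $\sgn(b_n)=\sgn(x_{n-1})$, the tuple $\vec{b}$ is uniquely recovered.

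For surjectivity, given $\vec{x}$ in the stated product, set $p(x)=\prod_{i=1}^{n-1}(x-x_i^2)$ and define the Lagrange coefficients
\[
c_i = \frac{p(a_i^2)}{\prod_{j\neq i}(a_i^2-a_j^2)} \qquad (1\leq i\leq n-1),
\qquad
\eta = \frac{\prod_{i=1}^{n-1}x_i^2}{\prod_{i=1}^{n-1}a_i^2}.
\]
The interlacing conditions $a_{i+1}\leq x_i\leq a_i$ for $i\leq n-2$ and $|x_{n-1}|\leq a_{n-1}$ show, by counting negative factors, that $p(a_i^2)$ and $\prod_{j\neq i}(a_i^2-a_j^2)$ both have sign $(-1)^{i-1}$, so $c_i\geq 0$; likewise $\eta\in[0,1]$. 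The identity
\[
p(x) = \prod_{i=1}^{n-1}(x-a_i^2) + \sum_{i=1}^{n-1} c_i \prod_{j\neq i}(x-a_j^2)
\]
then follows from a degree-plus-interpolation argument: the difference is a polynomial of degree at most $n-2$ vanishing at the $n-1$ distinct points $a_1^2,\dots,a_{n-1}^2$. Evaluating both sides at $x=0$ yields the consistency relation $\sum_{i=1}^{n-1} c_i/a_i^2 + \eta = 1$.

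Now define $|\beta|^2>0$ as the unique positive solution of
\[
|\beta|^2\Bigl(\sum_{i=1}^{n-1}\frac{c_i}{a_i^2(a_n^2+a_i^2)}+\frac{\eta}{a_n^2}\Bigr)=1,
\]
and set $b_i = \sqrt{c_i|\beta|^2/(a_i^2(a_n^2+a_i^2))}$ for $1\leq i\leq n-1$ together with $b_n = \sgn(x_{n-1})\sqrt{\eta|\beta|^2/a_n^2}$. The consistency relation above guarantees simultaneously that $\sum_{i=1}^n b_i^2 = 1$ (so $\vec{b}\in B'$) and that $|\beta|^2 = \sum_{i=1}^{n-1}b_i^2(a_n^2+a_i^2)+a_n^2 b_n^2$, so the two uses of $|\beta|^2$ (as normalization constant and as the squared norm of the vector $\beta$ from \S\ref{SS:moment2}) agree. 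Plugging into Lemma~\ref{Ls:characteristic1} recovers the polynomial identity displayed above, so $h_{\vec{b}}=p$, and the prescribed sign of $x_{n-1}$ matches $\sgn(b_n)$ by construction.

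The only step requiring genuine care is the sign analysis of $c_i$ (where one must track the $i-1$ sign inversions in both numerator and denominator) and the reconciliation between the two meanings of $|\beta|^2$; the latter is not an obstacle because it is forced automatically by the polynomial identity obtained via interpolation at $a_1^2,\dots,a_{n-1}^2$. No new geometric input is needed beyond what was used in the elliptic case.
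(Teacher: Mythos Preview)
Your proof is correct and follows the same strategy the paper indicates (it only says ``the proofs are similar'' to Corollary~\ref{C:characteristic3}): recover the $b_i$ from the values $h_{\vec b}(a_i^2)$ and $h_{\vec b}(0)$ via Proposition~\ref{Ps:determinant} for injectivity, and for surjectivity express the target monic polynomial in the form of Lemma~\ref{Ls:characteristic1} using interlacing to guarantee nonnegativity of the coefficients. Your explicit handling of the two roles of $|\beta|^2$ and of the sign convention $\sgn x_{n-1}=\sgn b_n$ is exactly the extra bookkeeping the non-elliptic case requires and which the paper leaves to the reader.
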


\begin{proposition}\label{Ps:pf1}
The image of the moment map $q(\mathcal{O}_f)$
 consists of two depth zero orbits $P\cdot f$, $Pg'_{\infty}\cdot f$,
 and all depth one $P$-coadjoint orbits with singular values
 $(x_1,\dots, x_{n-1})$ such that
\[a_1\geq x_1\geq a_2\geq x_2\geq \cdots\geq
 a_{n-1}\geq x_{n-1}\geq 0.\]
Moreover, $q$ maps different $P$-orbits in $\mathcal{O}_{f}$
 to different $P$-orbits in $\mathfrak{p}^{\ast}$.
\end{proposition}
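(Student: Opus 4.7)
The plan is to mirror the proof of Proposition~\ref{P:pf1}, substituting the enumeration of $P$-orbits from Lemma~\ref{L:P-orbits2} and the analysis of $Z_{\vec{b}}$ carried out above. First I would dispose of the two exceptional $P$-orbits $P\cdot f$ and $Pg'_{\infty}\cdot f$: since $t'_{\vec{a}}$ and $\Ad(g'_{\infty})(t'_{\vec{a}})=\diag\{a_{1}H',\dots,a_{n-1}H',0,-a_{n}H\}$ both lie in $\mathfrak{m}\oplus\mathfrak{a}\subset\bar{\mathfrak{p}}$, their images under $\pr$ vanish on $\bar{\mathfrak{n}}$, so $q(f)$ and $q(g'_{\infty}\cdot f)$ lie in $\mathfrak{l}^{\ast}$; hence both orbits are of depth zero in the sense of Definition~\ref{D:depth}, and they are manifestly not $P$-conjugate, being distinguished by the sign of their $\mathfrak{a}^{\ast}$-component.

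For the remaining orbits, represented by $\bar{n}_{\vec{b}}\cdot f$ with $\vec{b}\in B'$ per Lemma~\ref{L:P-orbits2}, the explicit formula for $\Ad(\bar{n}_{\vec{b}})(t'_{\vec{a}})$ computed just before Lemma~\ref{L:Zb3} shows $q(\bar{n}_{\vec{b}}\cdot f)=\pr(X_{Y,\beta,a_{n}})$ with $\beta\neq 0$. I would then apply Lemma~\ref{p:standard5} to replace this representative by an $N$-conjugate of the form $\pr(X_{Y_{\vec{b}},\beta,0})$ with $Y_{\vec{b}}\beta^{t}=0$, so that Lemma~\ref{L:class-matrix} reduces the determination of the $P$-orbit to identifying the $\SO(2n)$-conjugacy class of the block matrix $Z_{\vec{b}}$. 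In particular, since $\beta\neq 0$, each such orbit has depth one.

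By the classification recalled at the end of \S\ref{SS:P-orbit2}, the $\SO(2n)$-class of $Z_{\vec{b}}$ is fully determined by its singular values together with the sign of $\Pf(Z_{\vec{b}})$. The singular values of $Y_{\vec{b}}$ are the $|x_{i}|$ where $x_{i}^{2}$ runs over the roots of $h_{\vec{b}}$, and from Lemma~\ref{L:Zb3} one directly reads off $\Pf(Z_{\vec{b}})=(b_{n}/|\beta|)\prod_{i=1}^{n}a_{i}$. With the convention $\sgn(x_{n-1})=\sgn(b_{n})$, Corollary~\ref{Cs:characteristic3} supplies a bijection $\vec{b}\leftrightarrow(x_{1},\dots,x_{n-1})\in[a_{2},a_{1}]\times\cdots\times[a_{n-1},a_{n-2}]\times[-a_{n-1},a_{n-1}]$. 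Unwinding: the interlacing $a_{1}\geq x_{1}\geq\cdots\geq a_{n-1}\geq |x_{n-1}|\geq 0$ is exactly the constraint appearing in the statement, and the sign of $x_{n-1}$ faithfully records the sign of the Pfaffian, with both signs realized whenever $|x_{n-1}|>0$. This simultaneously identifies the image $q(\mathcal{O}_{f})$ with the claimed set of orbits and establishes injectivity of $q$ on the quotient by $P$, since two distinct tuples in $B'$ produce matrices $Z_{\vec{b}}$ in different $\SO(2n)$-classes, and the two depth zero orbits cannot coincide with any depth one image.

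The only genuine content beyond bookkeeping is the Pfaffian computation $\Pf(Z_{\vec{b}})=(b_{n}/|\beta|)\prod a_{i}$, which was already carried out using Lemma~\ref{L:Zb3}; once this formula and Corollary~\ref{Cs:characteristic3} are in hand, the proposition follows by direct assembly, so I do not anticipate a serious obstacle.
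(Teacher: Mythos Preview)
Your proposal is correct and follows essentially the same route as the paper, which simply notes that the proof is analogous to that of Proposition~\ref{P:pf1} with Lemma~\ref{L:P-orbits2}, the Pfaffian formula $\Pf(Z_{\vec{b}})=\frac{b_n}{|\beta|}\prod a_i$, and Corollary~\ref{Cs:characteristic3} substituted in. One small phrasing slip: $\pr(t'_{\vec a})$ is a functional on $\mathfrak{p}=\mathfrak{m}+\mathfrak{a}+\mathfrak{n}$, so you should say it vanishes on $\mathfrak{n}$ (equivalently, that $t'_{\vec a}$ has zero $\bar{\mathfrak{n}}$-component in $\bar{\mathfrak{p}}\cong\mathfrak{p}^*$), not that it ``vanishes on $\bar{\mathfrak{n}}$''.
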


\begin{proposition}\label{Ps:pf3}
For any $\vec{b}\in B'$,
\[\Stab_{P_{1}}(\bar{n}_{\vec{b}}\cdot f)\cong\SO(2)^{r}\]
 and
\[\Stab_{P_{1}}(q(\bar{n}_{\vec{b}}\cdot f)) \cong\U(2)^{s}\times
 \SO(2)^{n-1-2s},\]
 where $r$ is the number of zeros among $b_{1},\dots,b_{n-1}$ and
 $s$ is the number of double zeros of $h_{\vec{b}}(x)$.
\end{proposition}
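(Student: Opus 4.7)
The plan is to follow the strategy of Proposition~\ref{P:pf3}, adapting each step to the non-elliptic setting. For the first isomorphism, I would start from the identification
\[\Stab_{P_{1}}(\bar{n}_{\vec{b}}\cdot f)
 \cong \Stab_{G^{f}}(\bar{n}_{\vec{b}}^{-1}\cdot v_{0})
 = \Stab_{T_{s}}(\bar{n}_{\vec{b}}^{-1}\cdot v_{0}),\]
which is valid because $P_1$-orbits on $\mathcal{O}_f=G_1/G^f$ correspond bijectively to $G^f$-orbits on $X_n\cong G_1/P_1$. The explicit representative $\bar{n}_{\vec{b}}^{-1}\cdot v_{0}$ given by \eqref{Eq:nbv0} makes the stabilizer transparent: the middle coordinate $-b_n$ of the $(2n-1)$-component is automatically fixed by every element of $T_s$; each compact $\SO(2)$ block of $T_s$ acting on the pair $(x_{2i-1},x_{2i})$ for $1\leq i\leq n-1$ fixes the point if and only if either the block is trivial or $b_i=0$; and the hyperbolic $\SO_e(1,1)$-block of $T_s$, acting on the last two coordinates $(x_{2n},x_{0})=(0,1)$, is forced to be trivial since any non-trivial boost sends $(0,1)$ off its projective class. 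This yields $\SO(2)^{r}$ where $r$ is the number of vanishing $b_i$ with $1\leq i\leq n-1$, independently of $b_n$.

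For the second isomorphism, I would apply Lemma~\ref{L:p-standard3}~(3): because $\xi\neq 0$ for every $\vec{b}\in B'$, the canonical representative $\eta+\tilde{\xi}$ has $\Stab_{P}(\eta+\tilde{\xi})=\Stab_{L^{\xi}}(\eta)$. Via the isomorphism $\mathfrak{l}^{\ast}_{\xi}\cong(\mathfrak{l}^{\xi})^{\ast}$ of Lemma~\ref{L:p-standard1}, the element $\eta$ corresponds to the skew-symmetric matrix $Y_{\vec{b}}$ from \S\ref{SS:moment2}, whose singular values are precisely the non-negative square roots $x_1,\dots,x_{n-1}$ of the zeros of $h_{\vec{b}}(x)$. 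Since $L^{\xi}\cong\Spin(2n-2)$, the stabilizer of such a skew-symmetric element is the product of unitary groups indexed by the distinct absolute values of its purely imaginary eigenvalues: each double zero of $h_{\vec{b}}(x)$ contributes a factor $\U(2)$, and each simple zero contributes $\SO(2)$. This yields $\U(2)^{s}\times\SO(2)^{n-1-2s}$, as claimed.

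The only step demanding genuinely new input (as opposed to formal transcription from the elliptic proof) is the exclusion of the hyperbolic block of $T_s$ from the stabilizer in the first isomorphism; once this is settled, both statements reduce to standard stabilizer calculations in the maximal torus of $\SO(2n-2)$ and in $T_{s}$ acting on $X_n$. I do not anticipate any serious obstacle, and the proof will be parallel in structure to that of Proposition~\ref{P:pf3}.
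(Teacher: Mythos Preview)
Your proposal is correct and follows essentially the same approach as the paper, which simply states that the proof is similar to that of Proposition~\ref{P:pf3}; you have filled in exactly the details that similarity entails, including the one genuinely new observation that the non-compact factor $\SO_e(1,1)\subset T_s$ never contributes to the stabilizer. One small slip: since the statement concerns $\Stab_{P_1}$, the relevant Levi stabilizer is $L_1^{\xi}\cong M_1'=\SO(2n-2)$ rather than $\Spin(2n-2)$, but this does not affect the centralizer computation or the final answer.
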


Lemma \ref{L:proper1} has the following analogue. The proof is the same.

\begin{lemma}\label{Ls:proper1}
For any compact set $\Omega\subset\mathfrak{p}^{\ast}-\mathfrak{l}^{\ast}$, $q^{-1}(\Omega)$ is compact.
%
%For any $\epsilon>0$, let \[\Omega_{\epsilon}=\{l+\phi_{n}(n):l\in\mathfrak{l}^{\ast},n\in\mathfrak{n}^{\ast},
%|l|\leq\epsilon,|n|\leq\epsilon\}.\] Then $q^{-1}(\Omega_{\epsilon})$ is not compact.
\end{lemma}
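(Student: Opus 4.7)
The plan is to adapt the proof of Lemma~\ref{L:proper1} almost verbatim, tracking only the few points where the non-elliptic structure makes a difference. First I will reduce to the depth one orbits: since the two depth zero $P$-orbits $P\cdot f$ and $Pg'_\infty\cdot f$ map into $\mathfrak{l}^\ast$ under $q$, and $\Omega\subset \mathfrak{p}^\ast-\mathfrak{l}^\ast$, every element of $q^{-1}(\Omega)$ lies in $\bigcup_{\vec{b}\in B'} P\cdot\bar{n}_{\vec{b}}\cdot f$ by Lemma~\ref{L:P-orbits2}. Using the Iwasawa decomposition $P=MAN$, write a general element $f'\in q^{-1}(\Omega)$ as $f'=an'm\bar{n}_{\vec{b}}\cdot f$ with $a\in A$, $n'\in N$, $m\in M$, $\vec{b}\in B'$; since $M$ and $B'$ are compact, both $m$ and $\vec{b}$ lie in compact sets from the outset.

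Next I will control $|\beta|=|\xi_1|$, where $q(\bar{n}_{\vec{b}}\cdot f)=\eta_1+\phi_n(\xi_1)$ with $\xi_1$ represented by $\beta$ as computed in \S\ref{SS:moment2}. From
\[
\beta=\bigl(b_1\sqrt{a_n^{2}+a_1^{2}},0,\ldots,b_{n-1}\sqrt{a_n^{2}+a_{n-1}^{2}},0,a_nb_n\bigr)
\]
and $\sum_i b_i^{2}=1$ one obtains
\[
a_n^{2}\leq |\beta|^{2}=a_n^{2}+\sum_{i=1}^{n-1}b_i^{2}a_i^{2}\leq a_n^{2}+a_1^{2},
\]
so $|\beta|$ is bounded both above and away from zero. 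This is the one structural input specific to the non-elliptic case: the lower bound uses $a_n>0$, which is exactly the non-ellipticity assumption on $\mathcal{O}_f$.

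The remainder is the exact transcription of Lemma~\ref{L:proper1}. Writing $an'm\cdot q(\bar{n}_{\vec{b}}\cdot f)=\eta+\phi_n(\xi)$ with $\eta\in\mathfrak{l}^\ast$, $\xi\in\mathfrak{n}^\ast$, and setting $n'=\exp(X)$ ($X\in\mathfrak{n}$) and $\xi_1=\pr(Y)$ ($Y\in\bar{\mathfrak{n}}$), one has
\[
an'\cdot(\eta_1+\phi_n(\xi_1))=\eta_1+\pr([X,Y])+e^{-\lambda_0\log a}\phi_n(m\cdot \xi_1),
\]
so $\xi=e^{-\lambda_0\log a}m\cdot\xi_1$ and $\eta=\eta_1+\pr([X,Y])$. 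Compactness of $\Omega$ together with $\Omega\cap\mathfrak{l}^\ast=\emptyset$ bound $|\eta|$ above and bound $|\xi|$ both above and away from zero; combined with the two-sided bound on $|\beta|=|\xi_1|$ from the previous paragraph (and compactness of $M$), this forces $\log a$ to be bounded. Then $|Y|$ is bounded and $|\eta_1|$ is bounded (both controlled by $\vec{b}\in B'$), so $|X|$ is bounded from $\eta-\eta_1=\pr([X,Y])$, giving a compact range for $n'$.

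The main (and essentially only) obstacle is the lower bound $|\beta|\geq a_n>0$, which is where the non-ellipticity enters and prevents the $\mathfrak{n}^\ast$-component from degenerating; once this is in place, every other bound is a mechanical repetition of the elliptic argument, so I do not expect any further difficulty.
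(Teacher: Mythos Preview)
Your approach is correct and matches the paper's exactly: the paper simply says the proof is the same as that of Lemma~\ref{L:proper1}, and the only new input needed is the two-sided bound $a_n\leq|\beta|\leq\sqrt{a_n^{2}+a_1^{2}}$, which you supply. There is a cosmetic slip in your displayed equation (the left side has $an'$ while the right side reflects the full $an'm$-action; either write $an'm\cdot(\eta_1+\phi_n(\xi_1))=m\cdot\eta_1+\pr([X,\Ad(m)Y])+e^{-\lambda_0\log a}\phi_n(m\cdot\xi_1)$, or follow the paper and absorb $m$ into the definition of $\eta_1,\xi_1$ from the start), but this does not affect the argument.
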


%\begin{proof}
%The compactness of $q^{-1}(\Omega)$ of a compact set $\Omega\subset\mathfrak{p}^{\ast}-\mathfrak{l}^{\ast}$ can be shown
%similarly as the proof for Lemma \ref{L:proper1}. The non-compactness of $q^{-1}(\Omega_{\epsilon})$ could be seen as
%$q^{-1}(q(f))$ is non-compact.
%\end{proof}

By Proposition \ref{Ps:pf1} and Lemma \ref{Ls:proper1}, the following proposition follows.

\begin{proposition}\label{Ps:pf2}
%The image of the moment map $q\colon \mathcal{O}_{f}\rightarrow\mathfrak{p}^{\ast}$
% consists of two depth zero orbits $P\cdot f$, $Pg'_{\infty}\cdot f$,
% and other depth one orbits.
For any $g\in G$, the reduced space
\[q^{-1}(q(g\cdot f))/\Stab_{P}(q(g\cdot f))\] is a singleton.
The moment map $q$ is weakly proper, but not proper.
\end{proposition}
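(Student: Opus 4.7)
The plan is to adapt the strategy of Proposition~\ref{P:pf2} to the non-elliptic setting. All three claims will follow quickly once the structural work in Proposition~\ref{Ps:pf1} and Lemma~\ref{Ls:proper1} is accepted.

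First, the singleton reduced space is an immediate consequence of the injectivity statement in Proposition~\ref{Ps:pf1}: since $q$ maps distinct $P$-orbits in $\mathcal{O}_{f}$ to distinct $P$-orbits in $\mathfrak{p}^{\ast}$, the fiber $q^{-1}(q(g\cdot f))$ coincides with $\Stab_{P}(q(g\cdot f))\cdot (g\cdot f)$, and therefore its quotient by $\Stab_{P}(q(g\cdot f))$ is automatically a point. For weak properness, I would combine Lemma~\ref{Ls:proper1} with the observation, recorded in the remark at the end of \S\ref{SS:P-orbit2}, that every strongly regular coadjoint $P$-orbit has depth one: any compact subset $\Omega\subset q(\mathcal{O}_{f})\cap \Upsilon_{sr}$ therefore lies in $\mathfrak{p}^{\ast}-\mathfrak{l}^{\ast}$, so $q^{-1}(\Omega)$ is compact by the lemma.

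For non-properness, the cleanest route is to exhibit a non-compact fiber of $q$. Here, unlike the elliptic case, one has $t'_{\vec{a}}\in \mathfrak{l}=\mathfrak{m}+\mathfrak{a}$, so that $q(f)=\pr(t'_{\vec{a}})$ already lies in $\mathfrak{l}^{\ast}$. Since $N$ acts trivially on $\mathfrak{l}^{\ast}$ (\S\ref{SS:P-orbit}), the entire $N$-orbit $N\cdot f$ is contained in the single fiber $q^{-1}(\{q(f)\})$. Writing $t'_{\vec{a}}=\diag\{Y,0_{2\times 2}\}+a_{n}H_{0}$ with $Y=\diag\{a_{1}H',\dots,a_{n-1}H',0\}$, the bracket relations \eqref{Eq:brackets} give
\[
\Ad(\exp(X_{\gamma}))(t'_{\vec{a}})=t'_{\vec{a}}-X_{\gamma(Y^{t}+a_{n}I)}.
\]
The matrix $Y^{t}+a_{n}I$ is block diagonal with $2\times 2$ blocks of determinant $a_{n}^{2}+a_{i}^{2}>0$ and a trailing $1\times 1$ entry $a_{n}>0$, hence invertible, so $\gamma\mapsto X_{\gamma(Y^{t}+a_{n}I)}$ is an unbounded linear map $\bbR^{m}\to \mathfrak{n}$. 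Consequently $N\cdot f$ is unbounded in $\mathcal{O}_{f}$ while $\{q(f)\}$ is compact, so $q$ cannot be proper. I do not anticipate any serious obstacle, as the heavy lifting has already been carried out by Proposition~\ref{Ps:pf1} and Lemma~\ref{Ls:proper1}, and the only new input is the one-line computation of the $N$-action on $t'_{\vec{a}}$.
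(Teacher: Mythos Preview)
Your proposal is correct and follows essentially the same route as the paper: the reduced-space and weak-properness claims are deduced from Proposition~\ref{Ps:pf1} and Lemma~\ref{Ls:proper1}, and non-properness is obtained from the non-compactness of the fiber $q^{-1}(q(f))$. The paper merely asserts this last fact, whereas you supply the explicit computation that $N\cdot f\subset q^{-1}(q(f))$ is unbounded via $\Ad(\exp X_{\gamma})(t'_{\vec a})=t'_{\vec a}-X_{\gamma(Y^{t}+a_{n}I)}$ with $Y^{t}+a_{n}I$ invertible; this is a welcome addition of detail rather than a different argument.
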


\begin{proof}
The first and the second claims are direct consequences of
 Proposition \ref{Ps:pf1} and Lemma \ref{Ls:proper1}, respectively.
For the last claim it is enough to see that $q^{-1}(q(f))$ is non-compact.
\end{proof}

\subsection{Singular semisimple coadjoint orbits}\label{SS:singular-nonelliptic}

Now we consider singular semisimple coadjoint orbits, that is, we allow some of the singular values
$a_{1},\dots,a_{n-1}$ to be equal, $a_{n-1}=0$ or $a_{n}=0$.
First consider the case when $a_{n-1}\neq 0$ and $a_{n}\neq 0$.
Let $0=i_{0}<i_{1}<\cdots<i_{l-1}=n-1$ be such that
 $a_{i}=a_{j}$ if $i_{k-1}<i\leq j\leq i_{k}$ for some $1\leq k\leq l-1$,
 and $a_{i_{k}}>a_{i_{k}+1}$ for any $1\leq k\leq l-2$.
Write $n_{k}=i_{k}-i_{k-1}$ for $1\leq k\leq l-1$.
Then,
\[\Stab_{G}(f)\cong\U(n_{1})\times\cdots\times\U(n_{l-1})
\times\mathbb{R}_{>0}.\]
Put
\[B'_{l}=\{\vec{b}=(b'_{1},\dots,b'_{l}):
b'_{1},\dots,b'_{l-1}\geq 0,\sum_{1\leq i\leq l}(b_{i}')^{2}=1\}.
\]
For each $\vec{b}=(b'_{1},\dots,b'_{l})\in B'_{l}$, write
\begin{equation*}
%\label{Eq:alphab5}
\alpha=\alpha_{\vec{b}}=
\Bigl(\frac{a_{n}b_1}
{\sqrt{a_{n}^{2}+a_{1}^{2}}},\frac{-a_{1}b_1}{\sqrt{a_{n}^{2}+a_{1}^{2}}},\dots,\frac{a_{n}b_{n-1}}
{\sqrt{a_{n}^{2}+a_{n-1}^{2}}},\frac{-a_{n-1}b_{n-1}}{\sqrt{a_{n}^{2}+a_{n-1}^{2}}},b_{n}\Bigr),
\end{equation*}
where $b_{i}=b'_{k}$ if $i=i_{k}$ for some $1\leq k\leq l-1$;
 $b_{i}=0$ if $n-1\geq i\neq i_{k}$ for any $1\leq k\leq l-1$; and $b_{n}=b'_{l}$.
Put $\bar{X}_{\vec{b}}$ and $\bar{n}_{\vec{b}}$ as in \eqref{Eq:Xbnb}.
Then $\bar{n}_{\vec{b}}^{-1}\cdot v_{0}$ equals \eqref{Eq:nbv0}.

We have the following analogue of Lemma \ref{L:P-orbits2}.

\begin{lemma}\label{Ls:P-orbits2}
Each $P$-orbit in $\mathcal{O}_{f}-(P\cdot f\sqcup Pg'_{\infty}\cdot f)$
 contains some $\bar{n}_{\vec{b}}\cdot f$ for a unique tuple $\vec{b}\in B'_{l}$.
\end{lemma}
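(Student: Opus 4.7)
\textbf{Plan for Lemma~\ref{Ls:P-orbits2}.} My plan is to mirror the proof of Lemma~\ref{L:P-orbits2}, the only change being that the stabilizer $G^f$ is now strictly larger than $T_s$, so that the $T_s$-orbit classification of Lemma~\ref{L:Ts-orbits} must be further coarsened. Via the identifications $\mathcal{O}_f\cong G/G^f$ and $G/P\cong X_n$, $P$-orbits in $\mathcal{O}_f$ correspond bijectively to $G^f$-orbits in $X_n$, and we need to match these orbits with $B'_l\sqcup\{v_0,v'_0\}$.

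First I would check that $v_0$ and $v'_0$ remain the only $G^f$-fixed points of $X_n$: they are already $T_s$-fixed by Lemma~\ref{L:Ts-orbits}, and the extra $\U(n_k)$-factors and $\mathbb{R}_{>0}$-factor of $G^f$ sit inside block-diagonal subgroups of $G_1$ whose blocks are supported on coordinates that vanish at $v_0,v'_0$, so these points stay fixed. These two fixed points correspond precisely to the orbits $P\cdot f$ and $Pg'_\infty\cdot f$, which are excluded from the statement.

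Next, since $T_s\subset G^f$, every $G^f$-orbit in $X_n-\{v_0,v'_0\}$ is a union of $T_s$-orbits, and by Lemma~\ref{L:Ts-orbits} each such $T_s$-orbit has a unique representative of the form $\bar{n}_{\vec{b}}^{-1}\cdot v_0$ with $\vec{b}\in B'$. The quotient $G^f/T_s$ is isomorphic to $\prod_{k=1}^{l-1}\U(n_k)/T^{n_k}$ (the $\mathbb{R}_{>0}$-factor is already contained in $T_s$), and the core step is to describe its action on $B'$. Grouping the coordinate pairs $(x_{2i-1},x_{2i})$ for $i$ in the $k$-th block into a complex vector in $\mathbb{C}^{n_k}$, the $\U(n_k)$-factor of $G^f$ acts by its standard representation on $\mathbb{C}^{n_k}$; modulo the diagonal $T^{n_k}$ it is transitive on real vectors $(b_i)_{i_{k-1}<i\le i_k}$ of prescribed length $\sqrt{\sum_{i_{k-1}<i\le i_k}b_i^2}$. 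Hence each $G^f$-orbit has a unique representative $\vec{b}\in B'$ with $b_i=0$ whenever $i\notin\{i_1,\ldots,i_{l-1}\}$ and $b_{i_k}\ge 0$. Setting $b'_k:=b_{i_k}$ for $1\le k\le l-1$ and $b'_l:=b_n$, the condition $\sum_i b_i^2+b_n^2=1$ from $B'$ becomes $\sum_{k=1}^l(b'_k)^2=1$, producing the claimed bijection with $B'_l$.

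The main obstacle will be the verification that the $\U(n_k)$-factor of $G^f$ really acts on the block-$k$ coordinates of $X_n$ through its standard representation on $\mathbb{C}^{n_k}$. This requires unwinding the explicit embedding $\U(n_k)\hookrightarrow G_1$ arising from centralizing $\diag\{a_{i_k}H',\ldots,a_{i_k}H'\}$ (the common $a_{i_k}H'$-blocks) and checking that, under the pairing of the $2n_k$ real coordinates of $X_n$ in that block as $\mathbb{C}^{n_k}$, this embedding is the standard one. Once that compatibility is fixed, transitivity on spheres of $\mathbb{C}^{n_k}$ and uniqueness of the distinguished representative are routine.
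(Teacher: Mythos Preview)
Your proposal is correct and is precisely the argument the paper has in mind: the paper gives no separate proof of Lemma~\ref{Ls:P-orbits2}, merely declaring it ``the following analogue of Lemma~\ref{L:P-orbits2}'', and your outline is exactly that analogue---coarsen the $T_s$-orbit parametrization of Lemma~\ref{L:Ts-orbits} by the additional $\U(n_k)$-factors of $G^f$, using that each $\U(n_k)$ acts transitively on spheres in its $\mathbb{C}^{n_k}$-block. The one point worth tightening is the identification of the $\U(n_k)$-action: since all $a_i$ in the $k$-th block coincide, the representative \eqref{Eq:nbv0} has its $k$-th block equal to $(b_i)_{i_{k-1}<i\le i_k}$ times a \emph{single} unit complex number, so the $\U(n_k)$-orbit through it really is the full sphere of radius $\bigl(\sum_{i_{k-1}<i\le i_k} b_i^2\bigr)^{1/2}$ in $\mathbb{C}^{n_k}$, and the invariant $b'_k$ is exactly that radius; this makes both existence and uniqueness of the representative in $B'_l$ immediate.
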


With this, we take a similar study as for regular non-elliptic semisimple orbits.
All the results in the previous subsection can be extended.
In particular, Propositions \ref{Ps:pf1} and \ref{Ps:pf2} extend without change of words.

\medskip

Next, consider the case when $a_{n-1}=0$ and $a_{n}\neq 0$.
Let $0=i_{0}<i_{1}<\cdots<i_{l}=n-1$ be such that
$a_{i}=a_{j}$ if $i_{k-1}<i\leq j\leq i_{k}$ for some $1\leq k\leq l$,
 and $a_{i_{k}}>a_{i_{k}+1}$ for any $1\leq k\leq l-1$.
Write $n_{k}=i_{k}-i_{k-1}$ for $1\leq k\leq l$.
Then,
\[\Stab_{G_{1}}(f)\cong\U(n_{1})\times\cdots\times\U(n_{l-1})\times\SO(2n_{l}+1)
 \times\mathbb{R}_{>0}.\]
Put
\[B'_{l}=\{\vec{b}=(b'_{1},\dots,b'_{l}):b'_1,\dots,b'_{l}\geq 0,
 \sum_{1\leq i\leq l}(b_{i}')^{2}=1\}.\]
For each $\vec{b}=(b'_{1},\dots,b'_{l})\in B'_{l}$,
 write
\[\alpha=\alpha_{\vec{b}}=
\Bigl(\frac{a_{n}b_1}{\sqrt{a_{n}^{2}+a_{1}^{2}}},
 \frac{-a_{1}b_1}{\sqrt{a_{n}^{2}+a_{1}^{2}}},
 \dots,\frac{a_{n}b_{n-1}}{\sqrt{a_{n}^{2}+a_{n-1}^{2}}},
 \frac{-a_{n-1}b_{n-1}}{\sqrt{a_{n}^{2}+a_{n-1}^{2}}},b_{n}\Bigr),
\] where $b_{i}=b'_{k}$ if $i=i_{k}$ for some $1\leq k\leq l-1$;
 $b_{i}=0$ if $n-1\geq i\neq i_{k}$ for any $1\leq k\leq l-1$;
 and $b_{n}=b'_{l}$.
Put $\bar{X}_{\vec{b}}$ and $\bar{n}_{\vec{b}}$ as in \eqref{Eq:Xbnb}.
Then $\bar{n}_{\vec{b}}^{-1}\cdot v_{0}$ equals \eqref{Eq:nbv0}.

We have the following analogue of Lemma \ref{L:P-orbits2}.

\begin{lemma}\label{Ls:P-orbits3}
Each $P$-orbit
 in $\mathcal{O}_{f}-(P\cdot f\sqcup Pg'_{\infty}\cdot f)$
 contains some $\bar{n}_{\vec{b}}\cdot f$ for a unique tuple $\vec{b}\in B'_{l}$.
\end{lemma}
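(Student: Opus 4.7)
The strategy will parallel that of Lemmas \ref{L:P-orbits2} and \ref{Ls:P-orbits2}: parametrize $P$-orbits in $\mathcal{O}_{f}$ via the identification $P\backslash G / G^{f} \cong G^{f}\backslash X_{n}$, taking into account the enlarged stabilizer $G^{f}\cong \prod_{k=1}^{l-1}\U(n_{k}) \times \SO(2n_{l}+1) \times\mathbb{R}_{>0}$ coming from the fact that $a_{i_{l-1}+1}=\cdots=a_{n-1}=0$. First, the two depth-zero orbits $P\cdot f$ and $Pg'_{\infty}\cdot f$ correspond exactly to the two $G^{f}$-fixed points $v_{0}, v'_{0}\in X_{n}$, so it suffices to parametrize $G^{f}$-orbits on $X_{n}-\{v_{0},v'_{0}\}$. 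Invoking the direct analogue of Lemma \ref{L:Ts-orbits}, the $T_{s}$-orbits on $X_{n}-\{v_{0},v'_{0}\}$ are in bijection with $B'$ via $\vec{b}\mapsto \bar{n}_{\vec{b}}^{-1}\cdot v_{0}$. Since the $\mathbb{R}_{>0}$-factor of $G^{f}$ coincides with the $A$-part of $T_{s}$, the passage from $T_{s}$-orbits to $G^{f}$-orbits amounts to quotienting $B'$ by the residual action of $\prod_{k=1}^{l-1}\U(n_{k}) \times \SO(2n_{l}+1)$.

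The main step is to pin down this residual action concretely on the parameters $\vec{b}=(b_{1},\dots,b_{n})\in B'$. For $1\leq k\leq l-1$, the factor $\U(n_{k})$ acts on the $2n_{k}$ coordinates of $X_{n}$ at positions $2i_{k-1}+1,\dots,2i_{k}$; via the pairing into $\mathbb{C}^{n_{k}}$, it acts as the standard unitary representation, with orbits on spheres determined by the Hermitian norm $\sqrt{b_{i_{k-1}+1}^{2}+\cdots+b_{i_{k}}^{2}}$. Hence I can normalize uniquely to $b_{i_{k-1}+1}=\cdots=b_{i_{k}-1}=0$ and $b_{i_{k}}=b'_{k}:=\sqrt{\sum_{i=i_{k-1}+1}^{i_{k}} b_{i}^{2}}\geq 0$, matching the prescription in the definition of $B'_{l}$.

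For the remaining factor $\SO(2n_{l}+1)$, reading off \eqref{Eq:nbv0} when $a_{i}=0$ for $i_{l-1}<i\leq n-1$, the coordinates of $\bar{n}_{\vec{b}}^{-1}\cdot v_{0}$ on the block of positions $2i_{l-1}+1,\dots,2n-1$ simplify to $(-\sgn(a_{n})b_{i_{l-1}+1},0,\dots,-\sgn(a_{n})b_{n-1},0,-b_{n})$, a vector of Euclidean norm $\sqrt{\sum_{i=i_{l-1}+1}^{n-1}b_{i}^{2}+b_{n}^{2}}$. Since $\SO(2n_{l}+1)$ acts transitively on the unit sphere in $\mathbb{R}^{2n_{l}+1}$, I can rotate this vector to $(0,\dots,0,-b'_{l})$ with $b'_{l}:=\sqrt{\sum_{i=i_{l-1}+1}^{n-1}b_{i}^{2}+b_{n}^{2}}\geq 0$. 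The resulting tuple $(b'_{1},\dots,b'_{l})$ satisfies $\sum(b'_{k})^{2}=\sum b_{i}^{2}=1$ and $b'_{k}\geq 0$, so it lies in $B'_{l}$, and by construction $\bar{n}_{\vec{b}}^{-1}\cdot v_{0}$ is then in the same $G^{f}$-orbit as $\bar{n}_{\vec{b}'}^{-1}\cdot v_{0}$ for the associated $\vec{b}'\in B'_{l}$.

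Uniqueness is then immediate: the assignments $\vec{b}\mapsto b'_{k}$ are precisely the norms of the orthogonal blocks under the factors of $G^{f}$, so two elements of $B'_{l}$ lying in the same $G^{f}$-orbit must have matching block norms and hence coincide. I expect the only subtle point—the main obstacle—to be the verification that the chosen realization of $\SO(2n_{l}+1)\subset G^{f}$ stabilizes exactly the $(2n_{l}+1)$-dimensional coordinate block described above, and that its right action on $X_{n}$ (via the formula $g\cdot[\vec{x}]=[\vec{x}g_{1}^{t}]$) does factor through the transitive sphere action used. Once this identification is checked using the explicit matrix form of $t'_{\vec{a}}$ and its centralizer in $G_{1}$, both existence and uniqueness of the $B'_{l}$-representative follow as above.
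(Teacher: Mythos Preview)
Your proposal is correct and follows essentially the same approach as the paper. The paper states Lemma~\ref{Ls:P-orbits3} simply as ``the following analogue of Lemma~\ref{L:P-orbits2}'' with no further proof, so your argument---identifying $P$-orbits with $G^{f}$-orbits on $X_{n}-\{v_{0},v'_{0}\}$ and reading off the block invariants $(b'_{1},\dots,b'_{l})$ under the factors $\U(n_{1})\times\cdots\times\U(n_{l-1})\times\SO(2n_{l}+1)$---is exactly the intended computation, carried out in more detail than the paper provides. Your one flagged verification (that the $\SO(2n_{l}+1)$ factor sits on coordinates $2i_{l-1}+1,\dots,2n-1$ and acts via the standard sphere action) is immediate from the block form of $t'_{\vec{a}}$ once $a_{i_{l-1}+1}=\cdots=a_{n-1}=0$.
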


With this, we take a similar study as for regular non-elliptic semisimple orbits.
All results can be extended.
In particular, Propositions \ref{Ps:pf1} and \ref{Ps:pf2} extend without change of words.

\medskip

Next, consider the case when $a_{n}=0$ and $a_{1}\neq 0$.
Then $\mathcal{O}_f$ is elliptic.
Let $0=i_{0}<i_{1}<\cdots<i_{l}=n$ be such that $a_{i}=a_{j}$ if
$i_{k-1}<i\leq j\leq i_{k}$ for some $1\leq k\leq l$,
 and $a_{i_{k}}>a_{i_{k}+1}$ for any $1\leq k\leq l-1$.
Write $n_{k}=i_{k}-i_{k-1}$ for $1\leq k\leq l$.
Then,
\[\Stab_{G_{1}}(f)\cong\U(n_{1})\times
 \cdots\times\U(n_{l-1})\times\SO_{e}(2n_{l},1).
\]
Put
\[B'_{l}=\{\vec{b}=(b'_{1},\dots,b'_{l-1}):
b'_{1},\dots,b'_{l-1}\geq 0,\sum_{1\leq i\leq l-1}(b_{i}')^{2}=1\}.
\]
For each $\vec{b}=(b'_{1},\dots,b'_{l})\in B'_{l}$,
 write
\[\alpha=\alpha_{\vec{b}}=(0,-b_{1},\dots,0,-b_{n-1},0),\]
where $b_{i}=b'_{k}$ if $i=i_{k}$ for some $1\leq k\leq l-1$;
 $b_{i}=0$ if $i\neq i_{k}$ for any $1\leq k\leq l-1$.
Put $\bar{X}_{\vec{b}}$ and $\bar{n}_{\vec{b}}$ as in \eqref{Eq:Xbnb}.
Then,
\[\bar{n}_{\vec{b}}^{-1}\cdot v_{0}=[(0,b_{1},\dots,0,b_{n-1},0,0,1)].\]
Because of $a_{n}=0$, one has $g'_{\infty}\cdot f=f$. The following lemma can be proved along the same way as that
for Lemma \ref{L:P-orbits2}.

\begin{lemma}\label{Ls:P-orbits4}
Each $P$-orbit in $\mathcal{O}_{f}-P\cdot f$ contains some
 $\bar{n}_{\vec{b}}\cdot f$ for a unique tuple $\vec{b}\in B'_{l}$.
\end{lemma}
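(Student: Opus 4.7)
The plan is to mimic the strategy of Lemmas~\ref{L:P-orbits} and \ref{Ls:P-orbits2}: identify the quotient $P\backslash \mathcal{O}_f$ with $G^f\backslash G/P$, and hence (via the covering $G\twoheadrightarrow G_1$ and the identification $G/P\cong X_n$) with the set of orbits of $G^f_1\cong \U(n_1)\times\cdots\times\U(n_{l-1})\times\SO_e(2n_l,1)$ on the projectivized forward null cone $X_n$. The new feature compared with the regular case is the noncompact factor $\SO_e(2n_l,1)$, arising because $a_n=0$, and its transitivity properties will be the geometric input that forces the classification.

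First I would partition the coordinates of a representative $(x_1,\dots,x_{2n},x_0)$ of a point in $X_n$ into the first $2i_{l-1}$ entries on which the $\U(n_k)$ factors act pairwise and the last $2n_l+1$ entries $(x_{2i_{l-1}+1},\dots,x_{2n},x_0)$ on which $\SO_e(2n_l,1)$ acts preserving the quadratic form of signature $(2n_l,1)$. That this is indeed the action of $G^f_1$ on the coordinates is read off directly from the vanishing rows and columns of $t'_{\vec{a}}$ when $a_n=0$. The null condition rewrites as
\[
Q:=x_0^2-\sum_{j=2i_{l-1}+1}^{2n}x_j^2=\sum_{j=1}^{2i_{l-1}}x_j^2\geq 0,
\]
and $Q$ is $G^f_1$-invariant.

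If $Q=0$ then $x_1=\cdots=x_{2i_{l-1}}=0$ and the remaining $2n_l+1$-tuple is a nonzero forward null vector. Since $\SO_e(2n_l,1)$ acts transitively on the forward null cone modulo positive scalars, the point is $G^f_1$-equivalent to $v_0=[(0,\dots,0,1,1)]$, producing exactly the excluded orbit $P\cdot f$. If $Q>0$ the last $2n_l+1$ entries form a forward timelike vector, and an element of $\SO_e(2n_l,1)$ can be used to normalize $x_{2i_{l-1}+1}=\cdots=x_{2n}=0$ with $x_0=\sqrt{Q}$; rescaling to set $x_0=1$ forces $\sum_{j=1}^{2i_{l-1}}x_j^2=1$. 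The standard reduction for $\U(n_k)$ acting on $\mathbb{C}^{n_k}$ then brings the first $2i_{l-1}$ entries to $(0,b_1,0,b_2,\dots,0,b_{n-1})$ with $b_{i_k}=b'_k\geq 0$ and all other $b_i=0$, where $\sum_{k=1}^{l-1}(b'_k)^2=1$. This is precisely $\bar{n}_{\vec{b}}^{-1}\cdot v_0$ for the tuple $\vec{b}=(b'_1,\dots,b'_{l-1})\in B'_l$ specified in the lemma.

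Uniqueness in the $Q>0$ case follows from the standard uniqueness of $\U(n_k)$-orbit representatives on $\mathbb{C}^{n_k}$ and the enforced normalization $\sum(b'_k)^2=1$. I do not expect a serious obstacle; the only mildly delicate point is verifying that $G^f_1$ really preserves the indicated coordinate partition (equivalently, that the $\SO_e(2n_l,1)$ factor is the stabilizer in $G_1$ of the remaining block of $t'_{\vec{a}}$), which is immediate by inspection. Conceptually the argument is a straightforward specialization of that for Lemma~\ref{Ls:P-orbits2}, with the single point-at-infinity $g'_\infty\cdot f=f$ (since $a_n=0$) replacing the two points $\{P\cdot f,Pg'_\infty\cdot f\}$ present in the non-elliptic situation.
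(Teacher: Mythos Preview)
Your argument is correct and follows precisely the route the paper indicates: it says the lemma ``can be proved along the same way as that for Lemma~\ref{L:P-orbits2}'' and gives no further details, so your write-up is exactly the intended proof, with the expected modification that the noncompact factor $\SO_e(2n_l,1)$ in $G^{f}_1$ collapses the two special orbits $P\cdot f$ and $Pg'_{\infty}\cdot f$ into one (since $g'_{\infty}\cdot f=f$ when $a_n=0$), corresponding to your case $Q=0$.
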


With this, we take a similar study as for regular non-elliptic semisimple orbits.
All results can be extended.
In particular, Propositions \ref{Ps:pf1} and \ref{Ps:pf2} extend without change of words.
However, the range of zeros of $h_{\vec{b}}(x)$ becomes a bit different:
 due to $a_{n}=0$,
 we see that $x^{n_l}$ divides $h_{\vec{b}}(x)$.
The analogue of Corollary \ref{Cs:characteristic3} is as follows.

\begin{corollary}\label{Cs:characteristic3-an0}
The map $\vec{b}\mapsto\vec{x}$ gives a bijection from $B'_{l}$ to
\[[a_{2},a_{1}]\times\cdots\times [a_{i_{l-1}},a_{i_{l-1}-1}]\times\{0\}^{n_l}.\]
\end{corollary}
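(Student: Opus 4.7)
The plan is to follow the strategy of Corollary \ref{Cs:characteristic3}, adapted to the singular elliptic case $a_n=0$ with repeated singular values organized into blocks of lengths $n_1,\dots,n_{l-1},n_l$. The idea is to obtain an explicit factored formula for $h_{\vec{b}}(x)$ and read off the roots.

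First I would derive the analogue of Lemma \ref{Ls:characteristic1} in this parametrization. Because $b_i=0$ for every $i\notin\{i_1,\dots,i_{l-1}\}$ and $b_n=0$ by the setup, together with $a_i=0$ for $i>i_{l-1}$, the general expansion collapses and one can establish a factorization
\[
h_{\vec{b}}(x)=x^{n_l}\prod_{k=1}^{l-1}(x-a_{i_k}^2)^{n_k-1}\cdot\tilde{h}_{\vec{b}}(x),
\]
where $\tilde{h}_{\vec{b}}(x)$ is monic of degree $l-2$ and the values $\tilde{h}_{\vec{b}}(a_{i_k}^2)$ are explicit constant multiples of $(b'_k)^2$ that alternate in sign with $k$, exactly in the spirit of Proposition \ref{P:hb3}. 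The factor $x^{n_l}$ captures the forced zero roots coming from the block at $a_n=0$, and $(x-a_{i_k}^2)^{n_k-1}$ records the degeneracy within each repeated block. The drop from degree $l-1$ to $l-2$ reflects the identity $\sum_k a_{i_k}^2(b'_k)^2=|\beta|^2$, which forces a cancellation at $x=0$.

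Next I would apply the analogue of Corollary \ref{Cs:characteristic2} to $\tilde{h}_{\vec{b}}(x)$: the alternation of signs of its values at $a_{i_k}^2$ places its $l-2$ roots one per interval $[a_{i_{k+1}}^2,a_{i_k}^2]$ for $1\leq k\leq l-2$. Combining with the forced roots, the full ordered list $x_1^2\geq\cdots\geq x_{n-1}^2$ lies exactly in the product
\[ [a_2,a_1]\times\cdots\times [a_{i_{l-1}},a_{i_{l-1}-1}]\times\{0\}^{n_l}, \]
with the degenerate intervals (those with $a_{k+1}=a_k$) pinning $x_k$ to the common block value and the free coordinates at the block boundaries traversing the nontrivial intervals.

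For bijectivity I would mimic the proof of Corollary \ref{C:characteristic3}: injectivity follows since the values $\tilde{h}_{\vec{b}}(a_{i_k}^2)$ recover $(b'_k)^2$, and hence $b'_k\geq 0$, uniquely; surjectivity follows since any admissible ordered tuple $\vec{x}$ determines a monic polynomial of degree $l-2$ that expands as a nonnegative combination of the Lagrange basis $\prod_{j\neq k}(x-a_{i_j}^2)$, yielding coefficients which, after rescaling so $\sum_k (b'_k)^2=1$, give a unique $\vec{b}\in B'_l$. The main obstacle I anticipate is the careful bookkeeping of multiplicities at block endpoints and verifying that no accidental cancellation occurs between the explicit factors and $\tilde{h}_{\vec{b}}$; here the hypothesis $a_1\neq 0$ (hence $a_{i_{l-1}}>0$) is essential to keep the $n_l$ zero roots genuinely attached to the last block rather than migrating into $\tilde{h}_{\vec{b}}$.
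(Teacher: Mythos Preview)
Your proposal is correct and follows the same route the paper indicates: the paper gives no explicit proof but simply notes that $x^{n_l}$ divides $h_{\vec{b}}(x)$ and asserts the analogue of Corollary~\ref{Cs:characteristic3}, which is precisely what you flesh out via the factorization $h_{\vec{b}}(x)=x^{n_l}\prod_k(x-a_{i_k}^2)^{n_k-1}\tilde{h}_{\vec{b}}(x)$ and the Lagrange-interpolation bijection for $\tilde{h}_{\vec{b}}$. Your observation that the extra factor of $x$ (beyond the obvious $x^{n_l-1}$) comes from the identity $\sum_k a_{i_k}^2(b'_k)^2=|\beta|^2$ is exactly the mechanism behind the paper's terse remark ``due to $a_n=0$, we see that $x^{n_l}$ divides $h_{\vec{b}}(x)$.''
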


\medskip

For the most degenerate case where $a_{1}=\dots=a_{n-1}=a_{n}=0$, one has $f=0$. Then, the image of
the moment map is equal to $\{0\}$.

%Note that in this case, due to $a_{n-1}=0$, then $0$ is always a singular value of $q(g_{\vec{b}}\cdot f)$.

\subsection{Non-semisimple coadjoint orbits}\label{SS:non-semisimple}

Non-semisimple coadjoint orbits of $\Spin(2n,1)$ can be thought of as limits of elliptic orbits.
For $a_1\geq \cdots\geq a_{n-1}\geq 0$, define $s_{\vec{a}}\in \mathfrak{g}$ by
\[s_{\vec{a}}=\diag\{a_1 H',\dots,a_{n-1}H' , U\},\ \text{ where }
U=\begin{pmatrix} 0 & 1 & 1 \\ -1 & 0 & 0 \\ 1 & 0 & 0\end{pmatrix}\]
and put $f=\iota(s_{\vec{a}})$.

We first assume that $a_1>a_2>\cdots >a_{n-1}>0$.
%The other cases are treated in \S\ref{SS:singular-nonelliptic}.
Then the coadjoint orbit $\mathcal{O}_f = G\cdot f$ is regular
 and is the limit of regular
 elliptic coadjoint orbits defined in \S\ref{SS:doubleCoset} when $a_n\to +0$.
The image of the moment map $q\colon \mathcal{O}_f \to \mathfrak{p}^*$
 is obtained along the same line as the arguments in Section~\ref{S:elliptic} for elliptic orbits.
Instead of the set $B$ in \S\ref{SS:doubleCoset}, we let
\begin{equation*}
%\label{Eq:B}
B=\Bigl\{\vec{b}=(b_1,\dots,b_{n}):b_1,\dots,b_{n-1}\geq 0,\
 \sum_{i=1}^{n-1} b_{i}^{2}=1-2b_{n}\Bigr\}.
\end{equation*}
Here, the condition $b_n\geq 0$ is not imposed.
Put $v'_0=[(0,\dots,0,-1,1)]\in X_{n}$.
Then
\begin{lemma}\label{L:T-orbits2}
The map
\[B\rightarrow(X_{n}-\{v'_{0}\})/G^f ,\quad
\vec{b}\mapsto (\bar{n}_{\vec{b}})^{-1} \cdot v_{0}\]
 is a bijection.
\end{lemma}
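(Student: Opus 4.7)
The plan is to follow the template of Lemma~\ref{L:Ts-orbits}, but with the non-compact factor of the stabilizer replaced by a unipotent one-parameter subgroup. First I would identify $G^f$ explicitly. The Jordan decomposition of $s_{\vec{a}}$ reads $s_{\vec{a}}=s+u$, with $s=\diag\{a_1H',\dots,a_{n-1}H',0_{3\times 3}\}$ elliptic and $u$ the embedding of $U$ as a principal nilpotent in the $\mathfrak{so}(2,1)$-block (one checks $U^2\neq 0$ and $U^3=0$). Since $[s,u]=0$, one has $G^f=Z_G(s)\cap Z_G(u)$. Under the assumption $a_1>\cdots>a_{n-1}>0$, the image of $Z_G(s)$ in $G_1$ is $\U(1)^{n-1}\times\SO_{e}(2,1)$, and the centralizer of the principal nilpotent $u$ in $\SO_{e}(2,1)$ is the one-parameter unipotent subgroup $\exp(\bbR u)$. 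Thus the image of $G^f$ in $G_1$ equals $\U(1)^{n-1}\times\exp(\bbR u)$, and since the $G$-action on $X_n$ factors through $G_1$ this is the effective acting group.

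The key observation is the existence of a simple $\exp(\bbR u)$-invariant on the light cone. A direct computation from $\exp(tu)=I+tu+\tfrac{t^2}{2}u^2$ gives
\[x_{2n-1}(t)=x_{2n-1}+t(x_{2n}+x_0),\qquad x_{2n}(t)+x_0(t)=x_{2n}+x_0,\]
while the first $2n-2$ coordinates are fixed. Hence $\ell(\vec{x}):=x_0+x_{2n}$ is preserved. Moreover, on the positive light cone one has $x_0=\sqrt{\sum_{i=1}^{2n} x_i^2}\geq |x_{2n}|$, with equality precisely for representatives of $v'_0$; consequently $\ell>0$ on $X_n-\{v'_0\}$ and one may always normalize $\ell=1$ there, while this normalization is impossible at $v'_0$. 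In particular, evaluating $\bar{n}_{\vec{b}}^{-1}\cdot v_0$ directly yields $\ell=(1-b_n)+b_n=1$, so the map of the lemma is well-defined into $X_n-\{v'_0\}$.

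For surjectivity, given $[\vec{x}]\in X_n-\{v'_0\}$, I would first rescale so that $\ell=1$, then apply $\exp(-x_{2n-1}u)$ to arrange $x_{2n-1}=0$ (which preserves the first $2n-2$ coordinates as well as $\ell$), and finally rotate each pair $(x_{2i-1},x_{2i})$ for $1\leq i\leq n-1$ by the corresponding $\U(1)$-factor to bring it into $(0,-b_i)$ with $b_i\geq 0$. Setting $b_n:=x_{2n}$ in the resulting representative matches the displayed formula for $\bar{n}_{\vec{b}}^{-1}\cdot v_0$, and the light-cone equation $x_0^2=\sum_i x_i^2$ combined with $x_0=1-b_n$ forces $\sum_{i=1}^{n-1} b_i^2=1-2b_n$, so $\vec{b}\in B$. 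Injectivity then follows because $(b_1,\dots,b_{n-1})$ are the norms of the $\U(1)$-pairs in any $\ell=1$ representative, while $b_n$ is the common value of $x_{2n}$ after the unique unipotent kill of $x_{2n-1}$, both of which are $G^f$-invariants. The main subtlety is the bookkeeping of signs under projective rescaling (in particular, verifying $\ell\geq 0$ on the positive cone with equality only at $v'_0$); no issue arises from disconnectedness of $G^f$, since $\exp(\bbR u)$ is contractible and each $\U(1)$-factor remains connected under the spin cover.
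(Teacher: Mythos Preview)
The paper states Lemma~\ref{L:T-orbits2} without proof, treating it as a routine variant of Lemmas~\ref{L:T-orbits} and~\ref{L:Ts-orbits}. Your argument is correct and supplies exactly the details one would expect: the identification of the image of $G^f$ in $G_1$ as $\U(1)^{n-1}\times\exp(\bbR U)$ via Jordan decomposition, the recognition that $u=\bar{X}_{e_{2n-1}}\in\bar{\mathfrak{n}}$, and the use of the linear invariant $\ell=x_0+x_{2n}$ (positive away from $v'_0$) to normalize representatives before reading off the torus-norm invariants $b_1,\dots,b_{n-1}$ and the residual coordinate $b_n$. One small sharpening of your injectivity clause: once $\ell=1$ and $x_{2n-1}=0$, the light-cone equation already forces $b_n=\tfrac{1}{2}\bigl(1-\sum_{i<n}b_i^2\bigr)$, so $b_n$ is not an independent invariant but is determined by the others---which is precisely the defining relation of $B$, so nothing is lost.
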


To obtain the image $q(\mathcal{O}_f)$, we follow the argument in \S\ref{SS:moment}.
Similarly to Lemma~\ref{L:gf}, we have
$q(\bar{n}_{\vec{b}}\cdot f)=\pr(X_{Y,\beta,0})$,
where
\begin{align*}
Y=\diag\{a_1H',\dots,a_{n-1}H',0\} \text{ and }
\beta=(-a_1b_1,0,\dots,-a_{n-1}b_{n-1},0,1).
\end{align*}
Then Lemma~\ref{L:Zb} holds if the matrix $Z$ there is replaced by
\begin{equation*}
\begin{pmatrix} a_{1}&\ldots&0&\frac{-a_{1}b_{1}}{|\beta|}\\
 \vdots&\ddots&\vdots& \vdots\\
 0&\ldots&a_{n-1}&\frac{-a_{n-1}b_{n-1}}{|\beta|}\\
 0&\ldots&0 &\frac{1}{|\beta|}\\
\end{pmatrix}-\frac{\gamma_{1}^{t}\gamma_{2}}{|\beta|^{2}},
\end{equation*}
where $\gamma_{1}=(a_{1}b_{1},\dots,a_{n-1}b_{n-1},-1)$,
$\gamma_{2}=(a_{1}^{2}b_{1}, \dots,a_{n-1}^{2}b_{n-1},0)$. Note that this matrix is the
limit of the matrix $Z$ in Lemma~\ref{L:Zb} when $b_{n}=a_{n}^{-1}$ and $a_{n}\to 0$.
As in Lemma~\ref{L:Zb2},
 the Pfaffian is
 $\Pf(Z_{\vec{b}})=\frac{1}{|\beta|}a_1\cdots a_{n-1}>0$.
As in Proposition~\ref{P:hb3},
\begin{equation*}
h_{\vec{b}}(x)
 = \sum_{1\leq i\leq n-1}\frac{a_{i}^{2}b_{i}^{2}}{|\beta|^{2}}\,\,
 x\!\!\!\! \prod_{1\leq j\leq n-1,\, j\neq i}(x-a_{j}^{2})
 + \frac{1}{|\beta|^{2}}\prod_{1\leq j\leq n-1}(x-a_{j}^{2}).
\end{equation*}
%Then the map $\vec{b}\to \vec{x}$
%the polynomial $h_{\vec{b}}(x)$

Analogously to Proposition~\ref{Ps:pf1}, we have
\begin{proposition}\label{Ps:pf4}
The image of the moment map $q(\mathcal{O}_f)$
 consists of one depth zero orbit $Pg'_{\infty}\cdot f$,
 and all depth one $P$-coadjoint orbits with singular values
 $(x_1,\dots, x_{n-1})$ such that
\[a_1\geq x_1\geq a_2\geq x_2\geq \cdots\geq
 a_{n-1}\geq x_{n-1}>0\]
 and with the positive Pfaffian.
Moreover, $q$ maps different $P$-orbits in $\mathcal{O}_{f}$
 to different $P$-orbits in $\mathfrak{p}^{\ast}$.
\end{proposition}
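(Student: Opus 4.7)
The plan is to run the argument of Proposition~\ref{Ps:pf1} in the degenerate limit where the elliptic parameter $a_n$ disappears, using Lemma~\ref{L:T-orbits2} in place of Lemma~\ref{L:Ts-orbits}. The essential structural differences are that $v_0$ is no longer a fixed point of $G^f$ (so the orbit $P\cdot f$ does not contribute separately, it is already covered by some $\bar{n}_{\vec{b}}\cdot f$), while $v'_0$ still is, and the parameter $b_n$ in $\vec{b}\in B$ is unconstrained in sign.

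First I would use Lemma~\ref{L:T-orbits2} to parametrize $P$-orbits in $\mathcal{O}_f$: each orbit is either $Pg'_\infty\cdot f$, or meets $\bar{n}_{\vec{b}}\cdot f$ at a unique $\vec{b}\in B$. A direct computation shows $\pr(g'_\infty \cdot s_{\vec{a}})\in \mathfrak{l}^*$, supplying the unique depth zero orbit in the image. For $\vec{b}\in B$, the vector $\beta=(-a_1b_1,0,\ldots,-a_{n-1}b_{n-1},0,1)$ is nonzero, so $q(\bar{n}_{\vec{b}}\cdot f)=\pr(X_{Y,\beta,0})$ lies in a depth one $P$-orbit. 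The Pfaffian formula recorded just before the proposition gives $\Pf(Z_{\vec{b}})=|\beta|^{-1}a_1\cdots a_{n-1}>0$, so all depth one orbits in the image have positive Pfaffian.

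Next I would analyze the polynomial $h_{\vec{b}}(x)$ displayed above the proposition to read off the singular values of $Y_{\vec{b}}$. At $x=a_i^2$ only the $i$-th term in the first sum survives, giving
\[ h_{\vec{b}}(a_i^2)=\frac{a_i^{4}b_i^{2}}{|\beta|^2}\prod_{j\neq i}(a_i^2-a_j^2), \]
of sign $(-1)^{i-1}$; at $x=0$ only the second term survives, giving $h_{\vec{b}}(0)=(-1)^{n-1}|\beta|^{-2}\prod_j a_j^2$, which is nonzero because each $a_j>0$. The resulting strict sign alternation forces exactly $n-1$ roots, one in each open interval $(0,a_{n-1}^2)$, $(a_{n-1}^2,a_{n-2}^2),\ldots,(a_2^2,a_1^2)$; in particular the smallest root is strictly positive, which is precisely the strict inequality $x_{n-1}>0$ in the statement. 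Possible collisions when some $b_i$ vanish (double roots at $a_i^2$) are treated exactly as in the paragraph following Corollary~\ref{C:hb4}.

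Finally I would promote this to the asserted bijection. By Lemmas~\ref{L:p-standard4}, \ref{p:standard5} and \ref{L:class-matrix}, the $P$-class of $q(\bar{n}_{\vec{b}}\cdot f)$ is determined by the singular values $(x_1,\ldots,x_{n-1})$ together with the sign of the Pfaffian; since the latter is always $+$, injectivity of $\vec{b}\mapsto P\cdot q(\bar{n}_{\vec{b}}\cdot f)$ reduces to injectivity of $\vec{b}\mapsto h_{\vec{b}}$, and from $h_{\vec{b}}(a_i^2)$ one recovers $b_i^2$ (hence $b_i\geq 0$) for $i<n$, while $b_n$ is then forced by the defining relation of $B$. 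Surjectivity onto the prescribed product of intervals proceeds as in Corollary~\ref{C:characteristic3}, by expanding any prescribed monic polynomial with the appropriate interlacing roots as a non-negative combination of the Lagrange-type basis evident in the formula for $h_{\vec{b}}$. The main obstacle I anticipate is bookkeeping in this last step: one must keep track of the fact that the $x=0$ evaluation contributes to the expansion only through the single constant term $1/|\beta|^2$ (with no associated free parameter of its own), and verify that the remaining $n-1$ free coefficients $a_i^2b_i^2/|\beta|^2$ can always be chosen non-negative to match the prescribed signs of $h(a_i^2)$; this is immediate from the sign analysis above but is where the non-elliptic/non-semisimple character of the orbit visibly enters the argument.
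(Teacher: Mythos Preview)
Your proposal is correct and follows exactly the template the paper intends: the paper states this proposition only with the remark ``Analogously to Proposition~\ref{Ps:pf1}'', and you have faithfully adapted the argument of \S\ref{SS:moment} and \S\ref{SS:moment2} to the non-semisimple setting, using Lemma~\ref{L:T-orbits2} in place of Lemma~\ref{L:T-orbits}, the formula for $h_{\vec{b}}$ displayed in \S\ref{SS:non-semisimple}, and the key observation that $h_{\vec{b}}(0)=(-1)^{n-1}|\beta|^{-2}\prod_j a_j^2\neq 0$ forces $x_{n-1}>0$. One tiny imprecision: from $h_{\vec{b}}(a_i^2)$ you recover $b_i^2/|\beta|^2$, not $b_i^2$ directly; you then need $h_{\vec{b}}(0)$ (or equivalently monicness) to extract $|\beta|^2$ before solving for $b_i$ and $b_n$, but this is immediate.
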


Notice that $x_{n-1}$ is strictly positive.
As an analogue of Lemma~\ref{L:proper1},
 we have
\begin{lemma}
For any compact set
 $\Omega\subset (\mathfrak{p}^{\ast}-\mathfrak{l}^{\ast})\cap q(\mathcal{O}_f)$,
 the inverse image $q^{-1}(\Omega)$ is compact.
\end{lemma}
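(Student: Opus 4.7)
The plan is to follow the argument of Lemma~\ref{L:proper1} after one preliminary step. Since $\Omega\cap\mathfrak{l}^{\ast}=\emptyset$ and the unique depth-zero orbit appearing in $q(\mathcal{O}_f)$, namely $Pg'_{\infty}\cdot f$, lies in $\mathfrak{l}^{\ast}$, we have $q^{-1}(\Omega)\subset\mathcal{O}_f-Pg'_{\infty}\cdot f$. By Lemma~\ref{L:T-orbits2} together with the $MAN$-decomposition, every element of $q^{-1}(\Omega)$ can then be written as $an'm\bar{n}_{\vec{b}}\cdot f$ with $a\in A$, $n'\in N$, $m\in M$ and $\vec{b}\in B$.

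The genuinely new feature compared with the elliptic case treated in Lemma~\ref{L:proper1} is that $B$ is non-compact: the defining equation $\sum_{i=1}^{n-1}b_i^{2}=1-2b_n$ bounds $b_n$ above by $\frac{1}{2}$ but not below. The preliminary step is to show that $\vec{b}$ nevertheless stays in a compact subset of $B$ on $q^{-1}(\Omega)$. For this I would argue as follows: the $P$-orbit of $q(\bar{n}_{\vec{b}}\cdot f)$ is determined by the singular values $(x_1,\dots,x_{n-1})$ of $Y_{\vec{b}}$, whose squares are the roots of the polynomial $h_{\vec{b}}(x)$ written out above, and the constant term satisfies
\[
|h_{\vec{b}}(0)|=\frac{a_1^{2}\cdots a_{n-1}^{2}}{|\beta|^{2}}.
\]
Using $|\beta|^{2}\geq 1+a_{n-1}^{2}\sum_{i=1}^{n-1}b_i^{2}=1+a_{n-1}^{2}(1-2b_n)$, we see that $|\vec{b}|\to\infty$ (equivalently $b_n\to-\infty$) forces $|\beta|\to\infty$ and hence $\prod x_i^{2}=|h_{\vec{b}}(0)|\to 0$; since the other $x_i$ lie in fixed intervals bounded away from $0$, this can only happen if $x_{n-1}\to 0$. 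By Proposition~\ref{Ps:pf4} the image of $q$ inside the space of $P$-orbit invariants lies in $[a_2,a_1]\times\cdots\times(0,a_{n-1}]$; since the orbit invariants depend continuously on the element, a compact $\Omega\subset\mathfrak{p}^{\ast}-\mathfrak{l}^{\ast}$ produces a compact set of tuples $\vec{x}$ inside this locally closed region, on which $x_{n-1}$ is bounded away from $0$. Hence $\vec{b}$ stays bounded.

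Once $\vec{b}$ is bounded, the remainder of the proof is a direct transcription of that of Lemma~\ref{L:proper1}. Writing $m\cdot q(\bar{n}_{\vec{b}}\cdot f)=\eta_1+\phi_n(\xi_1)$ with $\xi_1=\pr(Y)$, $Y\in\bar{\mathfrak{n}}$, and $n'=\exp(X)$ with $X\in\mathfrak{n}$, one has
\[
q(an'm\bar{n}_{\vec{b}}\cdot f)=\eta_1+\pr([X,Y])+e^{-\lambda_0(\log a)}\phi_n(\xi_1)\in\Omega.
\]
The bounds on $\vec{b}$ and the compactness of $M$ make $|\eta_1|$ and $|\xi_1|$ bounded above, while the explicit form $\beta=(-a_1b_1,0,\dots,-a_{n-1}b_{n-1},0,1)$ yields $|\xi_1|=|\beta|\geq 1$. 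Combined with $\Omega$ being compact and disjoint from $\mathfrak{l}^{\ast}$, the scaling relation $\xi=e^{-\lambda_0(\log a)}\xi_1$ forces $\log a$ to be bounded both above and below, and the $\mathfrak{l}^{\ast}$-component equation then bounds $|X|$. Therefore $q^{-1}(\Omega)$ is compact.

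The main obstacle is the preliminary step above, namely establishing that the parametrization $\vec{b}\mapsto\vec{x}$ is proper in the sense that every compact subset of its image that avoids the boundary $\{x_{n-1}=0\}$ pulls back to a compact subset of $B$; this reduces to the asymptotic analysis of $h_{\vec{b}}(0)$ outlined above. Once this is in place, the scaling argument of Lemma~\ref{L:proper1} applies verbatim.
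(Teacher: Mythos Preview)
Your proof is correct and is precisely the argument the paper has in mind. The paper itself gives no proof for this lemma, stating only that it is ``an analogue of Lemma~\ref{L:proper1}''; you have supplied the missing details, and in particular you have correctly isolated the one genuine modification: the parameter set $B$ is now non-compact (no lower bound on $b_n$), and it is exactly the extra hypothesis $\Omega\subset q(\mathcal{O}_f)$---absent from Lemma~\ref{L:proper1}---that forces $x_{n-1}$ to be bounded away from $0$ on $\Omega$, hence (via $\prod x_i^{2}=|h_{\vec{b}}(0)|=a_1^{2}\cdots a_{n-1}^{2}/|\beta|^{2}$) keeps $|\beta|$ bounded and $\vec{b}$ in a compact subset of $B$. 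Once that is in hand, the scaling argument from Lemma~\ref{L:proper1} goes through verbatim, using in addition that here $|\beta|\geq 1$ from the last coordinate.
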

Then Proposition~\ref{P:pf2} holds without change of words.

\medskip

Proposition~\ref{Ps:pf4} extends
 to the case where some of $a_1,\dots,a_{n-1}$ coincide and $a_{n-1}\neq 0$.
This case can be thought of as a limit of singular elliptic orbits treated
 in \S\ref{SS:singular-elliptic}.

\medskip

When $a_{n-1}=0$, the non-semisimple orbit $\mathcal{O}_f$ is
  a limit of singular elliptic orbits treated
 at the end of \S\ref{SS:singular-nonelliptic}.
Similarly to Corollary~\ref{Cs:characteristic3-an0},
 the range of zeros of $h_{\vec{b}}(x)$ becomes
\[
[a_{2},a_{1}]\times \cdots [a_j,a_{j-1}]\times (0,a_{j}] \times \{0\}^{n-j-1},
\]
where $a_j>a_{j+1}=0$.

\medskip

By replacing $U$ with $-U$, namely, taking $\diag\{a_1 H',\dots,a_{n-1}H' , -U\}$, we have other non-semisimple
orbits. This case is similar to the above. The only difference is that the Pfaffian in Proposition~\ref{Ps:pf4}
becomes negative.

\subsection{Moment map for $\Spin(2n-1,1)$}

Suppose that $m$ is even and $G=\Spin(2n-1,1)$.
Let $a_1\geq\cdots\geq a_{n-2}\geq|a_{n-1}|\geq 0$ and $a_{n}\geq 0$.
Write
\[t'_{\vec{a}}=
\begin{pmatrix}
 0&a_1&&&&&\\
 -a_{1}&0&&&&&\\
 &&\ddots&&&&\\
 &&&0&a_{n-1}&&\\
 &&&-a_{n-1}&0&&\\
 &&&&&0&a_{n}\\
 &&&&&a_{n}&0\\
\end{pmatrix}.\]
Put $f = \iota(t'_{\vec{a}})$.
Then $\mathcal{O}_{f}=G\cdot f$ is a semisimple coadjoint orbit in $\mathfrak{g}^{\ast}$.
Moreover, all semisimple coadjoint orbits in $\mathfrak{g}^{\ast}$ are of this form.
%Note that, there are no elliptic orbits now.
Let $q\colon \mathcal{O}_{f}\rightarrow\mathfrak{p}^{\ast}$ be the moment map.
Let $g_{\infty}=1$ and
 let $g'_{\infty}$ be a pre-image in $G$ of $\diag\{I_{2n-3},-1,-1,1\}\in G_{1}$.
The following proposition summarize the result concerning the moment map $q$.
It is analogous to
Propositions~\ref{Ps:pf1} and \ref{Ps:pf2},
 and can be proved along a similar line as them.

\begin{proposition}\label{P:Dn-qf}
The image of the moment map $q(\mathcal{O}_f)$ consists of depth zero orbit(s)
 $P\cdot q(f)$ and $P\cdot q(g'_{\infty}f)$
 (these are the same orbit if and only if $a_{n-1}=a_{n}=0$),
 and depth one coadjoint $P$-orbits with singular values
 $(x_1,\dots, x_{n-2})$ such that
\[a_1\geq x_1\geq a_2\geq x_2\geq \cdots\geq
 a_{n-2}\geq x_{n-2}\geq |a_{n-1}|.\]
If $g\in G$ and $g\cdot f \not\in Pf \cup Pg'_{\infty}f$,
 then the reduced space \[q^{-1}(q(g\cdot f))/\Stab_{P}(q(g\cdot f))\] is a singleton.

The moment map $q$ is weakly proper.
It is not proper unless $f=0$.
\end{proposition}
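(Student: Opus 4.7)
The plan is to mimic the argument for the odd $m$ case (Section~\ref{S:non-elliptic}) with the appropriate modifications for $\mathfrak{g}=\mathfrak{so}(2n-1,1)$. The main structural difference is that now $m$ is even, so the matrix $Z_{Y,\beta}$ of Lemma~\ref{L:class-matrix} is of odd size, carries no Pfaffian, and depth one $P$-orbits are classified purely by the singular values $(x_1,\dots,x_{n-2})$ as recorded at the end of \S\ref{SS:P-orbit2}.

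First I would parametrize the $P$-orbits in $\mathcal{O}_f$ via the double coset space $P\backslash G/G^f\cong G^f\backslash X_{n-1}$, where $X_{n-1}$ is the analogue of the space used in \S\ref{SS:doubleCoset} and \S\ref{SS:doubleCoset2}. For regular $\vec{a}$ (that is, $a_1>\dots>a_{n-2}>|a_{n-1}|>0$ and $a_n>0$) the stabilizer $G^f$ is the pre-image of a maximal torus whose only two fixed points in $X_{n-1}$ are $v_0$ and $v'_0=[(0,\dots,0,-1,1)]$; the remaining $T_s$-orbits are parametrized by a set $B'$ of tuples $\vec{b}$ with $|\vec{b}|=1$, giving representatives $\bar{n}_{\vec{b}}\cdot f$. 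Exactly two $P$-orbits, corresponding to the two fixed points, represent $g_\infty\cdot f=f$ and $g'_\infty\cdot f$; they coincide precisely when the element $g'_\infty$ can be absorbed by $G^f$, which is equivalent to $a_{n-1}=a_n=0$.

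Next, as in Lemma~\ref{L:gf}, I would compute $\Ad(\bar{n}_{\vec{b}})(t'_{\vec{a}})$ by the bracket formulas \eqref{Eq:brackets} to obtain $q(\bar{n}_{\vec{b}}\cdot f)=\pr(X_{Y,\beta,a_n})$ with explicit $Y$ and nonzero $\beta$, so that $q(\bar{n}_{\vec{b}}\cdot f)$ has depth one by Lemma~\ref{L:p-standard4}. Using Lemmas~\ref{L:p-standard4} and \ref{p:standard5} I replace $\bar{n}_{\vec{b}}\cdot f$ by a canonical form in $\mathfrak{l}^*_\xi+\tilde\xi$, then apply Lemma~\ref{L:class-matrix} and compute the characteristic polynomial $h_{\vec{b}}(x)$ of $(Z')^t Z'$, where $Z'$ is obtained from the reduced matrix as in Lemmas~\ref{L:Zb} and \ref{L:Zb3}. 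The analogue of Proposition~\ref{Ps:determinant} then gives $h_{\vec{b}}(a_i^2)$ in a product form, from which the interlacing statement
\[
a_1\geq x_1\geq a_2\geq x_2\geq\cdots\geq a_{n-2}\geq x_{n-2}\geq |a_{n-1}|
\]
follows along the same lines as Corollaries~\ref{C:hb4} and \ref{Cs:characteristic2}, and the map $\vec{b}\mapsto\vec{x}$ becomes a bijection onto the product of intervals in the statement, as in Corollaries~\ref{C:characteristic3} and \ref{Cs:characteristic3}. This yields the description of $q(\mathcal{O}_f)$ and shows that $q$ separates $P$-orbits, so reduced spaces over orbits not equal to $P\cdot f$ or $Pg'_\infty\cdot f$ are singletons.

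For weak properness I would reproduce the estimate in Lemma~\ref{Ls:proper1}: given a compact $\Omega\subset \mathfrak{p}^*-\mathfrak{l}^*$, write a preimage point as $an'm\bar{n}_{\vec{b}}\cdot f$ with $\vec{b}\in B'$, use compactness of $M$ and $B'$ to bound the $M$- and $\vec{b}$-components, exploit the fact that the $\mathfrak{n}^*$-component $\xi$ has norm bounded away from zero and above (since $\beta\neq 0$ is bounded away from $0$ on $B'$), and then use the $A$-action formula to bound $\log a$ and $X\in\mathfrak{n}$. Non-properness is immediate: a depth one orbit in $\mathfrak{p}^*$ has a depth zero orbit in its closure, so $q(\mathcal{O}_f)$ is not closed unless $f=0$. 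Finally, the singular cases (coincidences among $a_1,\dots,a_{n-2}$, or $a_{n-1}=0$, or $a_n=0$) are handled by replacing $B'$ with a smaller space $B'_l$ indexed by the block structure of $\Stab_G(f)$, exactly in the spirit of \S\ref{SS:singular-nonelliptic}; the case $a_{n-1}=a_n=0$ is where the two depth zero orbits collapse to one. The main obstacle is bookkeeping in these degenerations: verifying that the endpoints of the intervals for $x_i$ are actually attained, that $P\cdot f$ and $Pg'_\infty\cdot f$ fuse precisely under $a_{n-1}=a_n=0$, and that weak properness survives at the boundary where some $b_i$ vanish so the orbit approaches a lower stratum of $\mathfrak{p}^*$.
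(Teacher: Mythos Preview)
Your proposal is correct and follows essentially the same approach as the paper, which does not give a detailed proof but simply states that the result ``is analogous to Propositions~\ref{Ps:pf1} and \ref{Ps:pf2}, and can be proved along a similar line as them.'' Your outline faithfully carries out this adaptation: parametrize $P$-orbits via the $G^f$-action on the flag variety, compute $q(\bar{n}_{\vec{b}}\cdot f)$ and the characteristic polynomial $h_{\vec{b}}$, deduce interlacing and bijectivity as in Corollaries~\ref{Cs:characteristic2} and \ref{Cs:characteristic3}, and handle weak properness and non-properness exactly as in Lemma~\ref{Ls:proper1} and Propositions~\ref{P:pf2}, \ref{Ps:pf2}.
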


A similar result holds for non-semisimple orbits.

\section{Verification of Duflo's conjecture in the case of $\Spin(m+1,1)$}\label{S:Duflo}

The orbit method associates some unitary representations
 of Lie groups to coadjoint orbits.
With that, algebraic properties of representations are
 reflected by geometric properties of coadjoint orbits.
The celebrated Duflo's conjecture (Conjecture~\ref{C:Duflo}) gives a connection between
 the branching law of unitary representations and the moment map of coadjoint orbits.
Here we verify Conjecture~\ref{C:Duflo} in our setting.

\subsection{Tempered representations}\label{SS:tempered}
We follow Duflo's way of associating coadjoint orbits to tempered representations.

Suppose first that $m$ is odd and $G=\Spin(2n,1)$.
Put
\[
H'=\begin{pmatrix} 0 & 1 \\ -1 & 0\end{pmatrix},\
H=\begin{pmatrix} 0 & 1 \\ 1 & 0\end{pmatrix},
\text{ and }
U= \begin{pmatrix} 0 & 1 & 1 \\ -1 & 0 & 0 \\ 1 & 0 & 0\end{pmatrix}.
\]

Let
\[\gamma= \Bigl(a_{1}+n-\frac{1}{2}, a_{2}+n-\frac{3}{2},
 \dots,a_{n}+\frac{1}{2}\Bigr)\in\Lambda_{0}\]
be a regular integral weight so that
$a_1,\dots,a_n$ are all integers or all half-integers and
 $a_1\geq \dots \geq a_n \geq 0$.
Let $\pi^{+}(\gamma)$ (resp.\ $\pi^{-}(\gamma)$)
 be a discrete series representation of $G$
 with infinitesimal character $\gamma$ and
 the lowest $K$-type $V_{K,\lambda^+}$ (resp.\ $V_{K,\lambda^-}$),
 where
\[\lambda^+=(a_1+1,\cdots,a_{n}+1) \text{ and }
 \lambda^-=(a_1+1,\cdots,a_{n-1}+1,-(a_{n}+1)).\]
In light of Remark~\ref{R:weight-orbit},
 the orbit $\mathcal{O}$ associated to $\pi^{+}(\gamma)$ is
 $G\cdot \iota(t_{-\gamma})$, where
\[
t_{-\gamma}
= - \diag\Bigl\{\Bigl(a_1+n-\frac{1}{2}\Bigr)H',
 \Bigl(a_2+n-\frac{3}{2}\Bigr)H',\dots,
 \Bigl(a_n+\frac{1}{2}\Bigr)H',0 \Bigr\}.
\]
Putting
\[
\vec{a}'=(a'_1,\dots,a'_n)
=\Bigl(a_{1}+n-\frac{1}{2}, a_{2}+n-\frac{3}{2},
 \dots, (-1)^n\Bigl(a_{n}+\frac{1}{2}\Bigr)\Bigr),
\]
$t_{-\gamma}$ is $G$-conjugate to $t_{\vec{a}'}$.
Hence $\mathcal{O}=G\cdot \iota(t_{\vec{a}'})$.
Similarly, the orbit associated to $\pi^-(\gamma)$ is
 the coadjoint $G$-orbit through
\[
\iota\Bigl(
\diag\Bigl\{\Bigl(a_1+n-\frac{1}{2}\Bigr)H',
 \Bigl(a_2+n-\frac{3}{2}\Bigr)H',\dots,
 (-1)^{n-1} \Bigl(a_n+\frac{1}{2}\Bigr)H',0\Bigr\}\Bigr).
\]
%The representations $\pi^+(\gamma)$ and $\pi^-(\gamma)$
% descend to representations of
% $G_1=\SO(2n,1)$ if and only if $a_i\in \bbZ$.

When \[ \gamma = \Bigl(a_{1}+n-\frac{1}{2}, a_{2}+n-\frac{3}{2},
 \dots,a_{n-1}+\frac{3}{2},0\Bigr)\in\Lambda_{n}\]
and $\pi^{+}(\gamma)$ is a limit of discrete series,
The corresponding orbits are not semisimple.
It is the coadjoint $G$-orbit through
\[
\iota\Bigl(
\diag\Bigl\{\Bigl(a_1+n-\frac{1}{2}\Bigr)H',\dots,
 \Bigl(a_{n-1}+\frac{3}{2}\Bigr)H',
 (-1)^{n} U \Bigr\}\Bigr).
\]
For $\pi^{-}(\gamma)$, replace $(-1)^{n} U$ by $(-1)^{n-1}U$.

Next, let
\[I(\mu,\nu)
=\Ind_{MA\bar{N}}^{G}
(V_{M,\mu}\otimes e^{\nu-\rho'}\otimes\mathbf{1}_{\bar{N}})\]
 be a unitary principal series representation of $G$.
Write $\mu=(a_1,\dots,a_{n-1})$ and $\nu=\mathbf{i}a_{n}\lambda_0$.
Then the corresponding orbit $\mathcal{O}$ is
 the $G$-coadjoint orbit through
\[
\iota\Bigl(
\diag\Bigl\{\Bigl(a_1+n-\frac{3}{2}\Bigr)H',
 \Bigl(a_2+n-\frac{5}{2}\Bigr)H',\dots,
 (-1)^{n-1} \Bigl(a_{n-1}+\frac{1}{2}\Bigr)H',0,
 a_n H\Bigr\}\Bigr).
\]

For $P$-representations,
 let $V_{M',\mu}$ be an irreducible representation of $M'$
 with highest weight $\mu=(b_1,\dots,b_{n-1})$.
Let
 $I_{P,V_{M',\mu}}=\Ind_{M'N}^{P}(V_{M',\mu}\otimes e^{\mathbf{i}\xi_0})$
 be the unitarily induced representation of $P$.
Then the corresponding orbit is $P\cdot \pr (Z_{Y,\beta})$
 in the notation of \S\ref{SS:P-orbit2} such that the singular values
 of $Y$ are
\[
(x_1,\dots,x_{n-1})=(b_1+n-2,\,b_2+n-3,\dots,b_{n-2}+1,|b_{n-1}|)
\]
 and the sign of the Pfaffian of $Z_{Y,\beta}$ equals
 the sign of $(-1)^{n-1}b_{n-1}$.
%Note that the representation $I_{P,V_{M',\mu}}$
% descends to a representation of
% $P_1$ if and only if $b_i\in \bbZ$.

\begin{theorem}\label{T:Duflo1}
Let $P$ be a minimal parabolic subgroup $P$ of $G=\Spin(2n,1)$.
Let $\pi$ be a tempered representation of $G$,
 which is associated to a regular coadjoint orbit $\mathcal{O}\subset\mathfrak{g}^{\ast}$.
Write $q\colon \mathcal{O} \rightarrow\mathfrak{p}^{\ast}$ for the moment map.
\begin{enumerate}
\item[(1)] The restriction of $\bar{\pi}$ to $P$ decomposes
 into a finite direct sum of
 irreducible unitarily induced representations of $P$ from $M'N$,
 and this decomposition is multiplicity-free.
\item[(2)] Let $\tau$ be an irreducible unitarily induced representation of $P$ which is associated to a coadjoint orbit $\mathcal{O}'\subset\mathfrak{p}^{\ast}$.
Assume that $Z(G)$, the center of $G$, acts by the same scalar
 on $\pi$ and on $\tau$.
Then for $\tau$ to appear in $\pi|_{P}$
 it is necessary and sufficient that
 $\mathcal{O'}\subset q(\mathcal{O})$.
\item[(3)] The moment map $q\colon \mathcal{O}\rightarrow\mathfrak{p}^{\ast}$ is weakly proper,
 but not proper.
\item[(4)] The reduced space $q^{-1}(\mathcal{O'})/P$ is a singleton.
\end{enumerate}
\end{theorem}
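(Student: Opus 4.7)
The plan is to treat the four assertions in sequence, reducing (1) and (2) to the branching laws obtained in Section~\ref{S:resP}, and (3) and (4) to the moment map analysis of Sections~\ref{S:elliptic} and~\ref{S:non-elliptic}. First I would split the tempered representations $\pi$ of $G=\Spin(2n,1)$ into three mutually exclusive classes according to the Langlands parameter: (a) unitary principal series $I(\mu,\nu)$ with $\nu\in \mathbf{i}\mathfrak{a}^*$; (b) discrete series $\pi^{\pm}(\gamma)$ with $\gamma\in \Lambda_{0}$; (c) limits of discrete series $\pi^{\pm}(\gamma)$ with $\gamma\in \Lambda_{n}$. For case (a) the branching law is Theorem~\ref{T:branching-ps} and for (b), (c) it is Theorem~\ref{T:branching-ds}. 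In every case the right-hand side is a finite Hilbert direct sum of the form $\bigoplus_{\tau}\Ind_{M'N}^{P}(V_{M',\tau}\otimes e^{\mathbf{i}\xi_{0}})$ indexed by tuples $\tau=(b_{1},\dots,b_{n-1})$ satisfying interlacing relations with the parameters $a_{1},\dots,a_{n}$ of $\pi$, and each summand appears with multiplicity one. This gives (1) at once, and by Proposition~\ref{P:Mackey} the summands are exactly the non-trivial irreducible unitary representations of $P$ associated to regular coadjoint $P$-orbits.

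For (2) I match the branching parameters with the orbit parameters via the dictionaries recalled in \S\ref{SS:tempered}. On the $G$-side, the orbit $\mathcal{O}$ through $\iota(t_{\vec{a}'})$ (in the notation there) has the $\SO(m+1)$-type invariants $a_{1}+n-\tfrac{1}{2},\dots,\pm(a_{n}+\tfrac{1}{2})$ (up to the appropriate $\rho$-shift and sign choices associated to $\pi^{+}$ vs $\pi^{-}$, or to $\pi^{+}$ vs $\pi^{-}$ in $\Lambda_{n}$, or the absence of sign choice in the principal series case where the non-compact eigenvalue is $\mathbf{i}a_{n}$). On the $P$-side, $I_{P,V_{M',\mu}}$ with $\mu=(b_{1},\dots,b_{n-1})$ corresponds to the depth-one orbit whose singular values are $(b_{1}+n-2,\dots,b_{n-2}+1,|b_{n-1}|)$ with the sign of the Pfaffian equal to the sign of $(-1)^{n-1}b_{n-1}$. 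Then Propositions~\ref{P:pf1}, \ref{Ps:pf1}, together with their singular and non-semisimple analogues in \S\S\ref{SS:singular-elliptic}, \ref{SS:singular-nonelliptic}, \ref{SS:non-semisimple}, describe $q(\mathcal{O})$ precisely as the set of depth-one $P$-orbits whose singular values interlace the above $G$-side invariants (with the appropriate pinned Pfaffian sign in the elliptic case). Comparing with the interlacing inequalities in Theorems~\ref{T:branching-ps} and~\ref{T:branching-ds} after the $\rho$-shift, one sees that the interlacing on the $P$-orbit side is equivalent to the interlacing on the branching side, and the sign of $b_{n-1}$ matches the $\pm$ choice in $\pi^{\pm}(\gamma)$. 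The central character hypothesis serves only to align the integer/half-integer parity of the $b_{i}$ with that of the $a_{i}$, which is exactly the condition $b_{i}-a_{1}\in\mathbb{Z}$ appearing in the branching theorems.

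For (3) and (4) no further work is required: weak properness and the fact that every reduced space is a singleton are contained in Propositions~\ref{P:pf1}, \ref{P:pf2}, \ref{Ps:pf1}, \ref{Ps:pf2} and their singular extensions. Non-properness likewise follows there: the closure of $q(\mathcal{O})$ meets the depth-zero stratum $\mathfrak{l}^{\ast}$, which for regular $\mathcal{O}$ is not contained in $q(\mathcal{O})$, so $q(\mathcal{O})$ is not closed in $\mathfrak{p}^{\ast}$.

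The main obstacle is the bookkeeping in (2): one must keep careful track of the $\rho$-shift relating the Harish-Chandra parameter $\gamma$ to the infinitesimal character coordinates used in the orbit description, of the relation between the sign of the last singular value $x_{n-1}$ (or equivalently the sign of the Pfaffian on the $P$-side, and the sign of $a_{n}$ or of the $\pm$ subscript on the $G$-side), and of the parity of parameters controlled by the central character. Once these are unwound case by case for unitary principal series, discrete series (elliptic orbits) and limits of discrete series (non-semisimple orbits of the type treated in \S\ref{SS:non-semisimple}), each branching law becomes a formal consequence of the image description of $q$, and the theorem follows.
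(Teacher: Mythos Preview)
Your proposal is correct and follows essentially the same route as the paper: reduce (1) to Theorems~\ref{T:branching-ps} and~\ref{T:branching-ds}, reduce (3) and (4) to Propositions~\ref{P:pf2}, \ref{Ps:pf2}, \ref{Ps:pf4}, and for (2) translate both the branching inequalities and the moment-map image into interlacing conditions on the $\rho$-shifted parameters and match signs via the Pfaffian, exactly as the paper does case by case. One small inaccuracy: your explanation of non-properness (that $q(\mathcal{O})$ misses the depth-zero stratum) is only correct in the elliptic case; for non-elliptic regular orbits $q(\mathcal{O})$ does contain two depth-zero orbits (Proposition~\ref{Ps:pf1}), and non-properness there is instead deduced from non-compactness of $q^{-1}(q(f))$ as in Proposition~\ref{Ps:pf2}---but since you already cite that proposition, this does not affect the argument.
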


\begin{proof}
Statement (1) follows from Theorem \ref{T:branching-ds} for (limit of) discrete series
 and Theorem \ref{T:branching-ps} or \ref{T:branching-principal}
 for unitary principal series.
Statements (3) and (4) follow from
 Propositions \ref{P:pf2}, \ref{Ps:pf2}, \ref{Ps:pf4}.
It remains to show Statement (2), that is,
 to compare the restriction of tempered representations and
 the image of moment map of corresponding coadjoint orbits.

For $a_1\geq a_2\geq \cdots\geq a_{n}\geq -\frac{1}{2}$,
 \[\gamma=
\Bigl(a_{1}+n-\frac{1}{2},a_{2}+n-\frac{3}{2},
 \dots,a_{n}+\frac{1}{2}\Bigr)\in\Lambda_{0}\cup\Lambda_{n},\]
 the restriction of the (limit of) discrete series $\pi^+(\gamma)$ is given
 by Theorem \ref{T:branching-ds}:
\[\bar{\pi}^{+}(\gamma)|_{P}=\bigoplus_{\mu}I_{P,V_{M',\mu}}\]
 where $\mu=(b_1,\dots,b_{n-1})$ runs over tuples such that
\begin{equation}\label{Eq:interlacing}
a_{1}+1\geq b_{1}\geq a_{2}\geq
 \cdots\geq b_{n-2}\geq a_{n-1}+1\geq -b_{n-1}\geq a_{n}+1
\end{equation}
 and $b_i-a_1\in \bbZ$.
On the other hand, the moment map image of the corresponding orbit $\mathcal{O}$
 was studied in \S\ref{SS:moment} and \S\ref{SS:non-semisimple}.
Let $\mathcal{O}'$ be a coadjoint $P$-orbit which corresponds
 to a unitary representation $\tau=I_{P,V_{M',\mu}}$
 with $\mu=(b_1,\dots,b_{n-1})$.
Then the singular values for the $P$-orbit $\mathcal{O}'$ are
\[
(x_1,\dots,x_{n-1})=(b_1+n-1,\dots,b_{n-2}+1,|b_{n-1}|)
\]
 and the sign of the Pfaffian equals $\sgn (-1)^{n-1}b_{n-1}$.
Assume that
 the center $Z(G)$ acts on $\pi$ and
 $\tau$ by the same scalar, which is equivalent to $b_i-a_i\in \bbZ$.
Then by Proposition~\ref{P:pf1},
 $\mathcal{O}'\subset q(\mathcal{O})$ if and only if
\begin{align*}
 a_1+n-\frac{1}{2}\geq x_1 \geq a_2+n-\frac{3}{2} \geq x_2
\geq \cdots \geq a_{n-1}+\frac{3}{2}
 \geq x_{n-1}\geq a_{n}+\frac{1}{2}
\end{align*}
and the sign of Pfaffian equals $(-1)^n$.
Under our assumption $b_1-a_i\in \bbZ$,
 this is equivalent to
\eqref{Eq:interlacing}.
Therefore, Statement (2) for $\pi=\pi^+(\gamma)$ is proved.
The case of $\pi=\pi^-(\gamma)$ is similar.

For unitary principal series representations, use Theorem~\ref{T:branching-ps} and Proposition~\ref{Ps:pf1}.
\end{proof}

% (this corollary treats the case when $a_{n}\neq 0$,
% but we showed in \S\ref{SS:singular-nonelliptic} that the case when $a_{n}=0$ is similar \textcolor{red}{??}).

Suppose next that $m$ is even and $G=\Spin(2n-1,1)$.
Then the tempered representations of $\Spin(2n-1,1)$ are all unitary principal series.
A result similar to Theorem \ref{T:Duflo1} is implied by
 Theorem \ref{T:branching-ps2} and Proposition \ref{P:Dn-qf}.
The proof is along the same line as above.

\subsection{Non-tempered representations}\label{SS:nontempered}

We may associate non-tempered representations to
 some non-regular coadjoint orbits.
For example, the derived functor module $A_{\mathfrak{q}}(\lambda)$
 is associated to elliptic orbits.

Fix $1\leq j\leq n-1$.
Let $\mathfrak{q}_j$ be a $\theta$-stable parabolic subalgebra of $\mathfrak{g}_{\bbC}$
 whose Levi component has the real form isomorphic to
 $\mathfrak{u}(1)^j\oplus \mathfrak{so}(m-2j+1,1)$.
Then for $A_{\mathfrak{q}_j}(\lambda)$ with $\lambda=(a_1,\dots,a_j,0,\dots,0)$
 in the good range, we associate the singular elliptic coadjoint orbit through
\[
\iota\Bigl(\diag\Bigl\{\Bigl(a_1+\frac{m}{2}\Bigr)H',\dots,
 \Bigl(a_j+\frac{m}{2}-j+1\Bigr)H',0,\dots,0\Bigr\}\Bigr).
\]

The branching law of $\overline{A_{\mathfrak{q}_j}(\lambda)}|_{P}$ was obtained
 in \S\ref{SS:branchinglaw} and \S\ref{SS:branchinglaw2}.
According to the formulas there, we observe that
 $I_{P,V_{M',\mu}}$ occurs in the restriction
 only if $\mu$ is of the form $\mu=(b_1,\cdots,b_{j-1},0,\dots,0)$.
Then for $I_{P,V_{M',\mu}}$ with $\mu$ in this form, we associate the coadjoint $P$-orbit
 $P\cdot \pr(Z_{Y,\beta})$ such that the singular values of $Y$ are
\[\Bigl(b_1+\frac{m-1}{2},\cdots,b_{j-1}+\frac{m+3}{2}-j,0,\dots,0\Bigr).\]
We remark that this correspondence is different from one in \S\ref{SS:tempered}
 even for the same representation of $P$.
This is related to the fact that
 the same representation of a compact group
 can be cohomologically induced from different parabolic subalgebras.

By restricting our consideration to the coadjoint $P$-orbit of
 the above singular type for fixed $j$,
 an analogue of Theorem~\ref{T:Duflo1} for $A_{\mathfrak{q}_j}(\lambda)$
 follows from
 branching laws in \S\ref{SS:branchinglaw}, \S\ref{SS:branchinglaw2},
 and results about orbits, Corollary~\ref{Cs:characteristic3-an0}.

\appendix

\section{Unitary principal series representations}\label{S:principalSeries}

In Section~\ref{S:resP} we obtain branching laws for
 all irreducible unitary representations of $\Spin(m+1,1)$
 when restricted to $P$.
In Appendices~\ref{S:principalSeries} and \ref{S:trivial}
 we give more concrete description of
 the decomposition into $P$-representations
 for particular types of representations
 without using the arguments in \S\ref{SS:CW-Cloux}
 or the result of du Cloux~\cite{duCloux}.
We treat unitary principal series representations in Appendix~\ref{S:principalSeries}
 and representations with trivial infinitesimal character in Appendix~\ref{S:trivial}.

For a finite-dimensional irreducible unitary representation $\sigma$ of $M$ and a unitary character $\nu$ of $A$,
 we have a unitary principal series representation $\bar{I}(\sigma,\nu)$.
For $f\in\bar{I}(\sigma,\nu)$, let $f_{N}=f|_{N}$.
Through the map
\[\bar{I}(\sigma,\nu)\xrightarrow{\sim} L^{2}(N,V_{\sigma},\d n),\quad f\mapsto f_{N},\]
 one identifies $\bar{I}(\sigma,\nu)$ with $L^{2}(N,V_{\sigma},\d n)$.
The action of $P=MAN$ on $\bar{I}(\sigma,\nu)$
 induces its action on $L^{2}(N,V_{\sigma},\d n)$
 given by \eqref{Eq:P-action}.

As in \eqref{Eq:Ff},
 define the inverse Fourier transform of $f_{N}\in L^{2}(N,V_{\sigma},\d n)$
 as a function on $\mathfrak{n}^{\ast}$ by
\begin{equation*}
%\label{Eq:Ff}
\widehat{f_{N}}(\xi)=\mathcal{F}(f_{N})(\xi)=
(2\pi)^{-\frac{m}{2}}\int_{\bbR^{m}}e^{\mathbf{i}(\xi,x)}f(n_{x})\d x.
\end{equation*}
By the classical Fourier theory, map
\[f_{N}\mapsto \mathcal{F}(f_{N})=\widehat{f_{N}}\]
 gives an isomorphism of Hilbert spaces
\[\mathcal{F}\colon L^{2}(N,V_{\sigma},\d n)
 \rightarrow L^{2}(\mathfrak{n}^{\ast}-\{0\},V_{\sigma},\d \xi)
 \bigl(\simeq L^{2}(\mathfrak{n}^{\ast},V_{\sigma},\d \xi)\bigr).\]
The $P$-action on $\widehat{f_{N}}$ is given by \eqref{Eq:P-action2}.

Recall $\xi_0=(0,\dots,0,1)\in \mathfrak{n}^*$.
For $h\in L^{2}(\mathfrak{n}^{\ast}-\{0\},V_{\sigma},\d x)$,
 define a function $h_{at,\nu}$ on $P$ by $h_{at,\nu}(p)=(p^{-1}\cdot h)(\xi_0)$
 and then
\begin{align*}
h_{at,\nu}(p)
&=e^{-\mathbf{i}(\xi_0,x)}
 |\Ad^*(m_0a)(\xi_{0})|^{\frac{2\nu(H_0)+m}{2}}
 (\sigma(m_0)^{-1}h(\Ad^*(m_0a)\xi_{0}))
\end{align*}
for $p=m_0an_{x}\in P$ as in \eqref{Eq:anti-trivialization}.

\begin{proposition}\label{P:anti-trivialization}
The image of the map
 $L^{2}(\mathfrak{n}^{\ast}-\{0\},V_{\sigma},\d x)\ni h\mapsto h_{at,\nu}$ is equal to the representation space of the unitarily induced representation
 $\Ind_{M'N}^{MAN}(\sigma|_{M'}\otimes e^{\mathbf{i} \xi_{0}})$.
The map $h\mapsto h_{at,\nu}$ preserves the actions of $P$ and
 it preserves inner products up to scalar.
\end{proposition}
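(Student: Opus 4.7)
The plan is to mirror the proof of Lemma~\ref{L:anti-trivialization}, which already establishes the $P$-equivariance and the transformation law under $M'N$ in the smooth setting, and then to add the measure-theoretic computation needed in the $L^2$ setting. First, I would verify that for any $h\in L^{2}(\mathfrak{n}^{\ast}-\{0\},V_{\sigma},\d x)$ and any $(m',n_x)\in M'\times N$, the pointwise identity
\[h_{at,\nu}(pm'n_x) = (\sigma\otimes e^{\mathbf{i}\xi_0})(m',n_x)^{-1}h_{at,\nu}(p)\]
holds, using only that $\Ad^{\ast}(m')\xi_0=\xi_0$; this is the same computation as in Lemma~\ref{L:anti-trivialization} and does not require smoothness. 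Combined with the tautological $P$-equivariance $h_{at,\nu}(g^{-1}p)=(g\cdot h)_{at,\nu}(p)$, it follows that $h\mapsto h_{at,\nu}$ is a $P$-intertwiner with image inside the space of measurable sections of the bundle $P\times_{M'N}(\sigma|_{M'}\otimes e^{\mathbf{i}\xi_0})$.

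Second, I would set up the inverse map $h'\mapsto h'_{t,\nu}$ by the same formula as in the smooth case and verify that it is well-defined as an inverse at the level of measurable functions; this shows bijectivity once we check the $L^2$ matching below.

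Third, the main computation is to compare the two inner products via the orbit map $MA/M'\to \mathfrak{n}^{\ast}-\{0\}$, $[ma]\mapsto \Ad^{\ast}(ma)\xi_0$. Writing $r=|\Ad^{\ast}(ma)\xi_0|=e^{-\lambda_0(\log a)}$, polar coordinates $\xi=(r,\theta)$ on $\mathfrak{n}^{\ast}-\{0\}$, and $(t,\theta)$ with $t=\log r^{-1}$ on $A\times M/M'$, one sees that the pushforward of a left-invariant measure $\d_l ma = \d m\,\d t$ on $MA/M'$ equals $|\xi|^{-m}\d\xi$ up to a fixed positive constant. On the other hand, for $p=m_0a\in MA$ and $\xi=\Ad^{\ast}(m_0a)\xi_0$,
\[|h_{at,\nu}(m_0a)|_{\sigma}^{2}=|\xi|^{2\Re\nu(H_0)+m}\,|h(\xi)|_{\sigma}^{2}.\]
Since $\nu$ is a unitary character of $A$, $\Re\nu(H_0)=0$, so the factor reduces to $|\xi|^m$, which exactly cancels the Jacobian $|\xi|^{-m}$. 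Hence
\[\int_{MA/M'}|h_{at,\nu}(ma)|_{\sigma}^{2}\d_l ma
 = C\int_{\mathfrak{n}^{\ast}-\{0\}}|h(\xi)|_{\sigma}^{2}\d\xi\]
for some positive constant $C$ depending only on the normalization of measures.

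The resulting isometry (up to scalar) is then automatically surjective onto the completion defining $\Ind_{M'N}^{MAN}(\sigma|_{M'}\otimes e^{\mathbf{i}\xi_0})$, because $L^2$ sections compactly supported modulo $M'N$ lie in the image of $h\mapsto h_{at,\nu}$ with $h$ of compact support in $\mathfrak{n}^{\ast}-\{0\}$, and such $h$ are dense in $L^{2}(\mathfrak{n}^{\ast}-\{0\},V_{\sigma},\d x)$. The only nontrivial step is the Jacobian computation above; I expect no real obstacle, only bookkeeping of the measure on $MA/M'$ and the cancellation between the $|\xi|^{2\Re\nu(H_0)+m}$ weight in the definition of $h_{at,\nu}$ and the $|\xi|^{-m}$ Jacobian, which is precisely why the normalization $\nu-\rho'$ is built into $\bar{I}(\sigma,\nu)$.
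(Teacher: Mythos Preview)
Your proposal is correct and follows essentially the same approach as the paper: the $P$-equivariance and $M'N$-transformation law are handled by reference to Lemma~\ref{L:anti-trivialization}, and the isometry (up to scalar) comes from comparing the measure $\d_l ma$ on $MA/M'$ with $|\xi|^{-m}\d\xi$ on $\mathfrak{n}^{\ast}-\{0\}$ and cancelling against the factor $|\xi|^{m}$ in $|h_{at,\nu}|^2$. The only cosmetic difference is that the paper obtains $\d_l ma = c|\xi|^{-m}\d\xi$ in one line by invoking uniqueness of the $MA$-invariant measure on $MA/M'\cong \mathfrak{n}^{\ast}-\{0\}$, rather than computing in explicit polar coordinates as you do.
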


\begin{proof}
For $m_0a\in MA$, write $\xi=\Ad^*(m_0a)\xi_{0}$.
Since both $\d {}_{l}m_0a$ and $|\xi|^{-m}\d\xi$
 are $MA$ invariant measures on $MA/M'=\mathfrak{n}^{\ast}-\{0\}$,
 we have $\d {}_{l}m_0a=c|\xi|^{-m}\d\xi$ for some constant $c>0$.
Then $|h_{at,\nu}(m_0a)|^{2}=|\xi|^{m}|h(\xi)|^{2}$ and
 hence $\| h_{at,\nu}\|^{2}=c\|h\|^{2}$.
Therefore, the map $h\mapsto h_{at,\nu}$
 sends $L^{2}(\mathfrak{n}^{\ast}-\{0\},V_{\sigma},\d x)$
 to $\Ind_{M'N}^{MAN}(\sigma|_{M'}\otimes e^{\mathbf{i} \xi_{0}})$
 and it preserves inner products up to scalar.

The remaining assertions can be seen as in Lemma~\ref{L:anti-trivialization}.
\end{proof}

Proposition~\ref{P:anti-trivialization}
 gives the branching law for unitary principal series
 more directly than that in Section~\ref{S:resP}.

\begin{theorem}\label{T:branching-principal}
For a finite-dimensional unitary representation $\sigma$ of $M$, let
\[\sigma|_{M'}=\bigoplus_{j=1}^s \tau_{j}\]
be the decomposition of $\sigma$ into a direct sum of irreducible unitary representations of $M'$.
Then,
\[\bar{I}(\sigma,\nu)|_{P}=\bigoplus_{j=1}^s I_{P,\tau_{j}}.\]
\end{theorem}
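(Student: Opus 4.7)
The plan is to chain together the three isomorphisms set up in the paragraphs preceding the statement and then use additivity of induction over direct sums. More precisely, I want to produce an explicit unitary $P$-equivariant isomorphism
\[
\bar I(\sigma,\nu)\xrightarrow{\;\sim\;} \Ind_{M'N}^{P}(\sigma|_{M'}\otimes e^{\mathbf{i}\xi_{0}}),
\]
and then observe that the right hand side decomposes according to the $M'$-decomposition of $\sigma|_{M'}$.

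First I would recall that the restriction-to-$N$ map $f\mapsto f_{N}$ identifies $\bar I(\sigma,\nu)$ with $L^{2}(N,V_{\sigma},\d n)$ as a Hilbert space, and that the $P$-action transports to the formulas in \eqref{Eq:P-action}. Next I would apply the Euclidean Fourier transform $\mathcal{F}$ to land in $L^{2}(\mathfrak{n}^{\ast}-\{0\},V_{\sigma},\d \xi)$; this is a unitary isomorphism and by construction the induced $P$-action is exactly \eqref{Eq:P-action2}. Third, I invoke Proposition~\ref{P:anti-trivialization}: the anti-trivialization map $h\mapsto h_{at,\nu}$ is a $P$-equivariant isomorphism (unitary up to a positive scalar) from $L^{2}(\mathfrak{n}^{\ast}-\{0\},V_{\sigma},\d \xi)$ onto the representation space of $\Ind_{M'N}^{P}(\sigma|_{M'}\otimes e^{\mathbf{i}\xi_{0}})$. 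Composing the three yields the desired isomorphism
\[
\bar I(\sigma,\nu)|_{P}\cong \Ind_{M'N}^{P}(\sigma|_{M'}\otimes e^{\mathbf{i}\xi_{0}}).
\]

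To finish, I use that unitary induction from a closed subgroup commutes with Hilbert direct sums in the inducing data: applying $\Ind_{M'N}^{P}(-\otimes e^{\mathbf{i}\xi_{0}})$ to $\sigma|_{M'}=\bigoplus_{j=1}^{s}\tau_{j}$ gives $\bigoplus_{j=1}^{s}\Ind_{M'N}^{P}(\tau_{j}\otimes e^{\mathbf{i}\xi_{0}})=\bigoplus_{j=1}^{s} I_{P,\tau_{j}}$. Since each $\tau_{j}$ is irreducible, each summand $I_{P,\tau_{j}}$ is one of the irreducible unitary representations of $P$ classified in Proposition~\ref{P:Mackey}, so this is the irreducible decomposition.

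There is essentially no obstacle here beyond bookkeeping: all the analytical work (showing that the $P$-action on the Fourier-transformed picture really is \eqref{Eq:P-action2}, and that the anti-trivialization lands in the correct induced representation and is unitary up to scalar) has already been carried out above the statement. The only small point to be careful about is that $\mathcal{F}$ gives $L^{2}(\mathfrak{n}^{\ast},V_{\sigma},\d\xi)$, which differs from $L^{2}(\mathfrak{n}^{\ast}-\{0\},V_{\sigma},\d\xi)$ by a subspace supported on the measure-zero set $\{0\}$; this is harmless.
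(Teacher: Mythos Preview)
Your proof is correct and follows essentially the same route as the paper's own proof: realize $\bar I(\sigma,\nu)$ in the non-compact picture $L^{2}(N,V_{\sigma},\d n)$, apply the inverse Fourier transform to pass to $L^{2}(\mathfrak{n}^{\ast}-\{0\},V_{\sigma},\d\xi)$ with $P$-action \eqref{Eq:P-action2}, and then apply the anti-trivialization of Proposition~\ref{P:anti-trivialization} to identify the result with $\Ind_{M'N}^{P}(\sigma|_{M'}\otimes e^{\mathbf{i}\xi_{0}})$. Your treatment is slightly more detailed than the paper's (you spell out the direct-sum compatibility of induction and the measure-zero issue at $\xi=0$), but the argument is the same.
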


\begin{proof}
By realizing $\bar{I}(\sigma,\nu)$ with the non-compact picture of induced representation, we identify $\bar{I}(\sigma,\nu)$ with $L^{2}(N,V_{\sigma},\d n)$.
From \eqref{Eq:P-action},
 we know the action of $P$ on $L^{2}(N,V_{\sigma},\d n)$.
Taking the inverse Fourier transform
  the action of $P$ on the Fourier transformed picture is given as \eqref{Eq:P-action2}.
Applying the anti-trivialization, by Proposition \ref{P:anti-trivialization}
 we identify $\bar{I}(\sigma,\nu)$
 with $\Ind_{M'N}^{MAN}(\sigma|_{M'}\otimes e^{\mathbf{i}\xi_{0}})$.
Hence the conclusion follows.
\end{proof}

\section{Restriction to $P$ of irreducible representations of $G$ with trivial infinitesimal character
and some complementary series}\label{S:trivial}

In this section we study
 representations of $G$ (or $G_2$) with trivial infinitesimal character
 and some complementary series representations in detail.
In particular we prove Proposition~\ref{P:P-restriction2}, a branching law
 for a discrete series, which was used in \S\ref{SS:branchinglaw}.

\subsection{Principal series with infinitesimal character $\rho$}\label{SS:rep-O(m+1,1)}

Let \[m>1 \text{ and } n:= \Bigl\lfloor \frac{m}{2} \Bigr\rfloor +1\] and use the notation in Section~\ref{S:repP}.

%Let $V_{M,\mu}$ denote a finite-dimensional irreducible unitary representation of $M$ with highest weight $\mu$.
%It descends to $M_{1}$ if and only if
% $\mu\in\spa_{\mathbb{Z}}\{\epsilon_{1},\dots,\epsilon_{n-1}\}$.
%If it descends, then there are four ways in
%extending $V_{M,\mu}$ to an irreducible unitary representation of
% $M_{2}=\OO(m)\times\Delta_{2}(\OO(1))$.

For each $0\leq j\leq n-1$,
 define an representation of $M_{2}=\OO(m)\times\Delta_{2}(\OO(1))$,
 denoted by $(\sigma_j, V_j)$ as
\[
V_j:=\bigwedge^{j}\mathbb{C}^{m}
\]
on which $\OO(m)$ acts naturally and $\Delta_{2}(\OO(1))$
 acts trivially.
The restriction $V_j|_{M_1}$ is irreducible and has the highest weight
\begin{equation*}
\mu_{j}=(\underbrace{1,\dots,1}_{j},\underbrace{0,\dots,0}_{n-j-1}).
\end{equation*}
Put
\begin{align*}
%\label{Eq:inducedRep2}
 &I_{j}(\nu):=\Ind_{M_{2}A\bar{N}}^{G_{2}}(V_j\otimes e^{\nu-\rho'}\otimes \mathbf{1}_{\bar{N}}).
\end{align*}
%Then $I_{j}(\nu)$ consists of $K_{2}$-finite functions in the smoothly induced representation.

\subsection{Normalized Knapp-Stein intertwining operators}\label{SS:intertwining}

%Since $N$ is an open and dense subset of $G_{2}/\bar{P}_{2}$, a continuous function $f\in I_{\delta,\epsilon}
%(\mu,\nu)$ is determined by its restriction to $N$. For any $f\in I_{\delta,\epsilon}(\mu,\nu)$, write
%\begin{equation}\label{Eq:fN}f_{N}=f|_{N}.\end{equation} The association $f\mapsto f_{N}$ is called the
%non-compact picture of an induced representation.

The formal Knapp-Stein intertwining operator \[J'_{j}(\nu):I_{j}(-\nu)\rightarrow I_{j}(\nu)\] is defined by
\begin{equation}\label{Eq:intertwining3}(J'_{j}(\nu)f)(g)=\int_{N}f(gsn)\d n
\end{equation}
 for $f\in I_{j}(-\nu)$ and $g\in G_{2}$, where $s=\diag\{I_{m},-1,1\}$ as in (\ref{Eq:s}).

\begin{proposition}\label{L:intertwining1}
When $\Re\nu(H_0)>0$, one has
\begin{equation}\label{Eq:Jf1}(J'_{j}(\nu)f)(n_{x})=\int_{\mathbb{R}^{m}}
|y|^{-2(\rho'-\nu)(H_{0})}\sigma_j(r_{y})f(n_{x-y})\d y
\end{equation}
 for $x\in\mathbb{R}^{m}$, and the integral on the right hand side converges absolutely.

When $f|_{K_{2}}$ is fixed, the value of $(J'_{j}(\nu)f)(n_{x})$ varies holomorphically with respect to $\nu$ when
$\Re\nu(H_0)>0$. %and it admits a meromorphic continuation to all $\nu\in\mathfrak{a}^{\ast}_{\mathbb{C}}$.
\end{proposition}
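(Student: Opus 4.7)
The plan is to start from the defining integral~\eqref{Eq:intertwining3}, parametrize $N$ by $\mathbb{R}^m$ via $n\leftrightarrow n_y$, and explicitly Iwasawa-decompose $sn_y$ using the ambient group $G_2$. For $y\neq 0$, Lemma~\ref{L:barn-iwasawa} gives $sn_y = n_{y/|y|^2}\, r_y\, e^{-(2\log|y|)H_0}\,\bar{n}_{y/|y|^2}$, hence $n_x s n_y = n_{x+y/|y|^2}\, r_y\, e^{-(2\log|y|)H_0}\,\bar{n}_{y/|y|^2}$. Since $f\in I_j(-\nu)$ satisfies $f(g m a\bar{n})=\sigma_j(m)^{-1}e^{(\nu+\rho')\log a}f(g)$ and $r_y^{-1}=r_y$, this produces $f(n_x s n_y)=\sigma_j(r_y)\,|y|^{-2(\nu+\rho')(H_0)}\,f(n_{x+y/|y|^2})$. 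The spherical inversion $z=y/|y|^2$ satisfies $|y|=1/|z|$, $r_y=r_z$, and has Jacobian $dy=|z|^{-2m}dz$; composing with $z\mapsto -z$ transforms the integral~\eqref{Eq:intertwining3} into the form stated in~\eqref{Eq:Jf1}, at least at the level of formal manipulation.

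Next I would verify absolute convergence of~\eqref{Eq:Jf1} for $\Re\nu(H_0)>0$. The essential input is a pointwise bound for $f$ at infinity on $N$: Lemma~\ref{L:Iwasawa} writes $n_x=s_x\, e^{-\log(1+|x|^2)H_0}\,\bar{n}_{x/(1+|x|^2)}$ with $s_x\in K_2$, so the transformation rule yields $f(n_x)=(1+|x|^2)^{-(\nu+\rho')(H_0)}\,f(s_x)$, whence $|f(n_x)|\lesssim (1+|x|^2)^{-\Re\nu(H_0)-m/2}$ with constant controlled by $\sup_{K_2}|f|$. The integrand in~\eqref{Eq:Jf1} is then dominated by $|y|^{2\Re\nu(H_0)-m}(1+|x-y|^2)^{-\Re\nu(H_0)-m/2}$: near $y=0$ this is locally $L^1$ exactly when $\Re\nu(H_0)>0$, and for large $|y|$ the two factors combine, modulo lower order, into $|y|^{-2m}$, which is integrable on $\mathbb{R}^m$ in polar coordinates. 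This simultaneously justifies all the formal manipulations in the first step and secures the stated convergence range.

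Finally, holomorphicity in $\nu$ would follow by a standard argument. With $f|_{K_2}$ fixed, the integrand of~\eqref{Eq:Jf1} is pointwise holomorphic in $\nu$, and the estimate above is locally uniform on compact subsets of $\{\Re\nu(H_0)>0\}$. Dominated convergence gives continuity of $\nu\mapsto (J'_j(\nu)f)(n_x)$, and combining Morera's theorem with Fubini (to interchange a triangle integral in $\nu$ with the $y$-integral) yields holomorphicity. The main obstacle is really only bookkeeping: tracking the $\nu$-dependent exponents through the spherical inversion and matching them against the Iwasawa asymptotic of $f(n_x)$ to pin down the precise range $\Re\nu(H_0)>0$; once this is done, both the integral formula and the holomorphic dependence are immediate.
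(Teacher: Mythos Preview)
Your proposal is correct and follows essentially the same route as the paper: use Lemma~\ref{L:barn-iwasawa} to rewrite $f(n_x s n_y)$, perform the spherical inversion $z=y/|y|^2$ (with Jacobian $|z|^{-2m}$) followed by $z\mapsto -z$, and then split the convergence analysis into the regions near $y=0$ and near infinity using the Iwasawa bound $|f(n_x)|\le C(1+|x|^2)^{-(\rho'+\Re\nu)(H_0)}$. Your Morera/Fubini justification of holomorphicity is a slightly more explicit version of the paper's one-line appeal to absolute convergence, but the content is identical.
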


\begin{proof}
Using Lemma \ref{L:barn-iwasawa}, one has
\begin{align*}
(J'_{j}(\nu)f)(n_{x})
&=\int_{\mathbb{R}^{m}}
 f(n_{x}sn_{y})\d y\\
&=\int_{\mathbb{R}^{m}}\sigma_j(r_{y})^{-1}e^{(-2\log|y|)(\rho'+\nu)(H_{0})}
 f(n_{x+\frac{y}{|y|^{2}}})\d y\\
&=\int_{\mathbb{R}^{m}}\sigma_j(r_{y})^{-1}e^{(2\log|y|)(\rho'+\nu)(H_{0})}
|y|^{-4\rho'(H_0)}f(n_{x-y})\d y\\
&=\int_{\mathbb{R}^{m}}|y|^{-2(\rho'-\nu)(H_{0})}\sigma_j(r_{y})^{-1}f(n_{x-y})\d y.
\end{align*}
Note that $r_y^2=I$ and hence $\sigma_j(r_{y})^{-1}=\sigma_j(r_{y})$.

By Lemma \ref{L:Iwasawa}, for $f\in I_{j}(-\nu)$,
\begin{equation*}
%\label{Eq:fnx}
f(n_{x})=(1+|x|^{2})^{-(\rho'+\nu)(H_{0})}f(s_{x}).
\end{equation*}
Let $C=\max_{x\in K_{2}}|f(x)|$. Then,
\[|f(n_{x})|\leq C(1+|x|^{2})^{-(\rho'+\nu)(H_{0})}\] for all $x\in\mathbb{R}^{m}$.
Since \[(-2(\rho'-\nu)H_{0})+(-2(\rho'+\nu)(H_{0}))=-4\rho'(H_0)=-2m<-m,\]
the integral in (\ref{Eq:Jf1}) converges absolutely in the range $|y|\geq |x|+1$.
For a fixed $x$,  $\sigma_j(r_{y})f(n_{x-y})$ is bounded
 in the range $|y|\leq |x|+1$.
By \[-\Re2(\rho'-\nu)(H_{0})=-m+2\Re\nu(H_0)>-m,\]
 the integral in (\ref{Eq:Jf1}) also converges absolutely on $|y|\leq |x|+1$.
By the absolute convergence, the integral on the right hand side of (\ref{Eq:Jf1}) varies holomorphically with respect to $\nu$.
\end{proof}

%By the classical Knapp-Stein theory (cf. \cite{Knapp}),  $(J'_{j}(\nu)f)(n_{x})$ admits a meromorphic continuation to
%all $\nu$ when $f|_{K_{2}}$ is fixed.

The Knapp-Stein intertwining operator in the current
 setting was studied in \cite[\S8.3]{Kobayashi-Speh}.
Following \cite[(8.12)]{Kobayashi-Speh}, define
 an $\End(V_j)$-valued distribution on $\mathbb{R}^{m}$ by
\begin{equation}
\label{Eq:Tdistribution}
(T_{j}(\nu))(x)=\frac{1}{\Gamma(\nu(H_0))}|x|^{-2(\rho'-\nu)(H_{0})}\sigma_j(r_{x}).
\end{equation}
Define
\begin{equation*}
%\label{Eq:intertwining4}
(J_{j}(\nu)f)(n_{x})=\int_{\mathbb{R}^{m}}(T_{j}(\nu))(y)f(x-y)\d y.
\end{equation*}
We call $J_{j}(\nu)$ the {\it normalized Knapp-Stein intertwining operator.} The following is
 \cite[Lemma 8.7]{Kobayashi-Speh}, which is implied by Lemma \ref{L:Riesz}.

%When $\Re\nu(H_0)>0$, then the integral in (\ref{Eq:intertwining4}) converges absolutely and \[J_{\delta,
%\epsilon}(\mu,\nu)f=\frac{1}{\Gamma(\nu(H_0))}J'_{\delta,\epsilon}(\mu,\nu)f.\] Therefore, we obtain a new
%intertwining operator \[J_{\delta,\epsilon}(\mu,\nu):I_{\delta,\epsilon}(\mu,-\nu)\rightarrow I_{\delta,
%\epsilon}(\mu,\nu).\]

\begin{lemma}\label{L:continuation}
Both the distribution $T_{j}(\nu)$ and the intertwining operator $J_{j}(\nu)$ admit holomorphic continuations to the
whole $\mathfrak{a}^{\ast}_{\mathbb{C}}$.
\end{lemma}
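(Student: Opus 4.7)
The plan is to reduce the continuation of $T_j(\nu)$ to the Riesz-distribution machinery (Lemma \ref{L:Riesz}) and deduce that of $J_j(\nu)$ via Proposition~\ref{L:intertwining1}. The first step is to exploit the scale invariance $r_{tx}=r_x$ ($t\neq 0$): since the entries of $r_x=I-\tfrac{2}{|x|^2}x^{t}x$ are rational functions homogeneous of degree zero, the $j\times j$ minors of $r_x$ are polynomials in $x_1,\dots,x_m$ of degree at most $2j$ divided by $|x|^{2j}$. Thus one can write $\sigma_j(r_x)=|x|^{-2j}P_j(x)$ with $P_j$ an $\End(V_j)$-valued polynomial of degree at most $2j$, and consequently
\[
T_j(\nu)(x)=\frac{1}{\Gamma(\nu(H_0))}|x|^{2\nu(H_0)-m-2j}P_j(x),
\]
which is the form to which Lemma \ref{L:Riesz} applies.

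Next, I would examine the polar structure. The tempered distribution $|x|^{s}$ on $\mathbb{R}^m$ extends meromorphically in $s$ with simple poles at $s=-m,-m-2,\dots$, whose residues are multiples of iterated Laplacians of $\delta_0$. For our exponent $s=2\nu(H_0)-m-2j$, potential poles occur at $\nu(H_0)\in\{j,j-1,\dots\}$. The factor $1/\Gamma(\nu(H_0))$ annihilates every pole at $\nu(H_0)\in\{0,-1,-2,\dots\}$, so only $\nu(H_0)\in\{1,\dots,j\}$ remain problematic; there one has to verify that the residue of $P_j(x)|x|^{s}$ itself vanishes, which involves derivatives of $P_j$ at the origin along a pattern dictated by the reflection structure of $\sigma_j$ on $\bigwedge^{j}\mathbb{C}^m$. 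This algebraic input is precisely the content of Lemma \ref{L:Riesz}, and the residue-cancellation here is the main obstacle I anticipate.

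Finally, once $T_j(\nu)$ has been established as a holomorphic family of tempered distributions on $\mathbb{R}^m$, the operator $J_j(\nu)$ inherits holomorphy. In the non-compact picture $(J_j(\nu)f)(n_x)$ is the convolution of $T_j(\nu)$ with $y\mapsto f(n_{x-y})$, and by the Iwasawa formula of Lemma \ref{L:Iwasawa} the function $f_N(x)=(1+|x|^2)^{-(\rho'+\nu)(H_0)}f(s_x)$ is, for fixed $f|_{K_2}$, a holomorphic function of $\nu$ whose decay at infinity is controlled uniformly in $\nu$ on compacta. The pairing of a holomorphic family of tempered distributions against such a holomorphic family of Schwartz-class test functions is itself holomorphic, which yields the continuation of $J_j(\nu)$ to all of $\mathfrak{a}^{\ast}_{\mathbb{C}}$.
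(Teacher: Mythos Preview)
Your reduction to Riesz distributions is the right idea and matches the paper's one-line argument (the paper simply cites \cite[Lemma~8.7]{Kobayashi-Speh} and says it is implied by Lemma~\ref{L:Riesz}). But two steps in your execution are not correct.

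First, the pole accounting. You write $\sigma_j(r_x)=|x|^{-2j}P_j(x)$ with $P_j$ of degree ``at most $2j$'' and then worry about residual poles at $\nu(H_0)\in\{1,\dots,j\}$ arising from lower-degree terms of $P_j$. But you already observed that $r_{tx}=r_x$; this means $\sigma_j(r_x)$ is homogeneous of degree $0$, so $P_j$ is homogeneous of degree \emph{exactly} $2j$. Even more concretely, since $I-r_x=\tfrac{2}{|x|^2}x^{t}x$ has rank one, the exterior-power expansion terminates after one step (cf.\ the computation in the proof of Lemma~\ref{L:Riesz3}):
\[
\sigma_j(r_x)=I+|x|^{-2}Q(x),\qquad Q\ \text{an $\End(V_j)$-valued homogeneous polynomial of degree }2.
\]
Hence $T_j(\nu)$ is a finite linear combination of $\Gamma(\nu(H_0))^{-1}|x|^{2\nu(H_0)-m}$ and $\Gamma(\nu(H_0))^{-1}x_kx_l\,|x|^{2\nu(H_0)-m-2}$, and for each of these the only poles occur at $\nu(H_0)\in\{0,-1,-2,\dots\}$, all killed by the Gamma factor. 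There is nothing to cancel at $\nu(H_0)\in\{1,\dots,j\}$, and Lemma~\ref{L:Riesz} contains no ``reflection-structure'' input of the kind you describe; it is a purely scalar statement about $|x|^{-\alpha}$.

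Second, in the last paragraph $f_N$ is \emph{not} of Schwartz class: by Lemma~\ref{L:Iwasawa} one has $f(n_x)=(1+|x|^2)^{-(\rho'+\nu)(H_0)}f(s_x)$, which decays only polynomially. So ``pairing a holomorphic family of tempered distributions against a Schwartz family'' is not what is happening, and that sentence does not justify the continuation of $J_j(\nu)$. A correct argument is to split $T_j(\nu)=\chi T_j(\nu)+(1-\chi)T_j(\nu)$ with $\chi$ a cutoff equal to $1$ near $0$: the first piece is a compactly supported distribution depending holomorphically on $\nu$, so its convolution with the smooth function $f_N$ is holomorphic; the second piece is the smooth function $\Gamma(\nu(H_0))^{-1}|y|^{2\nu(H_0)-m}\sigma_j(r_y)$ away from $0$, and since $|T_j(\nu)(y)|\cdot|f(n_{x-y})|\lesssim |y|^{-2m}$ as $|y|\to\infty$, the remaining integral converges absolutely and depends holomorphically on $\nu$.
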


%define \[(\cdot,\cdot):(V_{M,\mu}^{(\delta,\epsilon)}\otimes\mathbb{C}_{-\rho'-\nu})
%\times(V_{M,\mu}^{(\delta,\epsilon)}\otimes\mathbb{C}_{-\rho'+\nu})\rightarrow\mathbb{C}\] by \[(v\otimes t,
%v'\otimes t')=t\overline{t'}(v,v')_{\mu},\] where $v,v'\in V_{M,\mu}^{(\delta,\epsilon)}$, $t,t'\in\mathbb{C}$,
%and $(v,v')_{\mu}$ is an invariant inner product on $V_{M,\mu}$. Then, the pairing $(\cdot,\cdot)$ is a
%esquilinear form: it is linear on the first variable and conjugate-linear on the second variable.

Assume $\nu(H_{0})\in \bbR$.
For two functions $f,h$ in the image of $J_{j}(\nu)$,
 choose $\tilde{f},\tilde{h}\in I_{j}(-\nu)$
such that $J_{j}(\nu)\tilde{f}=f$ and $J_{j}(\nu)\tilde{h}=h$.
Define
\begin{equation}\label{Eq:Hermitian1}(f,h):=(\tilde{f}|h)=
\int_{\mathbb{R}^{m}}(\tilde{f}(n_{x}),h(n_{x}))\d x,\end{equation}
 where $(\tilde{f}(n_{x}),h(n_{x}))$ is the inner product on
 $V_{M,\mu_{j}}$.
By the following lemma, $(f,h)$ is a well-defined $G$-invariant
 Hermitian form on the image of $J_j(\nu)$.

%When $f_1$ and $f_2$ are $V_{M,\mu_{j}}$-valued functions on $\bbR^m$,
% we write
%\[(f_1|f_2):=\int_{\bbR^m} (f_1(x),f_2(x)) \]

\begin{lemma}\label{L:Hermitian2}
For any $\nu\in\mathfrak{a}_{\bbC}^*$,
 $\tilde{f}\in I_{j}(-\nu)$ and $\tilde{h}\in I_{j}(-\bar{\nu})$,
\begin{equation*}
\int_{\mathbb{R}^{m}}((J_{j}(\nu)\tilde{f})(n_{x}), \tilde{h}(n_{x}))\d x
=\int_{\mathbb{R}^{m}}(\tilde{f}(n_{x}), (J_j(\bar{\nu})\tilde{h})(n_{x}))\d x.
\end{equation*}
\end{lemma}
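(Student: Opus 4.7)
The plan is to establish the identity on a nonempty open subset of $\mathfrak{a}_{\bbC}^*$ where all integrals converge absolutely, and then extend it to all of $\mathfrak{a}_{\bbC}^*$ by analytic continuation via Lemma~\ref{L:continuation}. I first restrict to a small open set in $\{\nu : \Re\nu(H_0)>0\}$, so that by Proposition~\ref{L:intertwining1} and \eqref{Eq:Tdistribution} the normalized operator is given by the absolutely convergent convolution
\[
(J_j(\nu)\tilde{f})(n_x) = \int_{\mathbb{R}^m} (T_j(\nu))(y)\,\tilde{f}(n_{x-y})\,\d y.
\]
Substituting this into the left-hand side of the claimed identity, I apply Fubini to swap the order of integration, perform the change of variable $x\mapsto x+y$, and arrive at
\[
\int_{\mathbb{R}^m}\!\!\int_{\mathbb{R}^m} \bigl((T_j(\nu))(y)\,\tilde{f}(n_x),\;\tilde{h}(n_{x+y})\bigr)\,\d x\,\d y.
\]

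The algebraic core of the argument is the pointwise self-adjointness relation $(T_j(\nu))(y)^{*} = (T_j(\bar{\nu}))(y)$ as operators on $V_j$. This decomposes into two observations: (i) the reflection $r_y$ is an involution in $\OO(m)$ and $\sigma_j$ is unitary on $V_j = \bigwedge^j \mathbb{C}^m$, so $\sigma_j(r_y)$ is a self-adjoint unitary coinciding with its own adjoint; and (ii) the complex scalar satisfies $\overline{|y|^{-2(\rho'-\nu)(H_0)}/\Gamma(\nu(H_0))} = |y|^{-2(\rho'-\bar{\nu})(H_0)}/\Gamma(\bar{\nu}(H_0))$, since $|y|$ is real and $\overline{\Gamma(z)}=\Gamma(\bar{z})$. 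Moving the operator across the sesquilinear pairing using (i)--(ii), and then substituting $y\mapsto -y$ together with the evident identity $r_{-y}=r_y$ (so $(T_j(\bar{\nu}))(-y) = (T_j(\bar{\nu}))(y)$), rewrites the double integral as $\int_{\mathbb{R}^m}(\tilde{f}(n_x),\,(J_j(\bar{\nu})\tilde{h})(n_x))\,\d x$, the right-hand side. Extending to all $\nu\in\mathfrak{a}_{\bbC}^*$ is then routine: working in the compact picture, one fixes $\tilde{f}|_{K_2}$ and $\tilde{h}|_{K_2}$ independent of $\nu$, and both sides become holomorphic in $\nu$ by Lemma~\ref{L:continuation}; equality on a nonempty open set forces equality everywhere.

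The step I expect to be the main obstacle is the justification of Fubini in the first stage. From Lemma~\ref{L:Iwasawa} one has $|\tilde{f}(n_x)|,\,|\tilde{h}(n_x)|\lesssim (1+|x|^2)^{-\rho'(H_0)-\Re\nu(H_0)}$, while the distributional kernel satisfies $|(T_j(\nu))(y)|\lesssim |y|^{-m+2\Re\nu(H_0)}$, which is integrable at $y=0$ exactly when $\Re\nu(H_0)>0$. Simultaneously controlling the singularity at $y=0$, the behaviour at infinity in $y$, and the decay of the sections in $x$ will constrain $\Re\nu(H_0)$ to a bounded nonempty open subinterval of $(0,\infty)$; since any such open set suffices to feed the analytic-continuation step, the obstacle is essentially a careful but technical polar-coordinate estimate rather than a conceptual one.
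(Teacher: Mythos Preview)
Your proposal is correct and follows essentially the same route as the paper: establish the identity for $\Re\nu(H_0)>0$ by writing out the convolution kernel, applying Fubini and a translation, using that $(T_j(\nu))(y)^*=(T_j(\bar{\nu}))(y)$, and then extend by holomorphy in $\nu$ (fixing $\tilde{f}|_{K_2}$, $\tilde{h}|_{K_2}$) via Lemma~\ref{L:continuation}. The paper additionally remarks that the pairing $(\tilde{f}(g),(J_j(\bar{\nu})\tilde{h})(g))$ defines a $G_2$-invariant density on $G_2/\bar{P}_2$, so the $\mathbb{R}^m$-integral equals a $K_2$-integral and is finite for every $\nu$; this is precisely what underlies your ``compact picture'' remark and disposes of the convergence issue needed for analytic continuation.
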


\begin{proof}
Let $\phi(g)=(\tilde{f}(g), (J_j(\bar{\nu})\tilde{h})(g))$ ($g\in G_{2}$).
Then it satisfies the relation $\phi(gma\bar{n})=e^{2\rho'(\log a)}\phi(g)$.
By this, $\phi$ gives a left $G_{2}$-invariant density form on $G_{2}/\bar{P}_{2}$.
Hence, \[\int_{\mathbb{R}^{m}}\phi(n_{x})\d x=\int_{K_{2}}\phi(k)\d k\]
 (\cite[Chapter V, \S 6]{Knapp}).
Therefore, the integral in \eqref{Eq:Hermitian1} converges.
% and it defines an invariant sesquilinear form on the image of $\J_{j}(\nu)$.

%For general $\nu\in \mathfrak{a}_{\bbC}^*$,
% and for $\tilde{f}\in J_{j}(-\nu)$, $\tilde{h}\in J_{j}(-\bar{\nu})$,
Put
 \[(\tilde{f}, \tilde{h})_{\nu}
 = \int_{\bbR^m} \bigl(\tilde{f}(n_{x}), (J_{j}(\bar{\nu}) \tilde{h})(n_x)\bigr) \d x. \]
Fix $\tilde{f}|_{K_{2}}, \tilde{h}|_{K_{2}}$ and vary $\nu$.
By Lemma \ref{L:continuation} the value of $(\tilde{f},\tilde{h})_{\nu}$
 varies holomorphically with respect to $\nu$.
When $\Re\nu(H_0)>0$, by Proposition~\ref{L:intertwining1}
 the intertwining integral converges absolutely. Hence
\begin{align*}
&\quad(\tilde{f},\tilde{h})_{\nu}\\
&=\int_{\mathbb{R}^{m}}\int_{\mathbb{R}^{m}}(\tilde{f}(n_{y}),(T_{j}({\bar{\nu}}))(y-x)\tilde{h}(n_{x}))
 \d x\d y\\
&=\int_{\mathbb{R}^{m}}\int_{\mathbb{R}^{m}}((T_{j}(\nu))(x-y)\tilde{f}(n_{y}),
 \tilde{h}(n_{x}))\d y\d x.
\end{align*}
By holomorphicity, this holds for all $\nu\in\mathfrak{a}^{\ast}_{\mathbb{C}}$.
%Then, when $\nu(H_0)\in\mathbb{R}$, we get the equation in the lemma.
%\begin{equation*}(f,h)=(f|\tilde{h})
% =\int_{\mathbb{R}^{m}}(f(n_{x}),\tilde{h}(n_{x}))\d x.
%\qedhere
%\end{equation*}
\end{proof}

\subsection{Fourier transformed picture}\label{SS:Fourier2}

For $f\in I_{j}(\nu)$, let $f_{N}=f|_{N}$. As in \eqref{Eq:Ff}, define the inverse Fourier transform of
$f_{N}$ as a function (or a distribution) on $\mathfrak{n}^{\ast}$ by
\begin{equation*}
%\label{Eq:Fourier1}
\widehat{f_{N}}(\xi)=\mathcal{F}(f_{N})(\xi)
=(2\pi)^{-\frac{m}{2}}\int_{\mathbb{R}^{m}}e^{\mathbf{i}(\xi,x)}f(n_{x})\d x.
\end{equation*}
Since $f_{N}$ is a tempered distribution, $\widehat{f_{N}}$ is a tempered distribution as well. The action of
$P_2$ on the Fourier transformed picture is defined as $p\widehat{f_{N}}=\widehat{(p f)_{N}}$ for $p\in P_2$.
The $P_2$-actions on $I_j(\nu)$ and $\mathcal{F}I_j(\nu)$ are given as \eqref{Eq:P-action} and \eqref{Eq:P-action2},
respectively.

\if 0
\begin{lemma}\label{L:action-P2}
The group $P_{2}=M_{2}AN$ acts on the noncompact picture of $I_{j}(\nu)$ as follows:
 for $f\in I_{j}(\nu)$ and $n\in N$,
\begin{equation}\label{Eq:nf3}(n'f)(n)=f(n'^{-1}n)\ (\forall n'\in N);\end{equation}
\begin{equation}\label{Eq:af3}(af)(n)=e^{(\nu-\rho')\log a}f(\Ad(a^{-1})\bar{n})\ (\forall a\in A);\end{equation} \begin{equation}\label{Eq:mf3}(mf)(n)=\sigma_j(m)f(m^{-1}nm)\ (\forall m\in M_{2}).\end{equation}
\end{lemma}

\begin{lemma}\label{L:action-P-Fourier2}
The group $P_{2}=M_{2}AN$ acts on the Fourier transformed picture of $I_{\delta,\epsilon}(\mu,\nu)$ as follows:
for any $f\in I_{j}(\nu)$ and any $\xi\in\mathfrak{n}^{\ast}$, \begin{equation}\label{Eq:nf4}
(n_{x}\widehat{f_{N}})(\xi)=e^{2\pi\mathbf{i}(\xi,x)}\widehat{f_{N}}(\xi)\ (\forall n_{x}\in N);\end{equation} \begin{equation}\label{Eq:af4}(a\widehat{f_{N}})(\xi)=e^{(\nu+\rho')\log a}\widehat{f_{N}}(\Ad^{\ast}(a^{-1})\xi)
\ (\forall a\in A);\end{equation} \begin{equation}\label{Eq:mf4}(m\widehat{f_{N}})(\xi)=\sigma_j(m)\widehat{f_{N}}
(\Ad^{\ast}(m^{-1})\xi)\ (\forall m\in M_{2}).\end{equation}
\end{lemma}

\fi

When $\Re\alpha<m$, the function $|x|^{-\alpha}$ on $\mathbb{R}^{m}$ is locally $L^1$
 and is a tempered distribution.
The lemma below follows from \cite[Chapter I, \S3.9]{Gelfand-Shilov}
 and it implies Lemma \ref{L:continuation}.

\begin{lemma}\label{L:Riesz}
The distribution $|x|^{-\alpha}$ on $\mathbb{R}^{m}$, originally defined when $\Re\alpha<m$, admits a unique meromorphic extension to all $\alpha\in\mathbb{C}$ as tempered distributions.
Moreover, the extension has only simple poles,
 which are at $\alpha=m+2k$ with $k\in\mathbb{Z}_{\geq 0}$.

The distribution $\Gamma(\frac{m-\alpha}{2})^{-1} |x|^{-\alpha}$ admits a holomorphic continuation to the whole complex plane such that it is a tempered distribution
 for every $\alpha\in\mathbb{C}$.
\end{lemma}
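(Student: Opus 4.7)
The plan is to follow the standard meromorphic continuation argument for the Riesz-type distribution $|x|^{-\alpha}$, which is classical (Gelfand--Shilov). For a test function $\varphi\in\mathscr{S}(\mathbb{R}^m)$, I would write the pairing in polar coordinates as
\[
\langle |x|^{-\alpha},\varphi\rangle
 =\int_{0}^{\infty} r^{m-1-\alpha}\Phi(r)\,dr,
 \quad \Phi(r):=\int_{S^{m-1}}\varphi(r\omega)\,d\omega,
\]
which is absolutely convergent for $\Re\alpha<m$ (since $\Phi\in C^\infty([0,\infty))$ is rapidly decreasing at infinity). The object is to extend this meromorphically in $\alpha$.

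First I would split the integral at $r=1$. The tail $\int_{1}^{\infty} r^{m-1-\alpha}\Phi(r)\,dr$ is entire in $\alpha$ because $\Phi$ is Schwartz at infinity and the estimates are uniform on compact sets in $\alpha$. For the head $\int_{0}^{1} r^{m-1-\alpha}\Phi(r)\,dr$ I would Taylor-expand $\Phi$ at $0$ to order $N$, writing $\Phi(r)=\sum_{k=0}^{N}\frac{\Phi^{(k)}(0)}{k!}r^{k}+R_{N}(r)$ with $R_{N}(r)=O(r^{N+1})$. The key observation is that the antipodal symmetry $\omega\mapsto -\omega$ of $S^{m-1}$ forces $\Phi^{(k)}(0)=0$ for every odd $k$, so only even-order terms survive. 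Integrating each even term gives
\[
\int_{0}^{1} r^{m-1-\alpha+2j}\,dr=\frac{1}{m-\alpha+2j},
\]
which is meromorphic in $\alpha$ with a single simple pole at $\alpha=m+2j$; the remainder integral is holomorphic for $\Re\alpha<m+N+1$. Letting $N\to\infty$ yields meromorphic continuation to all of $\mathbb{C}$ with at worst simple poles on the discrete set $\{m+2k:k\in\mathbb{Z}_{\geq 0}\}$. Uniqueness of the extension follows from the identity theorem.

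For the second statement, recall that $\Gamma(z)$ has simple poles precisely at $z=-k$, $k\in\mathbb{Z}_{\geq 0}$, so $\Gamma\!\left(\frac{m-\alpha}{2}\right)^{-1}$ has simple zeros exactly at $\alpha=m+2k$. Multiplying the meromorphic family $|x|^{-\alpha}$ by this reciprocal Gamma factor therefore cancels every pole, producing a holomorphic family of tempered distributions on all of $\mathbb{C}$. The tempered property in both assertions follows from the estimates in the splitting above: on compact sets of $\alpha$, the pairing is bounded by a finite sum of Schwartz seminorms of $\varphi$ (seminorms controlling $\Phi$ near $0$ and the decay of $\Phi$ at infinity).

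There is no serious obstacle; the only point requiring a touch of care is to keep track of the Schwartz-seminorm estimates uniformly in $\alpha$ on compact subsets of $\mathbb{C}\setminus\{m+2k\}$, so that the analytically continued functional is genuinely a tempered distribution (and not merely a distribution on $C_c^\infty$). This is handled by the same Taylor-expansion splitting, since the coefficients $\Phi^{(2j)}(0)$ and the $L^\infty$-bounds on $R_{N}$ and its derivatives are continuous seminorms on $\mathscr{S}(\mathbb{R}^m)$.
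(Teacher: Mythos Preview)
Your argument is correct and is precisely the classical Gelfand--Shilov continuation via polar coordinates and Taylor expansion of the spherical mean; the paper does not give its own proof but simply cites \cite[Chapter~I, \S3.9]{Gelfand-Shilov}, which is exactly the method you have reproduced.
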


%The distributions and
% called a {\it Riesz distribution},
%which is a tempered distribution.

%This follows from . Moreover, it is shown in \cite[Chapter I,
%Section 3]{Gelfand-Shilov} that the distribution $|x|^{-\alpha}$ can be regularized such that its value at
%%regular points and its residues at poles are all tempered distributions. Let $|x|^{-\alpha}$ still denote
%this regularized extension.
%Since poles of $\Gamma(\frac{2n-1-\alpha}{2})$ are located at $\alpha=2n-1+2k$
%($k\in\mathbb{Z}_{\geq 0}$), which are all simple poles. Thus,

%\[\mathcal{F}
%\Bigl(\frac{|x|^{-\alpha}}{\Gamma(\frac{m-\alpha}{2})}\Bigr)
%=
%2^{\frac{m}{2}-\alpha}\frac{|\xi|^{-(m-\alpha)}}{\Gamma(\frac{\alpha}{2})}.
%\]
%Hence
It follows from \cite[Chapter II, \S 3.3]{Gelfand-Shilov} that
\begin{align}\label{Eq:Riesz1}
\mathcal{F}|x|^{-\alpha}=d_{\alpha}|\xi|^{-(m-\alpha)},
\text{ where }
d_{\alpha}=2^{\frac{m}{2}-\alpha}\frac{\Gamma(\frac{m-\alpha}{2})}{\Gamma(\frac{\alpha}{2})}.
\end{align}
%Putting
%\begin{equation}\label{Eq:dalpha}d_{\alpha}=\frac{\Gamma(\frac{2n-1-\alpha}{2})}{\Gamma(\frac{\alpha}{2})}
%\pi^{\alpha-(n-\frac{1}{2})}.\end{equation}, $$.
From \eqref{Eq:Mult-Differ} and \eqref{Eq:Riesz1}, one verifies the following two equalities.
For $1\leq k,l \leq m$ with $k\neq l$,
\begin{align}\label{Eq:Riesz2}
&\mathcal{F}(|x|^{-\alpha-2}x_{k}x_{l})=
-d_{\alpha+2}(m-\alpha-2)
(m-\alpha)\xi_{k}\xi_{l}|\xi|^{-(m-\alpha)-2}, \\ \nonumber
&\mathcal{F}(|x|^{-\alpha-2}x_{k}^2)
=d_{\alpha+2}(m-\alpha-2)
(|\xi|^{2}-(m-\alpha)\xi_{k}^{2})|\xi|^{-(m-\alpha)-2}.
\end{align}
%\begin{lemma}\label{L:Riesz2}
%For $1\leq k\leq 2n-1$,
%\[
%\mathcal{F}(|x|^{-\alpha-2}x_{k}^2)
%=d_{\alpha+2}(m-\alpha-2)
%(|\xi|^{2}-(m-\alpha)\xi_{k}^{2})|\xi|^{-(m+2-\alpha)}.\]
%\end{lemma}
%\begin{proof}
%Taking derivative $\partial_{\xi_{k}}$ in (\ref{Eq:Ff}), we see that
% $\widehat{fx_{k}}=-\mathbf{i}\frac{\partial{\widehat{f}}}{\partial{\xi_{j}}}$.
%From this, one verifies two equalities in the lemma using \eqref{Eq:Riesz1}.
%\end{proof}

%Now we take \[\mu=\mu_{j}=(\underbrace{1,\dots,1}_{j-1},\underbrace{0,\dots,0}_{n-j})\] ($1\leq j\leq n$) and
%%$\delta=(-1)^{j-1}$. Let the intertwining operator $J_{\epsilon}(\mu_{j},\nu)$ in Subsection \ref{SS:rep-O(2n,1)}
%is equal to the normalized intertwining operator in (\ref{Eq:intertwining4}) for $\mu=\mu_{j}$ and
%$\delta=(-1)^{j-1}$.

In the following lemma \ref{L:Riesz3}, we give a formula for the Fourier transformed counter-part of the
intertwining kernel $T_{j}(\nu)$. The formula \eqref{Eq:fourier-T} is crucial for us. It enables us to
show algebraic and analytic properties of Fourier transformed picture of the intertwining image, and to
construct $L^2$-models for irreducible unitary representations with infinitesimal character $\rho$ and
for some complementary series $I(\mu_{j},\nu)$ of $G_{2}=\OO(m+1,1)$ when $0<\nu(H_0)<\frac{m}{2}-j$.

\begin{lemma}\label{L:Riesz3}
Let $0\leq j\leq n-1$. Then
\begin{equation}
\label{Eq:fourier-T}
\mathcal{F}T_{j}(\nu)
=\frac{2^{(2\nu-\rho')(H_0)}|\xi|^{-2\nu(H_0)}}{\Gamma(1+(\rho'-\nu)(H_0))}
\big(\rho'(H_0)-j-\nu(H_0)\sigma_j(r_{\xi})\big).
\end{equation}
\end{lemma}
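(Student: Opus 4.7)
My plan is to compute $\mathcal{F}(|x|^{-\alpha}\sigma_j(r_x))$ for $\alpha := 2(\rho'-\nu)(H_0) = m - 2\nu(H_0)$, and then divide by $\Gamma(\nu(H_0))$. By Lemmas~\ref{L:Riesz} and \ref{L:continuation}, both sides of \eqref{Eq:fourier-T} are meromorphic in $\nu$, so it suffices to verify the identity on an open set of $\nu$ where the distributions are regular and to invoke analytic continuation.

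The key structural observation is that $r_x = I - 2 Q_x$ with $Q_x = x^tx/|x|^2$ of rank one, so the naive expansion of $\bigwedge^j(I - 2Q_x)$ telescopes: any term with two or more factors of $Q_x$ vanishes because $Q_xv \wedge Q_xw = 0$. This yields
\[
\sigma_j(r_x) = I - 2\widehat{Q}_x \quad \text{on } \textstyle\bigwedge^j \mathbb{C}^m,
\]
where $\widehat{A}$ denotes the Leibniz (derivation) extension of $A \in \End(\mathbb{C}^m)$ to $\bigwedge^j\mathbb{C}^m$. Writing $\widehat{Q}_x = |x|^{-2}\sum_{k,l}x_kx_l\widehat{E}_{kl}$ with $E_{kl}$ the matrix units then reduces the problem to applying the Fourier identities \eqref{Eq:Riesz1} and \eqref{Eq:Riesz2} separately to each monomial $|x|^{-\alpha}$ or $x_kx_l|x|^{-\alpha-2}$.

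After substituting the term-by-term Fourier transforms, the diagonal and off-diagonal cases of \eqref{Eq:Riesz2} can be repackaged using $\sum_k \widehat{E}_{kk} = j \cdot I$ on $\bigwedge^j\mathbb{C}^m$ into
\[
\sum_{k,l}\mathcal{F}(x_kx_l|x|^{-\alpha-2})\widehat{E}_{kl}
 = d_{\alpha+2}(m-\alpha-2)|\xi|^{-(m-\alpha)-2}\bigl[j|\xi|^2 I - (m-\alpha)\widehat{\xi^t\xi}\bigr].
\]
Running the structural formula backwards in the $\xi$ variable gives $\widehat{\xi^t\xi} = \tfrac{|\xi|^2}{2}(I - \sigma_j(r_\xi))$, which is exactly how $\sigma_j(r_\xi)$ enters on the Fourier side.

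The remaining step is arithmetic: using the $\Gamma$-identities $(m-\alpha-2)\Gamma((m-\alpha)/2 - 1) = 2\Gamma((m-\alpha)/2)$ and $\Gamma(\alpha/2+1) = (\alpha/2)\Gamma(\alpha/2)$, the scalar contribution from $\mathcal{F}(|x|^{-\alpha})I$ and the tensorial contribution collapse into
\[
\frac{2^{m/2-\alpha}\,\Gamma((m-\alpha)/2)}{\Gamma(\alpha/2+1)}\,|\xi|^{-(m-\alpha)}\Bigl[(m/2 - j)I - \tfrac{m-\alpha}{2}\sigma_j(r_\xi)\Bigr].
\]
Translating back via $\nu(H_0) = (m-\alpha)/2$ and $\rho'(H_0) = m/2$, and then dividing by $\Gamma(\nu(H_0))$, yields \eqref{Eq:fourier-T}. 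The only point that demands care is the cancellation that produces the $\nu$-independent coefficient $\rho'(H_0) - j$ in front of $I$: this rests on the algebraic identity $\alpha/2 + (m-\alpha)/2 = m/2$ and is the main piece of bookkeeping that distinguishes the argument from a purely routine substitution.
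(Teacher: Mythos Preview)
Your proof is correct and follows essentially the same approach as the paper: both exploit the rank-one structure of $Q_x = x^t x/|x|^2$ to collapse the wedge expansion of $\sigma_j(r_x)$ to first order, then apply the Riesz Fourier identities \eqref{Eq:Riesz1} and \eqref{Eq:Riesz2} term by term and simplify via the same $\Gamma$-relations. The only difference is packaging---you phrase the telescoping via the derivation extension $\widehat{Q}_x$ and the identity $\sum_k \widehat{E}_{kk}=j\cdot I$, whereas the paper writes out the matrix coefficients $c_{I,J}(x)$ in an explicit basis; the underlying computation is identical.
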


\begin{proof}
Choose an orthonormal basis $\{v_1,\dots,v_{m}\}$ of $\mathbb{C}^{m}$.
%Then, \[V_{M,\mu_{j}}((-1)^{j-1},(-1)^{j-1})
%=\wedge^{j-1}\mathbb{C}^{2n-1}\] as a representation of $M_{2}=\OO(2n-1)\times Z(G_{2})$, where $\OO(2n-1)$ acts in the
%natural way and $Z(\G_{2})$ acts trivially.
The exterior product $\bigwedge^{j}\mathbb{C}^{m}$ has a basis
$v_{I}=v_{i_{1}}\wedge\cdots\wedge v_{i_{j}}$,
 where $I=\{i_{1},\dots,i_{j}\}$ with $1\leq i_{1}<\cdots<i_{j}\leq m$.
By this,
\begin{align*}
&\quad\sigma_j(r_{x})v_{I} \\
 &=r_{x}(v_{i_1})\wedge \cdots \wedge r_{x}(v_{i_{j}})\\
 &= (v_{i_{1}}-2|x|^{-2}(v_{i_{1}},x)x) \wedge\cdots
  \wedge (v_{i_{j}}-2|x|^{-2}(v_{i_{j}},x)x)\\
 &=v_{I}+\sum_{k=1}^{j}2(-1)^{k}|x|^{-2} (v_{i_{k}},x)
 x \wedge v_{i_{1}}\wedge\cdots\wedge v_{i_{k-1}}\wedge v_{i_{k+1}}\cdots\wedge v_{i_{j}}\\
&=v_{I}+
 \sum_{k=1}^{j}\sum_{l=1}^{m}2(-1)^{k}|x|^{-2} (v_{i_{k}},x)(v_{l},x)
 v_{l} \wedge v_{i_{1}} \wedge \cdots \wedge v_{i_{k-1}} \wedge v_{i_{k+1}} \cdots \wedge
 v_{i_{j}}\\
&=\sum_{J}c_{I,J}(x)v_{J},
\end{align*}
where $J$ runs over $J\subset \{1,\dots,m\}$ such that $|J|=j$; and
\[c_{I,J}(x)\!=\!
\begin{cases}
2(-1)^{k+a(I,k,l)}|x|^{-2} (v_{i_{k}},x)(v_{l},x)
 \ \text{if $J=(I-\{i_{k}\})\sqcup\{l\}\neq I$},\\
|x|^{-2}\Bigl(\sum_{\substack{l'\neq i_{k} (1\leq k\leq j)}}(v_{l'},x)^{2}-\sum_{k=1}^{j}(v_{i_{k}},x)^{2}\Bigr)
 \ \text{if $I=J$},\\
0\quad\text{if $|I\cap J|<j-1$}.
\end{cases}\]
Here, $a(I,k,l)$ is the number of indices $k'\in\{1,\dots,k-1,k+1,\dots,j\}$ such that $i_{k'}<l$.

\if 0
\begin{align*}
&c_{I,J}(x)=2 (-1)^{k+a(I,k,l)}|x|^{-2} (v_{i_{k}},x)(v_{l},x)
 \text{ if $J=(I-\{i_{k}\})\sqcup\{l\}\neq I$},\\
&c_{I,J}(x)=|x|^{-2}\Bigl(\sum_{\substack{l'\neq i_{k}\\ (1\leq \forall k\leq j)}}(v_{l'},x)^{2}
 -\sum_{k=1}^{j}(v_{i_{k}},x)^{2}\Bigr)  \text{ if $I=J$},\\
&c_{I,J}(x)=0 \text{ if $|I\cap J|<j-1$}.
\end{align*}

\begin{align*}
c_{I,J}(x)=
\begin{cases}
0 & \text{ if $|I\cap J|<j-2$};\\
(-1)^{k+a(I,k,l)}\frac{2(v_{i_{k}},x)(v_{l},x)}{(x,x)} & \text{ if $J=(I-\{i_{k}\})\sqcup\{l\}, l\neq i_{k}$};\\
\frac{\sum_{l'\neq i_{k}\ (1\leq k\leq j-1)}(v_{l'},x)^{2}
 -\sum_{k=1}^{j-1}(v_{i_{k}},x)^{2}}{(x,x)}
 & \text{ if $I=J$},
\end{cases}
\end{align*}
\fi

Put $\alpha=2(\rho'-\nu)(H_0)$. Using two formulas \eqref{Eq:Riesz2} and the above expression of $\sigma_j(r_{x})v_{I}$,
one finds that the inverse  Fourier transform of
\[(T_{j}(\nu))(x)=\frac{|x|^{-2(\rho'-\nu)(H_{0})}}{\Gamma(\nu(H_0))}\sigma_j(r_{x})\]
is equal to
\[d_{\alpha+2}\frac{(m-\alpha-2)|\xi|^{-(m-\alpha)}}{\Gamma(\nu(H_0))}
 (m-2j-(m-\alpha)\sigma_j(r_{\xi})).\]
Note that
\begin{align*}
m-\alpha=2\nu(H_0),\quad
\frac{d_{\alpha+2}}{d_{\alpha}}
=\frac{1}{\alpha(m-\alpha-2)}, \quad
\frac{d_{\alpha}}{\Gamma(\nu(H_0))}=\frac{2^{\frac{m}{2}-\alpha}}{\Gamma(\frac{\alpha}{2})}.
\end{align*}
Multiplying these together, we find that $\mathcal{F}T_{j}(\nu)$ is equal to
\[\frac{2^{(2\nu-\rho')(H_0)}|\xi|^{-2\nu(H_0)}}{\Gamma(1+(\rho'-\nu)(H_0))}
\bigl(\rho'(H_0)-j-\nu(H_0)\sigma_j(r_{\xi})\bigr).
\qedhere\]
\end{proof}

After writing a draft of this paper,
 the authors noticed that Lemma~\ref{L:Riesz3} was proved in a more general form
 in \cite[Theorem 3.2 and Remark 4.9]{Fischmann-Orsted}.
The Knapp-Stein intertwining operators and $L^2$ inner products for complementary series
 in the Fourier transformed picture
 were previously studied in \cite{Speh-Venkataramana}.
The authors thank Professor {\O}rsted for kindly showing references.

\if 0
Put \[\nu_{j}=\Bigl(\frac{m}{2}-j+1\Bigr)\lambda_{0}.\]
The following formulas directly follow from Lemma \ref{L:Riesz3}:
%It describes the Fourier transforms of $T_{\mu_{j},\nu}((-1)^{j-1},\epsilon)$ at critical values $\nu=\pm{\nu_{j}}$.
%\begin{lemma}\label{L:Riesz4}
%For $2\leq j\leq n$,
\begin{align}
\label{Eq:intertwining-T1}
&\mathcal{F}T_{j}(\nu_j)
=\frac{2^{\frac{m}{2}-2j+2}|\xi|^{-m+2j-2}}{\Gamma(j)}
 \Bigl(\frac{m}{2}-j+1\Bigr)(1-\sigma_j(r_{\xi}))\text{ for $2\leq j\leq n$},\\
 \nonumber
%\end{equation*}
%For $1\leq j\leq n$,
%\begin{equation}
%\label{Eq:intertwining-T3}
&\mathcal{F}T_{j}(-\nu_{j})
=\frac{2^{-\frac{3m}{2}+2j-2}|\xi|^{m-2j+2}}{\Gamma(m-j+2)}
 \Bigl(\frac{m}{2}-j+1\Bigr)(1+\sigma_j(r_{\xi}))\text{ for $1\leq j\leq n$}.
\end{align}
%\end{lemma}
\fi

We need some general facts about the convolution and the Fourier transform.

\begin{fact}\label{L:Fourier-convolution1}
\begin{enumerate}
\item Let $1\leq p,q,r\leq\infty$ be such that $\frac{1}{r}=\frac{1}{p}+\frac{1}{q}$.
For any complex valued functions
 $u_{1}\in L^{p}(\mathbb{R}^{m})$ and $u_{2}\in L^{q}(\mathbb{R}^{m})$, one has
$u_{1}u_{2}\in L^{r}(\mathbb{R}^{m})$ and
\begin{equation*}
%\label{Eq:Lmultiplication}
\| u_{1}u_{2}\|_{r}
\leq \|u_{1}\|_{p}\|u_{2}\|_{q}.
\end{equation*}
\item Let $1\leq p,q,r\leq\infty$ be such that $\frac{1}{r}=\frac{1}{p}+\frac{1}{q}-1$.
For any complex valued functions $u_{1}\in L^{p}(\mathbb{R}^{m})$
 and $u_{2}\in L^{q}(\mathbb{R}^{m})$, one has
 $u_{1}\ast u_{2}\in L^{r}(\mathbb{R}^{m})$ and
\begin{equation*}
%\label{Eq:Lconvolution}
\|u_{1}\ast u_{2}\|_{r}\leq\|u_{1}\|_{p}
\|u_{2}\|_{q}.
\end{equation*}
\item Let $1\leq p\leq 2$ and $\frac{1}{q}=1-\frac{1}{p}$. For any complex valued function
$u\in L^{p}(\mathbb{R}^{m})$, one has $\widehat{u}\in L^{q}(\mathbb{R}^{m})$ and
\begin{equation*}
%\label{Eq:LFourier}
\|\widehat{u}\|_{q}\leq(2\pi)^{\frac{m}{2}-\frac{m}{p}}\|u\|_{p}.
\end{equation*}
\end{enumerate}
\end{fact}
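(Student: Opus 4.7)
The three statements in Fact~\ref{L:Fourier-convolution1} are all classical results from real and harmonic analysis, so I will only sketch the standard arguments rather than reprove them. Since the paper labels them as a ``Fact'', the cleanest approach is to cite them from a standard reference (e.g.\ Stein--Weiss, \emph{Introduction to Fourier Analysis on Euclidean Spaces}, or Folland, \emph{Real Analysis}); however, if one wants self-contained proofs, the outlines are as follows.

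For part (1), the plan is to reduce to the classical Hölder inequality. Writing $|u_1 u_2|^r = |u_1|^r \cdot |u_2|^r$ and applying Hölder's inequality with conjugate exponents $p/r$ and $q/r$ (which are conjugate precisely because $\frac{1}{r}=\frac{1}{p}+\frac{1}{q}$) immediately gives $\|u_1 u_2\|_r \leq \|u_1\|_p \|u_2\|_q$. The boundary cases where $p$, $q$, or $r$ equals $\infty$ are handled separately by the trivial estimate $\|u_1 u_2\|_q \leq \|u_1\|_\infty \|u_2\|_q$.

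For part (2), the standard route is Young's convolution inequality. First I would verify the endpoint cases: when $q=1$, writing $(u_1 \ast u_2)(x) = \int u_1(x-y) u_2(y)\,dy$, Minkowski's integral inequality yields $\|u_1 \ast u_2\|_p \leq \|u_1\|_p \|u_2\|_1$; the case $p=1$ is symmetric; the case $p=q'$ (so $r=\infty$) follows directly from Hölder. The general case is then obtained by Riesz--Thorin interpolation between these endpoints, or alternatively by a direct application of Hölder with a suitable three-fold splitting of the integrand $|u_1(x-y)|\cdot |u_2(y)| = (|u_1(x-y)|^p|u_2(y)|^q)^{1/r}\cdot |u_1(x-y)|^{1-p/r}\cdot|u_2(y)|^{1-q/r}$.

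For part (3), this is the Hausdorff--Young inequality, and the standard proof is Riesz--Thorin interpolation between the two endpoints $p=1$ and $p=2$. At $p=1$ one has the trivial bound $\|\widehat{u}\|_\infty \leq (2\pi)^{-m/2}\|u\|_1$ from the definition of the Fourier transform in \eqref{Eq:Ff}. At $p=2$ one has the Plancherel identity $\|\widehat{u}\|_2 = \|u\|_2$, which can be proved by density of Schwartz functions together with the standard Gaussian computation. Riesz--Thorin interpolation between these two bounds yields the stated inequality for all intermediate $p\in[1,2]$, with the constant $(2\pi)^{\frac{m}{2}-\frac{m}{p}}$ arising from interpolating the two endpoint constants.

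The main (and only) subtle point in all three parts is keeping track of the normalization constant for the Fourier transform, since different sources use different conventions; in particular, the factor $(2\pi)^{-m/2}$ appearing in \eqref{Eq:Ff} propagates through the Hausdorff--Young estimate. Since all three statements are classical, I would in practice simply cite them from a standard textbook rather than include the proofs in the paper.
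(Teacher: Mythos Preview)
Your proposal is correct and matches the paper's approach: the paper's proof simply identifies (1) as the classical H\"older inequality and cites (2) and (3) from H\"ormander's \emph{The Analysis of Linear Partial Differential Operators I} (Corollary 4.5.2 and Theorem 7.1.13, respectively), without giving any argument. Your additional proof sketches are accurate but go beyond what the paper provides.
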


\begin{proof}
(1) is the classical H\"older's inequality.
(2) is \cite[Corollary 4.5.2]{Hormander} and (3) is \cite[Theorem 7.1.13]{Hormander}.
\end{proof}

The following is a version of the convolution theorem.

\begin{lemma}\label{L:Fourier-convolution2}
Let $p_{1},p_{2}\geq 1$ be such that
 $\frac{1}{p_{1}}+\frac{1}{p_{2}}\geq\frac{3}{2}$.
For any complex valued functions $u_{1}\in L^{p_{1}}(\mathbb{R}^{m})$
 and $u_{2}\in L^{p_{2}}(\mathbb{R}^{m})$, one has
\begin{equation}\label{Eq:Tconvolution}
\widehat{u_{1}\ast u_{2}}=(2\pi)^{\frac{m}{2}}\widehat{u_{1}}\widehat{u_{2}}.
\end{equation}
\end{lemma}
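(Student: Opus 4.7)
The plan is to reduce to the classical convolution theorem on $L^1$ via an approximation argument, using the $L^p$-type norm bounds provided by Fact \ref{L:Fourier-convolution1}. First I would record the range of exponents that appear. The hypothesis $\frac{1}{p_1}+\frac{1}{p_2}\geq\frac{3}{2}$ together with $p_1,p_2\geq 1$ forces $p_1,p_2\in[1,2]$, so by Fact \ref{L:Fourier-convolution1}(3) each $\widehat{u_i}$ makes sense as a function in $L^{p_i'}$ with $\frac{1}{p_i'}=1-\frac{1}{p_i}$. Setting $\frac{1}{r}=\frac{1}{p_1}+\frac{1}{p_2}-1$ and $\frac{1}{s}=\frac{1}{p_1'}+\frac{1}{p_2'}=2-\frac{1}{p_1}-\frac{1}{p_2}$, one checks $r\in[1,2]$ and $\frac{1}{s}=1-\frac{1}{r}$, i.e.\ $s=r'$. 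Thus by Fact \ref{L:Fourier-convolution1}(2), $u_1\ast u_2\in L^r$, so its Fourier transform lies in $L^{s}$ by Fact \ref{L:Fourier-convolution1}(3); on the other side, $\widehat{u_1}\widehat{u_2}\in L^{s}$ by Fact \ref{L:Fourier-convolution1}(1). So both sides of \eqref{Eq:Tconvolution} already live in the same Banach space $L^s(\mathbb{R}^m)$.

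Next I would choose approximating sequences $u_i^{(k)}\in C_c^{\infty}(\mathbb{R}^m)$ (for instance by multiplying $u_i$ by cutoffs and convolving with a mollifier) such that $u_i^{(k)}\to u_i$ in $L^{p_i}$ as $k\to\infty$. Since each $u_i^{(k)}$ is in $L^1$, the classical convolution theorem applies and yields
\begin{equation*}
\widehat{u_1^{(k)}\ast u_2^{(k)}}=(2\pi)^{\frac{m}{2}}\widehat{u_1^{(k)}}\,\widehat{u_2^{(k)}}
\end{equation*}
for every $k$. The last step is to pass to the limit in both sides in the norm of $L^s$. For the left hand side, the bilinear estimate of Fact \ref{L:Fourier-convolution1}(2) gives $u_1^{(k)}\ast u_2^{(k)}\to u_1\ast u_2$ in $L^r$, and then Fact \ref{L:Fourier-convolution1}(3) forces $\widehat{u_1^{(k)}\ast u_2^{(k)}}\to \widehat{u_1\ast u_2}$ in $L^s$. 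For the right hand side, Fact \ref{L:Fourier-convolution1}(3) gives $\widehat{u_i^{(k)}}\to \widehat{u_i}$ in $L^{p_i'}$, and then Fact \ref{L:Fourier-convolution1}(1) applied to the telescoped difference
\begin{equation*}
\widehat{u_1^{(k)}}\widehat{u_2^{(k)}}-\widehat{u_1}\widehat{u_2}
=\bigl(\widehat{u_1^{(k)}}-\widehat{u_1}\bigr)\widehat{u_2^{(k)}}
+\widehat{u_1}\bigl(\widehat{u_2^{(k)}}-\widehat{u_2}\bigr)
\end{equation*}
shows $\widehat{u_1^{(k)}}\widehat{u_2^{(k)}}\to \widehat{u_1}\widehat{u_2}$ in $L^s$. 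Equating the two limits yields \eqref{Eq:Tconvolution}.

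The only mild obstacle is bookkeeping around the endpoint cases, in particular verifying that $p_1,p_2\in[1,2]$ and that the exponents $r,s$ produced by the hypotheses lie in the correct ranges so that each invocation of Fact \ref{L:Fourier-convolution1} is legitimate; the rest is a routine density-plus-continuity argument. No new analytic input beyond the three classical inequalities of Fact \ref{L:Fourier-convolution1} is needed.
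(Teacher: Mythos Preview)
Your proof is correct and follows essentially the same approach as the paper's: both verify that the two sides lie in the same $L^{s}$ space using the three inequalities of Fact~\ref{L:Fourier-convolution1}, establish the identity for a dense subclass (Schwartz or $C_c^\infty$) via the classical convolution theorem, and extend by continuity. The only cosmetic difference is that the paper packages the density argument as ``the bounded bilinear map $\phi(u_1,u_2)=\widehat{u_1\ast u_2}-(2\pi)^{m/2}\widehat{u_1}\widehat{u_2}$ vanishes on a dense subset,'' whereas you pick approximating sequences and telescope explicitly.
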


\begin{proof}
As $\frac{1}{p_{1}}+\frac{1}{p_{2}}\geq\frac{3}{2}$ and $p_{1},p_{2}\geq 1$, we have $1\leq p_{1},p_{2}\leq 2$.
Let $p,q_1,q_2$ be given by \[\frac{1}{p}=\frac{1}{p_{1}}+\frac{1}{p_2}-1,\quad\frac{1}{q_{1}}=1-\frac{1}{p_{1}},
\quad\frac{1}{q_2}=1-\frac{1}{p_{2}}.\] Then $1\leq p\leq 2$ and $q_{1},q_{2}\geq 2$. Let $q\geq 2$ be given by
\[\frac{1}{q}=1-\frac{1}{p}=\frac{1}{q_{1}}+\frac{1}{q_2}.\] By Fact~\ref{L:Fourier-convolution1},
\[u_{1}\ast u_{2}\in L^{p},\quad
\widehat{u_{1}}\in L^{q_{1}},\quad \widehat{u_{2}}\in L^{q_{2}}.\]
Again by Fact~\ref{L:Fourier-convolution1},
\[\widehat{u_{1}\ast u_{2}}\in L^{q}
\text{ and }
\widehat{u_{1}}\widehat{u_{2}}\in L^{q}.\]
Put
\[\phi(u_1,u_2)=\widehat{u_{1}\ast u_{2}}-(2\pi)^{\frac{m}{2}}\widehat{u_{1}}\widehat{u_{2}}.\]
Then $\phi$ gives a map
$L^{p_{1}}\times L^{p_{2}}\rightarrow L^{q}$, which is bilinear and bounded on both variables. By the classical
convolution theorem, $\phi(u_1,u_{2})=0$ whenever $u_1,u_{2}$ are both Schwartz functions. Since the space of
Schwartz functions are dense in both $L^{p_{1}}$ and $L^{p_{2}}$, we get $\phi(u_1,u_2)=0$ for any
$u_{1}\in L^{p_{1}}$ and $u_{2}\in L^{p_{2}}$. Thus, (\ref{Eq:Tconvolution}) follows.
\end{proof}

Recall some facts about the (single variable) $K$-Bessel function. See e.g.\ \cite{Watson} for more details.
The $K$-Bessel function $K_{\alpha}(x)$ is a solution to the second order linear ordinary differential
equation \begin{equation*}%\label{Eq:Bessel1}
x^{2}y''+xy'-(x^{2}+\alpha^{2})y=0
\end{equation*}
that has the asymptotic behavior
\begin{equation}\label{Eq:Bessel2}K_{\alpha}(x)
=\sqrt{\frac{\pi}{2x}}e^{-x}\Bigl(1+O\Bigl(\frac{1}{x}\Bigr)\Bigr)
\quad (x\to +\infty).
\end{equation}
This uniquely determines the holomorphic function $K_{\alpha}(x)$ on $\bbC - \bbR_{\leq 0}$.
Note that $K_{\alpha}(x)=K_{-\alpha}(x)$.
%For any $\alpha$, $K_{\alpha}(x)$ is smooth in $\mathbb{R}_{>0}$.
%When $x\to +\infty$, $K_{\alpha}(x)$ has the
% following asymptotical behavior
%\begin{equation}\label{Eq:Bessel2}K_{\alpha}(x)
%\sim\frac{\pi}{\sqrt{2x}}e^{-x}\Bigl(1+O\Bigl(\frac{1}{x}\Bigr)\Bigr).
%\end{equation}
When $x\to +0$, $K_{\alpha}(x)$ behaves as
\begin{equation*}%\label{Eq:Bessel3}
K_{\alpha}(x)=\begin{cases}
\frac{\Gamma(\alpha)}{2}(\frac{x}{2})^{-\alpha}(1+o(1))
&\textrm{ if $\Re\alpha>0$},\\
(\frac{\Gamma(\alpha)}{2}(\frac{x}{2})^{-\alpha}
+\frac{\Gamma(-\alpha)}{2}(\frac{x}{2})^{\alpha})(1+o(1))
&\textrm{ if $\alpha\in \mathbf{i}\mathbb{R} -\{0\}$},\\
-\log(\frac{x}{2})(1+o(1))&\textrm{ if $\alpha=0$}.
\end{cases}
\end{equation*}
%Moreover, $K_{\alpha}(x)$ is uniquely determined by the asymptotical behavior at $+\infty$ as in
%(\ref{Eq:Bessel2}).
Define%the modified $K$-Bessel function by
\begin{equation*}
%\label{Eq:Bessel4}
\tilde{K}_{\alpha}(x):=\Bigl(\frac{|x|}{2}\Bigr)^{\alpha}K_{\alpha}(|x|)
\quad \text{ for $x\in \bbR-\{0\}$}.
\end{equation*}
Then as $x\to +0$,
\begin{equation}\label{Eq:Bessel4}
\tilde{K}_{\alpha}(x)
=\begin{cases}
 \frac{\Gamma(\alpha)}{2}(1+o(1))
  & \textrm{ if $\Re\alpha>0$},\\
 (\frac{\Gamma(\alpha)}{2}
 + \frac{\Gamma(-\alpha)}{2}(\frac{x}{2})^{2\alpha})(1+o(1))
  & \textrm{ if $\alpha\in \mathbf{i}\mathbb{R} -\{0\}$},\\
 -\log(\frac{x}{2})(1+o(1))
  & \textrm{ if $\alpha=0$},\\
 \frac{\Gamma(-\alpha)}{2}(\frac{x}{2})^{2\alpha}(1+o(1))
  & \textrm{ if $\Re\alpha<0$}.
\end{cases}
\end{equation}
The function $y=\tilde{K}_{\alpha}(x)$ solves the second order linear ODE
\begin{equation*}
\label{Eq:Bessel5}
xy''+(1-2\alpha)y'-xy=0.
\end{equation*}
It follows that (\cite[III.71(5)]{Watson})
\begin{equation}\label{Eq:KBessel-recursive}
\tilde{K}_{\alpha+1}'(x)=-\frac{x}{2}\tilde{K}_{\alpha}(x).
\end{equation}
as both sides satisfy the same second order linear ODE and have the same asymptotic behavior when $x\to +\infty$.
%One shows that $-\frac{2}{x}\frac{\d\tilde{K}_{\alpha+1}(x)}{\d x}$ also satisfies the same second order ODE (\ref{Eq:Bessel1}) and it has the same asymptotical behavior as $x\to+\infty$.
%Hence, \begin{equation}\label{Eq:KBessel-recursive}
%\frac{\d \tilde{K}_{\alpha+1}(x)}{\d x}=-\frac{x}{2}\tilde{K}_{\alpha}(x).\end{equation}
Then \[\tilde{K}''_{\alpha+1}
=\tilde{K}_{\alpha+1}+\frac{2\alpha+1}{x}\tilde{K}'_{\alpha+1}
=\tilde{K}_{\alpha+1}-\frac{2\alpha+1}{2}\tilde{K}_{\alpha}\]
and hence
\begin{equation*}
%\label{Eq:KBessel-recursive2}
\tilde{K}_{\alpha}=\frac{1}{\alpha+\frac{1}{2}}
(\tilde{K}_{\alpha+1}- \tilde{K}''_{\alpha+1}).
\end{equation*}
By this one shows that
\[\frac{\tilde{K}_{\alpha}(x)}{\Gamma(\alpha+\frac{1}{2})}\]
%=\frac{1}{\Gamma(\alpha+\frac{1}{2})}\Bigl(\frac{|x|}{2}\Bigr)^{\alpha}K_{\alpha}(|x|)
admits a holomorphic continuation to the whole complex plane and gives tempered distributions on $\bbR$
for any $\alpha\in \bbC$.

\if 0
One shows that \[\frac{1}{\Gamma(\alpha+\frac{1}{2})}\tilde{K}_{\alpha}(x)\] admits a holomorphic continuation
to the whole complex plane such that it is a tempered distribution for any $\alpha$.
\fi

%From this and the second order ODE for $\tilde{K}_{\alpha}$ one gets %\[\tilde{K}_{\alpha+1}(x)-\alpha\tilde{K}_{\alpha}(x)-\frac{x^{2}}{4}\tilde{K}_{\alpha-1}(x)=0.\]

%Define the (old fashion) Fourier translation of $f(x)$ by \[\widehat{f}(\xi)=\int_{\mathbb{R}}f(x)
%e^{\mathbf{i}x\xi}\d x.\] Then, $y=\widehat{\tilde{K}_{\alpha}}(x)$ solves the second order ODE
%\[(1+x^2)y''+(3+2\alpha)xy'+(1+2\alpha)y=0.\]

By \cite[Chapter II, \S2.5]{Gelfand-Shilov}, for each $\lambda\in \bbC$,
\begin{equation*}
%\label{Eq:Fourier-KBessel2}
\mathcal{F}(1+x^2)^{\lambda}
=\frac{\sqrt{2}}{\Gamma(-\lambda)}\tilde{K}_{-\lambda-\frac{1}{2}}(\xi)
\end{equation*}
as tempered distributions on $\bbR$.
%\end{lemma}
%
%\begin{proof}
%We give a partial proof. See \cite{Gelfand-Shilov} or \cite{Watson} for details.
%Let $f(x)$ be the Fourier transform of
%\[\frac{\sqrt{2}}{\Gamma(-\lambda)}\tilde{K}_{-\lambda-\frac{1}{2}}(\xi).\]
%From the second order ODE satisfied by $\tilde{K}_{-\lambda-\frac{1}{2}}$,
% one shows that $y=f(x)$ satisfies $(1+x^2)y'+(1+2x)y=0$.
%Due to $\tilde{K}_{-\lambda-\frac{1}{2}}(\xi)$ is an even tempered distribution, so is $f(x)$.
%Since $(1+x^2)^{\lambda}$ solves the same equation,
% is the only even distribution solution
% to the ODE $(1+x^2)y''+(3+2\alpha)xy'+(1+2\alpha)y=0$ up to scalar,
%one has
%\[c(\lambda)f(x)=(1+x^2)^{\lambda},\]
%where $c(\lambda)$ is a nowhere vanishing holomorphic function of $\lambda$.
%For $-\Re \lambda\gg 0$, we can calculate the values at $x=0$
% and conclude that $c(\lambda)\equiv 1$.
%\end{proof}

On $\mathbb{R}^{m}$, define the modified $K$-Bessel function by \[\tilde{K}_{\alpha}(x)=
\Bigl(\frac{|x|}{2}\Bigr)^{\alpha} K_{\alpha}(|x|).\] Write
\[E=\sum_{1\leq i\leq m}x_{i}\partial_{x_{i}}\quad\textrm{ and }\quad\Delta=\sum_{1\leq i\leq m}(\partial_{x_{i}})^{2}.\]
By \eqref{Eq:KBessel-recursive} we have \begin{equation}\label{Eq:KBessel-recursive2}
(\partial_{x_{j}}\tilde{K}_{\alpha+1})(x)=-\frac{x_{j}}{2}\tilde{K}_{\alpha}(x).
\end{equation} Then, one shows that \[|x|^{2}\Delta \tilde{K}_{\alpha}+(2-m-2\alpha)E\tilde{K}_{\alpha}-
|x|^{2}\tilde{K}_{\alpha}=0\] and \[E\tilde{K}_{\alpha+1}=-\frac{|x|^{2}}{2}\tilde{K}_{\alpha}.\] By these,
we get \[\tilde{K}_{\alpha}=\frac{1}{\alpha+\frac{m}{2}}(-\Delta\tilde{K}_{\alpha+1}+\tilde{K}_{\alpha+1}).\]
Then, one shows that \[\frac{\tilde{K}_{\alpha}(x)}{\Gamma(\alpha+\frac{m}{2})}\] admits a holomorphic continuation
to the whole complex plane for $\alpha$ such that it is a tempered distribution on $\bbR^m$ for each $\alpha$.

\begin{lemma}\label{L:Fourier-KBessel3}
For all $\lambda\in \bbC$, \[\mathcal{F}(1+|x|^2)^{\lambda}
=\frac{2^{1-\frac{m}{2}}}{\Gamma(-\lambda)}\tilde{K}_{-\lambda-\frac{m}{2}}(|\xi|)\]
as tempered distributions on $\bbR^m$.
\end{lemma}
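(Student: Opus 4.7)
The plan is to verify the identity for $\Re\lambda$ sufficiently negative (where everything converges classically) and then extend to all $\lambda\in\bbC$ by holomorphic continuation.

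First I would assume $\Re\lambda<-m/2$, so that $(1+|x|^2)^\lambda \in L^1(\bbR^m)$ and its Fourier transform is given by a convergent integral. Using the gamma-function (subordination) identity
\[
(1+|x|^2)^\lambda \;=\; \frac{1}{\Gamma(-\lambda)}\int_0^\infty t^{-\lambda-1}e^{-t(1+|x|^2)}\,dt
\qquad(\Re\lambda<0),
\]
I would apply $\mathcal{F}$ and interchange the order of integration (justified by Fubini for $\Re\lambda$ sufficiently negative, since the resulting absolute integrand in $(t,x)$ is integrable). Using the standard Fourier transform of the Gaussian under the normalization~\eqref{Eq:Ff},
\[
\mathcal{F}\bigl(e^{-t|x|^2}\bigr)(\xi) \;=\; (2t)^{-m/2}\,e^{-|\xi|^2/(4t)},
\]
one obtains
\[
\mathcal{F}(1+|x|^2)^\lambda(\xi)
\;=\; \frac{2^{-m/2}}{\Gamma(-\lambda)}\int_0^\infty t^{-\lambda-m/2-1}\,e^{-t-|\xi|^2/(4t)}\,dt.
\]

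Next I would recognise the remaining integral as a classical integral representation of the $K$-Bessel function, namely
\[
\int_0^\infty t^{\nu-1} e^{-t-z^2/(4t)}\,dt \;=\; 2\Bigl(\frac{z}{2}\Bigr)^{\nu} K_\nu(z),
\]
applied with $\nu=-\lambda-m/2$ and $z=|\xi|$. Using the definition $\tilde{K}_\alpha(x)=(|x|/2)^\alpha K_\alpha(|x|)$, this yields exactly
\[
\mathcal{F}(1+|x|^2)^\lambda(\xi)=\frac{2^{1-\frac{m}{2}}}{\Gamma(-\lambda)}\,\tilde{K}_{-\lambda-\frac{m}{2}}(|\xi|)
\]
for $\Re\lambda$ sufficiently negative.

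Finally, I would extend the identity to all $\lambda\in\bbC$ by analytic continuation in $\lambda$, viewing both sides as meromorphic families of tempered distributions. The left-hand side $(1+|x|^2)^\lambda$ is manifestly an entire family of tempered distributions in $\lambda$, while the right-hand side is entire in $\lambda$ as well: the author has already observed that $\tilde{K}_\alpha/\Gamma(\alpha+m/2)$ extends to an entire family of tempered distributions in $\alpha$, so with $\alpha=-\lambda-m/2$ the factor $\tilde{K}_{-\lambda-m/2}/\Gamma(-\lambda)$ is entire in $\lambda$. Since both families agree on a non-empty open subset of $\bbC$ (namely $\Re\lambda<-m/2$), they agree everywhere. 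The main delicate point will be making the Fubini step rigorous, in particular controlling the integrand uniformly in $\xi$ so that the resulting distributional identity is unambiguous; but this reduces to routine estimates given the exponential decay in $t$ and $|\xi|^2/t$, and once that is settled the holomorphic-continuation step is automatic from the preparatory facts on $\tilde{K}_\alpha$ recalled just above the statement.
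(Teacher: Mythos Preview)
Your proof is correct and takes a genuinely different route from the paper. The paper first records the one-dimensional case $\mathcal{F}(1+x^2)^{\lambda}=\sqrt{2}\,\Gamma(-\lambda)^{-1}\tilde{K}_{-\lambda-\frac{1}{2}}(\xi)$ (citing Gelfand--Shilov) and then reduces the $m$-variable integral to the one-variable case by the change of variables $x=(x_1,\sqrt{1+x_1^2}\,z)$, $z\in\bbR^{m-1}$, computing the remaining $z$-integral in spherical coordinates via a beta integral. Your approach is the classical ``subordination'' or Gamma-representation argument: write $(1+|x|^2)^{\lambda}$ as a $t$-integral of Gaussians, take the Fourier transform inside, and recognise the resulting $t$-integral $\int_0^\infty t^{\nu-1}e^{-t-|\xi|^2/(4t)}\,dt$ as $2\tilde{K}_{\nu}(|\xi|)$. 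Your method is more self-contained in that it does not need the $m=1$ result as a separate input and yields the $m$-dimensional formula in one stroke; the paper's reduction trades that for a shorter computation once the one-dimensional fact is in hand. Both arguments land in the same range $\Re\lambda<-\tfrac{m}{2}$ for the classical computation and then invoke the same holomorphic continuation, using the entirety of $\lambda\mapsto (1+|x|^2)^{\lambda}$ and of $\lambda\mapsto \Gamma(-\lambda)^{-1}\tilde{K}_{-\lambda-m/2}$ as families of tempered distributions, which the paper established just before the lemma.
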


\begin{proof}
By the holomorphicity of both sides with respect to $\lambda$,
 we may assume that $\Re\lambda<-\frac{m}{2}$.
We calculate the Fourier transform of $(1+|x|^{2})^{\lambda}$ on $\mathbb{R}^{m}$
by reducing it to the case of $m=1$.
Let $m\geq 2$.
Write $\Omega_{m}$ for the volume of the $(m-1)$-dimensional sphere. It is well-known that
\[\Omega_{m}=\frac{2\pi^{\frac{m}{2}}}{\Gamma(\frac{m}{2})}.\]
Letting $x=(x_{1},\sqrt{1+x_{1}^{2}} z)$ with $z\in\mathbb{R}^{m-1}$, one gets
\begin{align*}
&\int_{\mathbb{R}^{m}}(1+|x|^2)^{\lambda}e^{\mathbf{i}(\xi,x)}\d x\\
&=\int_{\mathbb{R}^{m}}(1+|x|^2)^{\lambda}e^{\mathbf{i}x_{1}|\xi|}\d x\\
&=\int_{\mathbb{R}^{m}}(1+x_{1}^{2})^{\lambda+\frac{m-1}{2}}
 e^{\mathbf{i}x_{1}\xi}(1+|z|^2)^{\lambda}\d x_1\d z\\
&=\sqrt{2\pi}\mathcal{F}(1+x_{1}^{2})^{\lambda+\frac{m-1}{2}}
\times \Omega_{m-1}
 \int_{0}^{\infty}(1+r^{2})^{\lambda}r^{m-2}\d r\\
&=\frac{2\sqrt{\pi}}{\Gamma(-\lambda-\frac{m-1}{2})}
 \tilde{K}_{-\lambda-\frac{m}{2}}(|\xi|)
 \times \frac{2\pi^{\frac{m-1}{2}}}
{\Gamma(\frac{m-1}{2})}\frac{\Gamma(\frac{m-1}{2})\Gamma(-\lambda-\frac{m-1}{2})}{2\Gamma(-\lambda)}\\
&=(2\pi)^{\frac{m}{2}}
\frac{2^{1-\frac{m}{2}}}{\Gamma(-\lambda)}\tilde{K}_{-\lambda-\frac{m}{2}}(|\xi|)
\end{align*}
For the first equation, we used a rotation on coordinates
 to assume $\xi=(|\xi|,0,\dots,0)$.
%Inputting $m=2n-1$,
%One gets
%\[\frac{2\pi^{n-\frac{1}{2}}}{\Gamma(-\lambda)}\tilde{K}_{-\lambda-n+\frac{1}{2}}(|2\pi\xi|).
%\qedhere\]
\end{proof}

Now we show properties of $K_{2}$-finite functions in $I(\sigma,\nu)$ (particularly $I_{j}(\nu)$) and their Fourier
transforms.

\begin{lemma}\label{L:f-polynomial}
For any tuple $\alpha=(\alpha_1,\dots,\alpha_{m})$ with $\alpha_{i}\in\mathbb{Z}_{\geq 0}$ and any $K_{2}$-finite
function $f\in I(\sigma,\nu)_{K_{2}}$, each entry of \[(1+|x|^{2})^{(\rho'-\nu)(H_{0})}\partial^{\alpha}f(n_{x})\]
is a polynomial of the functions $(1+|x|^{2})^{-1}$, $x_{i}(1+|x|^{2})^{-1}$, $x_{i}x_{j}(1+|x|^{2})^{-1}$
($1\leq i,j\leq m$). In particular, \[|\partial^{\alpha}f(n_{x})|\leq C(1+|x|^2)^{(\Re\nu-\rho')(H_0)}\]
for a constant $C$ depending on $f$ and $\alpha$.
\end{lemma}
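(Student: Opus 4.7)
The plan is to reduce to the case $\alpha=0$ by translating $f$ by elements of $U(\mathfrak{n})$, and to handle the base case via the opposite Iwasawa decomposition together with $K_2$-finiteness.

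First I would address the case $\alpha=0$. Lemma~\ref{L:Iwasawa} gives the opposite Iwasawa decomposition
\[
n_x = s_x \exp\!\bigl(-\log(1+|x|^2)H_0\bigr)\bar{n}_{\tfrac{x}{1+|x|^2}},
\]
with $s_x\in K_2$ as in \eqref{Eq:sx}. The defining equivariance $f(g m a \bar{n}) = \sigma(m)^{-1} e^{(-\nu+\rho')\log a} f(g)$ then yields
\[
(1+|x|^2)^{(\rho'-\nu)(H_0)}\,f(n_x)=f(s_x).
\]
A direct inspection of \eqref{Eq:sx} shows that every matrix entry of $s_x$ lies in the (unital) polynomial algebra generated by the three families $(1+|x|^2)^{-1}$, $x_i(1+|x|^2)^{-1}$, $x_ix_j(1+|x|^2)^{-1}$. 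On the other hand, $K_2$ is a compact subgroup of $\GL_{m+2}(\mathbb{R})$, so by Peter--Weyl each matrix coefficient of a finite-dimensional representation of $K_2$ is a polynomial in the matrix entries $k_{ab}$ of $k\in K_2$. Since $f$ is $K_2$-finite, fixing a basis of $V_\sigma$ exhibits each entry of $f|_{K_2}$ as a finite sum of matrix coefficients, hence a polynomial in the $k_{ab}$. Composing with $x\mapsto s_x$ then exhibits each entry of $f(s_x)$ as a polynomial in the three designated families, settling $\alpha=0$.

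For general $\alpha$ I would exploit that $N\cong\mathbb{R}^m$ is abelian, so $n_{x+te_i}=n_x n_{te_i}$ and thus $\partial_{x_i}f(n_x)=(R(X_{e_i})f)(n_x)$, where $R$ denotes right translation and $X_{e_i}\in\mathfrak{n}$ corresponds to the $i$-th coordinate direction. Iterating and using commutativity of $\mathfrak{n}$ gives
\[
\partial^\alpha f(n_x)=(R(X^\alpha)f)(n_x),\qquad X^\alpha:=\prod_{i=1}^m X_{e_i}^{\alpha_i}\in U(\mathfrak{n}).
\]
Since $I(\sigma,\nu)_{K_2}$ is a $(\mathfrak{g},K_2)$-module, $h:=R(X^\alpha)f$ is again a $K_2$-finite element of $I(\sigma,\nu)_{K_2}$, so applying the base case to $h$ in place of $f$ yields the polynomial claim for $\partial^\alpha f(n_x)$.

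The pointwise estimate is then immediate: each of $(1+|x|^2)^{-1}$, $x_i(1+|x|^2)^{-1}$, $x_ix_j(1+|x|^2)^{-1}$ is bounded on $\mathbb{R}^m$, so any polynomial in them is bounded by a constant depending only on $f$ and $\alpha$; since $\bigl|(1+|x|^2)^{(\rho'-\nu)(H_0)}\bigr|=(1+|x|^2)^{(\rho'-\Re\nu)(H_0)}$, rearranging gives $|\partial^\alpha f(n_x)|\leq C(1+|x|^2)^{(\Re\nu-\rho')(H_0)}$. The only substantive input is the Peter--Weyl identification of $K_2$-finite functions with polynomial restrictions from $\GL_{m+2}(\mathbb{R})$, which is standard; I do not expect any real obstacle.
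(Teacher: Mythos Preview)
Your base case ($\alpha=0$) matches the paper's proof exactly. For the inductive step the paper takes a different and more elementary route: it simply says ``by taking derivatives,'' relying on the fact that the polynomial ring in $(1+|x|^2)^{-1}$, $x_i(1+|x|^2)^{-1}$, $x_ix_j(1+|x|^2)^{-1}$ is closed under each $\partial_{x_k}$ (for instance $\partial_{x_k}\bigl(x_ix_j(1+|x|^2)^{-1}\bigr)=\delta_{ik}\,x_j(1+|x|^2)^{-1}+\delta_{jk}\,x_i(1+|x|^2)^{-1}-2\,x_ix_j(1+|x|^2)^{-1}\cdot x_k(1+|x|^2)^{-1}$), and that differentiating the prefactor $(1+|x|^2)^{-(\rho'-\nu)(H_0)}$ only produces an extra factor $-2(\rho'-\nu)(H_0)\,x_k(1+|x|^2)^{-1}$ from this same ring.

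Your reduction via the module action is a legitimate alternative, but there is a small slip in the phrasing. The $(\mathfrak{g},K_2)$-module structure on $I(\sigma,\nu)_{K_2}$ comes from the \emph{left} regular representation $(g\cdot f)(x)=f(g^{-1}x)$, not from right translation $R$; right differentiation by $X\in\mathfrak{n}$ does not a priori preserve the equivariance condition under $MA\bar N$ on the right, so ``$R(X^\alpha)f\in I(\sigma,\nu)_{K_2}$'' is not justified as written. The fix is immediate: since $N$ is abelian you equally have $n_{x+te_i}=n_{te_i}n_x$, hence $\partial_{x_i}f(n_x)=-(X_{e_i}\cdot f)(n_x)$ where $\cdot$ is the genuine module action; then $h:=(-1)^{|\alpha|}\,X^\alpha\cdot f\in I(\sigma,\nu)_{K_2}$ satisfies $h(n_x)=\partial^\alpha f(n_x)$ and the base case applies to $h$. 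With this adjustment your argument is correct. The paper's approach avoids invoking the $(\mathfrak{g},K_2)$-module structure, while yours avoids checking closure of the ring under differentiation; both are short.
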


\begin{proof}
By Lemma~\ref{L:Iwasawa}, we have \[f(n_{x})=(1+|x|^{2})^{-(\rho'-\nu)(H_{0})}f(s_{x}),\] where
\[s_{x}=\begin{pmatrix}I_{m}-\frac{2x^{t}x}{1+|x|^2}&-\frac{2x^{t}}{1+|x|^{2}}&0\\
 \frac{2x}{1+|x|^2}&\frac{1-|x|^{2}}{1+|x|^{2}}&0\\0&0&1\\
\end{pmatrix}.\]
Since $f$ is a $K_{2}$-finite function, $f(s_{x})$ is a polynomial of its entries. Hence the claim holds when
$\alpha=0$ by Lemma \ref{L:Fourier-KBessel3}. By taking derivatives, the lemma is proved.
\end{proof}

\begin{lemma}\label{L:f-Fourier}
\begin{enumerate}
\item[(1)]For any $\nu\in\mathfrak{a}^{\ast}_{\mathbb{C}}$ and any $K_{2}$-finite function $f\in I(\sigma,\nu)_{K_{2}}$,
$\widehat{f_{N}}(\xi)$ is a smooth function on $\mathfrak{n}^{\ast}-\{0\}$ and it decays fast near infinity in
the sense that $\widehat{f_{N}}(\xi)e^{|\xi|}$ has a polynomial bound as $\xi\rightarrow\infty$.
\item[(2)]If $\Re\nu(H_0)<0$, then $\widehat{f_{N}}(\xi)$ is continuous near $\xi=0$; if $\Re\nu(H_0)=0$, then
$\widehat{f_{N}}(\xi)$ is bounded by a multiple of $\log|\xi|$ near $\xi=0$; if $0<\Re\nu(H_0)<\rho'(H_0)$,
then $\widehat{f_{N}}(\xi)$ is bounded by a multiple of $|\xi|^{-2\Re\nu(H_0)}$ near $\xi=0$.
\item[(3)]If $\Re\nu(H_0)<\rho'(H_0)$, then $\widehat{f_{N}}$ belongs to $L^1(\mathfrak{n}^*)$.
\end{enumerate}
\end{lemma}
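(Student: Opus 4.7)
The plan is to reduce each assertion to the near-zero and at-infinity asymptotics of modified Bessel functions through an explicit Fourier computation. First I would apply Lemma~\ref{L:f-polynomial} (with $\alpha=0$) and pick an integer $d$ sufficiently large and generic (so that $(\rho'-\nu)(H_0)+d$ avoids the non-positive integers) to write, component-wise in $V_{\sigma}$,
\[
f(n_x)=\sum_{|\beta|\leq 2d}c_{\beta}\,x^{\beta}\,(1+|x|^{2})^{-(\rho'-\nu)(H_0)-d}.
\]
Combining \eqref{Eq:Mult-Differ} with Lemma~\ref{L:Fourier-KBessel3} then gives
\[
\widehat{f_{N}}(\xi)=\frac{2^{1-\frac{m}{2}}}{\Gamma((\rho'-\nu)(H_0)+d)}\sum_{|\beta|\leq 2d}c_{\beta}\,(-\mathbf{i}\partial_{\xi})^{\beta}\tilde K_{d-\nu(H_0)}(|\xi|).
\]
Iterating the recursion \eqref{Eq:KBessel-recursive2} lets me expand $\partial^{\beta}\tilde K_{d-\nu(H_0)}(|\xi|)=\sum_{j=\lceil|\beta|/2\rceil}^{|\beta|}P_{\beta,j}(\xi)\,\tilde K_{d-\nu(H_0)-j}(|\xi|)$ with $P_{\beta,j}$ a polynomial of degree $2j-|\beta|$, and all three assertions follow from this representation.

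Part (1) then splits: smoothness on $\mathfrak{n}^{\ast}-\{0\}$ is already supplied by Lemma~\ref{L:FourierSmooth}, while the fast decay at infinity follows from \eqref{Eq:Bessel2}---each $\tilde K_{\gamma}(|\xi|)$ is $O(|\xi|^{\gamma-1/2}e^{-|\xi|})$, so each term in the expansion of $\widehat{f_{N}}$ is bounded by a polynomial in $\xi$ times $e^{-|\xi|}$. For part (2), I would use \eqref{Eq:Bessel4}: for generic $\mu$ with $\Re\mu>0$, $\tilde K_{\mu}(|\xi|)$ admits an expansion $F(|\xi|^{2})+|\xi|^{2\mu}G(|\xi|^{2})$ with $F,G$ holomorphic near $0$, so $P_{\beta,j}(\xi)\,\tilde K_{d-\nu(H_0)-j}(|\xi|)$ is bounded near $0$ by $C(|\xi|^{2j-|\beta|}+|\xi|^{2(d-\Re\nu(H_0))-|\beta|})$. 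Maximizing over $|\beta|\leq 2d$ and $\lceil|\beta|/2\rceil\leq j\leq|\beta|$, the worst singular exponent is $-2\Re\nu(H_0)$, attained when $|\beta|=2d$ and $j=d$; this yields directly the three cases of (2), with the logarithm appearing through the integer-index expansion of $\tilde K_{0}$ when $\nu=0$ (and being merely a formal upper bound when $\nu\in\mathbf{i}\bbR-\{0\}$). Part (3) is then immediate: the bound $|\xi|^{-2\Re\nu(H_0)}$ is integrable on a neighborhood of $0$ in $\bbR^{m}$ precisely when $-2\Re\nu(H_0)+m>0$, i.e., $\Re\nu(H_0)<\rho'(H_0)$, and this combines with the exponential decay at infinity from part~(1) to give $\widehat{f_{N}}\in L^{1}$.

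The main technical obstacle will be the combinatorial bookkeeping in the expansion of $\partial^{\beta}\tilde K$ (in particular establishing the degree formula $\deg P_{\beta,j}=2j-|\beta|$ by induction on $|\beta|$ via \eqref{Eq:KBessel-recursive2}) together with the treatment of parameters for which the index $d-\nu(H_0)-j$ is a non-positive integer, where $\tilde K$ picks up logarithmic corrections. These special cases can be avoided by choosing $d$ generically except when $\nu(H_0)$ itself is a non-negative integer; in the remaining cases, one substitutes the explicit log-expansion of $\tilde K$ at integer index and verifies that the extra $\log|\xi|$ factor is still dominated by the bounds claimed in part~(2).
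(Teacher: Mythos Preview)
Your approach is essentially the paper's: expand $f(n_x)$ via Lemma~\ref{L:f-polynomial} as a sum of monomials times powers of $(1+|x|^2)^{-1}$, apply Lemma~\ref{L:Fourier-KBessel3} and \eqref{Eq:Mult-Differ}, and then use the recursion \eqref{Eq:KBessel-recursive2} together with the asymptotics \eqref{Eq:Bessel2} and \eqref{Eq:Bessel4}. Your choice to clear denominators to a single exponent $d$ rather than keep the natural sum over $k$ is a cosmetic variation, and invoking Lemma~\ref{L:FourierSmooth} for smoothness in (1) is a legitimate shortcut.

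There is, however, a genuine gap in your argument for (3). What parts (1) and (2) give you is a pointwise bound on the \emph{function} $\widehat{f_N}|_{\mathfrak{n}^*-\{0\}}$; your recursion identity $\partial^{\beta}\tilde K_{d-\nu(H_0)}=\sum_j P_{\beta,j}\,\tilde K_{d-\nu(H_0)-j}$ is an equality of smooth functions on $\mathfrak{n}^*-\{0\}$, not a priori of tempered distributions, since the indices $d-\nu(H_0)-j$ can drop below $-m/2$ and the right-hand side may fail to be locally integrable at $0$. So you have shown only that the restriction $h:=\widehat{f_N}|_{\mathfrak{n}^*-\{0\}}$ lies in $L^1$, not that the tempered distribution $\widehat{f_N}$ coincides with $h$. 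The paper closes this gap with a short separate argument: $\widehat{f_N}-h$ is supported at $\{0\}$, so $\mathcal{F}^{-1}(\widehat{f_N}-h)$ is a polynomial; but $\mathcal{F}^{-1}(\widehat{f_N})=f_N$ and $\mathcal{F}^{-1}(h)$ both tend to $0$ at infinity (the latter by Riemann--Lebesgue, the former by Lemma~\ref{L:f-polynomial}), hence the polynomial is zero and $\widehat{f_N}=h\in L^1$. You should add this step.
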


\begin{proof}
(1) By Lemma \ref{L:f-polynomial}, $f(n_{x})$ is of the form \[f(n_{x})=(1+|x|^{2})^{-(\rho'-\nu)(H_{0})}h(x)\]
where $h(x)$ is a polynomial of $(1+|x|^{2})^{-1}$, $x_{i}(1+|x|^{2})^{-1}$, $x_{i}x_{j}(1+|x|^{2})^{-1}$
($1\leq i,j\leq m$). By Lemma \ref{L:Fourier-KBessel3}, the inverse Fourier transform of $(1+|x|^{2})^{\lambda}$
is \[\frac{2^{1-\frac{m}{2}}}{\Gamma(-\lambda)}\tilde{K}_{-\lambda-\frac{m}{2}}(|\xi|).\] Then
$\widehat{f_{N}}(\xi)$ is a linear combination of terms of the form
\[\frac{\partial^{\alpha}\tilde{K}_{-\nu(H_0)+k}(|\xi|)}{\Gamma((\rho'-\nu)(H_0)+k)}\] with $k\geq 0$ and
$|\alpha|\leq 2k$. By the recursive relation \eqref{Eq:KBessel-recursive2} and the smoothness and asymptotic
behavior \eqref{Eq:Bessel2} of the $K$-Bessel function, one sees that $\widehat{f_{N}}(\xi)$ is a smooth function
over $\mathfrak{n}^{\ast}-\{0\}$ and it decays fast near infinity in the sense that $\widehat{f_{N}}(\xi)e^{|\xi|}$
has a polynomial bound as $\xi\rightarrow\infty$.

\noindent (2) By the recursive relation \eqref{Eq:KBessel-recursive2}, each $\partial^{\alpha}\tilde{K}_{-\nu(H_0)+k}(|\xi|)$
($|\alpha|\leq 2k$) is a finite linear combination of terms of the form $\xi^{\beta}\tilde{K}_{-\nu(H_0)+k-t}(|\xi|)$
where $t\geq|\beta|\geq 0$ and $2t-|\beta|=|\alpha|\leq 2k$. When $\Re\nu(H_0)<\rho'(H_0)$, we have
$\Re(-\nu(H_0)+k)>-\frac{m}{2}$.
Moreover, by \eqref{Eq:Bessel4} one shows that the singularity at 0 of
$\xi^{\beta}\tilde{K}_{-\nu(H_0)+k-t}(|\xi|)$ is not worse than that described in (2).

\noindent (3) By (1) and (2), $\widehat{f_{N}}|_{\mathfrak{n}^*-\{0\}}$ is in $L^1$ when $\Re\nu(H_0)<\rho'(H_0)$. Let $h:=
\widehat{f_{N}}|_{\mathfrak{n}^*-\{0\}}$. To prove (3), we only need to show that $\widehat{f_{N}}=h$ as a
distribution. Since the support of $\widehat{f_{N}}-h$ is contained in $\{0\}$, its Fourier transform
$\mathcal{F}^{-1}(\widehat{f_{N}}-h)$ is a polynomial. Both $\mathcal{F}^{-1}(\widehat{f_{N}})(x)=f_{N}(x)$ and
$\mathcal{F}^{-1}(h)(x)$ tend to 0 as $x\to \infty$. Hence $\mathcal{F}^{-1}(\widehat{f_{N}}-h)=0$ and
$\widehat{f_{N}}=h$.
\end{proof}

%\begin{remark}
%Note that Lemma \ref{L:f-Fourier} (1) is a strengthening of Lemma \ref{L:FourierSmooth}. When $\Re\nu(H_0)\geq
%\rho'(H_0)$, Lemma \ref{L:f-Fourier} (3) must fail.
%\end{remark}

%\begin{question}\label{Q:f-Fourier}
%What is the precise singularity of $\widehat{f_{N}}(\xi)$ near $0$ when $\Re\nu(H_0)\geq\rho'(H_0)$?
%\end{question}

\begin{lemma}\label{L:f-L1}
Let $0\leq j\leq n-1$ and $f\in I_{j}(\nu)_{K_{2}}$. \begin{enumerate}
\item[(1)]If $\Re\nu(H_0)<0$, then $f_{N}\in L^{1}$, and $|\xi|^{k}\widehat{f_{N}}$ is a bounded function over
$\mathfrak{n}^{\ast}$ for any $k\geq 0$.
\item[(2)]If $0\leq\Re\nu(H_0)<\rho'(H_0)$, put \[p_{0}= \frac{\rho'(H_0)}{\rho'(H_0)-\Re\nu(H_0)}\geq 1
\textrm{ and }q_0=\frac{\rho'(H_0)}{\Re\nu(H_0)}\geq 1.\] Then, $f_{N}\in L^{p}$ for any $p$ with $p>p_0$ and
$\widehat{f_{N}}\in L^{q}$ for any $q$ with $1\leq q<q_0$.
\end{enumerate}
\end{lemma}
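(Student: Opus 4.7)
The plan is to deduce both parts directly from the pointwise bounds in Lemma~\ref{L:f-polynomial} for the noncompact picture, combined with the regularity and decay information for $\widehat{f_N}$ obtained in Lemma~\ref{L:f-Fourier}. The key observation is that the bound
\[|\partial^{\alpha}f(n_{x})|\leq C_{\alpha}(1+|x|^{2})^{(\Re\nu-\rho')(H_{0})}\]
holds uniformly in $\alpha$ (only $C_{\alpha}$ depends on $\alpha$), so taking powers and integrating yields the $L^{p}$ membership on the $N$-side, while \eqref{Eq:Mult-Differ} converts derivatives on $N$ into polynomial multiplication on $\mathfrak{n}^{\ast}$.

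For part (1), assume $\Re\nu(H_{0})<0$. Then $(\Re\nu-\rho')(H_{0})<-\rho'(H_{0})=-\tfrac{m}{2}$, so
\[\int_{\mathbb{R}^{m}}(1+|x|^{2})^{(\Re\nu-\rho')(H_{0})}\d x<\infty,\]
which gives $\partial^{\alpha}f_{N}\in L^{1}(\mathbb{R}^{m})$ for every multi-index $\alpha$ by Lemma~\ref{L:f-polynomial}. Applying Fact~\ref{L:Fourier-convolution1}(3) (the $p=1$ case of Hausdorff--Young, i.e.\ $\|\widehat{h}\|_{\infty}\leq (2\pi)^{-m/2}\|h\|_{1}$) together with \eqref{Eq:Mult-Differ} shows that $\xi^{\alpha}\widehat{f_{N}}$ is bounded on $\mathfrak{n}^{\ast}$ for every $\alpha$. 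Since $|\xi|^{k}$ is dominated by $1+\sum_{j}\xi_{j}^{2\lceil k/2\rceil}$ for $|\xi|\geq 1$ and is bounded by $1$ for $|\xi|\leq 1$, the uniform boundedness of $|\xi|^{k}\widehat{f_{N}}$ follows at once for any $k\geq 0$.

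For part (2), assume $0\leq\Re\nu(H_{0})<\rho'(H_{0})$. The $L^{p}$ statement is immediate: $|f_{N}(x)|^{p}\leq C^{p}(1+|x|^{2})^{p(\Re\nu-\rho')(H_{0})}$ is integrable precisely when $2p(\rho'(H_{0})-\Re\nu(H_{0}))>m=2\rho'(H_{0})$, i.e., $p>p_{0}$. For the $L^{q}$ statement on the Fourier side, invoke Lemma~\ref{L:f-Fourier}: $\widehat{f_{N}}$ is smooth on $\mathfrak{n}^{\ast}-\{0\}$ and satisfies $|\widehat{f_{N}}(\xi)|e^{|\xi|}\leq P(|\xi|)$ for some polynomial $P$, so the contribution of $\{|\xi|\geq 1\}$ to $\|\widehat{f_{N}}\|_{q}^{q}$ is finite for any $q\geq 1$. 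Near $\xi=0$, Lemma~\ref{L:f-Fourier}(2) provides $|\widehat{f_{N}}(\xi)|\leq C|\xi|^{-2\Re\nu(H_{0})}$ when $\Re\nu(H_{0})>0$, and $|\widehat{f_{N}}(\xi)|\leq C(1+|\log|\xi||)$ when $\Re\nu(H_{0})=0$. Since $\int_{|\xi|\leq 1}|\xi|^{-2q\Re\nu(H_{0})}\d\xi<\infty$ iff $2q\Re\nu(H_{0})<m$, i.e., $q<q_{0}$ (with $q_{0}=\infty$ if $\Re\nu(H_{0})=0$), the conclusion follows.

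I do not anticipate a genuine obstacle here: the proof is a straightforward bookkeeping exercise once Lemmas~\ref{L:f-polynomial} and \ref{L:f-Fourier} are in place. The only point requiring care is that one cannot reach the endpoint $q=q_{0}$ via Hausdorff--Young alone once $\Re\nu(H_{0})>\rho'(H_{0})/2$ (where $p_{0}>2$), which is precisely why we use the explicit singularity bound from Lemma~\ref{L:f-Fourier}(2) to handle $|\xi|\leq 1$ and the exponential decay from Lemma~\ref{L:f-Fourier}(1) to handle $|\xi|\geq 1$, rather than a single global duality argument.
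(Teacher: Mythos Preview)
Your proof is correct and follows essentially the same approach as the paper: part~(1) uses Lemma~\ref{L:f-polynomial} to put every derivative $\partial^{\alpha}f_{N}$ in $L^{1}$ and then concludes boundedness of $\xi^{\alpha}\widehat{f_{N}}$ via \eqref{Eq:Mult-Differ}, while part~(2) combines the $L^{p}$ bound from Lemma~\ref{L:f-polynomial} with the near-zero and large-$|\xi|$ estimates of Lemma~\ref{L:f-Fourier}. You have simply spelled out the integrability thresholds and the $|\xi|\leq 1$ versus $|\xi|\geq 1$ split that the paper leaves implicit.
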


\begin{proof}
If $\Re\nu(H_0)<0$, then $\partial^{\alpha}f(n_{x})\in L^{1}$ for any tuple $\alpha$ by Lemma~\ref{L:f-polynomial}.
Hence, $\xi^{\alpha}\widehat{f_{N}}(\xi)$ is bounded over $\mathfrak{n}^{\ast}$. Therefore, for any $k\geq 0$,
$|\xi|^{k}\widehat{f_{N}}$ is a bounded function over $\mathfrak{n}^{\ast}$. If $0\leq\Re\nu(H_0)<\rho'(H_0)$,
then by Lemma~\ref{L:f-polynomial} one gets $f_{N}\in L^{p}$ for any $p$ with $p>p_0$. By Lemma~\ref{L:f-Fourier},
one sees that $\widehat{f_{N}}\in L^{q}$ for any $q$ with $1\leq q<q_0$.
\end{proof}

\begin{lemma}\label{L:Riesz5}
Let $0\leq j\leq n-1$ and $f\in I_{j}(-\nu)_{K_{2}}$. If $-\rho'(H_0)<\Re\nu(H_0)<\rho'(H_0)$, then
\begin{equation}\label{Eq:intertwining-Fourier}
\mathcal{F}((J_{j}(\nu)f)_{N})=(2\pi)^{\frac{m}{2}}\mathcal{F}(T_{j}(\nu))\cdot \mathcal{F}(f_{N}).
\end{equation}
\end{lemma}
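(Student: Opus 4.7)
The plan is to reduce to a smaller strip where the classical $L^p$ convolution theorem applies, then extend by analytic continuation. To begin I would restrict attention to $\nu$ with $0 < \Re\nu(H_0) < \rho'(H_0)/2$. In this strip the distribution $T_j(\nu)$ is actually a locally integrable function bounded by $C|x|^{-m+2\Re\nu(H_0)}$, and by Lemma~\ref{L:f-L1}(1) (applied to $f\in I_j(-\nu)_{K_2}$) one has $f_N\in L^1$. Moreover $f_N$ is bounded (even has polynomial decay of order $m+2\Re\nu(H_0)$ by Lemma~\ref{L:f-polynomial}), so the convolution integral $(T_j(\nu)*f_N)(x)$ converges absolutely, matching the formula from Proposition~\ref{L:intertwining1}.

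To pass to the Fourier side in this strip I would pick a smooth cutoff $\chi$ supported near the origin and write $T_j(\nu)=T_1+T_2$ with $T_1=\chi T_j(\nu)$ compactly supported (hence in $L^1$, and in $L^p$ for any $p<m/(m-2\Re\nu(H_0))$) and $T_2=(1-\chi)T_j(\nu)$ supported outside a ball, hence in $L^p$ for any $p>m/(m-2\Re\nu(H_0))$. Since we arranged $\Re\nu(H_0)<m/4$, the threshold $m/(m-2\Re\nu(H_0))$ is strictly less than $2$, so both $T_1$ and $T_2$ lie in $L^2$. Applied with $p_1=2$ and $p_2=1$ the hypothesis $1/p_1+1/p_2=3/2$ of Lemma~\ref{L:Fourier-convolution2} is satisfied, giving
\[\mathcal{F}(T_i*f_N)=(2\pi)^{m/2}\mathcal{F}(T_i)\cdot\mathcal{F}(f_N)\qquad(i=1,2).\]
Summing yields \eqref{Eq:intertwining-Fourier} in the strip $0<\Re\nu(H_0)<\rho'(H_0)/2$.

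To extend to the full range $-\rho'(H_0)<\Re\nu(H_0)<\rho'(H_0)$ I would argue by analytic continuation in $\nu$, viewing both sides as tempered distributions on $\mathfrak{n}^*$ depending holomorphically on $\nu$. On the left, fixing $f|_{K_2}$ gives a holomorphic family $f_\nu\in I_j(-\nu)_{K_2}$, and Lemma~\ref{L:continuation} together with Lemma~\ref{L:f-Fourier}(1)--(2) shows that $\mathcal{F}((J_j(\nu)f_\nu)_N)$ varies holomorphically in $\nu$ when paired with any Schwartz test function. On the right, $\mathcal{F}(T_j(\nu))$ is holomorphic in $\nu$ by the explicit formula in Lemma~\ref{L:Riesz3}, and for $\Re\nu(H_0)<\rho'(H_0)$ its dominant factor $|\xi|^{-2\nu(H_0)}$ is locally $L^1$; multiplying by the smooth function $\mathcal{F}(f_N)$ (smooth away from $\xi=0$ and with only the mild singularity of Lemma~\ref{L:f-Fourier}(2) at the origin, together with rapid decay at infinity) produces a locally integrable tempered distribution whose pairing with a Schwartz test function is again holomorphic in $\nu$. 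Since the two holomorphic families of distributions agree on an open subset of the strip, they agree throughout $-\rho'(H_0)<\Re\nu(H_0)<\rho'(H_0)$.

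The hard part is the last step: verifying that the right-hand side really does define a tempered distribution with holomorphic $\nu$-dependence over the whole strip, uniformly enough to apply the uniqueness theorem for analytic functions. The pointwise growth of $\mathcal{F}(T_j(\nu))$ at $\xi=0$ becomes worse as $\Re\nu(H_0)$ increases toward $\rho'(H_0)$, so it needs to be balanced against the vanishing of $\mathcal{F}(f_N)$ there that is controlled by Lemma~\ref{L:f-Fourier}(2); the rest of the verification is relatively routine, since $\mathcal{F}(f_N)$ has rapid decay at infinity by Lemma~\ref{L:f-Fourier}(1), so no issues arise at large $|\xi|$.
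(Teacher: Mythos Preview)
Your overall strategy matches the paper's exactly: split $T_j(\nu)$ into a piece near the origin and a piece at infinity, apply the $L^p$ convolution theorem (Lemma~\ref{L:Fourier-convolution2}) on a narrow substrip where $f_N\in L^1$, then extend by holomorphy in $\nu$. The paper uses the sharp cutoff $\chi_{|x|\le 1}$ rather than a smooth one, but that is immaterial.

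There is, however, a concrete slip in your integrability count. With $|T_j(\nu)(x)|\sim |x|^{-m+2\Re\nu(H_0)}$, the compactly supported piece $T_1$ lies in $L^p$ precisely for $p< m/(m-2\Re\nu(H_0))$. When $\Re\nu(H_0)<m/4$ this threshold is \emph{below} $2$, so $T_1\notin L^2$; only $T_2$ is in $L^2$. The fix is the one the paper uses: for $T_1$ take $p_1=p_2=1$ (you already observed $T_1\in L^1$), and for $T_2$ take $p_1=2$, $p_2=1$. Both choices satisfy $1/p_1+1/p_2\ge 3/2$, so Lemma~\ref{L:Fourier-convolution2} applies to each piece and summing gives \eqref{Eq:intertwining-Fourier} on the substrip.

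Your ``hard part'' is handled more cleanly in the paper than you anticipate. Rather than pairing against test functions, the paper first notes (from Lemma~\ref{L:f-Fourier}(3) for the left side, and from Lemma~\ref{L:Riesz3} combined with the decay and singularity estimates of Lemma~\ref{L:f-Fourier} for the right side) that \emph{both} sides of \eqref{Eq:intertwining-Fourier} are $L^1$ functions on $\mathfrak{n}^*$, smooth on $\mathfrak{n}^*\setminus\{0\}$, throughout the full strip $-\rho'(H_0)<\Re\nu(H_0)<\rho'(H_0)$. Holomorphy in $\nu$ of each side (for fixed $f|_{K_2}$) then gives the extension immediately, with no need to balance singularities against vanishing at $\xi=0$.
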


\begin{proof}
Recall that the operator $J_{j}(\nu)$ is defined as the convolution with $T_j(\nu)$ when $\Re\nu(H_0)>0$ and
it extends holomorphically to all $\nu\in \bbC$. Then $J_{j}(\nu)f\in I_{j}(\nu)_K$ and when
$-\rho'(H_0)<\Re\nu(H_0)<\rho'(H_0)$, $\mathcal{F}((J_{j}(\nu)f)_{N})$ is in $L^1$ by Lemma~\ref{L:f-Fourier}.
By Lemma~\ref{L:Riesz3} and Lemma~\ref{L:f-Fourier}, the right hand side of \eqref{Eq:intertwining-Fourier} is
$L^1$ near $0$, and decays rapidly when $|\xi|\to \infty$. Hence it is in $L^1(\mathfrak{n}^*)$.

Let $f_{0}: K_{2}\rightarrow V_{M,\mu_{j}}$ be a fixed $K_{2}$-finite function satisfying
\[f_{0}(km_0)=\sigma_j(m_0)^{-1}f_{0}(k),\quad  (k,m_0)\in K_{2}\times M_{2}.\]
Let $f=f_{\nu}$ be defined by
\[f_{\nu}(ka\bar{n})=e^{-(\nu-\rho')(H_{0})} f_{0}(k),\quad
 (k,a,\bar{n})\in K_{2}\times A\times\bar{N}.\]
Write
\[T_{\nu}:=T_{j}(\nu),\quad g_{\nu}:=T_{\nu}\ast (f_\nu)_{N} = J_{j}(\nu)(f_\nu)_{N}.\]
It is enough to show
\begin{equation}\label{Eq:convolution2}
\mathcal{F}(g_{\nu})
=(2\pi)^{\frac{m}{2}}\mathcal{F}(T_{\nu})\cdot \mathcal{F}((f_{\nu})_N).
\end{equation}

We first show (\ref{Eq:convolution2}) when $0<\nu(H_0)<\frac{1}{2}\rho'(H_0)$. Let
\[T_{\nu}^{+}=T_{\nu}\chi_{|x|\leq 1},\  T_{\nu}^{-}=T_{\nu}\chi_{|x|>1}, \text{ and }
g_{\nu}^{+}=T_{\nu}^{+}\ast f_{\nu},\  g_{\nu}^{-}=
T_{\nu}^{-}\ast f_{\nu},\]
%Put \[p_0=\frac{\rho'(H_0)}{\rho'(H_0)-\Re\nu(H_0)},\]
where $\chi_{|x|\leq 1}$ and $\chi_{|x|>1}$
 are characteristic functions of ${|x|\leq 1}$ and ${|x|> 1}$, respectively.
Then $T_{\nu}^{+}\in L^{1}$ and $T_{\nu}^{-}\in L^2$.
By Lemma~\ref{L:f-Fourier}, $f_{\nu}\in L^{1}$.
Hence Lemma~\ref{L:Fourier-convolution2} implies that
\begin{align*}
&\mathcal{F}(g_{\nu}^{+})
=(2\pi)^{\frac{m}{2}}\mathcal{F}(T_{\nu}^{+})\cdot \mathcal{F}((f_{\nu})_N)
 \text{ and }\\
&\mathcal{F}(g_{\nu}^{-})
=(2\pi)^{\frac{m}{2}}\mathcal{F}(T_{\nu}^{-})\cdot \mathcal{F}((f_{\nu})_N).
\end{align*}
Taking the sum, one gets \eqref{Eq:convolution2}.
%\[\widehat{g_{\nu}}=(2\pi)^{\frac{m}{2}}\widehat{(f_{\nu})_N}\widehat{T_{\nu}}.\]
%This is an equality for tempered distributions.
\if 0
By Lemma~\ref{L:f-Fourier},
$\mathcal{F}(f_{\nu})$ and $\mathcal{F}(g_{\nu})$ are smooth over $\mathfrak{n}^{\ast}-\{0\}$. By Lemma
\ref{L:Riesz3}, $\mathcal{F}(T_{\nu})$ is smooth over $\mathfrak{n}^{\ast}-\{0\}$. Hence,
\[\mathcal{F}(g_{\nu})=\mathcal{F}(f_{\nu})\mathcal{F}(T_{\nu})\] over $\mathfrak{n}^{\ast}-\{0\}$.
\fi

When $-\rho'(H_0)<\Re\nu(H_0)<\rho'(H_0)$,
 both sides of \eqref{Eq:convolution2} are smooth on $\mathfrak{n}^{\ast}-\{0\}$
 and they are in $L^1(\mathfrak{n}^*)$.
Moreover, they are holomorphic in $\nu$.
Therefore, \eqref{Eq:convolution2} holds
 for all $\nu$ with $-\rho'(H_0)<\Re\nu(H_0)<\rho'(H_0)$.
\if 0
by Lemma \ref{L:f-Fourier}, $\mathcal{F}(f_{\nu})$ and $\mathcal{F}(g_{\nu})$
are smooth over $\mathfrak{n}^{\ast}-\{0\}$. By Lemma \ref{L:Riesz3}, $\mathcal{F}(T_{\nu})$ is smooth over
$\mathfrak{n}^{\ast}-\{0\}$. Since $\mathcal{F}(f_{\nu})$, $\mathcal{F}(g_{\nu})$, $\mathcal{F}(T_{\nu})$ all vary
holomorphically with respect to $\nu$, it follows that \[\mathcal{F}(g_{\nu})=\mathcal{F}(f_{\nu})\mathcal{F}(T_{\nu})\]
holds true over $\mathfrak{n}^{\ast}-\{0\}$.
\fi
\end{proof}

The following proposition follows from Lemma \ref{L:Riesz3} and Lemma \ref{L:Riesz5}.

\begin{proposition}\label{P:Riesz5}
Let $0\leq j\leq n-1$ and $-\rho'(H_0)<\Re\nu(H_0)<\rho'(H_0)$. Then for any function $f\in I_{j}(-\nu)_{K_2}$,
\begin{equation*}%\label{Eq:intertwining-Fourier5}
\mathcal{F}((J_{j}(\nu)f)_{N})(\xi)=\frac{\pi^{\frac{m}{2}}|\frac{1}{2}\xi|^{-2\nu(H_0)}}{\Gamma(\frac{m}{2}-
\nu(H_0)+1)}\Bigl(\frac{m}{2}-j-\nu(H_0)\sigma_j(r_{\xi})\Bigr)\mathcal{F}(f_{N})(\xi).
\end{equation*}
\end{proposition}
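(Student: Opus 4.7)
The plan is to combine Lemma~\ref{L:Riesz5} with the explicit formula of Lemma~\ref{L:Riesz3}. First I would invoke Lemma~\ref{L:Riesz5}, which is applicable precisely under the hypothesis $-\rho'(H_0)<\Re\nu(H_0)<\rho'(H_0)$, to rewrite
\[
\mathcal{F}((J_{j}(\nu)f)_{N}) = (2\pi)^{m/2}\,\mathcal{F}(T_{j}(\nu))\cdot\mathcal{F}(f_{N}),
\]
thereby reducing the problem to substituting the closed form of $\mathcal{F}(T_{j}(\nu))$ provided by Lemma~\ref{L:Riesz3}.

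Next I would perform the constant bookkeeping. Using $\rho'(H_0)=m/2$, the formula of Lemma~\ref{L:Riesz3} yields
\[
(2\pi)^{m/2}\,\mathcal{F}(T_{j}(\nu))
= \frac{(2\pi)^{m/2}\,2^{(2\nu-\rho')(H_0)}\,|\xi|^{-2\nu(H_0)}}{\Gamma(1+(\rho'-\nu)(H_0))}\bigl(\tfrac{m}{2}-j-\nu(H_0)\sigma_j(r_\xi)\bigr).
\]
The scalar factor simplifies via $(2\pi)^{m/2}\,2^{-m/2} = \pi^{m/2}$, and the remaining factor $2^{2\nu(H_0)}|\xi|^{-2\nu(H_0)}$ is exactly $|\tfrac{1}{2}\xi|^{-2\nu(H_0)}$. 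This immediately produces the stated right-hand side.

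There is essentially no analytic obstacle here: the work was already done in the two supporting lemmas. Lemma~\ref{L:Riesz3} contains the nontrivial Fourier-theoretic calculation (writing $T_j(\nu)$ as a distribution and evaluating its Fourier transform via \eqref{Eq:Riesz1}--\eqref{Eq:Riesz2}), and Lemma~\ref{L:Riesz5} supplies the justification for passing the convolution through the Fourier transform in the non-absolutely-convergent range, using the splitting of $T_\nu$ into compactly supported and square-integrable parts together with holomorphic continuation in $\nu$. Once these are in hand, the proof of Proposition~\ref{P:Riesz5} is a one-line substitution followed by the constant simplification described above.
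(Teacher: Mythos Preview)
Your proposal is correct and follows exactly the paper's approach: the paper states only that the proposition follows from Lemma~\ref{L:Riesz3} and Lemma~\ref{L:Riesz5}, and you have supplied precisely that substitution together with the constant simplification $(2\pi)^{m/2}\cdot 2^{(2\nu-\rho')(H_0)}=\pi^{m/2}\,2^{2\nu(H_0)}$ and $\rho'(H_0)=m/2$.
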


Put \[\nu_{j}=\Bigl(\frac{m}{2}-j\Bigr)\lambda_{0}.\] Then Proposition~\ref{P:Riesz5} gives
%By \eqref{Eq:intertwining-T1} and Lemma \ref{L:Riesz5} we have

\begin{proposition}\label{P:Riesz4}
Let $1\leq j\leq n-1$. Then for any function $f\in I_{j}(-\nu_{j})_{K_2}$, \begin{equation*}%\label{Eq:intertwining-Fourier1}
\mathcal{F}((J_{j}(\nu_{j})f)_N)(\xi)=\frac{\pi^{\frac{m}{2}}|\frac{1}{2}\xi|^{-m+2j}}{\Gamma(j+1)}
\Bigl(\frac{m}{2}-j\Bigr)(1-\sigma_j(r_{\xi}))\mathcal{F}(f_N)(\xi);\end{equation*}
and for any function $f\in I_{j}(\nu_{j})_{K_2}$, \begin{equation*}%\label{Eq:intertwining-Fourier2}
\mathcal{F}((J_{j}(-\nu_{j})f)_N)(\xi)=\frac{\pi^{\frac{m}{2}}|\frac{1}{2}\xi|^{m-2j}}{\Gamma(m-j+1)}
\Bigl(\frac{m}{2}-j\Bigr)(1+\sigma_j(r_{\xi}))\mathcal{F}(f_N)(\xi).
\end{equation*}
\end{proposition}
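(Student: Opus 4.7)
The plan is to obtain Proposition~\ref{P:Riesz4} as an immediate specialization of Proposition~\ref{P:Riesz5} at the two parameters $\nu=\nu_j$ and $\nu=-\nu_j$. Recall that Proposition~\ref{P:Riesz5} asserts, for any $\nu$ with $-\rho'(H_0)<\Re\nu(H_0)<\rho'(H_0)$ and any $f\in I_j(-\nu)_{K_2}$,
\begin{equation*}
\mathcal{F}((J_j(\nu)f)_N)(\xi)=\frac{\pi^{m/2}|\tfrac{1}{2}\xi|^{-2\nu(H_0)}}{\Gamma(\tfrac{m}{2}-\nu(H_0)+1)}\Bigl(\tfrac{m}{2}-j-\nu(H_0)\sigma_j(r_\xi)\Bigr)\mathcal{F}(f_N)(\xi).
\end{equation*}
Since $\rho'(H_0)=\tfrac{m}{2}$ and $\nu_j(H_0)=\tfrac{m}{2}-j$, the hypothesis $1\leq j\leq n-1$ gives $0<\nu_j(H_0)<\rho'(H_0)$. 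Hence both $\nu=\nu_j$ and $\nu=-\nu_j$ lie in the open strip where Proposition~\ref{P:Riesz5} applies, so no additional analytic input will be required.

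For the first identity, I would substitute $\nu=\nu_j$ and use $\nu_j(H_0)=\tfrac{m}{2}-j$: the exponent of $|\tfrac{1}{2}\xi|$ becomes $-2\nu_j(H_0)=-m+2j$; the gamma factor in the denominator becomes $\Gamma(\tfrac{m}{2}-(\tfrac{m}{2}-j)+1)=\Gamma(j+1)$; and the bracket simplifies as
\begin{equation*}
\tfrac{m}{2}-j-\bigl(\tfrac{m}{2}-j\bigr)\sigma_j(r_\xi)=\bigl(\tfrac{m}{2}-j\bigr)\bigl(1-\sigma_j(r_\xi)\bigr).
\end{equation*}
This produces the first claimed formula for $f\in I_j(-\nu_j)_{K_2}$. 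Analogously, substituting $\nu=-\nu_j$ (so $\nu(H_0)=-\tfrac{m}{2}+j$) gives the exponent $m-2j$, the gamma factor $\Gamma(\tfrac{m}{2}+(\tfrac{m}{2}-j)+1)=\Gamma(m-j+1)$, and the bracket $(\tfrac{m}{2}-j)(1+\sigma_j(r_\xi))$, yielding the second claimed formula for $f\in I_j(\nu_j)_{K_2}$.

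Because the derivation is a pure specialization, there is no genuine obstacle at this step: all of the actual work—computing $\mathcal{F}(T_j(\nu))$ in Lemma~\ref{L:Riesz3}, justifying the convolution-to-product passage $\mathcal{F}((J_j(\nu)f)_N)=(2\pi)^{m/2}\mathcal{F}(T_j(\nu))\cdot\mathcal{F}(f_N)$ in Lemma~\ref{L:Riesz5}, and tracking the constants $2^{(2\nu-\rho')(H_0)}/\Gamma(1+(\rho'-\nu)(H_0))$ through the normalization—has already been carried out. One only has to note that the factor $|\tfrac{1}{2}\xi|^{-2\nu(H_0)}$ absorbs the powers of $2$ produced in Lemma~\ref{L:Riesz3}, which is already how the constants appear in Proposition~\ref{P:Riesz5}.
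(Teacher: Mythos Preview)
Your proposal is correct and matches the paper's approach exactly: the paper derives Proposition~\ref{P:Riesz4} as a direct specialization of Proposition~\ref{P:Riesz5} at $\nu=\pm\nu_j$, with no additional argument. One minor imprecision: when $m$ is even and $j=n-1$ one has $\nu_j(H_0)=0$ rather than $\nu_j(H_0)>0$, but this is harmless since $0$ still lies in the open strip $(-\rho'(H_0),\rho'(H_0))$ required by Proposition~\ref{P:Riesz5}.
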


\begin{lemma}\label{L:intertwining2}
Let $1\leq j\leq n-1$. \begin{enumerate}
\item[(1)]For any function $f\in I_{j}(-\nu_{j})_{K_2}$, \[(1+\sigma_j(r_{\xi}))\mathcal{F}((J_{j}(\nu_{j})f)_N)
=0;\] if $J_{j}(\nu_{j})f=0$, then $(1-\sigma_j(r_{\xi}))(\mathcal{F}(f_N))=0$.
\item[(2)]For any function $f\in I_{j}(\nu_{j})_{K_2}$, \[(1-\sigma_j(r_{\xi}))\mathcal{F}((J_{j}(-\nu_{j})f)_N)
=0;\] if $J_{j}(-\nu_{j})f=0$, then $(1+\sigma_j(r_{\xi}))\mathcal{F}(f_N)=0$.
\end{enumerate}
\end{lemma}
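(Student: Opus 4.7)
The plan is to deduce Lemma~\ref{L:intertwining2} directly from the explicit formulas in Proposition~\ref{P:Riesz4} together with the elementary observation that $\sigma_j(r_\xi)$ squares to the identity. The proof is essentially a one-line algebraic manipulation once those two ingredients are in place; the only subtlety is justifying that the multiplicative identities, which hold on $\mathfrak{n}^*-\{0\}$, in fact hold as distributions on all of $\mathfrak{n}^*$.

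First I would record the reflection identity. For any $\xi\in\mathfrak{n}^*-\{0\}$, the element $r_\xi\in\OO(m)$ from \eqref{Eq:rx} satisfies $r_\xi^2=I$, so $\sigma_j(r_\xi)^2=\sigma_j(I)=I$ as endomorphisms of $V_j$. Consequently
\[
(1+\sigma_j(r_\xi))(1-\sigma_j(r_\xi))=I-\sigma_j(r_\xi)^2=0
\]
pointwise in $\xi\neq 0$. This is the single algebraic fact driving both parts.

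For part (1), take $f\in I_j(-\nu_j)_{K_2}$ and apply the first formula of Proposition~\ref{P:Riesz4}, which says
\[
\mathcal{F}((J_j(\nu_j)f)_N)(\xi)
=C\,|\tfrac{1}{2}\xi|^{-m+2j}\bigl(1-\sigma_j(r_\xi)\bigr)\mathcal{F}(f_N)(\xi),
\]
where $C=\pi^{m/2}(\tfrac{m}{2}-j)/\Gamma(j+1)\neq 0$ since $1\leq j\leq n-1$. Multiplying both sides on the left by $1+\sigma_j(r_\xi)$ and invoking the reflection identity yields $(1+\sigma_j(r_\xi))\mathcal{F}((J_j(\nu_j)f)_N)(\xi)=0$ for every $\xi\neq 0$. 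Since $\Re(-\nu_j)(H_0)=j-\frac{m}{2}<\rho'(H_0)$, Lemma~\ref{L:f-Fourier}(3) shows $\mathcal{F}(f_N)\in L^1$, hence $\mathcal{F}((J_j(\nu_j)f)_N)$ has no delta-type contribution at the origin and the identity persists as an equality of distributions on $\mathfrak{n}^*$. For the converse direction, if $J_j(\nu_j)f=0$ then the displayed formula gives $C|\tfrac{1}{2}\xi|^{-m+2j}(1-\sigma_j(r_\xi))\mathcal{F}(f_N)(\xi)=0$ on $\mathfrak{n}^*-\{0\}$; dividing by the nonvanishing scalar factor yields $(1-\sigma_j(r_\xi))\mathcal{F}(f_N)=0$ on $\mathfrak{n}^*-\{0\}$, and again the $L^1$-regularity of $\mathcal{F}(f_N)$ (this time via $\Re\nu_j(H_0)=m/2-j<m/2$ and Lemma~\ref{L:f-Fourier}(3)) promotes this to an identity of distributions.

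Part (2) is completely parallel, swapping the roles of the signs: use the second formula of Proposition~\ref{P:Riesz4} for $f\in I_j(\nu_j)_{K_2}$, multiply by $1-\sigma_j(r_\xi)$, and use the same reflection identity. The only potential obstacle I foresee is the bookkeeping of the singularities at $\xi=0$ in dividing out $|\xi|^{\pm(m-2j)}$, but Lemma~\ref{L:f-Fourier} and Lemma~\ref{L:f-L1} make precisely those $L^p$-statements that guarantee no hidden distribution supported at the origin intervenes, so this reduces to the elementary involutive identity above.
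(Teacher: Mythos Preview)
Your proof is correct and follows essentially the same route as the paper's: use Proposition~\ref{P:Riesz4} (plus the involutivity $\sigma_j(r_\xi)^2=I$) to get the four identities on $\mathfrak{n}^*-\{0\}$, then invoke Lemma~\ref{L:f-Fourier}(3) to see that all the Fourier transforms involved lie in $L^1$, so the identities extend to all of $\mathfrak{n}^*$ as distributions. One minor tightening: in the forward direction of (1) it is cleaner to apply Lemma~\ref{L:f-Fourier}(3) directly to $J_j(\nu_j)f\in I_j(\nu_j)_{K_2}$ (with parameter $\nu_j$) rather than to $f$, since it is $\mathcal{F}((J_j(\nu_j)f)_N)$ whose $L^1$-regularity you need.
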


\begin{proof}
By Proposition \ref{P:Riesz4}, one gets the four equalities on $\mathfrak{n}^{\ast}-\{0\}$. By Lemma \ref{L:f-Fourier},
Fourier transformed functions appearing in the four equalities are always in $L^1$. Hence, one has such equalities
as tempered distributions.
\end{proof}

\if 0
\begin{proposition}\label{L:L-L'}
For each $1\leq j\leq n$,
 the image of $J_{j}(\nu_{j})$
 equals the kernel of $J_{j}(-\nu_{j})$, and the image of
$J_{j}(-\nu_{j})$ equals the kernel of $J_{j}(\nu_{j})$.
\end{proposition}

\begin{proof}
This follows from the composition series of the induced modules $L_{j}(\pm{\nu_{j}})$, and is well-known. For example, it
is implied by \cite[Theorem 5.2.1]{Collingwood}. When $2\leq j\leq n$, we give a proof using Proposition \ref{P:Riesz4}.
We know that $J_{j}(\nu)$ is given by the convolution against $T_{j}(\nu)$. In the Fourier transformed picture, by
Proposition \ref{P:Riesz4} one sees that the convolution becomes multiplication. Proposition \ref{P:Riesz4} also indicates
that the kernel of $\mathcal{F}(T_{j}(\pm{\nu_{j}}))$ is equal to the image of $\mathcal{F}(T_{j}(\mp{\nu_{j}}))$. Hence,
the conclusion follows.
\end{proof}

\fi

For two functions $f_1,f_2\in I_{j}(\nu)_{K_2}$ which lie in the image of $J_{j}(\nu)$, take a function
$\tilde{f}_{1}\in I_{j}(-\nu)_{K_2}$ such that $J_{j}(\nu)\tilde{f}_{1}=f_{1}$. Define
\begin{equation*}
%\label{Eq:innerProduct2}
(\mathcal{F}((\tilde{f}_{1})_{N})|\mathcal{F}((f_{2})_{N}))=
\int_{\mathfrak{n}^{\ast}}(\mathcal{F}(\tilde{f}_{1})(\xi),\mathcal{F}(f_{2})(\xi))\d\xi.
\end{equation*}

\begin{lemma}\label{L:innerProduct2}
Let $1\leq j\leq n-1$ and $-\rho'(H_0)<\nu(H_0)<\rho'(H_0)$. For any two functions $f_1,f_2\in I_{j}(\nu)_{K_{2}}$
which lie in the image of $J_{j}(\nu)$, we have
\begin{equation}\label{Eq:innerProduct3}
(\mathcal{F}((\tilde{f}_{1})_{N})|\mathcal{F}((f_{2})_{N}))=(f_{1},f_{2}).
\end{equation}
Here, the right hand side is defined in \eqref{Eq:Hermitian1}.
\end{lemma}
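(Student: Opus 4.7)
The plan is to recognize equation~(\ref{Eq:innerProduct3}) as a Plancherel-type identity,
\[\int_{\mathfrak{n}^{\ast}}(\mathcal{F}\tilde f_1(\xi),\mathcal{F}f_2(\xi))\,\d\xi \;=\; \int_{\bbR^{m}}(\tilde f_1(n_x),f_2(n_x))\,\d x,\]
to verify it directly on a sub-interval of $\nu(H_0)$ where the classical Plancherel theorem applies to $(\tilde f_1)_N$ and $(f_2)_N$ as $L^2$ functions, and then to extend to the full range by analytic continuation in $\nu$.

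First I would fix $K_2$-finite boundary values $\tilde f_0,f_0\colon K_2\to V_j$ satisfying the appropriate $M_2$-equivariance, and let $\tilde f_1^{(-\nu)}\in I_j(-\nu)_{K_2}$ and $f_2^{(\nu)}\in I_j(\nu)_{K_2}$ denote the corresponding holomorphic families in $\nu$ obtained from the Iwasawa decomposition (Lemma~\ref{L:Iwasawa}). Both sides of (\ref{Eq:innerProduct3}) then depend holomorphically on $\nu$ throughout the strip $|\nu(H_0)|<\rho'(H_0)$. For the right-hand side this follows from the uniform pointwise bound
\[|(\tilde f_1(n_x),f_2(n_x))|\le C(1+|x|^{2})^{-2\rho'(H_0)}=C(1+|x|^{2})^{-m},\]
which is a consequence of Lemma~\ref{L:f-polynomial} applied to each factor (the powers of $1+|x|^2$ combine to a $\nu$-independent exponent). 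For the left-hand side it follows from Lemma~\ref{L:f-Fourier}: the integrand is exponentially decaying at infinity, and the singularity of the product $(\mathcal{F}\tilde f_1,\mathcal{F}f_2)(\xi)$ at the origin is dominated by a multiple of $\max(|\xi|^{2|\nu(H_0)|},|\xi|^{-2|\nu(H_0)|})$, which is uniformly integrable on compact $\nu$-subsets of the strip.

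Second, I would restrict attention to the smaller sub-strip $|\nu(H_0)|<\rho'(H_0)/2$. The bound from Lemma~\ref{L:f-polynomial} there gives $\int(1+|x|^{2})^{-2(\rho'\mp\nu)(H_0)}\,\d x<\infty$, hence both $(\tilde f_1)_N$ and $(f_2)_N$ lie in $L^2(\bbR^{m},V_j)$, and the classical Plancherel theorem applied to the inverse Fourier transform $\mathcal F$ immediately yields (\ref{Eq:innerProduct3}) on this sub-strip. By the uniqueness of analytic continuation, the identity then extends to the whole connected strip $|\nu(H_0)|<\rho'(H_0)$.

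The main potential obstacle is verifying the uniform integrability that underlies the holomorphic dependence of the Fourier-side integral, since the asymptotic type of $\mathcal{F}f_N$ near $\xi=0$ changes qualitatively (from a negative power to continuity, via a logarithmic regime) as $\nu(H_0)$ crosses zero according to Lemma~\ref{L:f-Fourier}~(2). This is handled by the explicit description in the proof of Lemma~\ref{L:f-Fourier}, writing $\mathcal{F}f_N$ as a finite linear combination of terms $\Gamma(\alpha+\tfrac{m}{2})^{-1}\partial^{\beta}\tilde K_{\alpha}(|\xi|)$ in the normalized Bessel functions, which are entire in $\alpha=-\nu(H_0)+k$; the product in the integrand is then a holomorphic function of $\nu$ with bounds that are uniformly integrable on compact $\nu$-subsets, which justifies both the holomorphy of the left-hand side and the passage to the limit in the analytic continuation step.
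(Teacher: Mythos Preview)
Your approach is correct and is a genuine alternative to the paper's proof. Both arguments begin the same way: on the sub-strip $|\nu(H_0)|<\tfrac{1}{2}\rho'(H_0)$ one has $(\tilde f_1)_N,(f_2)_N\in L^2$ by Lemma~\ref{L:f-polynomial}, and Parseval gives \eqref{Eq:innerProduct3} immediately. After this point the two proofs diverge. The paper does \emph{not} use analytic continuation; instead it covers the remaining two half-strips $0<\nu(H_0)<\rho'(H_0)$ and $-\rho'(H_0)<\nu(H_0)<0$ separately, observing (via Lemma~\ref{L:f-L1} and Lemma~\ref{L:f-Fourier}(3)) that in the first case $(\tilde f_1)_N\in L^1$ and $\mathcal{F}((f_2)_N)\in L^1$, so that the double integral
\[\int_{\mathfrak{n}^*}\!\int_{\mathbb{R}^m}(\tilde f_1(n_x),\,e^{-\mathbf{i}(\xi,x)}\overline{h_2(\xi)})\,\d x\,\d\xi\]
is absolutely convergent and Fubini yields the identity directly; the third case is symmetric. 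Your route trades this case-by-case Fubini argument for a single analytic-continuation step, at the cost of having to justify holomorphy of the Fourier-side integral across the transition at $\nu(H_0)=0$, which you correctly identify as the delicate point and handle via the explicit Bessel expansion from the proof of Lemma~\ref{L:f-Fourier}. Either method works; yours is conceptually tidier, while the paper's avoids any appeal to holomorphic families and stays entirely within real $\nu$.
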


\begin{proof}
When $-\frac{1}{2}\rho'(H_0)<\nu(H_0)<\frac{1}{2}\rho'(H_0)$,
 both $\tilde{f}_1$ and $f_{2}$ are in $L^2$. By Parseval's theorem, one gets
\[(\mathcal{F}((\tilde{f}_{1})_{N})|\mathcal{F}((f_{2})_{N}))
 =((\tilde{f}_{1})_{N}|(f_{2})_{N})=(f_{1},f_{2}).\]

When $0<\nu(H_0)<\rho'(H_0)$, write $h_2=\mathcal{F}((f_2)_{N})$
 so that $(f_{2})_{N}$ is the Fourier transform of $h_2$.
By Lemma \ref{L:f-L1}, $(\tilde{f}_{1})_{N}\in L^{1}$, $h_{2}\in L^{1}$.
Then
\begin{align*}
(\mathcal{F}((\tilde{f}_{1})_{N})|h_{2})
&=(2\pi)^{\frac{m}{2}} \int_{\bbR^m}\int_{\bbR^m}
   (\tilde{f}_1)_N(x) e^{\mathbf{i}x\xi} \overline{h_2(\xi)} \d x \d\xi \\
&=((\tilde{f}_{1})_{N}|(f_{2})_{N})=(f_{1},f_{2}).
\end{align*}

\if 0

Put \begin{align*}&&\phi(h_1,(F_{2})_{N})\\&=&(h_{1}|\mathcal{F}((F_{2})_{N}))-
(\mathcal{F}^{-1}(h_{1})|(F_{2})_{N})\\&=&(\mathcal{F}((f_{1})_{N})|\mathcal{F}((F_{2})_{N}))-
((f_{1})_{N}|(F_{2})_{N}).\end{align*} Viewing $h_1,F_2$ as functions
in $L^{1}$, then $\phi$ is a sesquilinear form and is bounded on both variables. When both $h_1$ and $(F_{2})_{N}$
are Schwartz functions, $\phi(h_1,(F_2)_{N})=0$ by the classical Parseval's theorem. Using Schwartz functions to
approximate $h_1$ and $(F_{2})_{N}$, we get $\phi\equiv 0$. Hence, \[(\mathcal{F}((f_{1})_{N}),\mathcal{F}
((f_{2})_{N}))\!=\!(\mathcal{F}((f_{1})_{N})|\mathcal{F}((F_{2})_{N}))\!=\!((f_{1})_{N}|(F_{2})_{N})=(f_{1},f_{2}).\]

\fi
When $-\rho'(H_0)<\nu(H_0)< 0$, we have
 $(f_{2})_{N}\in L^{1}$ and $\mathcal{F}((\tilde{f}_1)_{N})\in L^1$.
Then \eqref{Eq:innerProduct3} is proved in the same way.
% as in the case where $0<\nu(H_0)<\rho'(H_0)$.
\if 0
By Lemma \ref{L:f-L1}, $(f_{1})_{N}\in L^{1}$
and $h_{2}\in L^{1}$. Then, $(F_{2})_{N}=\mathcal{F}^{-1}((h_2)_{N})$. Put\begin{align*}&&\phi((f_1)_{N},h_{2})
\\&=&(\mathcal{F}((f_{1})_{N})|h_{2})-(f_{1}|\mathcal{F}^{-1}(h_{2}))\\&=&(\mathcal{F}(f_{1})|\mathcal{F}(F_{2}))-
((f_{1})_{N}|(\tilde{f}_{2})_{N}).\end{align*}
Then one proves similarly as in the case of $0<\nu(H_0)<\rho'(H_0)$.
\fi
\end{proof}

Similarly to \S\ref{SS:irreducible}, we write $\pi_j$ for the image of $J_{j}(\nu_j)$
 and write $\pi'_j$ for the image of $J_{j}(-\nu_j)$.
The following fact is a special case of
 Facts~\ref{F:gamma-classification}, \ref{F:unitarizable},
 \ref{F:gamma-classification2}, \ref{F:unitarizable2}
 (cf.\ \cite{Collingwood}, \cite{Kobayashi-Speh}).

\begin{fact}\label{L:rho-classification}
For each $j$, both $\pi_{j}$ and $\pi'_{j}$ are irreducible representations of $G_{2}$. When $0\leq j\leq n-2$,
$\pi_{j+1}\cong \pi'_{j}$ and they are infinite-dimensional unitarizable representations, and $\pi_{0}$ is the
trivial representation. If $m$ is odd, then $\pi'_{n-1}$ is a discrete series; if $m$ is even, then
$I_{n-1}(\pm{\nu_{n-1}})$ is irreducible and hence $\pi'_{n-1}\cong \pi_{n-1}\cong \pi'_{n-2}$.
\end{fact}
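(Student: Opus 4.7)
The plan is to work entirely in the Fourier-transformed picture of the non-compact model, where Proposition~\ref{P:Riesz4} turns the Knapp--Stein intertwining operators into pointwise multiplication operators. Since $r_\xi^2 = I$ and hence $\sigma_j(r_\xi)^2 = I$, the operator $\sigma_j(r_\xi)$ is an involution on $V_j = \bigwedge^j \bbC^m$ for every $\xi \neq 0$, so the $V_j$-valued functions on $\mathfrak{n}^*-\{0\}$ split as sections of two subbundles: the $(-1)$-eigenbundle and the $(+1)$-eigenbundle of $\sigma_j(r_\xi)$. By Proposition~\ref{P:Riesz4} together with Lemma~\ref{L:intertwining2}, $\mathcal{F}((J_j(\nu_j)f)_N)$ lies in the $(-1)$-eigenbundle and $\mathcal{F}((J_j(-\nu_j)f)_N)$ lies in the $(+1)$-eigenbundle, and the multiplier is nonzero away from $\xi=0$. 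At the basepoint $\xi_0=(0,\dots,0,1)$, a direct calculation identifies $V_j^+ = \ker(\sigma_j(r_{\xi_0})+1)^{\perp} \cong \bigwedge^j \bbC^{m-1}$ (wedges not involving $e_m$) and $V_j^- \cong e_m\wedge \bigwedge^{j-1}\bbC^{m-1} \cong \bigwedge^{j-1}\bbC^{m-1}$ as $M'_2$-modules, both of which are irreducible $M'_2$-representations.

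Combining this with the anti-trivialization map of Proposition~\ref{P:anti-trivialization} (applied in the appropriate parameter range, and extended by Casselman--Wallach plus the results in \S\ref{SS:CW-Cloux} for other parameters), the restrictions to $P_2$ of $\pi_j$ and $\pi'_j$ are identified with the $P_2$-representations induced from $V_j^-\otimes e^{\mathbf{i}\xi_0}$ and $V_j^+\otimes e^{\mathbf{i}\xi_0}$ on $M'_2 N$, respectively. By (the $P_2$-analogue of) Proposition~\ref{P:Mackey}, these $P_2$-representations are irreducible whenever the corresponding $M'_2$-module is nonzero, which gives irreducibility of $\pi_j$ and $\pi'_j$ as $G_2$-representations: any $G_2$-invariant closed subspace is $P_2$-invariant and hence either $0$ or everything. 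The identification $V_{j+1}^- \cong \bigwedge^j\bbC^{m-1} \cong V_j^+$ produces an isomorphism $\pi_{j+1}|_{P_2} \cong \pi'_j|_{P_2}$, and since Appendix~\ref{S:MN.K} (or the subrepresentation theorem together with matching of infinitesimal characters) shows that the restriction to $MN$ determines the irreducible unitary $G_2$-representation in this setting, we conclude $\pi_{j+1} \cong \pi'_j$. Unitarizability of these representations comes from the invariant Hermitian form in Lemma~\ref{L:Hermitian2}, whose positivity on the image follows from Lemma~\ref{L:innerProduct2} and the positivity of $L^2$-inner products.

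The boundary cases are handled separately. The statement that $\pi_0$ is the trivial representation follows by noting that $I_0(\nu_0)=\operatorname{Ind}(\mathbf{1}\otimes e^0)$ contains the trivial representation as its unique irreducible submodule, and that the image of $J_0(\nu_0)$ consists precisely of constant functions (seen directly in the Fourier picture since $V_0^- = 0$ forces the image to be supported at $\xi=0$). For $m$ odd, $\pi'_{n-1}$ has infinitesimal character $\rho$, trivial central character, and the correct minimal $K_2$-type corresponding to a discrete series of $\OO(2n,1)$, so it is a discrete series. For $m$ even, $\nu_{n-1}=0$, so $I_{n-1}(\nu_{n-1})=I_{n-1}(-\nu_{n-1})$ and the formula of Proposition~\ref{P:Riesz4} forces $J_{n-1}(0)$ to be a nonzero scalar multiple of the identity on the $K_2$-finite level, implying that $I_{n-1}(0)$ is irreducible; the identifications $\pi_{n-1}\cong \pi'_{n-1}\cong \pi'_{n-2}$ then follow from the previous paragraph. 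The main obstacle is the careful matching of unitary structures and the verification that the Fourier-picture realization actually exhausts the full image of the intertwining operator, which requires combining the distributional identities of Lemmas~\ref{L:f-Fourier} and \ref{L:Riesz5} with growth and singularity estimates near $\xi=0$ and $\xi=\infty$.
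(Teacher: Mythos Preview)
The paper does not prove this statement at all; it records it as a known \emph{Fact}, a special case of the Langlands classification and unitarizability results stated earlier (Facts~\ref{F:gamma-classification}--\ref{F:unitarizable2}), with references to \cite{Collingwood} and \cite{Kobayashi-Speh}. So there is no ``paper's proof'' to compare against: the paper simply imports the result.

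Your attempt to supply an internal proof via the Fourier picture has two genuine gaps.

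\textbf{The isomorphism $\pi_{j+1}\cong\pi'_j$ is not established.} Knowing that $\bar{\pi}_{j+1}|_{P_2}\cong\bar{\pi}'_j|_{P_2}$ does not imply a $G_2$-isomorphism: distinct irreducible unitary $G_2$-representations can have the same $P_2$-restriction (for instance, by Theorem~\ref{T:branching-principal} all unitary principal series $\bar{I}(\sigma,\nu)$ with a fixed $\sigma$ and varying unitary $\nu$ restrict identically to $P_2$). Your appeal to Appendix~\ref{S:MN.K} is circular: Corollary~\ref{C:MN-K} and the surrounding results rely on the branching theorems of Section~\ref{S:resP}, whose proofs (e.g.\ Theorem~\ref{T:branching-ds}) already use the classification of which the present Fact is a special case. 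The standard route to $\pi_{j+1}\cong\pi'_j$ is through the composition series of the $I_j(\pm\nu_j)$ and intertwining operators among principal series with infinitesimal character $\rho$, i.e.\ precisely the Langlands-classification input the paper cites.

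\textbf{The even-$m$ boundary case is miscomputed.} For $m$ even one has $\nu_{n-1}(H_0)=\frac{m}{2}-(n-1)=0$, and Proposition~\ref{P:Riesz5} gives
\[
\mathcal{F}\bigl((J_{n-1}(0)f)_N\bigr)(\xi)
=\frac{\pi^{m/2}}{\Gamma(\tfrac{m}{2}+1)}\Bigl(\tfrac{m}{2}-(n-1)\Bigr)\mathcal{F}(f_N)(\xi)=0.
\]
So the normalized operator $J_{n-1}(0)$ is identically zero, not a nonzero scalar; your argument does not prove irreducibility of $I_{n-1}(0)$. (Moreover, $V_{n-1}|_{M'_2}\cong\bigwedge^{n-1}\bbC^{m-1}\oplus\bigwedge^{n-2}\bbC^{m-1}$ is reducible, so the $P_2$-restriction of $I_{n-1}(0)$ has two pieces and irreducibility cannot be read off from the $P_2$-side either.)

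The part of your argument that does go through cleanly is the irreducibility of $\bar{\pi}_j$ and $\bar{\pi}'_j$ for $1\le j\le n-1$: Propositions~\ref{P:image-Fourier2} and \ref{P:anti-trivialization2} give an isometric $P_2$-embedding of $\bar{\pi}_j$ (resp.\ $\bar{\pi}'_j$) into an irreducible $P_2$-module, hence an isomorphism, hence $G_2$-irreducibility. But this alone does not recover the full content of the Fact.
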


We write $\bar{\pi}_j$ and $\bar{\pi}'_j$ for the Hilbert completion of $\pi_j$ and $\pi'_j$, respectively.
They are irreducible unitary representations of $G_2$.

\if 0

\begin{lemma}\label{L:innerProduct3}
Let $2\leq j\leq n$.
For a function $f\in I_{j}(\nu_{j})$ which lies in the image of $J_{j}(\nu_{j})$, \begin{equation}
\label{Eq:innerProduct5}(\mathcal{F}(f_{N}),\mathcal{F}(f_{N}))\!=\!\int_{\overline{\mathfrak{n}}^{\ast}}
\frac{\Gamma(j)}{2n+1-2j}\frac{|\xi|^{2\nu_{j}(H_0)}}{\pi^{(\rho'-2\nu_{j})(H_0)}}|\mathcal{F}(f_{N})(\xi)|^{2}\d\xi;
\end{equation} for a function $f\in I_{j}(-\nu_{j})$ which lies in the image of $J_{j}(0\nu_{j})$, \begin{equation}
\label{Eq:innerProduct6}(\mathcal{F}(f_{N}),\mathcal{F}(f_{N}))\!=\!\int_{\overline{\mathfrak{n}}^{\ast}}
\frac{\Gamma(2n+1-j)}{2n+1-2j}\frac{|\xi|^{-2\nu_{j}(H_0)}}{\pi^{(\rho'+2\nu_{j})(H_0)}}|\mathcal{F}(f_{N})(\xi)|^{2}
\d\xi.\end{equation}
\end{lemma}

\fi

The following proposition follows from Proposition~\ref{P:Riesz4} and Lemma~\ref{L:innerProduct2}.

\begin{proposition}\label{P:image-Fourier2}
For each $j$ with $1\leq j\leq n-1$, the inverse Fourier transform $\mathcal{F}$ gives maps \[\mathcal{F}\colon
J_{j}(\nu_{j})(I_j(-\nu_j))\to L^{2}(\mathfrak{n}^{\ast}-\{0\},(V_j)^{-\sigma_j(r_{\xi})},|\xi|^{m-2j}\d \xi),\]
and \[\mathcal{F}\colon J_{j}(-\nu_{j})(I_j(\nu_j))\to L^{2}(\mathfrak{n}^{\ast}-\{0\},
(V_j)^{\sigma_j(r_{\xi})},|\xi|^{-m+2j}\d \xi),\] which are isometries up to scalars.
\end{proposition}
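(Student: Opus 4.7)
The plan is to prove the first isometry; the second is completely analogous by replacing $\nu_j$ with $-\nu_j$ and interchanging the roles of the $\pm 1$-eigenspaces of $\sigma_j(r_\xi)$. Throughout, I will work with $K_2$-finite representatives and appeal at the end to density in the Hilbert completion of the image of the intertwiner.

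First I would take $\tilde{f}\in I_j(-\nu_j)_{K_2}$ and set $f:=J_j(\nu_j)\tilde{f}$, so that by Proposition~\ref{P:Riesz4}
\[
\mathcal{F}(f_N)(\xi)\;=\;C_j\,|\xi|^{-m+2j}\bigl(1-\sigma_j(r_\xi)\bigr)\mathcal{F}(\tilde{f}_N)(\xi),
\]
where $C_j=\pi^{m/2}(\tfrac{m}{2}-j)/(2^{-m+2j}\Gamma(j+1))$. Since $\sigma_j(r_\xi)$ is an orthogonal involution on $V_j$, the operator $1-\sigma_j(r_\xi)$ is twice the orthogonal projection $P_-$ onto the $(-1)$-eigenspace. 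This already shows that $\mathcal{F}(f_N)(\xi)\in (V_j)^{-\sigma_j(r_\xi)}$ for (almost) every $\xi\in\mathfrak{n}^*-\{0\}$, giving the values-land-in-the-right-bundle part of the statement.

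The norm identity is then a direct calculation using Lemma~\ref{L:innerProduct2}, which provides the key bridge between the $L^2$ norm of $f$ and the Fourier transformed picture: $(f,f) = (\mathcal{F}(\tilde{f}_N)\mid \mathcal{F}(f_N))$. Substituting the formula above and using selfadjointness of $\sigma_j(r_\xi)$ together with $(1-\sigma_j(r_\xi))^2 = 2(1-\sigma_j(r_\xi))$, the pointwise integrand becomes
\[
\bigl(\mathcal{F}(\tilde{f}_N)(\xi),\,\mathcal{F}(f_N)(\xi)\bigr)
= C_j|\xi|^{-m+2j}\bigl(\mathcal{F}(\tilde{f}_N),\,(1-\sigma_j(r_\xi))\mathcal{F}(\tilde{f}_N)\bigr)
= 2C_j|\xi|^{-m+2j}\|P_-\mathcal{F}(\tilde{f}_N)\|^2.
\]
Now $P_-\mathcal{F}(\tilde{f}_N) = \tfrac12(1-\sigma_j(r_\xi))\mathcal{F}(\tilde{f}_N) = \tfrac12 C_j^{-1}|\xi|^{m-2j}\mathcal{F}(f_N)$, so the integrand collapses to $\tfrac12 C_j^{-1}|\xi|^{m-2j}|\mathcal{F}(f_N)(\xi)|^2$, yielding
\[
\|f\|^2 \;=\; \tfrac12 C_j^{-1}\int_{\mathfrak{n}^*}|\xi|^{m-2j}\,|\mathcal{F}(f_N)(\xi)|^2\,\d\xi,
\]
which is precisely the desired isometry (up to the constant $\tfrac12 C_j^{-1}$).

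The main obstacle I anticipate is purely bookkeeping rather than conceptual: one must verify that all manipulations are legitimate, in particular that $\mathcal{F}(\tilde{f}_N)$ and $\mathcal{F}(f_N)$ are genuine locally-$L^1$ functions on $\mathfrak{n}^*-\{0\}$ so that the pointwise algebraic identity above can be integrated. Both facts follow from Lemma~\ref{L:f-Fourier}(3), since $|\pm\nu_j(H_0)|<\rho'(H_0)$. After establishing the formula on $K_2$-finite vectors, density of $K_2$-finite vectors in $\bar{\pi}_j$ (respectively $\bar{\pi}'_j$) together with continuity of both sides extends the identity to all of $J_j(\pm\nu_j)(I_j(\mp\nu_j))$, completing the proof.
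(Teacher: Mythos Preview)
Your proposal is correct and follows the same route as the paper: Proposition~\ref{P:Riesz4} gives the eigenspace constraint and the explicit formula, Lemma~\ref{L:innerProduct2} converts $(f,f)$ into the pairing $(\mathcal{F}(\tilde{f}_N)\mid\mathcal{F}(f_N))$, and the algebra with $1-\sigma_j(r_\xi)=2P_-$ yields the weighted $L^2$ norm on $K_2$-finite vectors. The only place the paper is more explicit than your final ``density plus continuity'' line is the passage to general smooth $f\in J_j(\nu_j)(I_j(-\nu_j))$: it takes the $K_2$-type partial sums $f_n$ and observes that $\mathcal{F}((f_n)_N)\to\mathcal{F}(f_N)$ both as tempered distributions (from $C^0$ convergence on $K_2/M_2$) and in the weighted $L^2$ space (from the isometry just established), which forces the distributional Fourier transform of $f$ to coincide with the $L^2$ limit---this identification is what your sketch is implicitly assuming.
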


\begin{proof}
We prove the first statement. The second statement can be proved similarly.

We first prove the statement for $K_{2}$-finite functions. Let $f\in I_{j}(\nu_{j})_{K_2}$ be equal to
$J_{j}(\nu_{j})(\tilde{f})$ for some function $\tilde{f}\in I_{j}(-\nu_{j})_{K_2}$. By Lemma~\ref{L:intertwining2}, $(1+\sigma_j(r_{\xi}))\mathcal{F}(f_{N})(\xi)=0$. Hence, $\mathcal{F}(f_{N})(\xi)\in (V_j)^{-\sigma_j(r_{\xi})}$
for all $\xi\in\mathfrak{n}^{\ast}-\{0\}$. By Lemma \ref{L:innerProduct2},
\[(\mathcal{F}(f_{N})|\mathcal{F}(\tilde{f}_{N}))=(f_{N},f_{N})<\infty.\] By Proposition~\ref{P:Riesz4},
\[\mathcal{F}(f_N)(\xi) =\frac{\pi^{\frac{m}{2}}|\frac{1}{2}\xi|^{-m+2j}}{\Gamma(j+1)}
\Bigl(\frac{m}{2}-j\Bigr)(1-\sigma_j(r_{\xi}))\mathcal{F}(\tilde{f}_N)(\xi).\]
Therefore, the statement follows for $K_{2}$-finite functions.

For a general smooth function $f\in J_{j}(\nu_{j})(I_j(-\nu_j))$, Write $f=\sum_{\sigma\in\widehat{K}}f_{\sigma}$
for the $K_{2}$-type decomposition of $f$. Put \[f_{n}=\sum_{|\sigma|<n}f_{\sigma},\] where $|\sigma|$ denotes
the norm of the highest weight. Then, $\lim_{n\rightarrow\infty}f_{n}=f$ both as sections of a vector bundle on
$K_{2}/M_{2}\cong S^{m}$ with respect to the $C^{0}$-norm and as vectors in $\bar{\pi}_{j}$ with respect to the
inner product. Since $\lim_{n\rightarrow\infty}f_{n}=f$ as sections of the vector bundle, we have
$\lim_{n\rightarrow\infty}(f_{n})|_{N}=f|_{N}$ as tempered distributions.
Then, \[\lim_{n\rightarrow\infty}\mathcal{F}((f_{n})|_{N})=\mathcal{F}(f|_{N})\] as tempered distributions.
Since $\lim_{n\rightarrow\infty}f_{n}=f$ with respect to the inner product of $\bar{\pi}_{j}$, there exists a
function \[g\in L^{2}(\mathfrak{n}^{\ast}-\{0\},(V_j)^{-\sigma_j(r_{\xi})}, |\xi|^{m-2j}\d \xi)\] such that \[\lim_{n\rightarrow\infty}\mathcal{F}((f_{n})|_{N})=g\] in this $L^2$ space. Then one must have
\[\mathcal{F}(f|_{N})=g\in L^{2}(\mathfrak{n}^{\ast}-\{0\},(V_j)^{-\sigma_j(r_{\xi})},|\xi|^{m-2j}\d \xi).
\qedhere\]
\end{proof}

\begin{remark}
In light of Proposition~\ref{P:Riesz5} and Lemma~\ref{L:innerProduct2}, the Hermitian form
\eqref{Eq:Hermitian1} on $I_j(\nu)$ is positive definite if $-\frac{m}{2}+j < \nu(H_0)< \frac{m}{2}-j$.
In this case, the Hilbert completion of $I_j(\nu)$ is called a complementary series representation
 (cf.\ \cite[Remark 4.10]{Fischmann-Orsted}, \cite[Section 4]{Speh-Venkataramana}). One can determine
their restriction to $P_{2}$ similarly as that in Proposition~\ref{P:P'-restriction1}.
\end{remark}

\subsection{Anti-trivialization and restriction to $P_2$}\label{SS:restriction2}

As in \eqref{Eq:anti-trivialization},
 for a $V_j$-valued function $h$ on $\mathfrak{n}^{\ast}-\{0\}$,
 define its anti-trivialization by
$h_{at,\nu}(p)=(p^{-1}\cdot h)(\xi_0)$ for $p\in P$.

\begin{proposition}\label{P:anti-trivialization2}
The map $h\mapsto h_{at,\nu}$ gives an isomorphism \[L^{2}(\mathfrak{n}^{\ast}-\{0\},V_j,|\xi|^{2\nu(H_0)}
\d\xi)\xrightarrow{\sim}\Ind_{M'_{2}N}^{M_{2}AN}(V_j|_{M'_{2}} \otimes e^{\mathbf{i}\xi_{0}}),\] which
preserves inner products and actions of $P_{2}$. Moreover, the map $h\mapsto h_{at,\nu_{j}}$ gives
\[L^{2}(\mathfrak{n}^{\ast}-\{0\},(V_j)^{-\sigma_j(r_{\xi})},|\xi|^{m-2j}\d \xi)\xrightarrow{\sim}
\Ind_{M'_{2}N}^{M_{2}AN}((V_j)^{-\sigma_j(r_{\xi_0})}\otimes e^{\mathbf{i}\xi_{0}})\] and the map
$h\mapsto h_{at,-\nu_{j}}$ gives \[L^{2}(\mathfrak{n}^{\ast}-\{0\},(V_j)^{\sigma_j(r_{\xi})},|\xi|^{-m+2j}
\d\xi)\xrightarrow{\sim}\Ind_{M'_{2}N}^{M_{2}AN}((V_j)^{\sigma_j(r_{\xi_0})}\otimes e^{\mathbf{i}\xi_{0}}).\]
\end{proposition}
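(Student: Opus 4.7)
The plan is to follow the same strategy as in Proposition~\ref{P:anti-trivialization}, merely allowing non-unitary $\nu$ and then, for the two special values $\nu=\pm\nu_j$, carving out the appropriate pointwise subspace. The three things to check are: (a) the transformation law that makes $h_{at,\nu}$ a section of the induced bundle, (b) the norm identity relating the weighted $L^2$-norm on $\mathfrak{n}^*-\{0\}$ to the induced-representation norm, and (c) compatibility with the $P_2$-action. For (a) and (c) the computation already performed in Lemma~\ref{L:anti-trivialization} goes through without change, because neither step used the assumption that $\nu$ was imaginary. The inverse map is just the trivialization $h'\mapsto h'_{t,\nu}$ described in the proof of Lemma~\ref{L:anti-trivialization}, so bijectivity is free.

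For (b), I would argue as in Proposition~\ref{P:anti-trivialization}: fix the two $MA$-invariant measures $\d{}_l m_0a$ and $|\xi|^{-m}\d\xi$ on $MA/M'_2\cong\mathfrak{n}^*-\{0\}$ (where the second is invariant by \eqref{Eq:MAaction}), so they differ by a positive constant $c$. From the explicit formula \eqref{Eq:anti-trivialization}, for $\xi=\Ad^*(m_0a)\xi_0$ one has
\[
|h_{at,\nu}(m_0a)|^2 = |\xi|^{2\nu(H_0)+m}|h(\xi)|^2,
\]
and integrating against $\d{}_lm_0a = c|\xi|^{-m}\d\xi$ yields
\[
\|h_{at,\nu}\|^2 = c\int_{\mathfrak{n}^*-\{0\}}|\xi|^{2\nu(H_0)}|h(\xi)|^2\d\xi.
\]
This gives the first isomorphism (up to the scalar $c$), with the general weight $|\xi|^{2\nu(H_0)}$, and with $2\nu_j(H_0)=m-2j$ for the two distinguished values.

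For the subspace refinement in the $\nu=\pm\nu_j$ cases, the key observation is that $M'_2=\Stab_{M_2A}(\xi_0)$ commutes with the reflection $r_{\xi_0}$, because every $m'\in M'_2$ preserves the line $\bbR\xi_0$ and so commutes with the reflection along it. Hence $(V_j)^{\pm\sigma_j(r_{\xi_0})}$ is an $M'_2$-stable subspace of $V_j$, and $\Ind_{M'_2N}^{M_2AN}((V_j)^{\pm\sigma_j(r_{\xi_0})}\otimes e^{\mathbf{i}\xi_0})$ is well-defined. One then checks that for $h(\xi)\in (V_j)^{\mp\sigma_j(r_\xi)}$ pointwise, the value $h_{at,\mp\nu_j}(m_0a)$ lies in $(V_j)^{\mp\sigma_j(r_{\xi_0})}$: writing $\xi=\Ad^*(m_0a)\xi_0$ and using the identity $m_0\,r_{\xi_0}m_0^{-1}=r_{m_0\cdot\xi_0}=r_\xi$ (since reflections depend only on direction and $\xi$ is a positive multiple of $m_0\cdot\xi_0$), one computes
\[
\sigma_j(r_{\xi_0})\,\sigma_j(m_0)^{-1}h(\xi)
=\sigma_j(m_0)^{-1}\sigma_j(r_\xi)h(\xi)
=\mp\,\sigma_j(m_0)^{-1}h(\xi),
\]
which is precisely $\mp h_{at,\mp\nu_j}(m_0a)$. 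Conversely, any section of the induced bundle with value in $(V_j)^{\mp\sigma_j(r_{\xi_0})}$ at the identity has, by $M'_2$-equivariance, values in $(V_j)^{\mp\sigma_j(r_{\xi_0})}$ everywhere, which under the trivialization $h'\mapsto h'_{t,\mp\nu_j}$ produces a section of $(V_j)^{\mp\sigma_j(r_\xi)}$. The only mild obstacle is keeping track of how the reflection $r_{\xi_0}$ interacts with $M'_2$-equivariance, which is handled by the conjugation identity above; everything else is an instance of the argument in Proposition~\ref{P:anti-trivialization}.
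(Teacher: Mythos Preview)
Your proposal is correct and follows essentially the same route as the paper, which simply says that the first isomorphism is proved ``along the same way as the proof for Proposition~\ref{P:anti-trivialization}'' and that the second and third ``follow from the first one easily.'' Your write-up supplies exactly the details the paper omits---the weighted-norm computation and the conjugation identity $m_0\,r_{\xi_0}\,m_0^{-1}=r_\xi$ showing that the eigenspaces $(V_j)^{\pm\sigma_j(r_\xi)}$ match up with $(V_j)^{\pm\sigma_j(r_{\xi_0})}$ under anti-trivialization---so there is nothing to add.
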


\begin{proof}
The first statement can be shown along the same way as the proof for Proposition~\ref{P:anti-trivialization}.
The second and the third statements follow from the first one easily.
\end{proof}

\begin{proposition}\label{P:P'-restriction1}
For each $j$ with $1\leq j\leq n-1$, the restriction of unitary representations $\bar{\pi}_j,\bar{\pi}'_j$ to
$P_2$ are as follows: \[\bar{\pi}'_j|_{P_2}\simeq\Ind_{M'_{2}N}^{M_{2}AN}\bigl(\bigwedge^{j}\mathbb{C}^{m-1}
\otimes e^{\mathbf{i}\xi_{0}}\bigr)\textrm{ and }\bar{\pi}_j|_{P_2}\simeq\Ind_{M'_{2}N}^{M_{2}AN}
\bigl(\bigwedge^{j-1}\mathbb{C}^{m-1}\otimes e^{\mathbf{i}\xi_{0}}\bigr).\]
\end{proposition}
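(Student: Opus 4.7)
The plan is to combine the two structural results developed just before the statement, namely the Fourier-transform isometries of Proposition~\ref{P:image-Fourier2} and the anti-trivialization isomorphisms of Proposition~\ref{P:anti-trivialization2}. First I would send $\bar{\pi}_j = \overline{J_j(\nu_j)(I_j(-\nu_j))}$ and $\bar{\pi}'_j = \overline{J_j(-\nu_j)(I_j(\nu_j))}$ through $\mathcal{F}$ to land (as an isometry up to scalar) inside the weighted $L^2$-spaces
\[
L^{2}\bigl(\mathfrak{n}^{\ast}-\{0\},(V_j)^{-\sigma_j(r_{\xi})},|\xi|^{m-2j}\d \xi\bigr)
\quad\text{and}\quad
L^{2}\bigl(\mathfrak{n}^{\ast}-\{0\},(V_j)^{\sigma_j(r_{\xi})},|\xi|^{-m+2j}\d \xi\bigr)
\]
respectively, and then apply anti-trivialization with parameter $\pm \nu_j$ to produce $P_2$-equivariant isometric maps into $\Ind_{M'_2N}^{M_2AN}\bigl((V_j)^{-\sigma_j(r_{\xi_0})}\otimes e^{\mathbf{i}\xi_0}\bigr)$ and $\Ind_{M'_2N}^{M_2AN}\bigl((V_j)^{\sigma_j(r_{\xi_0})}\otimes e^{\mathbf{i}\xi_0}\bigr)$ respectively. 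Both pieces of this composition are already supplied by results in the paper, so no further analysis of the $P_2$-action is required.

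The first concrete step is then to identify the $\pm 1$-eigenspaces of $\sigma_j(r_{\xi_0})$ on $V_j = \bigwedge^j\mathbb{C}^m$ as representations of $M'_2 \cong \OO(m-1)\times \Delta_2(\OO(1))$. Since $\xi_0 = (0,\ldots,0,1)$, the reflection $r_{\xi_0}$ fixes $v_1,\ldots,v_{m-1}$ and negates $v_m$; hence for $I\subset\{1,\ldots,m\}$ with $|I|=j$ the basis vector $v_I$ lies in the $+1$-eigenspace of $\sigma_j(r_{\xi_0})$ iff $m\notin I$ and in the $-1$-eigenspace iff $m\in I$. Thus as $\OO(m-1)$-modules (the $\Delta_2(\OO(1))$ factor acts trivially under $\sigma_j$ by construction)
\[
(V_j)^{\sigma_j(r_{\xi_0})}\cong \bigwedge\nolimits^{j}\mathbb{C}^{m-1},\qquad
(V_j)^{-\sigma_j(r_{\xi_0})}\cong \bigwedge\nolimits^{j-1}\mathbb{C}^{m-1},
\]
the second isomorphism given by $v_I\mapsto v_{I\setminus\{m\}}$. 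Substituting this into the induced modules yields exactly the right-hand sides of the two claimed formulas.

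The remaining point is surjectivity of the constructed isometric embeddings $\bar{\pi}'_j\hookrightarrow \Ind_{M'_2N}^{M_2AN}(\bigwedge^j\mathbb{C}^{m-1}\otimes e^{\mathbf{i}\xi_0})$ and $\bar{\pi}_j\hookrightarrow \Ind_{M'_2N}^{M_2AN}(\bigwedge^{j-1}\mathbb{C}^{m-1}\otimes e^{\mathbf{i}\xi_0})$. This I would handle by an irreducibility argument on the target: the exterior powers $\bigwedge^{k}\mathbb{C}^{m-1}$ for $0\leq k\leq m-1$ are irreducible under $\OO(m-1)$, so by Mackey's little-group theorem (the $G_2$-version of Proposition~\ref{P:Mackey}) each induced module on the right is an irreducible unitary $P_2$-representation. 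Since the composed map is $P_2$-equivariant and its image is a closed invariant subspace which is nonzero (by Fact~\ref{L:rho-classification}, $\bar{\pi}_j$ and $\bar{\pi}'_j$ are nontrivial), the image must be the whole target. I do not expect any serious obstacle here; the only technical care needed is the same density-of-$K_2$-finite-vectors passage already used in the proof of Proposition~\ref{P:image-Fourier2}, which extends the isometry from a dense subspace to the Hilbert completion.
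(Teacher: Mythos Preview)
Your proposal is correct and follows essentially the same route as the paper: compose the Fourier isometry of Proposition~\ref{P:image-Fourier2} with the anti-trivialization of Proposition~\ref{P:anti-trivialization2}, identify the $\pm1$-eigenspaces of $\sigma_j(r_{\xi_0})$ on $\bigwedge^j\mathbb{C}^m$ as $\bigwedge^j\mathbb{C}^{m-1}$ and $\bigwedge^{j-1}\mathbb{C}^{m-1}$, and then conclude surjectivity from the irreducibility of the induced target. Your explicit basis argument for the eigenspace identification and the appeal to Proposition~\ref{P:Mackey} for irreducibility are exactly the details the paper leaves implicit.
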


\begin{proof}
Recall that the actions of $P_2$ on $\pi_{j}$, $\pi'_{j}$
 and on their Fourier transforms are
 given by \eqref{Eq:P-action} and \eqref{Eq:P-action2}.

Then by Proposition~\ref{P:image-Fourier2},
 $\mathcal{F}$ composed with anti-trivialization gives
 a $P_{2}$-equivariant isometric (up to scalar) embedding from $\pi_{j}$ or $\pi'_{j}$
 into unitarily induced representation of $P$ in the proposition.
Extending to the unitary completion, it gives
 a $P_{2}$-equivariant embedding
 from the unitary completion of $\pi_{j}$ or $\pi'_{j}$
 into unitarily induced representation of $P$.
Since $V_j|_{M_{2}}\cong\bigwedge^{j}\mathbb{C}^{m}$, and
\[(\bigwedge^{j}\mathbb{C}^{m})^{\sigma_j(r_{\xi_0})}\cong\bigwedge^{j}\mathbb{C}^{m-1}
\text{ and }
(\bigwedge^{j}\mathbb{C}^{m})^{-\sigma_j(r_{\xi_0})}\cong\bigwedge^{j-1}\mathbb{C}^{m-1}\]
 are irreducible representations of $M'_{2}$.
It follows that unitarily induced representations in the proposition are irreducible.
Therefore, the $P_{2}$-equivariant embeddings are isomorphisms.
\end{proof}

\subsection{Restriction to $P$}\label{SS:restriction3}

For each $j$ with $0\leq j\leq n-1$, write \[\pi_{j}(\rho)=\pi_{j}|_{G_{1}}\textrm{ and }\pi'_{j}(\rho)=
\pi'_{j}|_{G_{1}}.\] These representations of $G_1$ are also regarded as representations of $G$ via the
covering map $G\to G_1$. We use the same notation for these representations of $G$, which agrees with the
notation in \S\ref{SS:irreducible}.

In this subsection we suppose $m$ is odd so that $m=2n-1$ and $G=\Spin(2n,1)$. Write $\pi^{+}(\rho)$ for the
discrete series with lowest $K$-type $V_{K,(\underbrace{\scriptstyle 1,\dots,1}_{n})}$; and write $\pi^{-}(\rho)$
for the discrete series with lowest $K$-type $V_{K,(\underbrace{\scriptstyle 1,\dots,1}_{n-1},-1)}$. Then $\pi'_{n-1}(\rho)\cong\pi^{+}(\rho)\oplus\pi^{-}(\rho)$. Other representations $\pi_{j}$
($0\leq j\leq n-1$) are irreducible. Write $\bar{\pi}_{j}(\rho)$, $\bar{\pi}^{+}(\rho)$, $\bar{\pi}^{-}(\rho)$
for the Hilbert completion of $\pi_{j}(\rho)$, $\pi^{+}(\rho)$, $\pi^{-}(\rho)$, respectively.

The following proposition is a direct consequence of Proposition~\ref{P:P'-restriction1}.
It is a special case of Theorem~\ref{T:branching-regular}.
\begin{proposition}\label{P:P-restriction1}
For each $1\leq j\leq n-1$, \begin{align*}
&\bar{\pi}_{j}(\rho)|_{P}\cong\Ind_{M'N}^{MAN}(\bigwedge^{j-1}\mathbb{C}^{2n-2}\otimes e^{\mathbf{i}\xi_{0}})
\text { and }\\
&\bar{\pi}^{+}(\rho)|_{P}\oplus\bar{\pi}^{-}(\rho)|_{P}
 \cong\Ind_{M'N}^{MAN}(\bigwedge^{n-1}\mathbb{C}^{2n-2}\otimes e^{\mathbf{i}\xi_{0}}).
\end{align*}
The restrictions $\bar{\pi}_{j}(\rho)|_{P}$ ($1\leq j\leq n-1$), $\bar{\pi}^{+}(\rho)|_{P}$,
$\bar{\pi}^{-}(\rho)|_{P}$ are all irreducible.
\end{proposition}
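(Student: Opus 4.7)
The plan is to derive Proposition~\ref{P:P-restriction1} by restricting the $P_2$-branching law in Proposition~\ref{P:P'-restriction1} to the smaller parabolic $P_1 \subset P_2$, and then lifting to $P$ via the covering $G\to G_1$. The core mechanism is the following compatibility: both $\Ind_{M'_{2}N}^{M_{2}AN}(W\otimes e^{\mathbf{i}\xi_{0}})$ and $\Ind_{M'_{1}N}^{M_{1}AN}(W|_{M'_1}\otimes e^{\mathbf{i}\xi_{0}})$ are realized as $L^2$-sections of equivariant bundles over the common base $\mathfrak{n}^\ast - \{0\}$, since both $M_2 A$ and $M_1 A$ act transitively on $\mathfrak{n}^\ast-\{0\}$ with stabilizers $M'_2$ and $M'_1$ respectively (and $M'_1 = M'_2 \cap M_1$ by the explicit descriptions in \S\ref{SS:notation}). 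First I would verify this directly: the natural map $M_1 A \times_{M'_1} (W|_{M'_1}) \to M_2 A \times_{M'_2} W$ of equivariant vector bundles over $\mathfrak{n}^\ast - \{0\}$ is an $M_1 A$-equivariant bundle isomorphism (fibers match because $M'_1 = M_1\cap M'_2$ and both act on $W|_{M'_1}$ compatibly), hence passing to $L^2$-sections and coupling with the $N$-action through $e^{\mathbf{i}\xi_0}$ gives the desired identity of $P_1$-representations.

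Second, I would apply this compatibility with $W=\bigwedge^{j-1}\bbC^{m-1}$, the defining $\OO(m-1)$-module on which the $\OO(1)$ factor of $M'_2$ acts trivially. The key piece of representation theory is the branching of $W$ from $M'_2\cong \OO(2n-2)\times\OO(1)$ to $M'_1\cong \SO(2n-2)$: for $1\leq j\leq n-1$ (so $0\leq j-1\leq n-2$) the representation $\bigwedge^{j-1}\bbC^{2n-2}$ remains irreducible under $\SO(2n-2)$, while for $j-1=n-1$ it splits as $V^{+}\oplus V^{-}$ into the two half-form representations. Lifting from $M'_1 = \SO(2n-2)$ to $M'=\Spin(2n-2)$ (and likewise $M_1\subset M$) is harmless because these irreducible representations factor through $\SO(2n-2)$.

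Third, I would combine these two inputs. For $1\leq j\leq n-1$, restricting $\bar{\pi}_j|_{P_2}$ from Proposition~\ref{P:P'-restriction1} to $P_1$ and lifting to $P$ yields
\[\bar{\pi}_j(\rho)|_P \cong \Ind_{M'N}^{MAN}\bigl(\tstyle\bigwedge^{j-1}\bbC^{2n-2}\otimes e^{\mathbf{i}\xi_0}\bigr),\]
and this is irreducible by Proposition~\ref{P:Mackey} since $\bigwedge^{j-1}\bbC^{2n-2}$ is an irreducible representation of $M'$. For the discrete series case, applying the compatibility to $\bar{\pi}'_{n-1}|_{P_2} \cong \Ind_{M'_2 N}^{M_2 AN}(\bigwedge^{n-1}\bbC^{2n-2}\otimes e^{\mathbf{i}\xi_0})$ gives, after restriction to $P$,
\[\bar{\pi}'_{n-1}(\rho)|_P \cong \Ind_{M'N}^{MAN}(V^+\otimes e^{\mathbf{i}\xi_0})\ \oplus\ \Ind_{M'N}^{MAN}(V^-\otimes e^{\mathbf{i}\xi_0}),\]
with each summand irreducible by Proposition~\ref{P:Mackey}. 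On the other hand, by Fact~\ref{L:rho-classification} (and the definitions of $\pi^{\pm}(\rho)$ in \S\ref{SS:restriction3}), $\pi'_{n-1}|_{G_1}\cong \pi^+(\rho)\oplus \pi^-(\rho)$; pairing this with the two-summand decomposition above completes the second asserted isomorphism of the proposition.

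The main obstacle, and essentially the only substantive point, is the Mackey-type restriction identity in the first step: one has to verify carefully that the $P_1$-equivariant map between the two induced models is an isometry and preserves the $N$-action through the same character $e^{\mathbf{i}\xi_0}$. This amounts to checking that $M_1 \cdot M'_2 = M_2$ (so the $M_1$-action on $M_2/M'_2$ is transitive) and that the $M_1 A$-invariant measure on $\mathfrak{n}^\ast-\{0\}$ used in the definition of the unitarily induced representation is the same for both groups; both are immediate from the explicit formulas in \S\ref{SS:notation}, so no further analytic work is required beyond what was used to prove Proposition~\ref{P:P'-restriction1}.
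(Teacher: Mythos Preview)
Your proposal is correct and follows essentially the same approach as the paper, which proves Proposition~\ref{P:P-restriction1} in one sentence as ``a direct consequence of Proposition~\ref{P:P'-restriction1}.'' You have simply unpacked that sentence: the Mackey compatibility of the induced pictures over the common base $\mathfrak{n}^{\ast}-\{0\}$ (using $M_1A\cdot M'_2=M_2A$ and $M'_1=M_1\cap M'_2$), the branching of $\bigwedge^{j-1}\bbC^{2n-2}$ from $M'_2$ to $M'_1$ (irreducible for $j-1\leq n-2$, splitting into two half-forms for $j-1=n-1$), and the lift along $P\to P_1$.
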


The restriction $\bigwedge^{n-1}\mathbb{C}^{2n-2}|_{M'}$
 is the direct sum of two finite-dimensional irreducible representations of $M'=\Spin(2n-2)$
 with highest weights
\[\mu^+=(\underbrace{1,\dots,1}_{n-1})\textrm{ and }\mu^-=(\underbrace{1,\dots,1}_{n-2},-1),\]
 respectively.
After Proposition~\ref{P:P-restriction1}, we need to determine
 whether $\bar{\pi}^{-}(\rho)|_{P}$ is isomorphic to
 $\Ind_{M'N}^{MAN}(V_{M',\mu^+}\otimes e^{\mathbf{i}\xi_{0}})$
 or $\Ind_{M'N}^{MAN}(V_{M',\mu^-}\otimes e^{\mathbf{i}\xi_{0}})$.
In order to do this, we calculate the Fourier transform
 of a specific $K$-type function in $\pi^{-}(\rho)$.
For $f$ in a small $K$-type, the explicit form of $f|_N$
 was given in Kobayashi-Speh \cite[\S8.2]{Kobayashi-Speh}.
We follow their description and then we calculate its Fourier transform.

One has \[\bigwedge^{n}\mathbb{C}^{2n}\cong V_{K,\lambda^+}\oplus V_{K,\lambda^-},
 \text{ where }
\lambda^+=(\underbrace{1,\dots,1}_{n})
\textrm{ and }\lambda^-=(\underbrace{1,\dots,1}_{n-1},-1).\]
Note that $V_{K,\lambda^+}$ is the lowest $K$-type of $\pi^{+}(\rho)$,
 and $V_{K,\lambda^-}$ is the lowest $K$-type of $\pi^{-}(\rho)$.

Put $V=\mathbb{C}^{2n}$ and $V'=\mathbb{C}^{2n-1}$. Let $\{e_{j}:1\leq j\leq 2n\}$ be the standard orthonormal basis
of $V$. Then, $V=V'\oplus\mathbb{C}e_{2n}$ and this decomposition induces
\[\bigwedge^{n}V=\bigwedge^{n}V'\oplus \Bigl(\bigwedge^{n-1}V'\wedge\mathbb{C}e_{2n}\Bigr).\]
Let $p\colon V\rightarrow V'$ be the projection along $\mathbb{C}e_{2n}$.
Then it induces the projection $\bigwedge^{n}V\rightarrow\bigwedge^{n}V'$
 along $\bigwedge^{n-1}V'\wedge\mathbb{C}e_{2n}$, still denoted by $p$.
Note that $p$ is $M$-equivariant.
For each $u\in\bigwedge^{n}V$, define
\[f_{u}(ka\bar{n})=e^{(\rho'+\nu_{n-1})\log a}p(k^{-1}u),
 \quad ka\bar{n}\in KA\bar{N}, \]
which belongs to
 $\Ind_{MA\bar{N}}^{G}(\bigwedge^{n}\mathbb{C}^{2n-1}\otimes
 e^{-\nu_{n-1}-\rho'}\otimes\mathbf{1}_{\bar{N}})
 \cong I_{n-1}(-\nu_{n-1})$.
This isomorphism is induced by $\bigwedge^{n}V'|_{M'}\cong\bigwedge^{n-1}V'|_{M'}$.
By (\ref{Eq:Iwasawa3}), we have
\begin{equation*}
 f_{u}(n_{x})=(1+|x|^{2})^{-n}p(s_{x}^{-1}u),
\end{equation*}
where $s_x$ is as in \eqref{Eq:sx}.

Write $v_{j}=e_{2j-1}+\mathbf{i}e_{2j}$ for each $1\leq j\leq n$.
Put
\[u^{+}=v_{1}\wedge\cdots\wedge v_{n} \text{ and }
u^{-}=v_{1}\wedge\cdots\wedge v_{n-1}\wedge(e_{2n-1}-\mathbf{i}e_{2n}).\]
Then $u^{+}$ (resp.\ $u^{-}$)
 is a nonzero vector in $\bigwedge^{n}\mathbb{C}^{2n}$
 with highest weight $\lambda^+$ (resp.\ $\lambda^-$).
Then $f_{u^{-}}$ gives a function in
 $\pi^{-}(\rho)\subset
 \Ind_{MA\bar{N}}^{G}(\bigwedge^{n}\mathbb{C}^{2n-1}\otimes
 e^{-\nu_{n-1}-\rho'}\otimes\mathbf{1}_{\bar{N}})$,
 corresponding to a highest weight vector of the lowest $K$-type of $\pi^{-}(\rho)$.
Below we calculate the inverse Fourier transform of $f_{u^{-}}$.

Put $y=(x,1)\in\mathbb{R}^{2n}$. Let $r'_{x}$ denote both
\[I_{2n}-\frac{2y^{t}y}{|y|^{2}}\in\OO(2n) \text{ and }
\diag\Bigl\{I_{2n}-
\frac{2y^{t}y}{|y|^{2}},1\Bigr\}\in\OO(2n,1).\]
Note that $s_{x}=sr'_{x}$. Then
\[p(s_{x}^{-1}u^{-})=p(r'_{x}su^{-})=p(r'_{x}u^{+}).\]
Set $x_{2n}=1$ for notational convenience,
 but we keep $|x|^2=x_1^2+\cdots+x_{2n-1}^2$.

\begin{lemma}\label{L:rv+}
One has
\begin{align}\label{Eq:rv+}
&\ \ r'_{x}u^{+}\\\nonumber &=u^{+}+\sum_{1\leq k\leq n}(-1)^{k}\frac{2(x_{2k-1}+\mathbf{i}x_{2k})}
 {1+|x|^{2}}(x_{2k-1}e_{2k-1}+x_{2k}e_{2k})\\ \nonumber
&\qquad \qquad \qquad \qquad \wedge v_{1}\wedge \cdots \wedge\hat{v_{k}}\wedge\cdots\wedge v_{n}
 \\  \nonumber
&\quad +\sum_{1\leq k<j\leq n}(-1)^{j-k}\frac{2\mathbf{i}(x_{2k-1}+\mathbf{i}x_{2k})
 (x_{2j-1}+\mathbf{i}x_{2j})}
 {1+|x|^{2}}e_{2j-1}\wedge e_{2j}\\  \nonumber
&\qquad \qquad \qquad \qquad
  \wedge v_{1}\wedge \cdots\wedge\hat{v_{k}}\wedge\cdots\wedge\hat{v_{j}}
 \wedge\cdots\wedge v_{n}\\ \nonumber
&\quad +\sum_{1\leq j<k\leq n}(-1)^{j-k-1}\frac{2\mathbf{i}(x_{2k-1}+\mathbf{i}x_{2k})
 (x_{2j-1}+\mathbf{i}x_{2j})}{1+|x|^{2}}e_{2j-1}\wedge e_{2j} \\ \nonumber
&\qquad\qquad\qquad \qquad
 \wedge v_{1}\wedge \cdots\wedge\hat{v_{j}}
  \wedge\cdots\wedge\hat{v_{k}}\wedge\cdots\wedge v_{n}.
\end{align}
\end{lemma}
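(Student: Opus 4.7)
The plan is to expand $r'_{x}u^{+}=\bigwedge_{k=1}^{n}r'_{x}(v_{k})$ by multilinearity and retain only the terms surviving the identity $y\wedge y=0$. First I would compute, from $r'_{x}=I_{2n}-\frac{2\,y^{t}y}{|y|^{2}}$ with $y=(x,1)\in\mathbb{R}^{2n}$ and $|y|^{2}=1+|x|^{2}$, the action
\[
r'_{x}(v_{k})=v_{k}-\frac{2\alpha_{k}}{1+|x|^{2}}\,y,
\qquad
\alpha_{k}:=x_{2k-1}+\mathbf{i}\,x_{2k},
\]
where the convention $x_{2n}=1$ is in force. Since $y$ is a fixed vector, every summand in the expansion of $\bigwedge_{k}\bigl(v_{k}-\frac{2\alpha_{k}}{1+|x|^{2}}y\bigr)$ containing $y$ in at least two slots contributes $y\wedge y=0$. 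Hence the expansion truncates at first order in $y$:
\[
r'_{x}u^{+}
=u^{+}
 -\frac{2}{1+|x|^{2}}\sum_{k=1}^{n}(-1)^{k-1}\alpha_{k}\,
 y\wedge v_{1}\wedge\cdots\wedge\hat{v}_{k}\wedge\cdots\wedge v_{n},
\]
where $(-1)^{k-1}$ records moving $y$ from the $k$-th factor to the front.

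Next I would substitute $y=\sum_{j=1}^{n}(x_{2j-1}e_{2j-1}+x_{2j}e_{2j})$ into each term and split according to whether $j=k$ or $j\neq k$. When $j=k$, the pair $(e_{2k-1},e_{2k})$ does not appear in the remaining factors, and the term is already in the canonical form with coefficient $(-1)^{k}\cdot\frac{2\alpha_{k}}{1+|x|^{2}}$, matching the first sum of \eqref{Eq:rv+}. When $j\neq k$, the factor $v_{j}$ is still in the wedge, and the identities
\[
e_{2j-1}\wedge v_{j}=\mathbf{i}\,e_{2j-1}\wedge e_{2j},\qquad
e_{2j}\wedge v_{j}=-e_{2j-1}\wedge e_{2j}
\]
combine to give $(x_{2j-1}e_{2j-1}+x_{2j}e_{2j})\wedge v_{j}=\mathbf{i}\alpha_{j}\,e_{2j-1}\wedge e_{2j}$. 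Treating $j<k$ and $j>k$ separately produces the fourth and third sums of \eqref{Eq:rv+}, respectively, once one pulls the resulting 2-form $e_{2j-1}\wedge e_{2j}$ to the front (which, being of even degree, commutes with each intervening 1-form $v_{l}$ without sign change).

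The only nontrivial point is a careful parity count in the case $j\neq k$. Starting from the factor $(x_{2j-1}e_{2j-1}+x_{2j}e_{2j})$ in the leading slot (after $y$ has been brought to the front), one must count how many intermediate 1-forms $v_{l}$ it has to cross to sit adjacent to $v_{j}$; this count is $j-1$ when $j<k$ and $j-2$ when $j>k$, since in the latter case one of the preceding $v_{l}$'s has been removed from the wedge by the hat. Combining this sign with the overall $-(-1)^{k-1}$ from the previous step, and noting that the rearrangement of the 2-form $e_{2j-1}\wedge e_{2j}$ to the front is sign-free, reproduces the exponents $(-1)^{j-k-1}$ for $j<k$ and $(-1)^{j-k}$ for $k<j$ in the statement. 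This bookkeeping is the main — and only — obstacle, and is purely routine.
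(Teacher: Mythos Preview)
Your argument is correct and follows the same approach as the paper: expand $r'_{x}u^{+}=\bigwedge_{k} r'_{x}v_{k}$, use that each $r'_{x}v_{k}-v_{k}$ is a multiple of the single vector $y$ so the expansion truncates at first order, then substitute $y=\sum_{j}(x_{2j-1}e_{2j-1}+x_{2j}e_{2j})$ and apply the identity $(x_{2j-1}e_{2j-1}+x_{2j}e_{2j})\wedge v_{j}=\mathbf{i}(x_{2j-1}+\mathbf{i}x_{2j})e_{2j-1}\wedge e_{2j}$. The paper's proof is terser, leaving the parity bookkeeping to the reader, whereas you have spelled it out explicitly.
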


\begin{proof}
We have
\[r'_{x}u^{+}=r'_{x}v_1\wedge \cdots \wedge r'_{x}v_n.\]
Since $r'_{x}v_k-v_k$ is proportional to $y$,
\[r'_{x}u^{+}
=u^{+}
 + \sum_{k=1}^n (-1)^{k-1}
 (r'_{x}v_k -v_k) \wedge
 v_1\wedge \cdots \wedge \hat{v_k} \wedge
 \cdots  \wedge v_n.\]
Then we calculate
\begin{align*}
 r'_{x}v_k-v_k
&=-\frac{2(v_k,y)}{|y|^2}y \\
&= - \sum_{i=1}^n \frac{2(x_{2k-1}+\mathbf{i}x_{2k})}{1+|x|^2}(x_{2i-1}e_{2i-1}+x_{2i}e_{2i}).
\end{align*}
The term for $i=k$ corresponds to the
 second term of the right hand side of \eqref{Eq:rv+}.
Then the lemma follows from
\[(x_{2j-1}e_{2j-1}+x_{2j}e_{2j})\wedge v_j
=\mathbf{i}(x_{2j-1}+\mathbf{i}x_{2j}) e_{2j-1} \wedge e_{2j}.
\qedhere\]
\end{proof}

%Put $u=v_{1}\wedge\cdots\wedge v_{n-1}\wedge e_{2n-1}\in\wedge^{n}V'$.

%For a nonzero vector $\xi\in\mathbb{R}^{2n-1}$, there is a similar formula for $r_{\xi}u$ as that of $r_{y}u^{+}$
%in Lemma \ref{L:rv+}.

To calculate the inverse Fourier transform
 $\mathcal{F}((f_{u^{-}})_N)$ we need some formulas.
\begin{align} \label{Eq:F-formula1}
&\mathcal{F}(1+|x|^2)^{-n}
 =\frac{2^{\frac{1}{2}-n}\pi^{\frac{1}{2}}}{(n-1)!}e^{-|\xi|},\\
\label{Eq:F-formula2}
&\mathcal{F}(1+|x|^2)^{-(n+1)}=\frac{2^{-\frac{1}{2}-n}\pi^{\frac{1}{2}}}{n!}(1+|\xi|)e^{-|\xi|}.
\end{align}

First, the Fourier transform of the function of one variable
 $\sqrt{\frac{\pi}{2}}e^{-|\xi|}$ is equal to $(1+x^{2})^{-1}$.
By the Fourier inversion formula, we get $\mathcal{F}(1+x^{2})^{-1}=\sqrt{\frac{\pi}{2}}e^{-|\xi|}$.
Using $(1+x^{2})^{-2}=(1+\frac{x}{2}\frac{d}{dx})(1+x^{2})^{-1}$,
 we see that $\mathcal{F}(1+x^{2})^{-2}=\frac{\sqrt{\pi}}{2\sqrt{2}}(1+|\xi|)e^{-|\xi|}$.
Then as in the proof of Lemma~\ref{L:Fourier-KBessel3},
 we obtain \eqref{Eq:F-formula1} and \eqref{Eq:F-formula2}.

Then by \eqref{Eq:Mult-Differ}, we obtain the following.
\begin{align} \label{Eq:F-formula3}
&\mathcal{F}(x_j(1+|x|^2)^{-(n+1)})
 = \mathbf{i} \frac{2^{-\frac{1}{2}-n}\pi^{\frac{1}{2}}}{n!}\xi_je^{-|\xi|},\\
\label{Eq:F-formula4}
&\mathcal{F}(x_j^2(1+|x|^2)^{-(n+1)})
 = \frac{2^{-\frac{1}{2}-n}\pi^{\frac{1}{2}}}{n!}
  \Bigl(1-\frac{\xi_j^2}{|\xi|}\Bigr)e^{-|\xi|},\\
\label{Eq:F-formula5}
&\mathcal{F}(x_jx_k(1+|x|^2)^{-(n+1)})
 = -\frac{2^{-\frac{1}{2}-n}\pi^{\frac{1}{2}}}{n!}
  \frac{\xi_j\xi_k}{|\xi|} e^{-|\xi|}\quad (j\neq k).
\end{align}

\begin{lemma}\label{P:Ffnx}
%The Fourier transform of $(f_{u^{-}})_{N}$
% is equal to
We have
\[
\mathcal{F}(f_{u^{-}})_{N}
=\frac{2^{-\frac{1}{2}-n}\pi^{\frac{1}{2}}}{n!}e^{-|\xi|}
 (|\xi|(1-r_{\xi})u+2u'\wedge\xi)\]
 at $0\neq\xi\in\mathbb{R}^{2n-1}$, where
\begin{align*}
&u=v_{1}\wedge\cdots\wedge v_{n-1}\wedge e_{2n-1}\in\bigwedge^{n}V' \text{ and }\\
& u'=v_{1}\wedge\cdots\wedge v_{n-1}\in\bigwedge^{n-1}V'.
\end{align*}
\end{lemma}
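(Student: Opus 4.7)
The plan is to compute $f_{u^-}|_N$ explicitly from Lemma \ref{L:rv+} by applying the projection $p$, then to take its Fourier transform term by term using \eqref{Eq:F-formula1}--\eqref{Eq:F-formula5}, and finally to match the result with the closed expression on the right hand side by expanding $r_\xi u$ as a wedge product.

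First I would apply $p$ to each of the three summands in the right hand side of \eqref{Eq:rv+}. Recalling that $p$ annihilates any basis wedge containing $e_{2n}$, that $v_n=e_{2n-1}+\mathbf{i}e_{2n}$, and that $x_{2n}$ has been set to $1$, the term $u^+$ projects to $u$; in the second sum, for $k\le n-1$ the trailing factor $v_n$ becomes $e_{2n-1}$ under $p$, while for $k=n$ the vector $x_{2n-1}e_{2n-1}+e_{2n}$ is truncated to $x_{2n-1}e_{2n-1}$; the two double-index sums behave similarly. Combining this with $f_{u^-}(n_x)=(1+|x|^2)^{-n}p(r'_x u^+)$ writes $f_{u^-}|_N$ as $(1+|x|^2)^{-n}u$ plus a finite linear combination of $x_j(1+|x|^2)^{-(n+1)}$, $x_j^2(1+|x|^2)^{-(n+1)}$ and $x_jx_k(1+|x|^2)^{-(n+1)}$ (for $j\ne k$), each multiplied by a fixed basis wedge in $\bigwedge^n V'$. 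Applying $\mathcal{F}$ term by term via \eqref{Eq:F-formula1}--\eqref{Eq:F-formula5}, the result takes the schematic form $\frac{2^{-1/2-n}\sqrt{\pi}}{n!}e^{-|\xi|}R(\xi)$, where $R(\xi)$ is a $\bigwedge^n V'$-valued polynomial expression in $\xi_j$ and $\xi_j\xi_k/|\xi|$; in particular the constant summand contributes $2n\,u$ to $R(\xi)$ via \eqref{Eq:F-formula1} after rewriting $\frac{2^{1/2-n}\sqrt{\pi}}{(n-1)!}=\frac{2^{-1/2-n}\sqrt{\pi}}{n!}\cdot 2n$.

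To identify $R(\xi)$ with $|\xi|(1-r_\xi)u+2u'\wedge\xi$, I would expand the latter directly. For $w\in V'$ one has $r_\xi w=w-\frac{2(w,\xi)}{|\xi|^2}\xi$, and since $\xi\wedge\xi=0$ only the zeroth and single-factor-replacement terms survive in $r_\xi u=r_\xi v_1\wedge\cdots\wedge r_\xi v_{n-1}\wedge r_\xi e_{2n-1}$. Hence $|\xi|(1-r_\xi)u$ is a sum of terms of the form $|\xi|^{-1}(\xi_{2k-1}+\mathbf{i}\xi_{2k})\,\xi\wedge(v_1\wedge\cdots\hat{v_k}\cdots\wedge v_{n-1}\wedge e_{2n-1})$ for $1\le k\le n-1$ together with a multiple of $|\xi|^{-1}\xi_{2n-1}\,u'\wedge\xi$, while the summand $2u'\wedge\xi$ contributes a purely linear-in-$\xi$ expression. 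These are exactly the wedge types that appear in $R(\xi)$, and a term-by-term comparison of coefficients completes the proof.

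The main obstacle is the combinatorial bookkeeping in the last step: tracking the $(-1)^{j-k}$ prefactors of \eqref{Eq:rv+}, the Koszul signs from permuting $\xi$ through wedges, and the various numerical constants coming from \eqref{Eq:F-formula1}--\eqref{Eq:F-formula5}. In particular one has to verify that the $k=n$ contributions of the second and third sums in \eqref{Eq:rv+}, evaluated at $x_{2n}=1$, together cancel the $2n\,u\,e^{-|\xi|}$ produced by the constant summand; this cancellation is consistent with the right hand side since $|\xi|(1-r_\xi)u+2u'\wedge\xi$ manifestly vanishes as $\xi\to 0$.
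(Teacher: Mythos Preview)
Your approach is essentially identical to the paper's: project \eqref{Eq:rv+} via $p$ to obtain $f_{u^-}|_N$ explicitly, Fourier-transform term by term using \eqref{Eq:F-formula1}--\eqref{Eq:F-formula5}, and then match against the explicit expansion of $r_\xi u$ (computed exactly as in Lemma~\ref{L:rv+}). One small correction to your closing remark: the cancellation of the constant $2n\,u$ does not come only from the $k=n$ contributions---by \eqref{Eq:F-formula4} every term $x_j^2(1+|x|^2)^{-(n+1)}$ has a nonzero constant Fourier coefficient, so each $1\le k\le n-1$ summand of the single-index sum in \eqref{Eq:rv+} also contributes $-2u$ (via $e_{2k-1}+\mathbf{i}e_{2k}=v_k$ reassembling into $u$), and these $n$ contributions together cancel $2n\,u$.
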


\begin{proof}
By Lemma \ref{L:rv+}, we have
\begin{align*}
&f_{u^{-}}(n_{x})
=(1+|x|)^{-n}v_{1}\wedge\cdots \wedge v_{n-1}\wedge e_{2n-1}\\
&+\sum_{1\leq k\leq n-1}(-1)^{k}2(1+|x|)^{-n-1} (x_{2k-1}+\mathbf{i}x_{2k})
 (x_{2k-1}e_{2k-1}+x_{2k}e_{2k})\\
&\qquad\qquad\qquad\qquad
 \wedge v_{1}\wedge\cdots\wedge\hat{v_{k}}\wedge\cdots\wedge v_{n-1}\wedge e_{2n-1}\\
&+(-1)^{n}2(1+|x|)^{-n-1}(x_{2n-1}+\mathbf{i})x_{2n-1}e_{2n-1}\wedge
  v_{1}\wedge\cdots\wedge v_{n-1}\\
&+\sum_{1\leq k<j\leq n-1}(-1)^{j-k}2(1+|x|)^{-n-1}
 \mathbf{i}(x_{2k-1}+\mathbf{i}x_{2k})(x_{2j-1}+\mathbf{i}x_{2j})\\
&\qquad\qquad\qquad
 e_{2j-1}\wedge e_{2j}\wedge v_{1} \wedge\cdots\wedge\hat{v_{k}}\wedge\cdots\wedge
 \hat{v_{j}}\wedge\cdots\wedge v_{n-1}\wedge e_{2n-1}\\
&+\sum_{1\leq j<k\leq n-1}(-1)^{j-k-1}
2(1+|x|)^{-n-1}\mathbf{i}(x_{2k-1}+\mathbf{i}x_{2k})(x_{2j-1}+\mathbf{i}x_{2j})\\
&\qquad\qquad\qquad  e_{2j-1}\wedge e_{2j}\wedge v_{1}
\wedge\cdots\wedge\hat{v_{j}}\wedge\cdots\wedge\hat{v_{k}}\wedge\cdots\wedge
 v_{n-1}\wedge e_{2n-1}\\
&+\sum_{1\leq j\leq n-1}(-1)^{j-n-1}
2(1+|x|)^{-n-1}\mathbf{i}(x_{2n-1}+\mathbf{i})(x_{2j-1}+\mathbf{i}x_{2j})\\
&\qquad\qquad\qquad\qquad e_{2j-1}\wedge e_{2j}\wedge v_{1}\wedge
\cdots\wedge\hat{v_{j}}\wedge\cdots\wedge v_{n-1}.
\end{align*}
Then by \eqref{Eq:F-formula1} -- \eqref{Eq:F-formula5},
\begin{align*}
&\Bigl(\frac{2^{\frac{1}{2}-n}\pi^{\frac{1}{2}}}{n!}e^{-|\xi|}\Bigr)^{-1}
 \mathcal{F}((f_{u^{-}})_{N})
= nv_{1}\wedge\cdots \wedge v_{n-1}\wedge e_{2n-1}\\
&+\sum_{1\leq k\leq n-1}(-1)^{k}
 \Bigl(\Bigl(1-\frac{\xi_{2k-1}^{2}}{|\xi|}
 -\mathbf{i}\frac{\xi_{2k-1}\xi_{2k}}{|\xi|}\Bigr)e_{2k-1}
 +\Bigl(\mathbf{i}-\mathbf{i}\frac{\xi_{2k}^{2}}{|\xi|}
 -\frac{\xi_{2k-1}\xi_{2k}}{|\xi|}\Bigr)e_{2k}\Bigr)\\
&\qquad\qquad\qquad\qquad
 \wedge v_{1}\wedge\cdots\wedge\hat{v_{k}}\wedge\cdots\wedge v_{n-1}\wedge e_{2n-1}\\
&+(-1)^{n}\Bigl(1-\frac{\xi_{2n-1}^{2}}{|\xi|}-\xi_{2n-1}\Bigr)
 e_{2n-1}\wedge v_{1}\wedge\cdots\wedge v_{n-1}\\
&+\sum_{1\leq k<j\leq n-1}(-1)^{j-k-1}\mathbf{i}|\xi|^{-1}(\xi_{2k-1}+\mathbf{i}\xi_{2k})
 (\xi_{2j-1}+\mathbf{i}\xi_{2j})\\
&\qquad\qquad\qquad  e_{2j-1}\wedge e_{2j}\wedge
 v_{1}\wedge\cdots\wedge\hat{v_{k}}\wedge\cdots\wedge\hat{v_{j}}\wedge\cdots
 \wedge v_{n-1}\wedge e_{2n-1}\\
&+\sum_{1\leq j<k\leq n-1}(-1)^{j-k}
 \mathbf{i}|\xi|^{-1}(\xi_{2k-1}+\mathbf{i}\xi_{2k})(\xi_{2j-1}+\mathbf{i}\xi_{2j})\\
&\qquad\qquad\qquad
 e_{2j-1}\wedge e_{2j}\wedge v_{1}\wedge\cdots\wedge\hat{v_{j}}\wedge\cdots\wedge\hat{v_{k}}
 \wedge \cdots\wedge v_{n-1}\wedge e_{2n-1}\\
&+\sum_{1\leq j\leq n-1}(-1)^{j-n}\mathbf{i}(1+|\xi|^{-1}\xi_{2n-1})
 (\xi_{2j-1}+\mathbf{i}\xi_{2j})\\
&\qquad\qquad\qquad\qquad
 e_{2j-1}\wedge e_{2j}\wedge v_{1}\wedge\cdots\wedge\hat{v_{j}}\wedge\cdots\wedge v_{n-1}.
\end{align*}
Similarly to Lemma \ref{L:rv+}, we have
\begin{align*}
r_{\xi}(u)&=
u+\sum_{1\leq k\leq n-1}(-1)^{k}\frac{2(\xi_{2k-1}+\mathbf{i}\xi_{2k})}{|\xi|^{2}}
(\xi_{2k-1}e_{2k-1}+\xi_{2k}e_{2k})\\
&\qquad\qquad\qquad\qquad
  \wedge v_{1}\wedge\cdots\wedge\hat{v_{k}}\wedge\cdots\wedge v_{n-1}\wedge e_{2n-1}\\
&+(-1)^{n}\frac{2\xi_{2n-1}}{|\xi|^{2}}\xi_{2n-1}e_{2n-1}\wedge v_{1}\wedge\cdots\wedge v_{n-1}\\
&+\sum_{1\leq k<j\leq n-1}(-1)^{j-k}\frac{2\mathbf{i}(\xi_{2k-1}+\mathbf{i}\xi_{2k})(\xi_{2j-1}+\mathbf{i}\xi_{2j})}
{|\xi|^{2}}e_{2j-1}\wedge e_{2j}\\
&\qquad\qquad\qquad\qquad
 \wedge v_{1}\wedge\cdots\wedge\hat{v_{k}}\wedge\cdots\wedge\hat{v_{j}}\wedge
\cdots\wedge v_{n-1}\wedge e_{2n-1}\\
&+\sum_{1\leq j<k\leq n-1}(-1)^{j-k-1}\frac{2\mathbf{i}(\xi_{2k-1}+\mathbf{i}\xi_{2k})
(\xi_{2j-1}+\mathbf{i}\xi_{2j})}{|\xi|^{2}}e_{2j-1}\wedge e_{2j}\\
&\qquad\qquad\qquad\qquad \wedge v_{1}\wedge\cdots\wedge\hat{v_{j}}
\wedge\cdots\wedge\hat{v_{k}}\wedge\cdots\wedge v_{n-1}\wedge e_{2n-1}\\
&+\sum_{1\leq j\leq n-1}(-1)^{j-n-1}
\frac{2\mathbf{i}\xi_{2n-1}(\xi_{2j-1}+\mathbf{i}\xi_{2j})}{|\xi|^{2}}e_{2j-1}\wedge e_{2j}\\
&\qquad\qquad\qquad\qquad
  \wedge v_{1}\wedge \cdots\wedge\hat{v_{j}}\wedge\cdots\wedge v_{n-1}.
\end{align*}
%Here, we view $\xi$ as a vector in $\mathbb{R}^{2n}$ with last coordinate $0$.
The lemma follows from these equations.
\end{proof}

\begin{proposition}\label{P:P-restriction2}
One has
\[\bar{\pi}^{+}(\rho)|_{P}\cong\Ind_{M'N}^{MAN}(V_{M',\mu^-}\otimes e^{\mathbf{i}\xi_{0}})
 \text{ and }
\bar{\pi}^{-}(\rho)|_{P} \cong\Ind_{M'N}^{MAN}(V_{M',\mu^+}\otimes e^{\mathbf{i}\xi_{0}}).\]
\end{proposition}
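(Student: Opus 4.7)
The plan is to combine Proposition~\ref{P:P-restriction1} with an explicit identification of the $M'$-isotypic component that appears in $\bar{\pi}^-(\rho)|_P$. Proposition~\ref{P:P-restriction1} gives the sum
$\bar{\pi}^+(\rho)|_P \oplus \bar{\pi}^-(\rho)|_P \cong \Ind_{M'N}^{MAN}(\bigwedge^{n-1}\mathbb{C}^{2n-2}\otimes e^{\mathbf{i}\xi_0})$, and the $M'=\Spin(2n-2)$ decomposition $\bigwedge^{n-1}\mathbb{C}^{2n-2}|_{M'}=V_{M',\mu^+}\oplus V_{M',\mu^-}$ produces two irreducible direct summands $\Ind_{M'N}^{MAN}(V_{M',\mu^{\pm}}\otimes e^{\mathbf{i}\xi_0})$. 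The remaining task is to decide which of the two pieces is $\bar{\pi}^-(\rho)|_P$. I will do this by pinning down the $M'$-type produced by the highest weight vector $f_{u^-}$ of the lowest $K$-type of $\pi^-(\rho)$ constructed above, by applying the Fourier transform and the anti-trivialization of Proposition~\ref{P:anti-trivialization2}.

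The key step will be to evaluate the explicit formula of Lemma~\ref{P:Ffnx} for $\mathcal{F}(f_{u^-})_N$ at $\xi_0=(0,\dots,0,1)\in\mathbb{R}^{2n-1}$. Since $r_{\xi_0}=r_{e_{2n-1}}$ fixes each $v_j$ ($1\leq j\leq n-1$) and negates $e_{2n-1}$, we get $(1-r_{\xi_0})u=2u$, while $u'\wedge \xi_0 = u$ because $\xi_0=e_{2n-1}$; hence $\mathcal{F}(f_{u^-})_N(\xi_0)$ is a nonzero scalar multiple of $u=v_1\wedge\cdots\wedge v_{n-1}\wedge e_{2n-1}$. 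Via the anti-trivialization, this value determines the $M'$-isotypic component carrying $\bar{\pi}^-(\rho)|_P$. Since $M'$ fixes $e_{2n-1}$, the subspace $\bigwedge^{n-1}\mathbb{C}^{2n-2}\wedge e_{2n-1}\subset \bigwedge^{n}\mathbb{C}^{2n-1}$ is $M'$-equivariantly isomorphic to $\bigwedge^{n-1}\mathbb{C}^{2n-2}=V_{M',\mu^+}\oplus V_{M',\mu^-}$, and under this isomorphism $u$ corresponds to $v_1\wedge\cdots\wedge v_{n-1}$, which is a highest weight vector of weight $\mu^+=(1,\dots,1)$. Therefore $\bar{\pi}^-(\rho)|_P\cong \Ind_{M'N}^{MAN}(V_{M',\mu^+}\otimes e^{\mathbf{i}\xi_0})$, and the statement for $\bar{\pi}^+(\rho)|_P$ follows by elimination.

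The main technical obstacle is the sign bookkeeping between the two coefficient systems in play: $f_{u^-}$ takes values in $\bigwedge^{n}\mathbb{C}^{2n-1}$, whereas $I_{n-1}(-\nu_{n-1})$ is modelled on $V_{n-1}=\bigwedge^{n-1}\mathbb{C}^{2n-1}$, and the two are isomorphic as $\OO(2n-1)$-modules only up to a twist by the sign character. Fortunately this twist is trivial upon restriction to $M'=\Spin(2n-2)=M'_1$, so the identification of $v_1\wedge\cdots\wedge v_{n-1}$ as a highest weight vector of $V_{M',\mu^+}$ is unaffected. A second sign must also be tracked: the reflection $r_{\xi_0}$ has determinant $-1$, so $\sigma_{n-1}(r_{\xi_0})$ differs from the natural wedge-product action by a factor of $-1$, and this precisely places $u$ in the $+1$-eigenspace $(V_{n-1})^{\sigma_{n-1}(r_{\xi_0})}$, consistent with the target space in Proposition~\ref{P:image-Fourier2} for $\pi'_{n-1}=J_{n-1}(-\nu_{n-1})(I_{n-1}(\nu_{n-1}))\supset \pi^-(\rho)$. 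Once these two sign conventions are harmonized, the argument reduces to the direct evaluation described above.
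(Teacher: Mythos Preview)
Your proposal is correct and follows essentially the same route as the paper: evaluate the formula of Lemma~\ref{P:Ffnx} at $\xi_0=e_{2n-1}$, observe that $(1-r_{\xi_0})u=2u$ and $u'\wedge\xi_0=u$, so the value is a nonzero multiple of $u$, and then identify $u$ (via $w\wedge e_{2n-1}\mapsto w$) with $v_1\wedge\cdots\wedge v_{n-1}\in\bigwedge^{n-1}\mathbb{C}^{2n-2}$, a highest weight vector of weight $\mu^+$ for $M'$. Your additional discussion of the sign conventions---the determinant twist in $\bigwedge^{n}V'\cong\bigwedge^{n-1}V'$ and the $\sigma_{n-1}(r_{\xi_0})$-eigenspace consistency with Proposition~\ref{P:image-Fourier2}---is a useful sanity check that the paper glosses over, but the core argument is identical.
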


\begin{proof}
Let $h:=\mathcal{F}((f_{u^{-}})_N)$.
By Lemma~\ref{P:Ffnx},
\[h(\xi)=\frac{2^{-\frac{1}{2}-n}\pi^{\frac{1}{2}}}{n!}e^{-|\xi|}(|\xi|(1-r_{\xi})u+2u'\wedge\xi)\]
for $\xi\neq 0$.
Evaluating at $\xi=\xi_0=e_{2n-1}$, we have
\[h(e_{2n-1})=\frac{2^{-\frac{1}{2}-n}\pi^{\frac{1}{2}}}{n! e}\cdot (4u).\]
%Note that $\xi_0=e_{2n-1}$. Let $t=1$ and
%$p=m=\diag\{I_{2n-3},-1,-1,1,1\}\in M$. Then $\xi=p\cdot\xi_0$.
Hence
\[h_{at,\nu}(e)=h(\xi_0) = cu\]
 for a constant $c\neq 0$.
This is a highest weight vector for $M'$ with weight $\mu^+$
 in the representation $\bigwedge^{n}V'|_{M'}\cong\bigwedge^{n-1}V'|_{M'}$.
Hence the inverse Fourier transform of $f_{u^{-}}$
 must lie in $\Ind_{M'N}^{MAN}(V_{M',\mu^+}\otimes e^{\mathbf{i}\xi_{0}})$.
Therefore,
\[\bar{\pi}^{-}(\rho)|_{P}\cong\Ind_{M'N}^{MAN}(V_{M',\mu^+}\otimes e^{\mathbf{i}\xi_{0}})\]
 and then
\[\bar{\pi}^{+}(\rho)|_{P}\cong\Ind_{M'N}^{MAN}(V_{M',\mu^-}\otimes e^{\mathbf{i}\xi_{0}}).
\qedhere\]
\end{proof}

\section{Obtaining $\bar{\pi}|_{MN}$ from $\bar{\pi}|_{K}$}\label{S:MN.K}

By the Cartan decomposition $\mathfrak{g}=\mathfrak{k}\oplus\mathfrak{p}$ and the decomposition $\mathfrak{g}=
\mathfrak{m}\oplus\mathfrak{a}\oplus\mathfrak{n}\oplus\bar{\mathfrak{n}}$, we have $\mathfrak{k}=\mathfrak{m}
\oplus\{X+\theta(X):X\in\mathfrak{n}\}$. By this, we can view $\mathfrak{m}+\mathfrak{n}$ as the limit
$\lim_{t\rightarrow+\infty}\Ad(\exp(tH_{0}))(\mathfrak{k})$. On the group level, we can view $MN$ as the limit
$\lim_{t\rightarrow+\infty}\exp(tH_{0})K\exp(-tH_{0})$.
From this viewpoint, we may expect that $\bar{\pi}|_{MN}$ is determined
by $\bar{\pi}|_{K}$ for any unitarizable irreducible representation $\pi$ of $G$. Here we observe the relationship between two restrictions. The writing of this section is motivated by a question of Professor David Vogan.

As in Section \ref{S:repP}, write $I_{P,\tau}=\Ind_{M'N}^{P}(\tau\otimes e^{\mathbf{i}\xi_{0}})$ for a
unitarily induced representation of $P$ where $\tau$ is a finite-dimensional unitary representation of
$M'$. Then, $I_{P,\tau}$ is irreducible when $\tau$ is so. For any finite-dimensional unitary representation
$\tau$ of $M'$ and any $0\neq t\in\mathbb{R}$, write $I_{t,\tau}=\Ind_{M'N}^{MN}(\tau\otimes
e^{\mathbf{i}t\xi_{0}})$ for a unitarily induced representation of $MN$. Then, $I_{t,\tau}$ is irreducible when $\tau$ is so.

Using Mackey's theory for unitarily induced representations and considering the action of $MN$ on $P/M'N$,
the following lemma follows easily.

\begin{lemma}\label{L:P-MN}
We have \[I_{P,\tau}|_{MN}\cong\int_{t>0}I_{t,\tau}\d t.\]
\end{lemma}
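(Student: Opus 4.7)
The plan is to prove the lemma by combining induction in stages with the Mackey theory of restriction to a normal subgroup, where we exploit the fact that $MN$ is normal in $P=MAN$ with $P = MN\rtimes A$.

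First I would rewrite $I_{P,\tau}$ via induction in stages as
\[
I_{P,\tau} = \Ind_{M'N}^{P}(\tau\otimes e^{\mathbf{i}\xi_0})
\cong \Ind_{MN}^{P}\bigl(\Ind_{M'N}^{MN}(\tau\otimes e^{\mathbf{i}\xi_0})\bigr)
= \Ind_{MN}^{P}(I_{1,\tau}).
\]
Since $A$ normalizes both $M$ and $N$, the subgroup $MN$ is normal in $P$, and $P/MN\cong A$. Mackey's theorem for restriction of an induced representation from a normal subgroup then gives, as $MN$-representations,
\[
\Ind_{MN}^{P}(I_{1,\tau})\bigl|_{MN}
 \cong \int^{\oplus}_{A}\bigl(a\cdot I_{1,\tau}\bigr)\,da,
\]
where $a\cdot \sigma$ denotes the conjugate representation $\sigma(a^{-1}(\cdot)a)$ and $da$ is Haar measure on $A$.

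Next I would identify the $A$-conjugate of $I_{1,\tau}$ explicitly. Writing $a=\exp(tH_0)$ and using $[H_0,X_\alpha]=X_\alpha$, we have $\Ad^{\ast}(a^{-1})\xi_0 = e^{-t}\xi_0$, and $a$ normalizes $M'N$ (acting trivially on $M'$). Hence the character of $N$ pulled back by conjugation is $e^{\mathbf{i}e^{-t}\xi_0}$, so
\[
a\cdot I_{1,\tau}
 = \Ind_{M'N}^{MN}(\tau\otimes e^{\mathbf{i}e^{-t}\xi_0})
 = I_{e^{-t},\tau}.
\]
Substituting this into the direct integral and changing variable $s=e^{-t}$ (which converts $dt$ into the measure $ds/s$ on $(0,\infty)$, absolutely continuous with respect to Lebesgue measure), we obtain
\[
I_{P,\tau}\bigl|_{MN}
 \cong \int^{\oplus}_{t\in \bbR} I_{e^{-t},\tau}\,dt
 \cong \int^{\oplus}_{s>0} I_{s,\tau}\,\frac{ds}{s}
 \cong \int^{\oplus}_{t>0} I_{t,\tau}\,dt,
\]
where the last equivalence is the fact that two direct integrals of irreducible representations parametrized by a standard Borel space are unitarily equivalent once the underlying measures lie in the same measure class and the multiplicity function agrees (here both are $1$).

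The only non-routine point will be justifying the Mackey restriction formula in the precise form used above: the fiber $P/MN\cong A$ is non-compact, so the direct integral statement requires either the Mackey--Anh normal subgroup theorem or a direct verification on the non-compact picture. As a cross-check I would also sketch the alternative geometric argument: realize $I_{P,\tau}$ on sections over $P/M'N\cong \mathfrak{n}^{\ast}-\{0\}$, note that the $MN$-orbits in $\mathfrak{n}^{\ast}-\{0\}$ are precisely the spheres $\{|\xi|=t\}$ for $t>0$ (because $N$ acts trivially on $\mathfrak{n}^*$ and $M$ acts by rotations transitively on each sphere with stabilizer conjugate to $M'$), and disintegrate the $L^2$-space along $\xi\mapsto |\xi|$; on each sphere one recovers exactly $I_{t,\tau}$, and the disintegration measure is Lebesgue up to absolute continuity, giving the same conclusion.
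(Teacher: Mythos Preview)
Your proposal is correct. The paper's own proof is a single sentence: ``Using Mackey's theory for unitarily induced representations and considering the action of $MN$ on $P/M'N$, the following lemma follows easily.'' This is exactly your alternative geometric sketch---disintegrate $I_{P,\tau}$ along the $MN$-orbits (spheres) in $P/M'N\cong\mathfrak{n}^{*}-\{0\}$. Your main argument via induction in stages and the normal-subgroup Mackey theorem is a legitimate reorganization of the same computation: in both cases the parameter space is $MN\backslash P/M'N\cong A\cong(0,\infty)$ and the fiber over $t$ is $I_{t,\tau}$. One small slip: the formula should read $\Ad^{*}(a)\xi_{0}=e^{-t}\xi_{0}$ rather than $\Ad^{*}(a^{-1})\xi_{0}=e^{-t}\xi_{0}$, but your conclusion $a\cdot I_{1,\tau}\cong I_{e^{-t},\tau}$ is nonetheless correct.
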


\begin{corollary}\label{C:P-MN}
Let $\pi$ be an infinite-dimensional irreducible unitarizable representation of $G=\Spin(m+1,1)$. Then
$\bar{\pi}|_{P}$ and $\bar{\pi}|_{MN}$ determine each other.
\end{corollary}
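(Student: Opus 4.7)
The plan is to reduce the claim to recovering a single finite-dimensional $M'$-representation that governs $\bar\pi|_P$, and then to extract that data from $\bar\pi|_{MN}$ via Mackey theory on the Type I group $MN$.

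First I would apply Lemma~\ref{L:unitary-J}: since $\pi$ is an infinite-dimensional irreducible unitarizable representation of $G$, there exists a finite-dimensional unitary representation $W$ of $M'$ (uniquely determined up to isomorphism) such that
\[\bar\pi|_P \cong \Ind_{M'N}^{P}(W\otimes e^{\mathbf{i}\xi_0}).\]
Writing $W=\bigoplus_{\tau\in\widehat{M'}} V_{M',\tau}^{\,m(\tau)}$ with only finitely many $m(\tau)\neq 0$, this yields $\bar\pi|_P\cong \bigoplus_{\tau}I_{P,\tau}^{\,m(\tau)}$. Hence the isomorphism class of $\bar\pi|_P$ is equivalent to the multiplicity function $\tau\mapsto m(\tau)$, and it suffices to recover this function from $\bar\pi|_{MN}$.

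The direction $\bar\pi|_P\Rightarrow \bar\pi|_{MN}$ is immediate: restrict further to $MN\subset P$, or equivalently apply Lemma~\ref{L:P-MN} to each summand, which yields
\[\bar\pi|_{MN}\cong \bigoplus_{\tau}\int_{t>0} I_{t,\tau}^{\,m(\tau)}\,dt.\]
For the converse direction, I would observe that $MN=M\ltimes N$ is a Type I Lie group, since $N$ is a closed normal abelian subgroup and $M$ is compact. By Mackey's little group method, $\widehat{MN}$ is then the disjoint union of $\widehat{M}$ (from the orbit $\{0\}\subset \mathfrak{n}^*$) and $\mathbb{R}_{>0}\times \widehat{M'}$ (from the spheres $\{|\xi|=t\}\subset\mathfrak{n}^*$, with isotropy $M'N$), where the second family corresponds to the representations $I_{t,\tau}$.

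A unitary representation of a Type I group admits an essentially unique disintegration into irreducibles, specified by a measure class on the unitary dual together with a measurable multiplicity function. The formula above shows that the spectral data of $\bar\pi|_{MN}$ are concentrated on $\{(t,\tau): t>0,\ m(\tau)>0\}$, with the Lebesgue measure class in $t$, and that for Lebesgue-a.e.\ $t>0$ the multiplicity of $I_{t,\tau}$ in the fiber equals $m(\tau)$. Consequently the function $\tau\mapsto m(\tau)$, and hence both $W$ and $\bar\pi|_P$, can be read off from $\bar\pi|_{MN}$. The hard part, rather than a genuine obstacle, is invoking Type I-ness of $MN$ and the essential uniqueness of the direct integral decomposition; once these are in place, everything follows at once from Lemmas~\ref{L:unitary-J} and \ref{L:P-MN}.
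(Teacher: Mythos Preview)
Your proof is correct and follows essentially the same approach as the paper: reduce via Lemma~\ref{L:unitary-J} to a finite sum of $I_{P,\tau}$'s, apply Lemma~\ref{L:P-MN}, and use that the $MN$-spectra of $I_{P,\tau}$ and $I_{P,\tau'}$ are disjoint for $\tau\not\cong\tau'$. You are more explicit than the paper in justifying this last step via the Type~I property of $MN$ and the essential uniqueness of direct integral decompositions, which is a welcome addition.
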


\begin{proof}
We have shown that $\bar{\pi}|_{P}$ is a finite direct sum of $I_{P,\tau}$. Then, the conclusion follows as the
spectra of $I_{P,\tau}|_{MN},I_{P,\tau'}|_{MN}$ are disjoint whenever $\tau\not\cong\tau'$.
\end{proof}

In \S\ref{SS:CW-Cloux}, we constructed a homomorphism \[\Psi\colon K(G)\rightarrow K(M').\] Write $\widehat{K}$
for the set of isomorphism classes of finite-dimensional irreducible representations of $K$ and write
$\mathbb{Z}^{\widehat{K}}$ for the abelian group of functions $\widehat{K}\rightarrow\mathbb{Z}$ with addition
given by point-wise addition. Taking the multiplicities of irreducible representations of $K$ appearing in
$\pi|_{K}$ ($\pi\in\mathcal{C}(G)$), we obtain a homomorphism \[m\colon K(G)\rightarrow\mathbb{Z}^{\widehat{K}}.\]
Write $\mathbb{Z}(K)$ for the quotient group of $\mathbb{Z}^{\widehat{K}}$ by the subgroup of functions
$f\colon \widehat{K}\rightarrow\mathbb{Z}$ such that $\sharp\{[\sigma]\in\widehat{K} : f([\sigma])\neq 0\}$ is finite.
Let \[p\colon\mathbb{Z}^{\widehat{K}}\rightarrow\mathbb{Z}(K)\] be the quotient map.

As in Section \ref{S:repP}, put $n=\lfloor\frac{m+2}{2}\rfloor$ and $n'=\lfloor\frac{m+1}{2}\rfloor$. Then,
\[n=\begin{cases} n'\quad\ &\textrm{ if }m\textrm{ is odd};\\
n'+1\quad &\textrm{ if }m\textrm{ is even}.
\end{cases}\]
The ranks of $K=\Spin(m+1)$, $M=\Spin(m)$, $M'=\Spin(m-1)$ are equal to $n'$, $n-1$, $n'-1$,
respectively. For a highest weight $\vec{b}=(b_1,\dots,b_{n'-1})$ of $M'$, write $V_{M',\vec{b}}$ for an irreducible
representation of $M'$ with highest weight $\vec{b}$. Then, $[V_{M',\vec{b}}]$ is a basis of $K(M')$.
Let \[\phi\colon K(M')\rightarrow\mathbb{Z}(K)\] be defined by \[\phi([V_{M',\vec{b}}])=\sum_{k\geq 0}[V_{K,(k+b_1,
b_1,\dots,b_{n'-2},(-1)^{m}b_{n'-1})}].\]

\begin{proposition}\label{P:P-K}
We have $\phi\circ\Psi=p\circ m$.
\end{proposition}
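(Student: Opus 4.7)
The plan is to reduce the identity to the class of a principal series and then verify a combinatorial identity of $K$-multiplicities modulo finite-support functions on $\widehat K$. Both $\phi\circ\Psi$ and $p\circ m$ are additive homomorphisms $K(G)\to\mathbb{Z}(K)$: additivity of $\Psi$ is Lemma~\ref{L:exact}, additivity of $m$ follows from admissibility and the exactness of the passage to $K$-isotypic components, and $\phi$ and $p$ are homomorphisms by construction. Casselman's subrepresentation theorem (invoked right after Proposition~\ref{P:res-induced}) embeds every irreducible $[\pi]\in K(G)$ into some principal series $I(\sigma,\nu)$; an induction on composition length then reduces the proof to establishing the identity on $[I(\sigma,\nu)]$ for every irreducible $\sigma=V_{M,\mu}$ and every $\nu\in\mathfrak a^*_{\mathbb C}$.

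For such a principal series, Proposition~\ref{P:res-induced} gives $\Psi([I(\sigma,\nu)])=[V_{M,\mu}|_{M'}]$, while Frobenius reciprocity for $K$-types yields
\[ m([I(\sigma,\nu)]) = \sum_{\vec a\in\widehat K}[V_{K,\vec a}|_M:V_{M,\mu}]\,[V_{K,\vec a}]. \]
The classical $\Spin(m)\to\Spin(m-1)$ branching expresses $[V_{M,\mu}|_{M'}]$ as a multiplicity-one sum of $[V_{M',\vec b}]$ with $\vec b$ running over dominant weights satisfying the standard interlacing with $\mu=(c_1,\ldots,c_{n-1})$. Applying the definition of $\phi$, for each dominant $\vec a$ the equation $\vec a = (k+b_1,\,b_1,\ldots,b_{n'-2},\,(-1)^m b_{n'-1})$ forces $b_j = a_{j+1}$ for $j=1,\ldots,n'-2$, forces $b_{n'-1}=(-1)^m a_{n'}$, and determines $k=a_1-a_2$; the requirement $k\geq 0$ coincides with the automatic dominance constraint $a_1\geq a_2$.

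It then remains to compare the set of $\vec a$ on which each side has nonzero multiplicity. Using the classical $\Spin(m+1)\to\Spin(m)$ branching for the right hand side and the $\Spin(m)\to\Spin(m-1)$ branching (with the substitution above) for the left hand side, both conditions become the same interlacing of $\vec a$ against $\mu$, differing only in that the right hand side additionally imposes $a_1\geq c_1$. The symmetric difference is contained in the set $\{\vec a : a_2\leq a_1<c_1\}$, which is finite because $c_1$ is fixed and the remaining coordinates $(a_2,\ldots,a_{n'})$ are constrained by the interlacing to a bounded set of dominant weights. This finite discrepancy lies in the kernel of $p$, so equality holds in $\mathbb{Z}(K)$.

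The main technical point, rather than a real obstacle, is to track the sign factor $(-1)^m$ across the two parities of $m$: for $m$ odd the last coordinate $a_{n'}$ of a $\Spin(2n')$-weight and $b_{n'-1}$ of a $\Spin(2n'-2)$-weight may each be negative, and the identification $b_{n'-1}=-a_{n'}$ correctly preserves the absolute value appearing in the branching condition $c_{n'-1}\geq |a_{n'}|$; for $m$ even both $a_{n'}$ and $b_{n'-1}$ are nonnegative and the sign is trivial. Once these parity cases are separated, the combinatorial verification is routine.
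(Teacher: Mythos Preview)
Your reduction to principal series has a genuine gap when $m$ is odd. For $G=\Spin(2n,1)$ the classes $[I(\sigma,\nu)]$ do not span $K(G)$: at a regular integral infinitesimal character $\gamma\in\Lambda_0$ there are $n+2$ irreducible classes $[\pi_0(\gamma)],\ldots,[\pi_{n-1}(\gamma)],[\pi^+(\gamma)],[\pi^-(\gamma)]$ but only $n$ distinct principal-series classes $[I_j^\pm(\gamma)]$, and in each of these the discrete-series pair enters only through the sum $[\pi^+(\gamma)]+[\pi^-(\gamma)]$. Hence knowing $\phi\circ\Psi=p\circ m$ on every $[I(\sigma,\nu)]$ still leaves the value on $[\pi^+(\gamma)]-[\pi^-(\gamma)]$ undetermined. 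Casselman's theorem does embed each irreducible in a principal series, but your ``induction on composition length'' cannot then isolate the individual $[\pi^\pm(\gamma)]$; what you actually need is that the $[I(\sigma,\nu)]$ span $K(G)$, which holds for $m$ even (where $\Spin(2n-1,1)$ has only the maximally split Cartan and hence no discrete series) but fails for $m$ odd.

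The paper closes this gap by checking (limits of) discrete series separately: $\Psi(\pi^\pm(\gamma))$ is read off from Theorem~\ref{T:branching-ds}, the $K$-type multiplicities of $\pi^\pm(\gamma)$ from Blattner's formula, and the identity is then verified directly for these extra generators. Your combinatorial verification for principal series is correct and is essentially what the paper uses for that part of the generating set; for $m$ even your argument is complete as written.
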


\begin{proof}
When $m$ is even, $K(G)$ is generated by induced representations $I(\sigma,\nu)$ and finite-dimensional
representations. Then, the conclusion follows from Proposition~\ref{P:res-induced} and branching laws for the
pair $M\subset K$ (giving $K$ types of induced representations).

When $m$ is odd,
 $K(G)$ is generated by induced representations $I(\sigma,\nu)$, (limits of) discrete
series and finite-dimensional representations. Then, the conclusion follows from Proposition~\ref{P:res-induced},
branching laws for the pair $M\subset K$, Theorem \ref{T:branching-ds} and Blattner's formula (giving
$K$ types of discrete series and limits of discrete series).
\end{proof}

\begin{corollary}\label{C:P-K}
For any $\pi\in\mathcal{C}(G)$, $\Psi(\pi)$ is determined by $\pi_{K}|_{K}$.
\end{corollary}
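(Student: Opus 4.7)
The plan is to deduce the corollary immediately from Proposition~\ref{P:P-K}, once the auxiliary map $\phi\colon K(M')\to \mathbb{Z}(K)$ is shown to be injective. Indeed, Proposition~\ref{P:P-K} gives $\phi\circ \Psi = p\circ m$, so if $\phi$ is injective then $\Psi(\pi)$ is recovered from $p(m(\pi))$, which in turn is visibly determined by $m(\pi)$, i.e.\ by the $K$-type decomposition of $\pi_K$.

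First I would verify the injectivity of $\phi$ by a direct combinatorial argument. Recall that for a highest weight $\vec{b}=(b_1,\dots,b_{n'-1})$ of $M'$,
\[
\phi([V_{M',\vec{b}}])
 = \sum_{k\geq 0}[V_{K,(k+b_1,\,b_1,\,\dots,\,b_{n'-2},\,(-1)^m b_{n'-1})}]
 \pmod{\text{finite sums}}.
\]
The key observation is that the last $n'-1$ coordinates of the $K$-highest weight appearing here are exactly $(b_1,\dots,b_{n'-2},(-1)^m b_{n'-1})$, so they already recover $\vec{b}$ uniquely. Consequently, the sets of $K$-weights supporting $\phi([V_{M',\vec{b}}])$ and $\phi([V_{M',\vec{b}'}])$ are disjoint whenever $\vec{b}\neq \vec{b}'$.

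Now suppose $x=\sum_{i}c_i [V_{M',\vec{b}^{(i)}}]\in K(M')$ is a finite $\bbZ$-linear combination with distinct $\vec{b}^{(i)}$ and $\phi(x)=0$ in $\mathbb{Z}(K)$. By the disjointness just noted, the coefficient of $[V_{K,(k+b_1^{(i)},\, b_1^{(i)},\,\dots,\,(-1)^m b_{n'-1}^{(i)})}]$ in the preimage $\sum_i c_i\,\phi([V_{M',\vec{b}^{(i)}}])\in\mathbb{Z}^{\widehat{K}}$ equals $c_i$ for every $k\geq 0$. For the image in $\mathbb{Z}(K)$ to vanish, this sequence of coefficients must be eventually zero, which forces $c_i=0$. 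Hence $\phi$ is injective.

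The main (and only real) obstacle in this plan is ensuring that the disjointness of supports stated above is genuinely correct for both parities of $m$; this amounts to checking that the formula defining $\phi$ makes the map $\vec{b}\mapsto \bigl(b_1,\dots,b_{n'-2},(-1)^m b_{n'-1}\bigr)$ a bijection onto the set of tails of dominant $K$-highest weights appearing in the sum, which is immediate from the definition of $\phi$ and the list of dominant weights for $K=\Spin(m+1)$. With injectivity of $\phi$ in hand, the chain
\[
\phi(\Psi(\pi)) = p(m(\pi))
\]
from Proposition~\ref{P:P-K} shows that $\Psi(\pi)\in K(M')$ is determined by $\pi_K|_K$, completing the proof.
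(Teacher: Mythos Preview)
Your proposal is correct and follows essentially the same approach as the paper: both deduce the corollary from Proposition~\ref{P:P-K} via the identity $\phi\circ\Psi=p\circ m$. The paper's proof simply notes that $\Psi(\pi)$ is a finite direct sum of irreducibles of $M'$ and says the conclusion follows ``directly'' from Proposition~\ref{P:P-K}; you make explicit the one point the paper leaves to the reader, namely the injectivity of $\phi$, and your disjoint-support argument (reading off $\vec{b}$ from the last $n'-1$ coordinates of the $K$-highest weight) is exactly the right way to see it.
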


\begin{proof}
Note that $\Psi(\pi)$ is a finite direct sum of finite-dimensional irreducible unitary representations of $M'$.
Then, the conclusion follows from Proposition~\ref{P:P-K} directly.
\end{proof}

\begin{corollary}\label{C:MN-K}
Let $\pi$ be a unitarizable irreducible representation of $G$. Then $\bar{\pi}|_{MN}$ is determined by
$\bar{\pi}|_{K}$.
\end{corollary}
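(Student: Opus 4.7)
The plan is to chain together three results already in place: Corollary~\ref{C:P-K}, Lemma~\ref{L:unitary-J}, and Corollary~\ref{C:P-MN}. First I would dispose of the trivial case: if $\bar{\pi}$ is the trivial representation, then $\bar{\pi}|_K$ is trivial, so $\bar{\pi}$ itself (and in particular $\bar{\pi}|_{MN}$) is determined; and since $G=\Spin(m+1,1)$ with $m>1$ admits no other finite-dimensional unitary irreducible representation, I may assume from now on that $\pi$ is infinite-dimensional.

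The core of the argument is a four-step extraction. Starting from $\bar{\pi}|_K$, I first pass to the underlying Harish-Chandra module $\pi_K$ by taking $K$-finite vectors; the $K$-action on $\pi_K$ is precisely the $K$-type decomposition of $\bar{\pi}|_K$, so this step loses no information. Next, Corollary~\ref{C:P-K} asserts that the functor $\Psi\colon K(G)\to K(M')$ applied to $\pi_K$ is determined by $\pi_K|_K$; this step is where Proposition~\ref{P:P-K} does the real work through Blattner's formula and the branching law $M\subset K$. Then Lemma~\ref{L:unitary-J} gives the explicit reconstruction
\[
\bar{\pi}|_{P}\cong \Ind_{M'N}^{P}\bigl(\Psi(\pi_K^{\sm}|_P)\otimes e^{\mathbf{i}\xi_{0}}\bigr),
\]
so from the finite-dimensional $M'$-representation $\Psi(\pi_K^{\sm}|_P)$ we recover $\bar{\pi}|_P$ up to unitary equivalence. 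Finally, Corollary~\ref{C:P-MN} (which itself rests on Lemma~\ref{L:P-MN} and the disjointness of spectra of $I_{t,\tau}$ for distinct $\tau$) returns $\bar{\pi}|_{MN}$ from $\bar{\pi}|_P$.

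I do not anticipate a genuine obstacle, since every link in the chain is already established in the body of the paper; the corollary is essentially a repackaging. The only point requiring a line of care is the identification $\Psi(\pi_K^{\sm}|_P)$ in Lemma~\ref{L:unitary-J} versus $\Psi([\pi_K])$ in Corollary~\ref{C:P-K}: both refer to the same finite-dimensional $M'$-module (cf.\ Remark~\ref{R:CHM} and the discussion preceding Proposition~\ref{P:res-induced}), so the composition is well-defined. I would also remark that, by Proposition~\ref{P:Mackey} together with Lemma~\ref{L:unitary-J}, the entire restriction $\bar{\pi}|_P$ is a finite direct sum of irreducible unitarily induced representations $I_{P,\tau}$, and the multiset of $\tau$ appearing is precisely $\Psi(\pi_K^{\sm}|_P)|_{M'}$; this makes both the reconstruction $\bar{\pi}|_K\leadsto \bar{\pi}|_P$ and the reconstruction $\bar{\pi}|_P\leadsto \bar{\pi}|_{MN}$ completely explicit.
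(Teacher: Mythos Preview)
Your proposal is correct and follows essentially the same chain as the paper: $\bar{\pi}|_K\leftrightarrow \pi_K|_K$, then Corollary~\ref{C:P-K} gives $\Psi(\pi)$, then Lemma~\ref{L:unitary-J} gives $\bar{\pi}|_P$, then Corollary~\ref{C:P-MN} gives $\bar{\pi}|_{MN}$. Your explicit treatment of the trivial representation is a small addition (since Lemma~\ref{L:unitary-J} requires $\pi$ non-trivial), but otherwise the arguments are identical.
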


\begin{proof}
When $\pi$ is a unitarizable irreducible representation, $\bar{\pi}|_{K}$ and $\pi_{K}|_{K}$ determine each other.
By Corollary \ref{C:P-K}, $\Psi(\pi)$ is determined by $\pi_{K}|_{K}$. By Lemma \ref{L:unitary-J}, $\bar{\pi}|_{P}$
and $\Psi(\pi)$ determine each other. By Corollary \ref{C:P-MN}, $\bar{\pi}|_{P}$ and $\bar{\pi}|_{MN}$ determine each
other. Then, the conclusion of the corollary follows.
\end{proof}

\section{A case of Bessel model and relation with the local GGP conjecture}\label{S:GGP}

Take $G_{3}=\SO(m+1,1)$ and let $P_{3}=M_{3}AN$ be a standard minimal parabolic subgroup. Put $H_{3}=
M'_{3}\ltimes N$. Then, determining $\Hom_{H_{3}}(\pi,\tau\otimes e^{\mathbf{i}\xi_{0}})$ for irreducibles
$\pi\in\mathcal{C}(G_{3})$ and $\tau\in\mathcal{C}(M'_{3})$ is related to a case of Bessel model in the
local Gan-Gross-Prasad conjecture. Precisely to say, this is the case that $\dim W^{\perp}=3$ for the Bessel
model of $G_{3}=\SO(m+1,1)$ as described in \S 2 of \cite{Gan-Gross-Prasad}. Note that Bessel models were
studied by Gomez-Wallach in more general setting (\cite{Gomez-Wallach}).

First, define categories $\mathcal{C}(G_{3})$, $\mathcal{C}(P_{3})$, $\mathcal{C}(M_{3})$ similarly as that for
$\mathcal{C}(G)$, $\mathcal{C}(P)$, $\mathcal{C}(M)$ in \S \ref{S:repP}. For any $\pi\in\mathcal{C}(P_{3})$, as in
\S \ref{S:repP}, define $\Psi(\pi)=\pi/\mathfrak{m}_{\xi_0}\cdot\pi$. Then $\Psi(\pi)\in\mathcal{C}(M'_{3})$ and
$\Psi$ defines a functor $\mathcal{C}(P_{3})\rightarrow\mathcal{C}(M'_{3})$. Then, for any $\pi\in\mathcal{C}(G_{3})$
we have \[\Psi(\pi)\cong\bigoplus_{\tau\in\widehat{M'_{3}}}n_{\tau}\tau\] where $n_{\tau}(\pi)=
\dim\Hom_{H_{3}}(\pi,\tau\otimes e^{\mathbf{i}\xi_{0}})$. By an analogue of Proposition~\ref{P:res-induced} for the
pair $P_{3}\subset G_{3}$ and Casselman's subrepresentation theorem, it follows that: when $\pi\in\mathcal{C}(G_{3})$
is irreducible, $n_{\tau}(\pi)=0$ or 1 (the multiplicity one theorem) and there are only finitely many $\tau\in
\widehat{M'_{3}}$ such that $n_{\tau}(\pi)=1$. Moreover, $\Psi(\pi)$ are all calculated with the Langlands parameter
of $\pi$ by results in this paper. Hence, we know $n_{\tau}(\pi)=1$ for exactly which $\tau\in\mathcal{C}(M'_{3})$. The
multiplicity one theorem in this case was shown before in a very general setting (cf.\ \cite[\S 15]{Gan-Gross-Prasad}
and references therein). However, to the authors' knowledge determining $n_{\tau}(\pi)=1$ for which pairs $(\pi,\tau)$
in this case is new. This is related to the local Gan-Gross-Prasad conjecture in the Bessel model case
(\cite[Conjecture 17.1]{Gan-Gross-Prasad}, \cite[Conjecture 6.1]{Gan-Gross-Prasad2}).

Second, for any unitarizable irreducible infinite-dimensional representation $\pi\in\mathcal{C}(G_{3})$, an analogue
of Lemma \ref{L:unitary-J} indicates that $\bar{\pi}|_{P_{3}}\cong I_{P_{3}}(\Psi(\pi)\otimes e^{\mathbf{i}\xi_{0}})$.
That is to say, ``$L^{2}$ spectrum=smooth quotient" in this case.

\end{document}